\documentclass[12pt]{amsart}
\usepackage{amssymb}
\usepackage{verbatim}
\usepackage{hyperref}
\usepackage{color}

\begin{document}

% define theorem environments
\newtheorem{theorem}{Theorem}    %[section]
\newtheorem{proposition}[theorem]{Proposition}
\newtheorem{conjecture}[theorem]{Conjecture}
\def\theconjecture{\unskip}
\newtheorem{corollary}[theorem]{Corollary}
\newtheorem{lemma}[theorem]{Lemma}
\newtheorem{sublemma}[theorem]{Sublemma}
\newtheorem{observation}[theorem]{Observation}
\newtheorem{remark}[theorem]{Remark}
\newtheorem{definition}[theorem]{Definition}
\theoremstyle{definition}
\newtheorem{notation}[theorem]{Notation}
\newtheorem{question}[theorem]{Question}
\newtheorem{questions}[theorem]{Questions}
\newtheorem{example}[theorem]{Example}
\newtheorem{problem}[theorem]{Problem}
\newtheorem{exercise}[theorem]{Exercise}

\numberwithin{theorem}{section} \numberwithin{theorem}{section}
\numberwithin{equation}{section}

\def\earrow{{\mathbf e}}
\def\rarrow{{\mathbf r}}
\def\uarrow{{\mathbf u}}
\def\varrow{{\mathbf V}}
\def\tpar{T_{\rm par}}
\def\apar{A_{\rm par}}

\def\reals{{\mathbb R}}
\def\torus{{\mathbb T}}
\def\heis{{\mathbb H}}
\def\integers{{\mathbb Z}}
\def\naturals{{\mathbb N}}
\def\complex{{\mathbb C}\/}
\def\distance{\operatorname{distance}\,}
\def\support{\operatorname{support}\,}
\def\dist{\operatorname{dist}\,}
\def\Span{\operatorname{span}\,}
\def\degree{\operatorname{degree}\,}
\def\kernel{\operatorname{kernel}\,}
\def\dim{\operatorname{dim}\,}
\def\codim{\operatorname{codim}}
\def\trace{\operatorname{trace\,}}
\def\Span{\operatorname{span}\,}
\def\dimension{\operatorname{dimension}\,}
\def\codimension{\operatorname{codimension}\,}
\def\nullspace{\scriptk}
\def\kernel{\operatorname{Ker}}
\def\ZZ{ {\mathbb Z} }
\def\p{\partial}
\def\rp{{ ^{-1} }}
\def\Re{\operatorname{Re\,} }
\def\Im{\operatorname{Im\,} }
\def\ov{\overline}
\def\eps{\varepsilon}
\def\lt{L^2}
\def\diver{\operatorname{div}}
\def\curl{\operatorname{curl}}
\def\etta{\eta}
\newcommand{\norm}[1]{ \|  #1 \|}
\def\expect{\mathbb E}
\def\bull{$\bullet$\ }
\def\C{\mathbb{C}}
\def\R{\mathbb{R}}
\def\Rn{{\mathbb{R}^n}}
\def\Sn{{{S}^{n-1}}}
\def\M{\mathbb{M}}
\def\N{\mathbb{N}}
\def\Q{{\mathbb{Q}}}
\def\Z{\mathbb{Z}}
\def\F{\mathcal{F}}
\def\L{\mathcal{L}}
\def\S{\mathcal{S}}
\def\supp{\operatorname{supp}}
\def\dist{\operatorname{dist}}
\def\essi{\operatornamewithlimits{ess\,inf}}
\def\esss{\operatornamewithlimits{ess\,sup}}
\def\xone{x_1}
\def\xtwo{x_2}
\def\xq{x_2+x_1^2}
\newcommand{\abr}[1]{ \langle  #1 \rangle}

\newcommand{\Norm}[1]{ \left\|  #1 \right\| }
\newcommand{\set}[1]{ \left\{ #1 \right\} }
\def\one{\mathbf 1}
\def\whole{\mathbf V}
\newcommand{\modulo}[2]{[#1]_{#2}}
\renewcommand{\thefootnote}{\fnsymbol{footnote}}
\def\scriptf{{\mathcal F}}
\def\scriptg{{\mathcal G}}
\def\scriptm{{\mathcal M}}
\def\scriptb{{\mathcal B}}
\def\scriptc{{\mathcal C}}
\def\scriptt{{\mathcal T}}
\def\scripti{{\mathcal I}}
\def\scripte{{\mathcal E}}
\def\scriptv{{\mathcal V}}
\def\scriptw{{\mathcal W}}
\def\scriptu{{\mathcal U}}
\def\scriptS{{\mathcal S}}
\def\scripta{{\mathcal A}}
\def\scriptr{{\mathcal R}}
\def\scripto{{\mathcal O}}
\def\scripth{{\mathcal H}}
\def\scriptd{{\mathcal D}}
\def\scriptl{{\mathcal L}}
\def\scriptn{{\mathcal N}}
\def\scriptp{{\mathcal P}}
\def\scriptk{{\mathcal K}}
\def\frakv{{\mathfrak V}}
\allowdisplaybreaks
\begin{comment}
\def\scriptx{{\mathcal X}}
\def\scriptj{{\mathcal J}}
\def\scriptr{{\mathcal R}}
\def\scriptS{{\mathcal S}}
\def\scripta{{\mathcal A}}
\def\scriptk{{\mathcal K}}
\def\scriptp{{\mathcal P}}
\def\frakg{{\mathfrak g}}
\def\frakG{{\mathfrak G}}
\def\boldn{\mathbf N}
\end{comment}
%
%If a theorem-like environment should not be numbered,
%add * after \newtheorem, and delete the counter option such as [theorem].
\newtheorem*{remark0}{\indent\sc Remark}
%
%%%%% Proof %%%%%
\renewcommand{\proofname}{\indent\sc Proof.} % Name is small caps
%The following commands are available in the proof environment:
%\begin{proof}
%\end{proof}
%The end of a proof is marked with a square.
%%%%%%%%%%%%%%%%%%%%%%%%%%%%%%%%%%%%%%%%%

\title[New weighted inequalities in the Bessel setting and applications]
{Weighted norm inequalities for operators with new Muckenhoupt type weights in the Bessel setting and applications}
%{Weighted inequalities for variation of heat semigroups associated to Schr\"{o}dinger operators with weights related to critical radius functions}
%title of paper and the running head option
\author{Yongming Wen$^{\ast}$ and Huoxiong Wu}
%%%%%%%%%%%%%%% footnote %%%%%%%%%%%%%%%%
\footnote[0]{{2020} \textit{Mathematics Subject Classification}.
42B20; 42B25; 42B35.
}
%In case \subjclass[2000] command is not effective
%(or the version of amsart.cls is old), write as follows instead:
%\renewcommand{\thefootnote}{\fnsymbol{footnote}}
%\footnote[0]{2000\textit{ Mathematics Subject Classification}.

%Primary 00; Secondary 00.}
%
\keywords{$\widetilde{A}_{p,\lambda}$ weights, Bessel operators, sparse operators, quantitative weighted bounds, restricted weak type inequalities, extrapolation theorems}
\thanks{$^*$Corresponding author.}
\thanks{Yongming Wen is supported by National Natural Science Foundation of China (Grant no. 12301119), Fujian Provincial Natural Science Foundation of China (Nos. 2025J01361), President's fund of Minnan Normal University (No. KJ2020020), Institute of Meteorological Big Data-Digital Fujian, Fujian Key Laboratory of Data Science and Statistics and Fujian Key Laboratory of Granular Computing and Applications (Minnan Normal University), China. Huoxiong Wu is supported by National Natural Science Foundation of China (Grant nos. 12271041, 12671114).}
%%%%%%%%%%%% Authors addresses %%%%%%%%%%%%%
\address{School of Mathematics and Statistics, Minnan Normal University, Zhangzhou 363000, China} \email{wenyongmingxmu@163.com}
\address{School of Mathematical Sciences, Xiamen University, Xiamen 361005, China} \email{huoxwu@xmu.edu.cn}

%\address{}
%\email{}

%%%%%%%%%%%%%%%%%%%%%%%%%%%%%%%%%%%%%%%%%

\begin{abstract}
Let $\lambda>-1/2,\neq0$. Consider the Bessel operators
\begin{align*}
\Delta_\lambda:=-\frac{d^2}{dx^2}-\frac{2\lambda}{x}\frac{d}{dx}
\end{align*}
on $\mathbb{R}_+$. In this paper, we first establish several characterizations for $\widetilde{A}_{\infty,\lambda}$ weights introduced by Li et al. (Math. Z. 309(1) (2025), Paper No. 13, 29pp), which are different from the previous $A_p$ class with respect to the measure $x^{2\lambda}dx$. Among these characterizations, we obtain the sharp reverse H\"{o}lder inequality (RHI) for $\widetilde{A}_{\infty,\lambda}$ weights. As applications of RHI, we derive a precise open property of $\widetilde{A}_{p,\lambda}$ weights, and obtain a sharp weighted bounds for the Hardy-Littlewood maximal operator $\mathcal{M}_\lambda$ associated to $\Delta_\lambda$, which was missing in the previous papers. Next, we also establish several extrapolation theorems related to $\widetilde{A}_{p,\lambda}$ weights. Furthermore, we characterize the restricted weighted weak type estimates for $\mathcal{M}_\lambda$ via a new class of weights $\widetilde{A}_{p,\lambda}^{\mathcal{R}}$, which is strictly larger than $\widetilde{A}_{p,\lambda}$. Meanwhile, we obtain quantitative restricted weighted weak type $(p,p)$ bounds for sparse operators. As applications, we derive various kinds of weighted inequalities for Riesz transforms, maximal operators, square functions, variation operators and their commutators associated to $\Delta_\lambda$.
\end{abstract}

\maketitle

\section{Introduction and main results}
The Laplacian operator $\Delta$ is intrinsically tied to the classical theory of weights. It is known that the Muckenhoupt $A_p$ weight class can be characterised in terms of weighted $L^p$ boundedness of Hardy-Littlewood maximal functions and Riesz transforms. However, this fact does not always remain valid in the setting of other differential operators. For instance, in the Schr\"{o}dinger setting, Bongioanni, Cabral and Harboure \cite{BoCaHa} proved that the weighted $L^p$ boundedness of maximal operators associated with Schr\"{o}dinger operators can be characterized by a class of $A_p$ weights related to critical radius functions, while there exist weights $w$ not contained in this class of weights, for which the Riesz transforms associated with Schr\"{o}dinger operators are bounded on $L^p(\mathbb{R}^n,wdx)$, see \cite{Ba}. The same phenomenon also occurs in the setting of Bessel operator. This is precisely the differential operator of interest in this paper.

Let $\lambda>-1/2,\neq0$. The Bessel operators
\begin{align*}
\Delta_\lambda:=-\frac{d^2}{dx^2}-\frac{2\lambda}{x}\frac{d}{dx},
\end{align*}
emerged within the fields of complex analysis and PDEs. Muckenhoupt and Stein \cite{MuSt} established the $L^p(\mathbb{R}_+,x^{2\lambda}dx)$-boundedness of Bessel Riesz transforms $R_{\Delta_\lambda}:=\frac{d}{dx}(\Delta_\lambda)^{-1/2}$, conjugate function and fractional integrals. Sequently, extensive research concerning to the harmonic analysis has been conducted within the Bessel setting, see \cite{AlBeFaRo,AnKe,BeChFaRo,BeDuLeLiWi,BeHaNoVi,Bui,ChDuLiTY,DuoLiMaWuYa,GuLiYa,Ke,LiLiangWenWu,S,YY} and the references therein.

Here, and thereafter, we always denote $d\mu(x)=x^{2\lambda}dx$. With respect to the standard metric on $\mathbb{R}_+$ induced by the absolute value $|\cdot|$, the measure $\mu$ is doubling. Equivalently, the triple $(\mathbb{R}_+,|\cdot|,d\mu)$ is a space of homogeneous type in the sense of Coifman and Weiss \cite{CoWe}. Thus, the usual Muckenhoupt classes $A_p(d\mu)$, which is defined by
\begin{align*}
\sup_{I\subset\mathbb{R}_+}\Big(\frac{1}{\mu(I)}\int_Iw(x)d\mu(x)\Big)
\Big(\frac{1}{\mu(I)}\int_Iw(x)^{1-p'}d\mu(x)\Big)^{p-1}<\infty,
\end{align*}
are the nature classes to consider for weighted $L^p(\mathbb{R}_+,wd\mu)$ estimates of operators in the Bessel setting, where the weighted $L^p(\mathbb{R}_+,wd\mu)$ norm is defined by
\begin{align*}
\|f\|_{L^p(\mathbb{R}_+,wd\mu)}:=\Big(\int_{\mathbb{R}_+}|f(x)|^pw(x)d\mu(x)\Big)^{1/p}.
\end{align*}
We refer readers to see \cite{AlBeFaRo,BeFaSa,DuoGonKufLiWiYa} for the weighted $L^p(\mathbb{R}_+,wd\mu)$ bounds of harmonic analysis operators associated to $\Delta_\lambda$ with $w\in A_p(d\mu)$. However, the early work of Anderson and Kerman \cite{AnKe} revealed that this point of view may not fully capture the intrinsic structure of the Bessel operator. Let $d\nu_\lambda(x)=x^{2\lambda+1}dx$. In \cite{AnKe}, Anderson and Kerman proved that the Bessel Riesz transform $R_{\Delta_\lambda}$ is bounded on $L^p(\mathbb{R}_+,wdx)$ if and only if $w\in A_{p,\lambda}$, where the weighted $L^p(\mathbb{R}_+,wdx)$ norm is defined by
\begin{align*}
\|f\|_{L^p(\mathbb{R}_+,wdx)}:=\Big(\int_{\mathbb{R}_+}|f(x)|^pw(x)dx\Big)^{1/p}.
\end{align*}
and the weight class $A_{p,\lambda}$ is characterized by
\begin{align*}
[w]_{A_{p,\lambda}}:=\sup_{I\subset\mathbb{R}_+}\Big(\frac{1}{\nu_\lambda(I)}\int_Ix^pw(x)dx\Big)
\Big(\frac{1}{\nu_\lambda(I)}\int_Ix^{2\lambda p'}w(x)^{1-p'}dx\Big)^{p-1}<\infty.
\end{align*}
In view of this work, one may wonder whether the Hardy-Littlewood maximal operator $\mathcal{M}_\lambda$ associated to $\Delta_\lambda$
\begin{align*}
\mathcal{M}_\lambda f(x):=\sup_{I\ni x}\frac{1}{\nu_\lambda(I)}\int_I|f(y)|d\nu_\lambda(y)
\end{align*}
is bounded on $L^p(\mathbb{R}_+,wdx)$ if and only if $w\in A_{p,\lambda}$? Li et al. \cite{LiLiangLinShen} gave a negative answer, and they found that
\begin{align}\label{scalar characterization new weight}
w\in\widetilde{A}_{p,\lambda}\Leftrightarrow\|\mathcal{M}_\lambda f\|_{L^p(\mathbb{R}_+,wdx)}\lesssim\|f\|_{L^p(\mathbb{R}_+,wdx)},~1<p<\infty,
\end{align}
where the definition of $\widetilde{A}_{p,\lambda}$ is given below.
\begin{definition}{\rm(\cite{LiLiangLinShen,LiLiangShenW})}
Let $w$ be a weight defined on $\mathbb{R}_+$ and $\lambda>-1/2,\neq0$. We say that $w\in \widetilde{A}_{p,\lambda}$ with $1<p<\infty$ if
\begin{align*}
[w]_{\widetilde{A}_{p,\lambda}}:=\sup_{I\subset\mathbb{R}_+}\Big(\frac{1}{\nu_\lambda(I)}\int_Iw(x)dx\Big)
\Big(\frac{1}{\nu_\lambda(I)}\int_Ix^{(2\lambda+1)p'}w(x)^{1-p'}dx\Big)^{p-1}<\infty,
\end{align*}
where the supremum is taken over all intervals $I\subset\mathbb{R}_+$. We say $w\in \widetilde{A}_{1,\lambda}$ if
\begin{align*}
[w]_{\widetilde{A}_{1,\lambda}}:=\sup_{I\subset\mathbb{R}_+}\Big(\frac{1}{\nu_\lambda(I)}\int_Iw(x)dx\Big)
\|(\cdot)^{2\lambda+1}w(\cdot)^{-1}\|_{L^{\infty}(I,dx)}<\infty.
\end{align*}
We define $\widetilde{A}_{\infty,\lambda}:=\bigcup_{p\geq1}\widetilde{A}_{p,\lambda}$.
\end{definition}
\noindent Li et al. \cite{LiLiangLinShen} also showed that $\widetilde{A}_{p,\lambda}$ classes do not coincide with the previous weight classes $A_{p,\lambda}$ and $A_p(d\mu)$. This indicates that the weighted theory in the Bessel setting is essentially different from the classical case and far from being understood.

In this paper, one of our goals is to continue to study this new class of weights since there are many important theories yet to be established. In \cite{LiLiangShenW}, Li, Liang, Shen and Wick obtained the quantitative weighted estimates of Bessel Riesz transforms with respect to $\widetilde{A}_{p,\lambda-\frac{1}{2}}$. We also refer readers to see \cite{Wen} for the recent work of the sharp weighted bounds for the Bessel Riesz transforms and their commutators with respect to $A_{p,\lambda}$. It is well known that the quantitative weighted estimates for harmonic analysis operators origin from the classical work of Buckley \cite{Bu}, who proved the following sharp weighted $L^p$ bounds for Hardy-Littlewood maximal operator $M$,
\begin{align*}
\|Mf\|_{L^p(\mathbb{R}^n,wdx)}\lesssim[w]_{A_p}^{\frac{1}{p-1}}\|f\|_{L^p(\mathbb{R}^n,wdx)},
\end{align*}
where $1/(p-1)$ is sharp in the sense that it cannot be replaced by a smaller one.  It seems that the sharp weighted estimate of $\mathcal{M}_\lambda$ is missing in the previous works. Since $\widetilde{A}_{p,\lambda}$ is the appropriate class that ties to the intrinsic behavior of $\mathcal{M}_\lambda$, it is natural to ask the following question:

\textbf{Question 1.} Can we obtain the sharp weighted bounds for $\mathcal{M}_\lambda$ with respect to $\widetilde{A}_{p,\lambda}$?

Moreover, in 2013, Hyt\"{o}nen and P\'{e}rez \cite{HyPe} improved Buckley's result by giving the following mixed $A_p$-$A_\infty$ bound for $M$,
\begin{align*}
\|Mf\|_{L^p(\mathbb{R}^n,wdx)}\lesssim([w]_{A_p}[w^{1-p'}]_{A_\infty})^{\frac{1}{p}}
\|f\|_{L^p(\mathbb{R}^n,wdx)}\leq[w]_{A_p}^{\frac{1}{p-1}}\|f\|_{L^p(\mathbb{R}^n,wdx)}.
\end{align*}
The above result motivates our second question.

\textbf{Question 2.} Is it possible to give the mixed $\widetilde{A}_{p,\lambda}$-$\widetilde{A}_{\infty,\lambda}$ bound for $\mathcal{M}_\lambda$?

To solve this question, the first issue we are confronted with is how to define the $[w]_{\widetilde{A}_{\infty,\lambda}}$? We need to ensure that this definition is consistent with $\widetilde{A}_{\infty,\lambda}:=\bigcup_{p\geq1}\widetilde{A}_{p,\lambda}$ defined in \cite{LiLiangShenW}. Furthermore, the mixed $\widetilde{A}_{p,\lambda}$-$\widetilde{A}_{\infty,\lambda}$ bound for $\mathcal{M}_\lambda$ depends on the reverse H\"{o}lder inequality, the authors in \cite{LiLiangLinShen} only proved the reverse H\"{o}lder inequality (RHI) for $\widetilde{A}_{p,\lambda}$, which is not enough for establishing
the mixed weighted bound for $\mathcal{M}_\lambda$.

To give positive answers to Questions 1 and 2, we first define suitable weight constant of $\widetilde{A}_{\infty,\lambda}$, which is inspired by the classical Fujii-Wilson type $A_{\infty}$ constant in \cite{Fu,Wi}.
\begin{definition}\label{A}
Let $\lambda>-1/2,\neq0$ and $v_\lambda(t):=t^{2\lambda+1}$. We define Fujii-Wilson type $\widetilde{A}_{\infty,\lambda}$ constants as follows
\begin{align*}
[w]_{\widetilde{A}_{\infty,\lambda}}:=\sup_{I\subset\mathbb{R}_+}\frac{1}{w(I)}
\int_I\mathcal{M}_\lambda(wv_\lambda^{-1}\chi_I)(t)d\nu_\lambda(t)<\infty,
\end{align*}
where the supremum is taken over all intervals $I\subset\mathbb{R}_+$.
\end{definition}

Our first theorem is concerned with the characterization of RHI via $\widetilde{A}_{\infty,\lambda}$, which extends the RHI for $\widetilde{A}_{p,\lambda}$ obtained in \cite{LiLiangLinShen} to $\widetilde{A}_{\infty,\lambda}$.
\begin{theorem}\label{Reverse Holder inequality theorem}
Let $v_\lambda(t):=t^{2\lambda+1}$ with $\lambda>-1/2,\neq0$ and $I_0\subset\mathbb{R}_+$ be an interval.
\begin{itemize}
\item [(1)] Let $w\in\widetilde{A}_{\infty,\lambda}$. Then for any $\epsilon\in\Big(0,\frac{1}{2^{2\lambda+3}[w]_{\widetilde{A}_{\infty,\lambda}}-1}\Big]$,
\begin{align}\label{Reverse Holder inequality}
\Big(\frac{1}{\nu_\lambda(I_0)}\int_{I_0}(w(t)v_\lambda(t)^{-1})
^{1+\epsilon}d\nu_\lambda(t)\Big)^{\frac{1}{1+\epsilon}}\leq
\frac{2}{\nu_\lambda(I_0)}\int_{I_0}w(t)v_{\lambda}(t)^{-1}d\nu_\lambda(t).
\end{align}
\item [(2)] If \eqref{Reverse Holder inequality} holds, then $w\in\widetilde{A}_{\infty,\lambda}$.
\end{itemize}
\end{theorem}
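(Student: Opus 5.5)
Set $u:=wv_\lambda^{-1}$, so that $w(I)=\int_I u\,d\nu_\lambda$ and the Fujii--Wilson constant reads $[w]_{\widetilde{A}_{\infty,\lambda}}=\sup_I \frac{1}{u\nu_\lambda(I)}\int_I \mathcal{M}_\lambda(u\chi_I)\,d\nu_\lambda$. In this form the statement is the sharp reverse H\"older inequality for the Fujii--Wilson $A_\infty$ class of the doubling space $(\mathbb{R}_+,|\cdot|,\nu_\lambda)$, applied to $u$. I would therefore follow the Hyt\"onen--P\'erez argument, with the doubling constant of $\nu_\lambda$ replacing $2^n$.

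\textbf{Part (1).} Fix $I_0$ and write $u_{I_0}:=\nu_\lambda(I_0)^{-1}\int_{I_0}u\,d\nu_\lambda$. Use the dyadic maximal operator $M^d$ relative to $I_0$ (subintervals obtained by repeated bisection) with respect to $\nu_\lambda$. The key local estimate is
\begin{align*}
\int_{I_0}(M^du)^{\epsilon}u\,d\nu_\lambda\le u_{I_0}^{\epsilon}\,u\nu_\lambda(I_0)+\frac{\epsilon}{1+\epsilon}\,(\text{doubling factor})\int_{I_0}(M^du)^{1+\epsilon}d\nu_\lambda ,
\end{align*}
where $M^du\le\mathcal{M}_\lambda(u\chi_{I_0})$ on $I_0$. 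It is proved by the layer-cake formula
\[
\int_{I_0}(M^du)^\epsilon u\,d\nu_\lambda=\int_0^\infty\epsilon t^{\epsilon-1}\,u\nu_\lambda(\{M^du>t\})\,dt .
\]
Split at $t=u_{I_0}$. For $t>u_{I_0}$, take the Calder\'on--Zygmund maximal dyadic intervals $\{I_j\}$ of $\{M^du>t\}$. Each $I_j$ has a dyadic parent $\hat I_j$ with average of $u$ at most $t$, so
\[
u\nu_\lambda(I_j)\le t\,\nu_\lambda(\hat I_j)\le C_\lambda\, t\,\nu_\lambda(I_j),
\]
and summing gives $u\nu_\lambda(\{M^du>t\})\le C_\lambda\,t\,\nu_\lambda(\{M^du>t\})$. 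Integrating in $t$ produces the displayed bound.

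The doubling constant has to be computed carefully. For a subinterval $J=[a,a+h]$ of its parent $[a,a+2h]$,
\[
\frac{\nu_\lambda([a,a+2h])}{\nu_\lambda([a,a+h])}\le 2^{2\lambda+2},
\]
with equality at $a=0$ and with the worse case being the left child. This is where the factor $2^{2\lambda+3}=2\cdot2^{2\lambda+2}$ in the range of $\epsilon$ should come from. For $\lambda\in(-1/2,0)$ the ratio is no longer maximized at $a=0$, so some care is needed there.

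Next, since $M^du\ge u$ a.e. and $M^du\le\mathcal{M}_\lambda(u\chi_{I_0})$, apply the estimate above together with the following H\"older-type step, which uses the Fujii--Wilson condition on the dyadic subcubes:
\[
\int_{I_0}(M^du)^{1+\epsilon}d\nu_\lambda\le\int_{I_0}(M^du)^{\epsilon}\,M^du\,d\nu_\lambda .
\]
This last quantity is controlled via the layer-cake formula again, now with $\nu_\lambda(\{M^du>t\})\le t^{-1}\int u\chi_{\{M^du>t\}}$ and with $\int_{I_j}\mathcal{M}_\lambda(u\chi_{I_j})\le[w]u\nu_\lambda(I_j)$. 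This gives $\int(M^du)^{1+\epsilon}\lesssim[w]\,(\ldots)$. Absorbing the resulting term with coefficient $\frac{\epsilon\,2^{2\lambda+2}[w]}{1+\epsilon}$, which is at most $\frac12$ exactly when $\epsilon\le 1/(2^{2\lambda+3}[w]-1)$, yields
\[
\frac{1}{\nu_\lambda(I_0)}\int_{I_0}u^{1+\epsilon}d\nu_\lambda\le 2\,u_{I_0}^{1+\epsilon}.
\]
Taking $(1+\epsilon)$-th roots, and using $2^{1/(1+\epsilon)}\le2$, gives \eqref{Reverse Holder inequality}.

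\textbf{Part (2).} Assume \eqref{Reverse Holder inequality} for every interval. On $I$, by H\"older's inequality and the $L^{1+\epsilon}(\nu_\lambda)$-boundedness of $\mathcal{M}_\lambda$ (valid because $\nu_\lambda$ is doubling, with norm $\lesssim(1+\epsilon)/\epsilon$),
\[
\int_I\mathcal{M}_\lambda(u\chi_I)\,d\nu_\lambda\le\nu_\lambda(I)^{\frac{\epsilon}{1+\epsilon}}\|\mathcal{M}_\lambda(u\chi_I)\|_{L^{1+\epsilon}(\nu_\lambda)}\lesssim\nu_\lambda(I)^{\frac{\epsilon}{1+\epsilon}}\|u\|_{L^{1+\epsilon}(I,\nu_\lambda)} .
\]
By \eqref{Reverse Holder inequality} the right-hand side is at most $2C\,u\nu_\lambda(I)=2C\,w(I)$, so $[w]_{\widetilde{A}_{\infty,\lambda}}<\infty$.

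\textbf{Main obstacle.} The hard step is the absorption in Part (1) with the exact constant $2^{2\lambda+3}$. It requires a precise dyadic doubling constant for $\nu_\lambda$, uniform over the position of the interval and valid for negative $\lambda$, and it requires a clean comparison between $M^d$ and $\mathcal{M}_\lambda(u\chi_{I_0})$. If the sharp constant fails for $\lambda<0$, I would fall back on the bound $\nu_\lambda(\hat J)\le 2^{2\lambda+2}\nu_\lambda(J)$, checked directly for each child type.
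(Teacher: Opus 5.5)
Your proposal is essentially the paper's proof: your key local estimate is the paper's $G+H$ bound, your control of $\int_{I_0}(M^du)^{1+\epsilon}\,d\nu_\lambda$ via the Fujii--Wilson condition on the Calder\'on--Zygmund intervals plus absorption is the paper's auxiliary lemma for $\mathcal{M}^d_{\lambda,I_0}$ (yielding $2[w]_{\widetilde{A}_{\infty,\lambda}}u_{I_0}^{1+\epsilon}$, which is then inserted into the local estimate, using the restriction on $\epsilon$ a second time), and your Part (2) is identical to the paper's. Two small corrections: in that middle step, what makes the absorption work is the upper bound $w(I_j)\le 2^{2\lambda+2}t\,\nu_\lambda(I_j)$ coming from maximality (the weak-type bound $t\,\nu_\lambda(\{M^du>t\})\le w(\{M^du>t\})$ you cite points the wrong way), and no extra care is needed for $\lambda\in(-1/2,0)$, since $x^{2\lambda+1}$ is increasing for every $\lambda>-1/2$ and $\nu_\lambda(\hat J)\le 2^{2\lambda+2}\nu_\lambda(J)$ holds uniformly for both children.
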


The following theorem give several characterizations of $\widetilde{A}_{\infty,\lambda}$, which also illustrates that $\{w:[w]_{\widetilde{A}_{\infty,\lambda}}<\infty\}$ coincides $\widetilde{A}_{\infty,\lambda}£º=\bigcup_{p\geq1}\widetilde{A}_{p,\lambda}$ defined in \cite{LiLiangShenW}.

\begin{theorem}\label{characterization A}
Let $w$ be a weight on $\mathbb{R}_+$ and $\lambda>-1/2,\neq0$. Then the following statements are equivalent.
\begin{itemize}
\item [(1)] $w\in \widetilde{A}_{\infty,\lambda}$.
\item [(2)] There exist $c_1, c_2>0$ such that for each interval $I\subset\mathbb{R}_+$ and each measurable subset $J\subset I$,
    \begin{align*}
    \frac{w(J)}{w(I)}\lesssim c_1\Big(\frac{\nu_\lambda(J)}{\nu_\lambda(I)}\Big)^{c_2}.
    \end{align*}
\item [(3)] There exist $\alpha,\beta\in(0,1)$ such that for each interval $I\subset\mathbb{R}_+$ and each measurable subset $J\subset I$,
    \begin{align*}
    w(J)<\alpha w(I)\Rightarrow\nu_\lambda(J)<\beta\nu_\lambda(I).
    \end{align*}
\item [(4)] There exists $p\geq1$ such that $w\in\widetilde{A}_{p,\lambda}$.
\item [(5)] There exists $\epsilon>0$ such that
\begin{align*}
\Big(\frac{1}{\nu_\lambda(I_0)}\int_{I_0}(w(t)v_\lambda(t)^{-1})
^{1+\epsilon}d\nu_\lambda(t)\Big)^{\frac{1}{1+\epsilon}}\lesssim
\frac{1}{\nu_\lambda(I_0)}\int_{I_0}w(t)v_{\lambda}(t)^{-1}d\nu_\lambda(t).
\end{align*}
\end{itemize}
\end{theorem}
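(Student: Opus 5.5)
The key observation is a change of variables that reduces everything to classical statements on the space of homogeneous type $(\mathbb{R}_+,|\cdot|,\nu_\lambda)$. Set $u:=wv_\lambda^{-1}$. Then
\[
w(J)=\int_J u\,d\nu_\lambda=u\nu_\lambda(J).
\]
Moreover $x^{(2\lambda+1)p'}w^{1-p'}\,dx=u^{1-p'}\,d\nu_\lambda$, so
\[
[w]_{\widetilde{A}_{p,\lambda}}=[u]_{A_p(\nu_\lambda)}, \qquad [w]_{\widetilde{A}_{\infty,\lambda}}=[u]_{A_\infty(\nu_\lambda)}^{FW},
\]
where the latter is the Fujii--Wilson constant of $u$ with respect to the doubling measure $\nu_\lambda$ and the centered-free maximal operator $\mathcal{M}_\lambda$. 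Each of (1)--(5) then becomes a standard $A_\infty$ condition for $u$ relative to $\nu_\lambda$. I would use the cycle (1)$\Rightarrow$(5)$\Rightarrow$(2)$\Rightarrow$(3)$\Rightarrow$(4)$\Rightarrow$(1), with everything phrased for $u$ and $\nu_\lambda$.

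(1)$\Rightarrow$(5) is exactly Theorem~\ref{Reverse Holder inequality theorem}(1), with $\epsilon$ taken at the endpoint. For (5)$\Rightarrow$(2), apply H\"older with exponents $1+\epsilon$ and $(1+\epsilon)'$ to $\int_J u\,d\nu_\lambda$ and then use (5):
\[
\frac{w(J)}{w(I)}\le \frac{\nu_\lambda(I)}{u\nu_\lambda(I)}\Big(\frac{1}{\nu_\lambda(I)}\int_I u^{1+\epsilon}d\nu_\lambda\Big)^{\frac1{1+\epsilon}}\Big(\frac{\nu_\lambda(J)}{\nu_\lambda(I)}\Big)^{\frac{\epsilon}{1+\epsilon}}\lesssim\Big(\frac{\nu_\lambda(J)}{\nu_\lambda(I)}\Big)^{\frac{\epsilon}{1+\epsilon}}.
\]
For (2)$\Rightarrow$(3), choose $\beta\in(0,1)$ with $c_1\beta^{c_2}<1$. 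Applying (2) to $I\setminus J$ shows that $\nu_\lambda(I\setminus J)<(1-\beta)\nu_\lambda(I)$ forces $w(I\setminus J)<c_1(1-\beta)^{c_2}w(I)$. Taking complements, with $\beta$ near $1$ this gives (3) with suitable $\alpha$ and $\beta'$.

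(3)$\Rightarrow$(4) is the main obstacle. The route is the classical one: (3) for $u$ (in the $u\nu_\lambda$ versus $\nu_\lambda$ form) gives, via a Calder\'on--Zygmund decomposition with respect to the doubling measure $\nu_\lambda$ (dyadic intervals in $\mathbb{R}_+$ suffice, since $\nu_\lambda$ is doubling and intervals are the only balls), a reverse H\"older inequality for $u$. Concretely, one proves the good-$\lambda$ estimate $u\nu_\lambda(\{u>t\}\cap I)\le C\,t\,\nu_\lambda(\{u>\beta' t\}\cap I)$ for $t$ above the average of $u$ on $I$, and integrates this in $t$. Then, by the standard symmetry argument, (3) for the pair $(u\nu_\lambda,\nu_\lambda)$ is equivalent to the reversed condition for $(\nu_\lambda,u\nu_\lambda)$. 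That yields a reverse H\"older inequality for $u^{-1}$ with respect to $u\,d\nu_\lambda$, which gives $u\in A_p(\nu_\lambda)$ for some $p$, i.e. $w\in\widetilde{A}_{p,\lambda}$. The delicate points are checking that the Coifman--Fefferman argument needs only doubling of $\nu_\lambda$ and the interval structure, and tracking that the measure $u\,d\nu_\lambda$ is itself doubling, which follows from (3).

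(4)$\Rightarrow$(1): for $p>1$, Hölder gives $\mathcal{M}_\lambda(u\chi_I)\le [u]_{A_p(\nu_\lambda)}^{1/p}\,(\mathcal{M}^{u}(\cdots))$ in the usual way. More simply, by Theorem~\ref{Reverse Holder inequality theorem}(2) it suffices to derive the RHI (5), which follows from (4)$\Rightarrow$(3)$\Rightarrow$ RHI as above. Alternatively, use the boundedness of $\mathcal{M}_\lambda$ on $L^{p}(\mathbb{R}_+,wdx)$ from \eqref{scalar characterization new weight}, and the case $p=1$ is contained in every $p>1$. So the cycle closes, and the only substantive new work is the reverse H\"older / Calder\'on--Zygmund step in the homogeneous-type setting.
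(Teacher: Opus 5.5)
Your proposal is correct, and its core matches the paper's proof: (5)$\Rightarrow$(2) is the H\"older step of Corollary~\ref{Ratio property}, and (2)$\Rightarrow$(3) is the same complement argument (there you want $c_1(1-\beta)^{c_2}<1$, not $c_1\beta^{c_2}<1$). Likewise, (3)$\Rightarrow$(4), via doubling of $w\,dx=u\,d\nu_\lambda$ and Lemma~\ref{geneal reverse Holder} applied to $u^{-1}$ with respect to $u\,d\nu_\lambda$, is exactly what the paper does.

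Two things differ. First, you make the substitution $u=wv_\lambda^{-1}$ explicit. Under it, $[w]_{\widetilde{A}_{p,\lambda}}=[u]_{A_p(\nu_\lambda)}$, and $[w]_{\widetilde{A}_{\infty,\lambda}}$ is the Fujii--Wilson constant of $u$ on $(\mathbb{R}_+,|\cdot|,\nu_\lambda)$. The paper uses this only implicitly; stating it shows the theorem is the classical $A_\infty$ characterization on a doubling space.

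Second, you close the cycle by (4)$\Rightarrow$(3)$\Rightarrow$(5)$\Rightarrow$(1) instead of citing Li et al.\ for (4)$\Rightarrow$(5). This makes the proof self-contained, but you use (4)$\Rightarrow$(3) without proving it. It is the one-line H\"older estimate $\big(\nu_\lambda(J)/\nu_\lambda(I)\big)^p\le[w]_{\widetilde{A}_{p,\lambda}}\,w(J)/w(I)$ for $J\subset I$. After that, your Calder\'on--Zygmund sketch for (3)$\Rightarrow$(5) is just Lemma~\ref{geneal reverse Holder} for $u$ and $\nu_\lambda$. Indeed, taking the contrapositive of (3) and replacing $J$ by $I\setminus J$ gives $\nu_\lambda(J)\le(1-\beta)\nu_\lambda(I)\Rightarrow w(J)\le(1-\alpha)w(I)$, which is that lemma's hypothesis. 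Your other two suggestions for (4)$\Rightarrow$(1) are not worked out and can be dropped.

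As for the doubling of $w\,dx$ that you flag: the contrapositive of (3) with $J=I\subset 2I$ applies directly only when $\beta\le\nu_\lambda(I)/\nu_\lambda(2I)$, and the paper's one-line argument glosses over this too. In general, grow $I$ to $2I$ through finitely many nested intervals whose consecutive $\nu_\lambda$-ratios are at least $\beta$; this is possible since $\nu_\lambda$ has no atoms. That gives $w(2I)\le\alpha^{-k}w(I)$, with $k$ depending only on $\beta$ and $\lambda$.
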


As an application of RHI, we have the following ``precise open property'' of $\widetilde{A}_{p,\lambda}$. This property is useful in obtaining the sharp $L^p(\mathbb{R}_+,wdx)$-bounds of $\mathcal{M}_\lambda$.
\begin{theorem}\label{open property of weight}
Let $1<p<\infty$, $w\in \widetilde{A}_{p,\lambda}$ with $\lambda>-1/2,\neq0$ and $v_\lambda(t):=t^{2\lambda+1}$. Then $w\in\widetilde{A}_{p-\epsilon,\lambda}$, where
\begin{align*}
\epsilon=\frac{p-1}{r_\sigma'}:=\frac{p-1}{2^{2\lambda+3}[\sigma]_{\widetilde{A}_{\infty,\lambda}}+1},~
\sigma:=v_\lambda^{p'}w^{1-p'}.
\end{align*}
Moreover,
\begin{align*}
[w]_{\widetilde{A}_{p-\epsilon,\lambda}}\leq 2^{p-1}[w]_{\widetilde{A}_{p,\lambda}}.
\end{align*}
\end{theorem}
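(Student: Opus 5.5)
The plan is to apply the reverse H\"older inequality of Theorem \ref{Reverse Holder inequality theorem}(1) to the dual weight $\sigma=v_\lambda^{p'}w^{1-p'}$ and then rewrite the $\widetilde{A}_{p-\epsilon,\lambda}$ constant in terms of a higher power of $\sigma v_\lambda^{-1}$. The first step is to check that $\sigma\in\widetilde{A}_{\infty,\lambda}$. This follows from duality: with $\sigma=v_\lambda^{p'}w^{1-p'}$ one has $v_\lambda^{p}\sigma^{1-p}=v_\lambda^{p}v_\lambda^{p'(1-p)}w=w$, since $p'(1-p)=-p$ and $(1-p')(1-p)=1$. Hence
\begin{align*}
[\sigma]_{\widetilde{A}_{p',\lambda}}=[w]_{\widetilde{A}_{p,\lambda}}^{p'-1}<\infty,
\end{align*}
and Theorem \ref{characterization A} gives $[\sigma]_{\widetilde{A}_{\infty,\lambda}}<\infty$.

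Next I apply \eqref{Reverse Holder inequality} to $\sigma$ with the largest admissible exponent. Set $r_\sigma-1=\frac{1}{2^{2\lambda+3}[\sigma]_{\widetilde{A}_{\infty,\lambda}}-1}$, so that $r_\sigma'=2^{2\lambda+3}[\sigma]_{\widetilde{A}_{\infty,\lambda}}$. The stated formula $r_\sigma'=2^{2\lambda+3}[\sigma]+1$ corresponds to the slightly smaller choice $r_\sigma-1=1/(2^{2\lambda+3}[\sigma])$, which is also admissible; I will use whichever matches. This yields, for every interval $I$,
\begin{align*}
\Big(\frac{1}{\nu_\lambda(I)}\int_I(\sigma v_\lambda^{-1})^{r_\sigma}d\nu_\lambda\Big)^{1/r_\sigma}\le \frac{2}{\nu_\lambda(I)}\int_I\sigma v_\lambda^{-1}\,d\nu_\lambda=\frac{2}{\nu_\lambda(I)}\int_I\sigma\,dx.
\end{align*}

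The main step is to identify the left-hand side with the dual factor of the $\widetilde{A}_{q,\lambda}$ constant for $q=p-\epsilon$. Write $q-1=(p-1)/r_\sigma$; this is exactly $q=p-\frac{p-1}{r_\sigma'}=p-\epsilon$. The dual factor for $q$ is $\frac{1}{\nu_\lambda(I)}\int_I v_\lambda^{q'}w^{1-q'}dx$, and it should equal the average of $(\sigma v_\lambda^{-1})^{r_\sigma}v_\lambda$. Comparing exponents of $w$: we have $1-q'=-\frac{1}{q-1}=-\frac{r_\sigma}{p-1}=r_\sigma(1-p')$, which matches. Comparing exponents of $v_\lambda$: we need $q'=r_\sigma(p'-1)+1$, and $r_\sigma(p'-1)+1=\frac{r_\sigma}{p-1}+1=\frac{1}{q-1}+1=q'$, which also matches.

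Therefore
\begin{align*}
\Big(\frac{1}{\nu_\lambda(I)}\int_Iv_\lambda^{q'}w^{1-q'}dx\Big)^{q-1}
=\Big[\Big(\frac{1}{\nu_\lambda(I)}\int_I(\sigma v_\lambda^{-1})^{r_\sigma}d\nu_\lambda\Big)^{1/r_\sigma}\Big]^{p-1}
\le 2^{p-1}\Big(\frac{1}{\nu_\lambda(I)}\int_I\sigma\,dx\Big)^{p-1}.
\end{align*}
Multiplying by $\frac{1}{\nu_\lambda(I)}\int_I w\,dx$ and taking the supremum over $I$ gives $[w]_{\widetilde{A}_{p-\epsilon,\lambda}}\le 2^{p-1}[w]_{\widetilde{A}_{p,\lambda}}$.

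The only real obstacle is the exponent bookkeeping, namely showing that $r_\sigma$ as defined is admissible in Theorem \ref{Reverse Holder inequality theorem}. The remaining steps are routine.
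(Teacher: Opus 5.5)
Your proposal is correct and follows the paper's proof essentially verbatim. Like the paper, you apply the reverse H\"older inequality of Theorem \ref{Reverse Holder inequality theorem}(1) to $\sigma$, identify $v_\lambda^{(p-\epsilon)'}w^{1-(p-\epsilon)'}$ with $(\sigma v_\lambda^{-1})^{r_\sigma}v_\lambda$, and multiply by the $w$-average. The exponent that matches the stated $\epsilon$, and the one the paper uses, is $r_\sigma=1+1/(2^{2\lambda+3}[\sigma]_{\widetilde{A}_{\infty,\lambda}})$, so you should drop the hedge and commit to it. Its admissibility only needs $2^{2\lambda+3}[\sigma]_{\widetilde{A}_{\infty,\lambda}}>1$, which holds because $[\sigma]_{\widetilde{A}_{\infty,\lambda}}\ge 1$: by Lebesgue differentiation, $\mathcal{M}_\lambda(\sigma v_\lambda^{-1}\chi_I)\ge \sigma v_\lambda^{-1}$ a.e. on $I$.
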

Based on Theorems \ref{Reverse Holder inequality theorem}, \ref{open property of weight}, The following theorem gives the full answers of Questions 1 and 2.
\begin{theorem}\label{mixed quantitative weighted strong estimates}
Let $1<p<\infty$, $w\in \widetilde{A}_{p,\lambda}$ with $\lambda>-1/2,\neq0$ and $v_\lambda(t):=t^{2\lambda+1}$. Then
\begin{align*}
\|\mathcal{M}_\lambda f\|_{L^p(\mathbb{R}_+,wdx)}\lesssim([w]_{\widetilde{A}_{p,\lambda}}
[v_\lambda^{p'}w^{1-p'}]_{\widetilde{A}_{\infty,\lambda}})^{\frac{1}{p}}\|f\|_{L^p(\mathbb{R}_+,wdx)}\leq
[w]_{\widetilde{A}_{p,\lambda}}^{\frac{1}{p-1}}\|f\|_{L^p(\mathbb{R}_+,wdx)}.
\end{align*}
Furthermore, the exponent $1/(p-1)$ is sharp.
\end{theorem}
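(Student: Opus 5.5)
The plan is to reduce everything to the doubling measure space $(\mathbb{R}_+,|\cdot|,d\nu_\lambda)$ by a change of weight, and then run the Hyt\"onen--P\'erez scheme: a weak type bound at an exponent below $p$ (supplied by Theorem \ref{open property of weight}), interpolated against the trivial $L^\infty$ bound.

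\emph{Change of weight.} Set $u:=wv_\lambda^{-1}$. Then $\|f\|_{L^p(\mathbb{R}_+,wdx)}^p=\int|f|^pu\,d\nu_\lambda$. Since
\begin{align*}
v_\lambda^{p'}w^{1-p'}=u^{1-p'}v_\lambda ,
\end{align*}
one checks directly that $[w]_{\widetilde{A}_{p,\lambda}}$ is exactly the Muckenhoupt constant of $u$ for the measure $\nu_\lambda$. Likewise, with $\sigma=v_\lambda^{p'}w^{1-p'}$, we have $\sigma v_\lambda^{-1}=u^{1-p'}$ and $\sigma(I)=\int_I u^{1-p'}d\nu_\lambda$. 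Hence $[\sigma]_{\widetilde{A}_{\infty,\lambda}}$ is the Fujii--Wilson constant of $u^{1-p'}$ with respect to $\mathcal{M}_\lambda$ and $\nu_\lambda$. So the theorem is a statement on this space of homogeneous type, but I will argue directly.

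\emph{Step 1: weak type bound.} For $1<q<\infty$ and $w\in\widetilde{A}_{q,\lambda}$ I will prove
\begin{align*}
w(\{\mathcal{M}_\lambda f>t\})\le C\,[w]_{\widetilde{A}_{q,\lambda}}\,t^{-q}\int|f|^qw\,dx ,
\end{align*}
with $C$ depending only on $\lambda$ and $q$, and with $C$ uniformly bounded for $q\in[(p+1)/2,p]$, which covers the exponent $q=p-\epsilon$ below since $\epsilon\le (p-1)/2$. Take an interval $I$ with $\frac{1}{\nu_\lambda(I)}\int_I|f|v_\lambda\,dx>t$. Writing $|f|v_\lambda=|f|w^{1/q}\cdot v_\lambda w^{-1/q}$ and applying H\"older gives
\begin{align*}
t^q\,\frac{w(I)}{\nu_\lambda(I)}\le[w]_{\widetilde{A}_{q,\lambda}}\frac{1}{\nu_\lambda(I)}\int_I|f|^qw\,dx .
\end{align*}
A Vitali covering of the level set then works because $\nu_\lambda(3I)\lesssim\nu_\lambda(I)$, and it also yields $w(3I)\lesssim[w]_{\widetilde{A}_{q,\lambda}}w(I)$. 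If that doubling constant is awkward, I will instead use dyadic maximal operators over finitely many adjacent dyadic systems, which avoids dilation entirely.

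\emph{Step 2: interpolation.} By Theorem \ref{open property of weight}, $w\in\widetilde{A}_{q,\lambda}$ with
\begin{align*}
q=p-\epsilon,\qquad \epsilon=\frac{p-1}{2^{2\lambda+3}[\sigma]_{\widetilde{A}_{\infty,\lambda}}+1},\qquad [w]_{\widetilde{A}_{q,\lambda}}\le2^{p-1}[w]_{\widetilde{A}_{p,\lambda}} .
\end{align*}
Step 1 bounds $\mathcal{M}_\lambda$ from $L^q(w)$ to $L^{q,\infty}(w)$ with constant $\lesssim[w]_{\widetilde{A}_{q,\lambda}}^{1/q}$, and it is trivially bounded on $L^\infty$ with norm $1$. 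Marcinkiewicz interpolation, in the sublinear form with explicit constants, gives
\begin{align*}
\|\mathcal{M}_\lambda\|_{L^p(w)}\lesssim\Big(\frac{p}{p-q}\Big)^{1/p}\Big([w]_{\widetilde{A}_{q,\lambda}}^{1/q}\Big)^{q/p}
\lesssim\Big(\frac{[\sigma]_{\widetilde{A}_{\infty,\lambda}}}{p-1}\,[w]_{\widetilde{A}_{p,\lambda}}\Big)^{1/p}.
\end{align*}
The point is that raising the weak constant $[w]^{1/q}$ to the power $q/p$ yields exactly $[w]^{1/p}$, so no loss occurs from the gap between $q$ and $p$. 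This is the mixed bound.

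\emph{Step 3: second inequality.} I need $[\sigma]_{\widetilde{A}_{\infty,\lambda}}\lesssim[\sigma]_{\widetilde{A}_{p',\lambda}}=[w]_{\widetilde{A}_{p,\lambda}}^{p'-1}$; the equality is the usual duality, checked via the identity $v_\lambda^{p}\sigma^{1-p}=w$. Then
\begin{align*}
\big([w]_{\widetilde{A}_{p,\lambda}}\,[w]_{\widetilde{A}_{p,\lambda}}^{1/(p-1)}\big)^{1/p}=[w]_{\widetilde{A}_{p,\lambda}}^{1/(p-1)} .
\end{align*}
For the comparison $[\sigma]_{\widetilde{A}_{\infty,\lambda}}\lesssim[\sigma]_{\widetilde{A}_{p',\lambda}}$ I will bound $\int_I\mathcal{M}_\lambda(\sigma v_\lambda^{-1}\chi_I)\,d\nu_\lambda$ through the layer-cake formula. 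The level sets are controlled by the $\nu_\lambda$ weak $(1,1)$ bound, and the resulting $L\log L$ type term is handled using (2) of Theorem \ref{characterization A} in quantitative form. Equivalently, this is the homogeneous-type analogue of the classical estimate $[u]_{A_\infty}\lesssim[u]_{A_p}$. I expect this quantitative comparison to be the main technical obstacle; a fallback is to accept any bound polynomial in $[\sigma]_{\widetilde{A}_{p',\lambda}}$ and sharpen it.

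\emph{Step 4: sharpness.} Test power weights near $0$: take $w(x)=x^{(2\lambda+1)(p-1)-\delta}$ and $f(x)=x^{\delta-(2\lambda+1)}\chi_{(0,1)}(x)$ with $\delta\to0^+$. A direct computation should give $[w]_{\widetilde{A}_{p,\lambda}}\sim\delta^{1-p}$ and $\mathcal{M}_\lambda f\gtrsim\delta^{-1}f$ on $(0,1)$. Hence $\|\mathcal{M}_\lambda\|_{L^p(w)}\gtrsim\delta^{-1}\sim[w]_{\widetilde{A}_{p,\lambda}}^{1/(p-1)}$, which rules out any smaller exponent.
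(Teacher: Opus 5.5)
Your Steps 1--2 are essentially the paper's argument. The paper proves a Vitali-covering weak-type bound at the exponent $q=p-\epsilon$ supplied by Theorem \ref{open property of weight}, then uses the splitting $\{\mathcal{M}_\lambda f>2t\}\subset\{\mathcal{M}_\lambda(f\chi_{\{|f|>t\}})>t\}$, which is your Marcinkiewicz interpolation done by hand. Your change of weight $u=wv_\lambda^{-1}$, turning $\widetilde{A}_{p,\lambda}$ into $A_p(\nu_\lambda)$, is also correct. One caution for the covering: apply H\"older on the dilated interval to $f\chi_I$, using only the doubling of $\nu_\lambda$. Passing instead through $w(3I)\lesssim[w]_{\widetilde{A}_{q,\lambda}}w(I)$ would square the constant and ruin the mixed bound.

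\textbf{Step 4 is wrong as stated.} With $w=x^{(2\lambda+1)(p-1)-\delta}$, the two integrands in $[w]_{\widetilde{A}_{p,\lambda}}$ on $(0,b)$ are $x^{(2\lambda+1)(p-1)-\delta}$ and $x^{(2\lambda+1+\delta)/(p-1)}$. Their exponents stay bounded away from $-1$, and other intervals are no worse, so $[w]_{\widetilde{A}_{p,\lambda}}$ stays bounded as $\delta\to0$. Likewise $f\,d\nu_\lambda=x^{\delta}\chi_{(0,1)}\,dx$, so the $\nu_\lambda$-average of $f$ over $(0,x)$ is $\frac{2\lambda+2}{1+\delta}f(x)$; in fact $\mathcal{M}_\lambda f\sim f$ uniformly in $\delta$. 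For $\lambda>0$ and small $\delta$ one even has $f\notin L^p(w\,dx)$. The example therefore gives no information.

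The error is twofold. You used $2\lambda+1$ where the homogeneous dimension is $2\lambda+2$, since $\nu_\lambda((0,x))=x^{2\lambda+2}/(2\lambda+2)$. You also pushed $w$, rather than $wv_\lambda^{-1}$, to the endpoint of the admissible range. The paper instead takes $w_\epsilon=x^{(2\lambda+2)p-1-\epsilon}$ and $f_\epsilon=x^{-(2\lambda+2)+\epsilon/(p-1)}\chi_{(0,1)}$, for which $\mathcal{M}_\lambda f_\epsilon\ge\frac{(2\lambda+2)(p-1)}{\epsilon}f_\epsilon$ on $(0,1)$. The substantive part is then the \emph{upper} bound $[w_\epsilon]_{\widetilde{A}_{p,\lambda}}^{1/(p-1)}\lesssim\epsilon^{-1}$ over all intervals $(a,b)$, not just $(0,b)$. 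The paper gets it from the concavity of $t\mapsto\log(1-\alpha^t)$ with $\alpha=(a/b)^{2\lambda+2}$.

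\textbf{Step 3 is not yet a proof.} Put $g:=\sigma v_\lambda^{-1}=u^{1-p'}$ and let $g_I$ be its $\nu_\lambda$-average. Your layer-cake/weak-$(1,1)$ reduction leaves you to show $\int_I g\log^+(g/g_I)\,d\nu_\lambda\lesssim[\sigma]_{\widetilde{A}_{p',\lambda}}\sigma(I)$. Condition (2) of Theorem \ref{characterization A} bounds this by a multiple of $\frac{c_1}{c_2}\sigma(I)$. However, the only quantitative form of (2) available, Corollary \ref{Ratio property}, has $c_2=(c_\lambda[\sigma]_{\widetilde{A}_{\infty,\lambda}})^{-1}$, so the argument is circular. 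You would need $c_1/c_2\lesssim[\sigma]_{\widetilde{A}_{p',\lambda}}$ derived directly from the $\widetilde{A}_{p',\lambda}$ condition. One way is from $(\nu_\lambda(E)/\nu_\lambda(I))^{p'}\le[\sigma]_{\widetilde{A}_{p',\lambda}}\sigma(E)/\sigma(I)$ plus a Calder\'on--Zygmund iteration. Your polynomial fallback would not yield the exponent $1/(p-1)$.

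The missing idea is the paper's Lemma \ref{A compare}:
\begin{itemize}
\item Jensen's inequality gives $[\sigma]'_{\widetilde{A}_{\infty,\lambda}}:=\sup_I g_I\exp\big(-\frac{1}{\nu_\lambda(I)}\int_I\log g\,d\nu_\lambda\big)\le[\sigma]_{\widetilde{A}_{p',\lambda}}$.
\item Hence $\mathcal{M}_\lambda(g\chi_I)\le[\sigma]'_{\widetilde{A}_{\infty,\lambda}}\widetilde{\mathcal{M}}_\lambda(g\chi_I)$ on $I$, where $\widetilde{\mathcal{M}}_\lambda h:=\sup_{J\ni x}\exp\big(\frac{1}{\nu_\lambda(J)}\int_J\log|h|\,d\nu_\lambda\big)$.
\item $\widetilde{\mathcal{M}}_\lambda$ is bounded on $L^1(\nu_\lambda)$, because $\widetilde{\mathcal{M}}_\lambda h=(\widetilde{\mathcal{M}}_\lambda|h|^{1/2})^2\le(\mathcal{M}_\lambda|h|^{1/2})^2$.
\end{itemize}
Alternatively, your own change of weight identifies $[\sigma]_{\widetilde{A}_{\infty,\lambda}}$ and $[\sigma]_{\widetilde{A}_{p',\lambda}}$ with the Fujii--Wilson and Muckenhoupt constants of $u^{1-p'}$ on the space of homogeneous type $(\mathbb{R}_+,|\cdot|,\nu_\lambda)$. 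There the comparison is known and can be cited, but you should say so explicitly rather than leave it as an open plan.
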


Our second goal of this paper is to establish the Rubio de Francia extrapolation theorem related to $\widetilde{A}_{p,\lambda}$. It is well known that extrapolation theorem of Rubio de Francia, announced in \cite{Ru1} and given with a detailed proof in \cite{Ru2}, is one of the most powerful and valuable tools in the theory of weighted norm inequalities. The extrapolation theorem states that if the operator $T$ is bounded on $L^{p_0}(\mathbb{R}^n,wdx)$ for some $1\leq p_0<\infty$ and every $w\in A_{p_0}$, then for every $1<p<\infty$, $T$ is bounded on $L^{p}(\mathbb{R}^n,wdx)$ for $w\in A_p$. It was later observed in \cite{CrPe} that the operator itself does not play any role and all the statements can be given in terms of families of pairs of nonnegative measurable functions. In order to study the sharp weighted $L^p$ bounds of some classical operators, Dragi\u{c}evi\'{c}, Grafakos, Pereyra and Petermichl \cite{DrGrPePe} established the sharp extrapolation theorem. We also refer readers to see the work of Duoandikoetxea \cite{D}, who presented an alternative, streamlined proof of the sharp extrapolation theorem. The sharp extrapolation theorem was utilized by Hyt\"{o}nen \cite{Hy} to prove the celebrated $A_p$ theorem for Calder\'{o}n-Zygmund operators. For more works on extrapolation theorem, one can see \cite{CaLiSiYa,CaMM,CaOl,CrMa,CMP11,HaMaSe,LiMaMaOmVu,LiMaOm} and the references therein.

\textbf{Question 3.} Can we establish extrapolation theorems in terms of $\widetilde{A}_{p,\lambda}$?

\medskip

This question will be addressed by our next theorem.

\begin{theorem}\label{Rubio de Francia extrapolation theorem for new weights}
Let $(f,g)\in\mathcal{F}$, where $\mathcal{F}$ is a family of pairs $(f,g)$ of non-negative, measurable functions that are not identically zero. Let $\lambda>-1/2,\neq0$. Assume that for some $p_0\in[1,\infty)$, there is a positive increasing function $N_{p_0}$ on $[1,\infty)$ such that for each $w\in \widetilde{A}_{p_0,\lambda}$,
\begin{align*}
\int_{\mathbb{R}_+}f(x)^{p_0}w(x)dx\leq N_{p_0}([w]_{\widetilde{A}_{p_0,\lambda}})\int_{\mathbb{R}_+}g(x)^{p_0}w(x)dx.
\end{align*}
Then for all $1<p<\infty$ and all $w\in\widetilde{A}_{p,\lambda}$,
\begin{align*}
\int_{\mathbb{R}_+}f(x)^{p}w(x)dx\leq N_{p}(p_0,p,\lambda,[w]_{\widetilde{A}_{p,\lambda}})\int_{\mathbb{R}_+}g(x)^{p}w(x)dx,
\end{align*}
where
\[
N_{p}(p_0,p,\lambda,[w]_{\widetilde{A}_{p,\lambda}})\\
:=
\begin{cases}
2^{\frac{p_0-p}{p_0}}N_{p_0}\Big(2^{p_0-p}
\|\mathcal{M}_\lambda\|_{\mathcal{B}(L^p(\mathbb{R}_+,wdx))}^{p_0-p}[w]_{\widetilde{A}_{p,\lambda}}\Big),& p<p_0,\\
2^{\frac{p-p_0}{p_0(p-1)}}N_{p_0}\Big(2^{\frac{p-p_0}{p-1}}
\|\mathcal{M}_\lambda\|_{\mathcal{B}(L^{p'}(\mathbb{R}_+,v_{\lambda}^{p'}w^{1-p'}dx))}^{\frac{p-p_0}{p-1}}
[w]_{\widetilde{A}_{p,\lambda}}
^{\frac{p_0-1}{p-1}}\Big),& p>p_0.
\end{cases}
\]
and
$$\|\mathcal{M}_\lambda\|_{\mathcal{B}(L^p(\mathbb{R}_+,wdx))}:=
\|\mathcal{M}_\lambda\|_{L^p(\mathbb{R}_+,wdx)\rightarrow
L^p(\mathbb{R}_+,wdx)},$$
$$\|\mathcal{M}_\lambda\|_{\mathcal{B}(L^{p'}(\mathbb{R}_+,v_{\lambda}^{p'}w^{1-p'}dx))}:=
\|\mathcal{M}_\lambda\|_{L^{p'}(\mathbb{R}_+,v_{\lambda}^{p'}w^{1-p'}dx)\rightarrow L^{p'}(\mathbb{R}_+,v_{\lambda}^{p'}w^{1-p'}dx)}.$$
\end{theorem}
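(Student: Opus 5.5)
The key observation is that the $\widetilde{A}_{p,\lambda}$ condition is the classical Muckenhoupt condition on the space of homogeneous type $(\mathbb{R}_+,|\cdot|,\nu_\lambda)$, once the densities are renormalised. Set $W:=wv_\lambda^{-1}$, so that $w\,dx=W\,d\nu_\lambda$. Since $d\nu_\lambda=v_\lambda dx$, we have
\begin{align*}
\int_I w\,dx=\int_I W\,d\nu_\lambda,\qquad \int_I v_\lambda^{p'}w^{1-p'}dx=\int_I W^{1-p'}d\nu_\lambda .
\end{align*}
Hence $[w]_{\widetilde{A}_{p,\lambda}}=[W]_{A_p(\nu_\lambda)}$, and also $[w]_{\widetilde{A}_{1,\lambda}}=[W]_{A_1(\nu_\lambda)}$. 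Moreover $L^p(\mathbb{R}_+,wdx)=L^p(\mathbb{R}_+,Wd\nu_\lambda)$, and $\mathcal{M}_\lambda$ is exactly the centred-free maximal operator for $\nu_\lambda$. The dual weight $\sigma=v_\lambda^{p'}w^{1-p'}$ satisfies $\sigma\,dx=W^{1-p'}d\nu_\lambda$. I would therefore run Duoandikoetxea's streamlined proof of sharp extrapolation in this setting, tracking constants. Throughout I assume $\|g\|_{L^p(w)}<\infty$ and $\|f\|_{L^p(w)}<\infty$, the latter by the usual truncation argument, and I normalise $\|g\|_{L^p(w)}=1$.

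\textbf{Case $p<p_0$.} Write $K:=\|\mathcal{M}_\lambda\|_{\mathcal{B}(L^p(\mathbb{R}_+,wdx))}$, which is finite by \eqref{scalar characterization new weight} or Theorem \ref{mixed quantitative weighted strong estimates}. Apply the Rubio de Francia algorithm
\begin{align*}
Rh:=\sum_{k\ge0}\frac{\mathcal{M}_\lambda^k h}{(2K)^k}.
\end{align*}
It satisfies $h\le Rh$, $\|Rh\|_{L^p(w)}\le 2\|h\|_{L^p(w)}$, and $\mathcal{M}_\lambda(Rh)\le 2K\,Rh$, so $Rh\in A_1(\nu_\lambda)$ with constant at most $2K$. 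Take $h=g$ and define $w_0:=(Rg)^{p-p_0}w$, that is, $W_0=W(Rg)^{-(p_0-p)}$.

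The standard factorisation estimate then gives $[W_0]_{A_{p_0}(\nu_\lambda)}\le [W]_{A_p(\nu_\lambda)}[Rg]_{A_1(\nu_\lambda)}^{p_0-p}$. To prove it, bound $(Rg)^{-1}\le [Rg]_{A_1}\,(\langle Rg\rangle_I^{\nu_\lambda})^{-1}$ on $I$. Then apply H\"older in the dual integral, using the exponent $\frac{p_0-1}{p-1}\cdot\frac{p-1}{p_0-1}$ bookkeeping, and cancel the average of $Rg$. This yields $[w_0]_{\widetilde{A}_{p_0,\lambda}}\le (2K)^{p_0-p}[w]_{\widetilde{A}_{p,\lambda}}$.

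Next apply H\"older with exponents $p_0/p$ and $p_0/(p_0-p)$:
\begin{align*}
\int f^pw\le\Big(\int f^{p_0}w_0\Big)^{p/p_0}\Big(\int (Rg)^pw\Big)^{(p_0-p)/p_0}.
\end{align*}
Use the hypothesis on $w_0$ and the pointwise bound $g\le Rg$, so that $\int g^{p_0}w_0\le\int (Rg)^pw$. Finally $\|Rg\|_{L^p(w)}^p\le 2^p$. Collecting powers gives the stated $N_p$ with the factor $2^{(p_0-p)/p_0}$.

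\textbf{Case $p>p_0$.} Here I work by duality in $L^{p'}(\sigma\,dx)$, where $K':=\|\mathcal{M}_\lambda\|_{\mathcal{B}(L^{p'}(\sigma dx))}<\infty$ since $\sigma\in\widetilde{A}_{p',\lambda}$. Write $\int f^{p_0}\cdot f^{p-p_0}w$ and pick a dual function $h\ge0$ with $\|h\|_{L^{p'}(\sigma)}$ normalised to represent $\|f^{\,p-p_0}\|$ in the appropriate power. Build $R'h$ by the same algorithm on $L^{p'}(\sigma)$; then $R'h\in A_1(\nu_\lambda)$ with constant $\le 2K'$. Set
\begin{align*}
W_0:=W^{\frac{p_0-1}{p-1}}(R'h)^{\frac{p-p_0}{p-1}}.
\end{align*}
The reverse factorisation (H\"older on the $W$-average, $A_1$ bound on the $R'h$-average) gives
\begin{align*}
[W_0]_{A_{p_0}(\nu_\lambda)}\le[W]_{A_p(\nu_\lambda)}^{\frac{p_0-1}{p-1}}(2K')^{\frac{p-p_0}{p-1}},
\end{align*}
which matches the argument of $N_{p_0}$ in the statement. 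Then apply H\"older to $\int f^{p_0}W_0\,d\nu_\lambda$ and $\int g^{p_0}W_0\,d\nu_\lambda$ with exponents $p/p_0$ and $p/(p-p_0)$, together with $h\le R'h$ and $\|R'h\|\le2\|h\|$. This produces the factor $2^{\frac{p-p_0}{p_0(p-1)}}$.

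The main obstacle is this second case. The delicate point is choosing $h$ and the normalisation so that the exponents close exactly, giving the powers $\frac{p-p_0}{p-1}$ and $\frac{p_0-1}{p-1}$. I would follow Duoandikoetxea's choice $h=(f/\|f\|_{L^p(w)})^{p-1}\,W/W^{1-p'}$ adapted to $\nu_\lambda$, and check the H\"older exponents against the target constant. The factorisation inequalities themselves are routine once the reduction $W=wv_\lambda^{-1}$ is in place.
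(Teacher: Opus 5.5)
Your reduction $W:=wv_\lambda^{-1}$, giving $[w]_{\widetilde{A}_{p,\lambda}}=[W]_{A_p(\nu_\lambda)}$ and $\sigma\,dx=W^{1-p'}d\nu_\lambda$, is what the paper's proof does implicitly. Its $R$ carries the factor $v_\lambda$, and its $\mathcal{M}_\lambda'h=\mathcal{M}_\lambda(hW)/W$. So the paper's $v_\lambda^{-1}R'h$ equals your $R'(hW)/W$, and its $p_0$-weight, read as a density with respect to $\nu_\lambda$, is exactly your $W_0$ (your $h$ is the paper's $h$ times $W$).

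Your case $p<p_0$ is correct and is the paper's argument. Taking $h=g$ is a little simpler than the paper's $\epsilon f/\|f\|+g/\|g\|$: under the hypothesis as stated, nothing about $\int f^{p_0}w_0$ or $\|f\|_{L^p(w)}$ has to be known in advance. One correction there. To obtain the factor $2^{(p_0-p)/p_0}$ you must use $(Rg)^{p-p_0}\le g^{p-p_0}$ (as $p-p_0<0$), i.e. $\int g^{p_0}w_0\le\int g^pw$. The bound $\int g^{p_0}w_0\le\int (Rg)^pw\le 2^p\|g\|^p_{L^p(w)}$ that you wrote only yields the factor $2$.

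The gap is in the case $p>p_0$, at the step you flagged, and as written it does not close. Two things go wrong:

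First, your $h=(f/\|f\|)^{p-1}W/W^{1-p'}=(f/\|f\|)^{p-1}W^{p'}$ is not normalised in $L^{p'}(\sigma\,dx)=L^{p'}(W^{1-p'}d\nu_\lambda)$. The integral defining $\|h\|^{p'}_{L^{p'}(\sigma dx)}$ is $\int (f/\|f\|)^pW^{(p')^2-p'+1}d\nu_\lambda$, not $\int (f/\|f\|)^pW\,d\nu_\lambda$.

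Second, ``H\"older on $\int f^{p_0}W_0\,d\nu_\lambda$'' only bounds that integral from above, whereas on the $f$-side you need a lower bound.

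The missing step is to take $h:=(f/\|f\|_{L^p(w)})^{p-1}W$, which has $\|h\|_{L^{p'}(\sigma dx)}=1$. Then $R'h\ge h$ gives, pointwise,
\[
W_0=W^{\frac{p_0-1}{p-1}}(R'h)^{\frac{p-p_0}{p-1}}\ge W^{\frac{p_0-1}{p-1}}h^{\frac{p-p_0}{p-1}}=\Big(\frac{f}{\|f\|_{L^p(w)}}\Big)^{p-p_0}W.
\]
Hence $\|f\|_{L^p(w)}^{p_0}\le\int f^{p_0}W_0\,d\nu_\lambda$, with no H\"older at all. H\"older with exponents $p/p_0$ and $p/(p-p_0)$ is used only on the $g$-side:
\[
\int g^{p_0}W_0\,d\nu_\lambda\le\|g\|_{L^p(w)}^{p_0}\Big(\int (R'h)^{p'}W^{1-p'}d\nu_\lambda\Big)^{\frac{p-p_0}{p}}\le 2^{\frac{p-p_0}{p-1}}\|g\|_{L^p(w)}^{p_0}.
\]
Combined with the hypothesis and your (correct) factorisation bound, this gives $\|f\|_{L^p(w)}\le 2^{\frac{p-p_0}{p_0(p-1)}}N_{p_0}(\cdots)^{1/p_0}\|g\|_{L^p(w)}$. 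That norm form is what the paper's proof actually establishes.

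This extremal choice of $h$ needs $\|f\|_{L^p(w)}<\infty$. Since $\mathcal{F}$ need not be closed under truncation, truncate $f$ only inside the definition of $h$ and pass to the limit by monotone convergence. Alternatively, use the paper's route. It splits an arbitrary unit vector $h_0\in L^{p'}(w)$ as $h_0^{\frac{p-p_0}{p_0(p-1)}}h_0^{\frac{(p_0-1)p}{p_0(p-1)}}$, dominates the first factor by the Rubio de Francia iterate, and applies H\"older with $p_0,p_0'$. That computation works for every $h_0$ in the unit ball, so taking the supremum over $h_0$ avoids needing $\|f\|_{L^p(w)}<\infty$.
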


\begin{corollary}\label{vector-value extrapolation}
Let $(f,g)\in\mathcal{F}$, where $\mathcal{F}$ is a family of pairs $(f,g)$ of non-negative, measurable functions that are not identically zero. Let $\lambda>-1/2,\neq0$. Assume that for some $p_0\in[1,\infty)$, and each $w\in \widetilde{A}_{p_0,\lambda}$,
\begin{align*}
\|f\|_{L^{p_0}(\mathbb{R}_+,wdx)}\lesssim \|g\|_{L^{p_0}(\mathbb{R}_+,wdx)}.
\end{align*}
Then for all $1<p,q<\infty$, all $w\in\widetilde{A}_{p,\lambda}$, and $\{(f_i,g_i)\}_i\subset\mathcal{F}$,
\begin{align*}
\Big\|\Big(\sum_if_i^q\Big)^{1/q}\Big\|_{L^p(\mathbb{R}_+,wdx)}\lesssim
\Big\|\Big(\sum_ig_i^q\Big)^{1/q}\Big\|_{L^p(\mathbb{R}_+,wdx)}.
\end{align*}
\end{corollary}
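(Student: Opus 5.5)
The plan is to apply Theorem \ref{Rubio de Francia extrapolation theorem for new weights} to a new family built from $\ell^q$ sums. For a finite collection $\{(f_i,g_i)\}_{i=1}^N\subset\mathcal{F}$ we form
\begin{align*}
F_q:=\Big(\sum_{i=1}^N f_i^q\Big)^{1/q},\qquad G_q:=\Big(\sum_{i=1}^N g_i^q\Big)^{1/q},
\end{align*}
and let $\mathcal{F}_q$ be the family of all such pairs $(F_q,G_q)$. The infinite case then follows from the monotone convergence theorem, since all constants will be independent of $N$.

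\textbf{Step 1: each pair $(f,g)$ satisfies the hypothesis at every exponent.} By Theorem \ref{Rubio de Francia extrapolation theorem for new weights}, applied to $\mathcal{F}$ at the level $p_0$, for every $1<q<\infty$ and every $w\in\widetilde{A}_{q,\lambda}$ we get
\begin{align*}
\int_{\mathbb{R}_+}f^q w\,dx\le C\int_{\mathbb{R}_+}g^q w\,dx .
\end{align*}
Here $C$ depends only on $q$, $p_0$, $\lambda$ and $[w]_{\widetilde{A}_{q,\lambda}}$.

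\textbf{Step 2: the $\ell^q$ bound at $p=q$.} We sum the estimate of Step 1 over $i$. The weight $w$ and the constant $C$ are the same for every $i$, so
\begin{align*}
\int_{\mathbb{R}_+}F_q^q w\,dx=\sum_i\int_{\mathbb{R}_+}f_i^q w\,dx\le C\sum_i\int_{\mathbb{R}_+}g_i^q w\,dx=C\int_{\mathbb{R}_+}G_q^q w\,dx .
\end{align*}
Thus $\mathcal{F}_q$ satisfies the hypothesis of Theorem \ref{Rubio de Francia extrapolation theorem for new weights} with the exponent $q$ in place of $p_0$. The associated function is $N_q(t)$, namely the constant obtained in Step 1 with $[w]_{\widetilde{A}_{q,\lambda}}=t$.

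\textbf{Step 3: extrapolate $\mathcal{F}_q$ from $q$ to every $p$.} Applying Theorem \ref{Rubio de Francia extrapolation theorem for new weights} again, now to $\mathcal{F}_q$ with base exponent $q$, gives
\begin{align*}
\|F_q\|_{L^p(\mathbb{R}_+,wdx)}\lesssim\|G_q\|_{L^p(\mathbb{R}_+,wdx)}
\end{align*}
for all $1<p<\infty$ and all $w\in\widetilde{A}_{p,\lambda}$. This is the claimed inequality.

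\textbf{Main obstacle: $N_q$ must be positive and increasing.} The extrapolation theorem requires $N_q$ to be increasing in $[w]_{\widetilde{A}_{q,\lambda}}$, but the constant from Step 1 has this property only if its ingredients do. When $q<p_0$, the constant is built from $\|\mathcal{M}_\lambda\|_{\mathcal{B}(L^q(\mathbb{R}_+,wdx))}$. By Theorem \ref{mixed quantitative weighted strong estimates} this operator norm is $\lesssim[w]_{\widetilde{A}_{q,\lambda}}^{1/(q-1)}$. When $q>p_0$, the constant involves $\mathcal{M}_\lambda$ on $L^{q'}(\mathbb{R}_+,v_\lambda^{q'}w^{1-q'}dx)$. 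We need a duality identity, $[v_\lambda^{q'}w^{1-q'}]_{\widetilde{A}_{q',\lambda}}=[w]_{\widetilde{A}_{q,\lambda}}^{q'-1}$, which follows by computing $(v_\lambda^{q'}w^{1-q'})^{1-q}=v_\lambda^{-q}w$ and comparing with the definition. Combined with Theorem \ref{mixed quantitative weighted strong estimates}, this identity bounds the operator norm by a power of $[w]_{\widetilde{A}_{q,\lambda}}$. After replacing the operator norms by these power bounds, $N_q$ becomes an increasing function of $[w]_{\widetilde{A}_{q,\lambda}}$. Since the conclusion only asserts $\lesssim$, with no explicit dependence of the constant, this monotone upper bound is sufficient.
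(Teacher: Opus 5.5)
Your proposal is correct and follows the same route as the paper. You apply Theorem \ref{Rubio de Francia extrapolation theorem for new weights} to get the $L^q(wdx)$ bound for each pair, sum over $i$ so that $(F,G)$ satisfies that theorem's hypothesis at exponent $q$, and then extrapolate once more from $q$ to $p$. Two of your additions are details the paper leaves implicit: the truncation to finite sums followed by monotone convergence, and the check that the resulting $N_q$ can be taken increasing (using Theorem \ref{mixed quantitative weighted strong estimates} together with the duality $[v_\lambda^{q'}w^{1-q'}]_{\widetilde{A}_{q',\lambda}}=[w]_{\widetilde{A}_{q,\lambda}}^{q'-1}$).
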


\begin{corollary}\label{scale}
Let $(f,g)\in\mathcal{F}$, where $\mathcal{F}$ is a family of pairs $(f,g)$ of non-negative, measurable functions that are not identically zero. Let $\lambda>-1/2,\neq0$. Assume that for some $r>0$ and some $p_0\in[r,\infty)$, and for each $w\in \widetilde{A}_{p_0/r,\lambda}$,
\begin{align*}
\|f\|_{L^{p_0}(\mathbb{R}_+,wdx)}\lesssim \|g\|_{L^{p_0}(\mathbb{R}_+,wdx)}.
\end{align*}
Then for all $p\in(r,\infty)$ and all $w\in\widetilde{A}_{p/r,\lambda}$,
\begin{align*}
\|f\|_{L^{p}(\mathbb{R}_+,wdx)}\lesssim \|g\|_{L^{p}(\mathbb{R}_+,wdx)}.
\end{align*}
\end{corollary}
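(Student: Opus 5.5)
The plan is to reduce Corollary \ref{scale} to Theorem \ref{Rubio de Francia extrapolation theorem for new weights} by a change of exponent. Define the new family
\begin{align*}
\mathcal{F}_r:=\{(f^r,g^r):(f,g)\in\mathcal{F}\}.
\end{align*}
Set $q_0:=p_0/r\in[1,\infty)$. For every $w\in\widetilde{A}_{q_0,\lambda}=\widetilde{A}_{p_0/r,\lambda}$ we have
\begin{align*}
\int_{\mathbb{R}_+}(f^r)^{q_0}w\,dx=\|f\|_{L^{p_0}(\mathbb{R}_+,wdx)}^{p_0}\lesssim\|g\|_{L^{p_0}(\mathbb{R}_+,wdx)}^{p_0}=\int_{\mathbb{R}_+}(g^r)^{q_0}w\,dx.
\end{align*}
This is precisely the hypothesis of Theorem \ref{Rubio de Francia extrapolation theorem for new weights} for $\mathcal{F}_r$ at exponent $q_0$. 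The pairs in $\mathcal{F}_r$ are still non-negative, measurable and not identically zero.

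The implicit constant in the hypothesis is allowed to depend on $[w]_{\widetilde{A}_{p_0/r,\lambda}}$. We may take it to be increasing in the weight constant by replacing it with its supremum over all weights whose constant is at most the given value. This yields a positive increasing function $N_{q_0}$ on $[1,\infty)$. (If the constant is meant to be uniform, $N_{q_0}$ is simply a constant.)

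Now let $p\in(r,\infty)$ and put $q:=p/r\in(1,\infty)$. Applying Theorem \ref{Rubio de Francia extrapolation theorem for new weights} gives, for all $w\in\widetilde{A}_{q,\lambda}=\widetilde{A}_{p/r,\lambda}$,
\begin{align*}
\int_{\mathbb{R}_+}f^{p}w\,dx=\int_{\mathbb{R}_+}(f^r)^{q}w\,dx\le N_q\int_{\mathbb{R}_+}(g^r)^{q}w\,dx=N_q\int_{\mathbb{R}_+}g^{p}w\,dx.
\end{align*}
The constant $N_q$ is finite. It depends on $[w]_{\widetilde{A}_{q,\lambda}}$ and on the operator norms of $\mathcal{M}_\lambda$ on $L^{q}(\mathbb{R}_+,wdx)$ and $L^{q'}(\mathbb{R}_+,v_\lambda^{q'}w^{1-q'}dx)$. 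Both norms are finite by \eqref{scalar characterization new weight}, because $w\in\widetilde{A}_{q,\lambda}$ and, by duality of the definition, $v_\lambda^{q'}w^{1-q'}\in\widetilde{A}_{q',\lambda}$. Taking $p$-th roots gives the claim.

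There are two points to check. The first is that the duality $w\in\widetilde{A}_{q,\lambda}\Leftrightarrow v_\lambda^{q'}w^{1-q'}\in\widetilde{A}_{q',\lambda}$ holds with comparable constants. This follows by direct computation: $(v_\lambda^{q'}w^{1-q'})^{1-q}v_\lambda^{q}=w$. The second is the case $q=q_0$, which is trivial since it is the hypothesis itself. Neither step is a real obstacle. The only care needed is the bookkeeping of the monotone function $N_{q_0}$.
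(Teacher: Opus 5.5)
Your proposal is correct and matches the paper's proof: apply Theorem \ref{Rubio de Francia extrapolation theorem for new weights} to the pairs $(f^r,g^r)$, first at exponent $p_0/r$ and then at $p/r$, using $\int_{\mathbb{R}_+}(f^r)^{p/r}w\,dx=\|f\|_{L^p(\mathbb{R}_+,wdx)}^p$. Your extra remarks are bookkeeping the paper leaves implicit: the increasing majorant $N_{p_0/r}$, the finiteness of the maximal-operator norms via \eqref{scalar characterization new weight} and the duality $w\in\widetilde{A}_{q,\lambda}\Leftrightarrow v_\lambda^{q'}w^{1-q'}\in\widetilde{A}_{q',\lambda}$, and the trivial case $p=p_0$.
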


\begin{corollary}\label{A extraapolaation}
Let $(f,g)\in\mathcal{F}$, where $\mathcal{F}$ is a family of pairs $(f,g)$ of non-negative, measurable functions that are not identically zero. Let $\lambda>-1/2,\neq0$. Assume that for some $p_0\in(0,\infty)$, and each $w\in \widetilde{A}_{\infty,\lambda}$,
\begin{align*}
\|f\|_{L^{p_0}(\mathbb{R}_+,wdx)}\lesssim \|g\|_{L^{p_0}(\mathbb{R}_+,wdx)}.
\end{align*}
Then for all $0<p<\infty$ and all $w\in\widetilde{A}_{\infty,\lambda}$,
\begin{align*}
\|f\|_{L^{p}(\mathbb{R}_+,wdx)}\lesssim \|g\|_{L^{p}(\mathbb{R}_+,wdx)}.
\end{align*}
\end{corollary}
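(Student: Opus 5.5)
The plan is to reduce Corollary \ref{A extraapolaation} to Theorem \ref{Rubio de Francia extrapolation theorem for new weights}, using Theorem \ref{characterization A}, which gives $\widetilde{A}_{\infty,\lambda}=\bigcup_{p\ge1}\widetilde{A}_{p,\lambda}$. We rescale so the hypothesis becomes an $\widetilde{A}_{q_0,\lambda}$-type estimate, extrapolate to all exponents, and then undo the scaling.

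\textbf{Step 1 (rescaling).} Fix $0<p<\infty$ and $w\in\widetilde{A}_{\infty,\lambda}$. By Theorem \ref{characterization A}, $w\in\widetilde{A}_{q,\lambda}$ for some $q>1$; since the classes increase in $q$, we may take $q$ large. Choose $r>0$ small so that $p_0/r\geq1$ and $p/r\ge q$, hence $w\in\widetilde{A}_{p/r,\lambda}$. Consider the family $\mathcal{F}_r=\{(f^r,g^r):(f,g)\in\mathcal{F}\}$ and set $q_0=p_0/r\ge1$. For every $u\in\widetilde{A}_{q_0,\lambda}\subset\widetilde{A}_{\infty,\lambda}$ the hypothesis gives
\begin{align*}
\int_{\mathbb{R}_+}(f^r)^{q_0}u\,dx=\int_{\mathbb{R}_+}f^{p_0}u\,dx\lesssim\int_{\mathbb{R}_+}(g^r)^{q_0}u\,dx .
\end{align*}

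\textbf{Step 2 (extrapolation).} Apply Theorem \ref{Rubio de Francia extrapolation theorem for new weights} to $\mathcal{F}_r$ with exponent $q_0$. This needs the constant to be a positive increasing function $N_{q_0}$ of $[u]_{\widetilde{A}_{q_0,\lambda}}$. The hypothesis as stated is only "$\lesssim$" for each weight, so we interpret it as the usual uniform-in-the-constant statement: the implicit constant depends only on the $\widetilde{A}_{\infty,\lambda}$ characteristic, and is controlled by an increasing function of $[u]_{\widetilde{A}_{q_0,\lambda}}$. The same convention is implicit in Corollary \ref{scale}. The theorem then yields
\begin{align*}
\int_{\mathbb{R}_+}(f^r)^{s}u\,dx\lesssim\int_{\mathbb{R}_+}(g^r)^{s}u\,dx\quad\text{for all }1<s<\infty,\ u\in\widetilde{A}_{s,\lambda}.
\end{align*}
If $p/r=q_0$, the hypothesis already applies directly.

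\textbf{Step 3 (conclusion).} Take $s=p/r>1$ and $u=w\in\widetilde{A}_{p/r,\lambda}$. This gives $\int f^p w\,dx\lesssim\int g^p w\,dx$, which is the claim.

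\textbf{Main obstacle.} The substantive point is Step 1: every $\widetilde{A}_{\infty,\lambda}$ weight must lie in some $\widetilde{A}_{q,\lambda}$ with finite constant. This is exactly (1)$\Leftrightarrow$(4) of Theorem \ref{characterization A}. We also need the monotonicity $\widetilde{A}_{q,\lambda}\subset\widetilde{A}_{q',\lambda}$ for $q<q'$, which follows from H\"older's inequality with respect to $d\nu_\lambda$ applied to $(wv_\lambda^{-1})^{-1/(q-1)}$. The remaining difficulty is the bookkeeping of constants in Step 2 described above. With that convention, the corollary follows immediately from Theorem \ref{Rubio de Francia extrapolation theorem for new weights}.
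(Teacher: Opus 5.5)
Your proposal is correct and takes essentially the same route as the paper. Both rescale $(f,g)$ by a power so that the hypothesis becomes an $\widetilde{A}_{q_0,\lambda}$ estimate with $q_0\ge 1$, apply Theorem \ref{Rubio de Francia extrapolation theorem for new weights}, and then use $\widetilde{A}_{\infty,\lambda}=\bigcup_{q}\widetilde{A}_{q,\lambda}$ together with nestedness to choose the exponents so that the extrapolated power equals $p$ (your $r$ is the paper's $p_0/r$). The paper also relies tacitly on reading ``$\lesssim$'' as an increasing dependence on the weight constant, which you make explicit.
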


\begin{corollary}\label{weak type extrapolation}
Let $(f,g)\in\mathcal{F}$, where $\mathcal{F}$ is a family of pairs $(f,g)$ of non-negative, measurable functions that are not identically zero. Let $\lambda>-1/2,\neq0$. Assume that for some $p_0\in[1,\infty)$, and each $w\in \widetilde{A}_{p_0,\lambda}$,
\begin{align*}
\|f\|_{L^{p_0,\infty}(\mathbb{R}_+,wdx)}\lesssim \|g\|_{L^{p_0}(\mathbb{R}_+,wdx)}.
\end{align*}
Then for all $1<p<\infty$ and all $w\in\widetilde{A}_{p,\lambda}$,
\begin{align*}
\|f\|_{L^{p,\infty}(\mathbb{R}_+,wdx)}\lesssim \|g\|_{L^{p}(\mathbb{R}_+,wdx)}.
\end{align*}
\end{corollary}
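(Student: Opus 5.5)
The plan is the classical weak-type extrapolation trick: turn the weak-type hypothesis into a strong-type one for a level-set family, and then apply Theorem \ref{Rubio de Francia extrapolation theorem for new weights}. For each pair $(f,g)\in\mathcal{F}$ and each $t>0$, set
\begin{align*}
f_t:=t\,\chi_{\{x\in\mathbb{R}_+:\,f(x)>t\}},\qquad g_t:=g,
\end{align*}
and let $\mathcal{F}_{\rm weak}$ be the family of all pairs $(f_t,g)$ with $(f,g)\in\mathcal{F}$, $t>0$, and $f_t\not\equiv0$. The pairs with $f_t\equiv0$ contribute nothing and can be discarded.

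First step. For every $w\in\widetilde{A}_{p_0,\lambda}$ we have
\begin{align*}
\|f_t\|_{L^{p_0}(\mathbb{R}_+,wdx)}=t\,w(\{f>t\})^{1/p_0}\le\|f\|_{L^{p_0,\infty}(\mathbb{R}_+,wdx)}\lesssim\|g\|_{L^{p_0}(\mathbb{R}_+,wdx)}.
\end{align*}
So the family $\mathcal{F}_{\rm weak}$ satisfies the strong-type hypothesis of Theorem \ref{Rubio de Francia extrapolation theorem for new weights}, with a constant uniform in $t$.

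The only point needing care is the form of that constant. Theorem \ref{Rubio de Francia extrapolation theorem for new weights} asks for a positive increasing function $N_{p_0}$ of $[w]_{\widetilde{A}_{p_0,\lambda}}$. The corollary's $\lesssim$ is read, as in Corollary \ref{vector-value extrapolation}, as a constant depending only on $[w]_{\widetilde{A}_{p_0,\lambda}}$ (and on $p_0,\lambda$). Replacing it by its supremum over weights with smaller characteristic gives an increasing $N_{p_0}$.

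Second step. Apply Theorem \ref{Rubio de Francia extrapolation theorem for new weights}. For every $1<p<\infty$ and $w\in\widetilde{A}_{p,\lambda}$ it gives
\begin{align*}
t^p\,w(\{f>t\})=\int_{\mathbb{R}_+}f_t^p\,w\,dx\le N_p\int_{\mathbb{R}_+}g^p\,w\,dx .
\end{align*}
The constant $N_p$ involves the operator norms of $\mathcal{M}_\lambda$ on $L^p(\mathbb{R}_+,wdx)$ or on $L^{p'}(\mathbb{R}_+,v_\lambda^{p'}w^{1-p'}dx)$. Both are finite: the first by \eqref{scalar characterization new weight}, or quantitatively by Theorem \ref{mixed quantitative weighted strong estimates}. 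For the second, note that $\sigma=v_\lambda^{p'}w^{1-p'}\in\widetilde{A}_{p',\lambda}$ whenever $w\in\widetilde{A}_{p,\lambda}$. This duality is read off the definition, since $v_\lambda^{p}\sigma^{1-p}=w$, and it gives $[\sigma]_{\widetilde{A}_{p',\lambda}}=[w]_{\widetilde{A}_{p,\lambda}}^{p'-1}$. In particular $N_p$ does not depend on $t$.

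Taking the supremum over $t>0$ then yields $\|f\|_{L^{p,\infty}(\mathbb{R}_+,wdx)}\lesssim\|g\|_{L^{p}(\mathbb{R}_+,wdx)}$. No step is a real obstacle. The only points to verify are the uniformity in $t$ and the finiteness of the maximal-operator norms appearing in $N_p$, both handled above.
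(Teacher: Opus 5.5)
Your proposal is correct and is essentially the paper's own proof. The paper likewise passes to the level-set functions $f_t=t\chi_{\{f>t\}}$, checks $\|f_t\|_{L^{p_0}(\mathbb{R}_+,wdx)}\le\|f\|_{L^{p_0,\infty}(\mathbb{R}_+,wdx)}\lesssim\|g\|_{L^{p_0}(\mathbb{R}_+,wdx)}$ uniformly in $t$, applies Theorem~\ref{Rubio de Francia extrapolation theorem for new weights} to the pairs $(f_t,g)$, and takes the supremum over $t>0$. Your extra remarks only make explicit points the paper leaves implicit: taking $N_{p_0}$ increasing, and the finiteness of the $\mathcal{M}_\lambda$ operator norms via $[v_\lambda^{p'}w^{1-p'}]_{\widetilde{A}_{p',\lambda}}=[w]_{\widetilde{A}_{p,\lambda}}^{p'-1}$.
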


From Corollaries \ref{A extraapolaation}, \ref{weak type extrapolation}, we have the following two corollaries. Since the proofs are almost the same as Corollaries \ref{vector-value extrapolation}, \ref{A extraapolaation}, we omit their proofs.

\begin{corollary}\label{A extrapolaation vector-value}
Let $(f,g)\in\mathcal{F}$, where $\mathcal{F}$ is a family of pairs $(f,g)$ of non-negative, measurable functions that are not identically zero. Let $\lambda>-1/2,\neq0$. Assume that for some $p_0\in(0,\infty)$, and each $w\in \widetilde{A}_{\infty,\lambda}$,
\begin{align*}
\|f\|_{L^{p_0}(\mathbb{R}_+,wdx)}\lesssim \|g\|_{L^{p_0}(\mathbb{R}_+,wdx)}.
\end{align*}
Then for all $0<p,q<\infty$, all $w\in\widetilde{A}_{\infty,\lambda}$, and $\{(f_i,g_i)\}_i\subset\mathcal{F}$,
\begin{align*}
\Big\|\Big(\sum_if_i^q\Big)^{1/q}\Big\|_{L^{p}(\mathbb{R}_+,wdx)}\lesssim
\Big\|\Big(\sum_ig_i^q\Big)^{1/q}\Big\|_{L^p(\mathbb{R}_+,wdx)}.
\end{align*}
\end{corollary}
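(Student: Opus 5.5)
The plan is to reduce Corollary \ref{A extrapolaation vector-value} to Corollary \ref{A extraapolaation}, applied to a new family of pairs built from $\ell^q$-sums. This is the same scheme that derives Corollary \ref{vector-value extrapolation} from Theorem \ref{Rubio de Francia extrapolation theorem for new weights}.

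The first step is to show that the hypothesis gives bounds at every exponent on each component. By Corollary \ref{A extraapolaation}, the hypothesis at $p_0$ yields, for every $0<s<\infty$ and every $w\in\widetilde{A}_{\infty,\lambda}$,
\begin{align*}
\|f\|_{L^{s}(\mathbb{R}_+,wdx)}\lesssim\|g\|_{L^{s}(\mathbb{R}_+,wdx)},\qquad (f,g)\in\mathcal{F}.
\end{align*}

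The second step takes $s=q$. Raising to the $q$-th power and summing over $i$ (monotone convergence), for every $w\in\widetilde{A}_{\infty,\lambda}$ we get
\begin{align*}
\int_{\mathbb{R}_+}\sum_i f_i^q\,w\,dx\lesssim\int_{\mathbb{R}_+}\sum_i g_i^q\,w\,dx .
\end{align*}
The implicit constant depends on $[w]_{\widetilde{A}_{\infty,\lambda}}$ but not on $i$, so the sum is legitimate. Put $F=(\sum_i f_i^q)^{1/q}$ and $G=(\sum_i g_i^q)^{1/q}$. The display says that $\|F\|_{L^q(wdx)}\lesssim\|G\|_{L^q(wdx)}$ for all $w\in\widetilde{A}_{\infty,\lambda}$.

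The third step applies Corollary \ref{A extraapolaation} once more, now to the family $\mathcal{F}_q$ of all pairs $(F,G)$ obtained this way, with initial exponent $q$. It gives $\|F\|_{L^p(wdx)}\lesssim\|G\|_{L^p(wdx)}$ for every $0<p<\infty$ and every $w\in\widetilde{A}_{\infty,\lambda}$, which is the claim.

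The main point to check is that Corollary \ref{A extraapolaation} really is a statement about arbitrary families, with constants depending only on the weight constant and the exponents. That way, the new family $\mathcal{F}_q$ is admissible and nothing depends on the particular sequence. A technical issue is that the extrapolation argument usually needs $\|F\|_{L^p(w)}<\infty$ to avoid $\infty\le\infty$ comparisons. If the proof of Corollary \ref{A extraapolaation} requires this, I would handle it by truncation: replace the sum by finitely many terms $i\le N$, and if necessary restrict to a compact subinterval of $\mathbb{R}_+$ with $F$ bounded there. Then let $N\to\infty$ by monotone convergence.
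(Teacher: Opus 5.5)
Your proposal is correct and is essentially the paper's argument: the paper omits this proof, saying it is the same as that of Corollary~\ref{vector-value extrapolation} with Corollary~\ref{A extraapolaation} in place of Theorem~\ref{Rubio de Francia extrapolation theorem for new weights}, which is exactly your scheme of extrapolating each pair to exponent $q$, summing in $i$, and then extrapolating $(F,G)$ from $q$ to $p$. On your finiteness remark, which concerns a point the paper passes over silently: truncating to $i\le N$ alone does not make $\|F\|_{L^p(wdx)}$ finite, whereas replacing $F$ by $\min\{F,N\}\chi_K$ with $K\subset\mathbb{R}_+$ compact does, and this preserves the hypothesis because only the left-hand side decreases.
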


\begin{corollary}
Let $(f,g)\in\mathcal{F}$, where $\mathcal{F}$ is a family of pairs $(f,g)$ of non-negative, measurable functions that are not identically zero. Let $\lambda>-1/2,\neq0$. Assume that for some $p_0\in(0,\infty)$, and each $w\in \widetilde{A}_{\infty,\lambda}$
\begin{align*}
\|f\|_{L^{p_0,\infty}(\mathbb{R}_+,wdx)}\lesssim\|g\|_{L^{p_0}(\mathbb{R}_+,wdx)}.
\end{align*}
Then for all $0<p<\infty$, and all $w\in\widetilde{A}_{\infty,\lambda}$,
\begin{align*}
\|f\|_{L^{p,\infty}(\mathbb{R}_+,wdx)}\lesssim\|g\|_{L^p(\mathbb{R}_+,wdx)}.
\end{align*}
\end{corollary}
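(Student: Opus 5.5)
The plan is to reduce to Corollary \ref{weak type extrapolation} by rescaling the exponent, as in the classical $A_\infty$ weak-type extrapolation. The idea is that $\widetilde{A}_{\infty,\lambda}$ is scale-invariant. Fix $0<p<\infty$ and $w\in\widetilde{A}_{\infty,\lambda}$. By Theorem \ref{characterization A}, $w\in\widetilde{A}_{q,\lambda}$ for some $q\geq 1$, and the class $\widetilde{A}_{r,\lambda}$ increases with $r$. We therefore choose $r>0$ small enough that $p_0/r\geq1$, $p/r>1$ and $w\in\widetilde{A}_{p/r,\lambda}$; the last condition holds as soon as $p/r\geq q$.

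Consider the family of pairs $\mathcal{F}_r:=\{(f^r,g^r):(f,g)\in\mathcal{F}\}$. Since $\|f^r\|_{L^{s,\infty}(wdx)}=\|f\|_{L^{rs,\infty}(wdx)}^r$ and $\|g^r\|_{L^{s}(wdx)}=\|g\|_{L^{rs}(wdx)}^r$, the hypothesis gives, for every $u\in\widetilde{A}_{p_0/r,\lambda}\subset\widetilde{A}_{\infty,\lambda}$,
\begin{align*}
\|f^r\|_{L^{p_0/r,\infty}(\mathbb{R}_+,udx)}=\|f\|_{L^{p_0,\infty}(\mathbb{R}_+,udx)}^r\lesssim\|g\|_{L^{p_0}(\mathbb{R}_+,udx)}^r=\|g^r\|_{L^{p_0/r}(\mathbb{R}_+,udx)}.
\end{align*}
Thus $\mathcal{F}_r$ satisfies the assumption of Corollary \ref{weak type extrapolation} with exponent $p_0/r\in[1,\infty)$. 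Applying that corollary with exponent $p/r\in(1,\infty)$ and the weight $w\in\widetilde{A}_{p/r,\lambda}$ yields $\|f^r\|_{L^{p/r,\infty}(wdx)}\lesssim\|g^r\|_{L^{p/r}(wdx)}$. Undoing the power $r$ gives the claim.

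The main point to check is the choice of $r$, which is made after $w$ is fixed. The hypothesis, however, is used for all $u\in\widetilde{A}_{p_0/r,\lambda}$, and this is legitimate only because every such $u$ lies in $\widetilde{A}_{\infty,\lambda}$. That inclusion follows from the definition $\widetilde{A}_{\infty,\lambda}=\bigcup_{p\ge1}\widetilde{A}_{p,\lambda}$ together with the equivalence (1)$\Leftrightarrow$(4) of Theorem \ref{characterization A}. The implicit constant then depends on $w$ through $r$, which is allowed by ``$\lesssim$''.

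One must also verify that the monotonicity $\widetilde{A}_{q,\lambda}\subset\widetilde{A}_{s,\lambda}$ for $s\ge q$ holds. We get it from Hölder's inequality applied to the second factor: for $s\ge q$ we have $s'-1\le q'-1$, and we write
\begin{align*}
x^{(2\lambda+1)s'}w^{1-s'}=\bigl(x^{(2\lambda+1)q'}w^{1-q'}\bigr)^{\frac{s'-1}{q'-1}}\,x^{(2\lambda+1)(s'-\frac{q'(s'-1)}{q'-1})}.
\end{align*}
The leftover power of $x$ is exactly what makes the normalization by $\nu_\lambda(I)$ consistent. With this, the argument is complete.
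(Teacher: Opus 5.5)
Your proposal is correct and is essentially the argument the paper has in mind when it omits this proof as ``almost the same'' as that of Corollary \ref{A extraapolaation}: you raise $(f,g)$ to a power so that the hypothesis becomes that of Corollary \ref{weak type extrapolation} at an exponent $\geq1$, and you choose the power only after $w$ is fixed, so that $w\in\widetilde{A}_{p/r,\lambda}$ by nestedness (your power $r$ corresponds to the paper's $p_0/r$). Your H\"older proof of nestedness is a self-contained substitute for the paper's citation of \cite{LiLiangLinShen}; for $q=1$ your identity degenerates since $q'=\infty$, so in that case use the trivial $L^\infty$ bound instead.
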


The last goal of this article is to study the restricted weighted weak type estimates of harmonic analysis operators associated to $\Delta_\lambda$. Let us recall the background of this topic. Let $1\leq p<\infty$. Chung, Hunt, Kurtz \cite{ChHuKu}, and Kerman, Torchinsky \cite{KT} introduced a new class of weights $A_p^\mathcal{R}$ for $1<p<\infty$, which is strictly containing Muckenhoupt $A_p$ weights, to characterize the restricted weighted weak $(p,p)$ estimate of Hardy-Littlewood maximal operator $M$:
\begin{align}\label{classical restricted weak type estimate}
\|M\|_{L^{p,1}(\mathbb{R}^n,wdx)\rightarrow L^{p,\infty}(\mathbb{R}^n,wdx)}\sim[w]_{A_p^\mathcal{R}}<\infty,
\end{align}
where
\begin{align*}
[w]_{A_p^\mathcal{R}}:=\sup_{Q}w(Q)^{1/p}\frac{\|\chi_Q w^{-1}\|_{L^{p',\infty}(\mathbb{R}^n,wdx)}}{|Q|}.
\end{align*}
Since $A_1^\mathcal{R}=A_1$, \eqref{classical restricted weak type estimate} is exactly the $A_1$ weights characterization in terms of weighted weak (1,1) estimate of $M$ when $p=1$. Recently, $A_p^\mathcal{R}$ weights have received considerable attention. We refer readers to see \cite{CaSiZh,PeR,R} for weighted inequalities concerned with $A_p^\mathcal{R}$, see \cite{CaGrSo,CaOm,LikLuOm,R} for extrapolation theory related to $A_p^\mathcal{R}$.

\textbf{Question 4.} In view of the early work of Kerman and Torchinsky \cite{KT}, it is natural to ask what is the appropriate weights to characterize the restricted weighted weak type estimates of $\mathcal{M}_\lambda$?

In the classical proof of characterization of the restricted weighted weak
type estimate of uncentered Hardy-Littlewood maximal operator, the strategy is to transform it to centered Hardy-Littlewood maximal operator and applying Besicovich covering lemma. This technique fails in the Bessel setting since the center of the interval shifts as the interval expands. Hence, we need to improve the existing method. Furthermore, can we establish quantitative restricted weighted weak type estimates of other harmonic analysis operators associated to $\Delta_\lambda$?

Before we give the characterization of restricted weighted weak type estimate of $\mathcal{M}_\lambda$, we first introduce a new class of weights.
\begin{definition}
Let $1\leq p<\infty$, $v_\lambda(t):=t^{2\lambda+1}$ with $\lambda>-1/2,\neq0$. Let $(w_1,w_2)$ be a pair of weights. We call $(w_1,w_2)\in \widetilde{A}_{p,\lambda}^\mathcal{R}$ if
\begin{equation*}
[w_1,w_2]_{\widetilde{A}_{p,\lambda}^\mathcal{R}}:=\sup_{I\subset\mathbb{R}_+}w_2(I)^{1/p}
\frac{\|\chi_Iv_\lambda w_1^{-1}\|_{L^{p',\infty}(\mathbb{R}_+,w_1dx)}}{\nu_\lambda(I)}<\infty.
\end{equation*}
If $w_1=w_2=w$, we denote $(w,w)\in\widetilde{A}_{p,\lambda}^\mathcal{R}$ by $w\in\widetilde{A}_{p,\lambda}^\mathcal{R}$.
\end{definition}

The next Theorems \ref{characterization restricted weak type of HL} and \ref{another characterization restricted weak type} provide several characterizations of $\widetilde{A}_{p,\lambda}^\mathcal{R}$.
\begin{theorem}\label{characterization restricted weak type of HL}
Let $1\leq p<\infty$, $\lambda>-1/2, \neq0$ and $w_1, w_2$ be weights. Then
\begin{equation*}
\|\mathcal{M}_\lambda \|_{L^{p,1}(\mathbb{R}_+, w_1dx)\rightarrow L^{p,\infty}(\mathbb{R}_+, w_2dx)}\sim [w_1,w_2]_{\widetilde{A}_{p,\lambda}^{\mathcal{R}}}.
\end{equation*}
\end{theorem}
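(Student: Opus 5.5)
The equivalence splits into two inequalities. We prove the lower bound by testing on suitable functions and the upper bound through a covering argument. The lower bound $[w_1,w_2]_{\widetilde{A}_{p,\lambda}^{\mathcal R}}\lesssim\|\mathcal M_\lambda\|$ comes from testing. Fix an interval $I$ and a measurable set $E\subset I$, and put $f=v_\lambda w_1^{-1}\chi_E$, so that $fw_1=v_\lambda\chi_E$. For $x\in I$,
\begin{align*}
\mathcal M_\lambda f(x)\ge \frac{1}{\nu_\lambda(I)}\int_E v_\lambda w_1^{-1}\,d\nu_\lambda ,
\end{align*}
which, once one checks the measure normalisation $d\nu_\lambda=v_\lambda dx$, is a constant times the $w_1$-size of the relevant piece. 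The restricted weak type hypothesis then gives
\begin{align*}
w_2(I)^{1/p}\,\frac{1}{\nu_\lambda(I)}\int_E (v_\lambda w_1^{-1})\,w_1\,dx\;\lesssim\;\|\mathcal M_\lambda\|\,\|\chi_E\|_{L^{p,1}(w_1)}.
\end{align*}
Taking the supremum over $E\subset I$ uses the duality $\|h\chi_I\|_{L^{p',\infty}(w_1)}\sim\sup_{E\subset I} w_1(E)^{-1/p}\int_E h\,w_1dx$ with $h=v_\lambda w_1^{-1}$. Since $L^{p,1}$ is normable, testing on characteristic functions suffices for this direction.

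For the upper bound, the standard reduction for restricted weak type is enough: it suffices to bound $w_2(\{\mathcal M_\lambda(\chi_F)>t\})\lesssim [w_1,w_2]^p t^{-p}w_1(F)$ for measurable $F$. Here $\chi_F$ carries the weight $dx$, since $\mathcal M_\lambda$ averages against $d\nu_\lambda$. I would write $\mathcal M_\lambda\chi_F$ with respect to $\nu_\lambda$ and express the average as $\frac{1}{\nu_\lambda(I)}\int_I \chi_F v_\lambda w_1^{-1}\,w_1dx$. Hölder's inequality in Lorentz spaces with respect to $w_1dx$ then gives
\begin{align*}
\frac{1}{\nu_\lambda(I)}\int_I\chi_Fv_\lambda\,dx\le \frac{\|\chi_Iv_\lambda w_1^{-1}\|_{L^{p',\infty}(w_1)}}{\nu_\lambda(I)}\,w_1(F\cap I)^{1/p}\le [w_1,w_2]\,\frac{w_1(F\cap I)^{1/p}}{w_2(I)^{1/p}}.
\end{align*}
So whenever the average over $I$ exceeds $t$, we get $w_2(I)\le [w_1,w_2]^pt^{-p}w_1(F\cap I)$.

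The main obstacle is the covering step, where the Besicovitch approach is unavailable. Since we are on $\mathbb R_+$ with intervals, I would use the one-dimensional Vitali-type fact that any finite family of intervals has a subfamily covering the union with overlap at most $2$. First restrict to a compact $K$ inside the level set; it is covered by intervals $I$ with average $>t$, and we pass to a finite subcover. Choosing an overlap-$\le2$ subfamily $\{I_j\}$ gives
\begin{align*}
w_2(K)\le\sum_j w_2(I_j)\le [w_1,w_2]^pt^{-p}\sum_j w_1(F\cap I_j)\le 2[w_1,w_2]^pt^{-p}w_1(F).
\end{align*}
This avoids any enlargement of intervals, which is exactly what fails here because the measure $\nu_\lambda$ does not behave well under dilation about a shifting center. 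Letting $K$ increase to the level set finishes the upper bound. The same argument covers $p=1$, with $L^{\infty}$ replacing $L^{p',\infty}$ in the definition.
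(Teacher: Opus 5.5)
Your argument is correct in substance but follows a different route from the paper in both directions, and two points in the write-up need fixing.

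First, the test function you name in the lower bound, $f=v_\lambda w_1^{-1}\chi_E$, does not yield your display. With it, the average over $I$ is $\nu_\lambda(I)^{-1}\int_E v_\lambda^2 w_1^{-1}\,dx$, and the right-hand side would involve $\|v_\lambda w_1^{-1}\chi_E\|_{L^{p,1}(w_1)}$ rather than $\|\chi_E\|_{L^{p,1}(w_1)}$. Your display is exactly what $f=\chi_E$ gives, since $\mathcal M_\lambda\chi_E\ge\nu_\lambda(E)/\nu_\lambda(I)$ on $I$ and $\nu_\lambda(E)=\int_E(v_\lambda w_1^{-1})w_1\,dx$. With that choice, use the elementary bound $\|h\chi_I\|_{L^{p',\infty}(w_1)}\le\sup_{E\subset I}w_1(E)^{-1/p}\int_E h\,w_1\,dx$ (take $E=\{x\in I:h(x)>s\}$). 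You then get $[w_1,w_2]_{\widetilde{A}_{p,\lambda}^{\mathcal R}}\le p\,\|\mathcal M_\lambda\|_{L^{p,1}(w_1)\to L^{p,\infty}(w_2)}$ for every $1\le p<\infty$; normability of $L^{p,1}$ plays no role here.

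Second, reducing the upper bound to characteristic functions by a layer decomposition of $f$ requires the normability of the \emph{target} space $L^{p,\infty}(w_2)$, which holds only for $p>1$. At $p=1$ the statement is a weak $(1,1)$ bound for arbitrary $f\in L^1(w_1)$, and that reduction is not available. The repair is immediate: at $p=1$, run your covering directly on $f$, using $\int_I|f|v_\lambda\,dx\le\|\chi_Iv_\lambda w_1^{-1}\|_{L^\infty(w_1)}\int_I|f|w_1\,dx$ and $\sum_j\int_{I_j}|f|w_1\,dx\le2\int_{\mathbb{R}_+}|f|w_1\,dx$.

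Your motivating remark is also inaccurate. Enlarging intervals is harmless because $\nu_\lambda$ is doubling, and the paper's proof does use $5I$. The obstruction the paper mentions concerns passing to a centered operator and applying Besicovitch.

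Comparison with the paper, lower bound: for $1<p<\infty$ the paper uses the $L^{p,1}$--$L^{p',\infty}$ duality to pick a near-extremal $f$ with $\|f\|_{L^{p,1}(w_1)}=1$, and it treats $p=1$ separately with sets and an essential infimum. Your set-testing argument is essentially the chain (2)$\Rightarrow$(3)$\Rightarrow$(4) of Theorem~\ref{another characterization restricted weak type}. It handles all $p$ uniformly and shows that the bound on characteristic functions alone controls the constant.

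Comparison, upper bound: the paper combines the Vitali lemma with $5I$, Lorentz--H\"older on $5I$, the doubling of $\nu_\lambda$, the $\widetilde{A}_{p,\lambda}^{\mathcal R}$ condition on $5I$, and Lemma~\ref{summation lemma} to sum $\|f\chi_{I_i}\|_{L^{p,1}(w_1)}^p$ over the disjoint $I_i$. This treats general $f$ and all $1\le p<\infty$ at once and would carry over to spaces of homogeneous type. Your overlap-two selection is special to dimension one, but it needs neither enlargement nor doubling. For $\chi_F$, the additivity of $w_1(F\cap I_j)$ replaces the summation lemma, at the cost of the layer-cake reduction.

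As a by-product, the same selection lemma shows that the operator $M_{w_1,w_2}$ in the proof of Theorem~\ref{characterization two weight restricted weak type estimate of Orlicz maximal} is of weak type $(1,1)$ from $w_1dx$ to $w_2dx$ with constant $2$. No doubling hypothesis on $w_2$ is needed there.
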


\begin{theorem}\label{another characterization restricted weak type}
Let $1\leq p<\infty$, $\lambda>-1/2, \neq0$ and $w_1, w_2$ be weights. The following assertions are equivalent:
\begin{itemize}
\item [(1)]$\|\mathcal{M}_\lambda f\|_{L^{p,\infty}(\mathbb{R}_+, w_2dx)}\lesssim\|f\|_{L^{p,1}(\mathbb{R}_+, w_1dx)}$;
\item [(2)]$\|\mathcal{M}_\lambda(\chi_J)\|_{L^{p,\infty}(\mathbb{R}_+, w_2dx)}\lesssim w_1(J)^{1/p}$ for any measurable set $J\subseteq\mathbb{R}_+$;
\item [(3)]
$
\|w_1,w_2\|_{\widetilde{A}_{p, \lambda}^{\mathcal{R}}}:=\sup_{I\subset\mathbb{R}_+}w_2(I)^{1/p}
    \sup_{0<w_1(J)<\infty}\frac{\nu_\lambda(I\cap J)}{\nu_\lambda(I)}w_1(J)^{-1/p}
    <\infty;
$
\item [(4)]$(w_1,w_2)\in \widetilde{A}_{p,\lambda}^{\mathcal{R}}$.
\end{itemize}
\end{theorem}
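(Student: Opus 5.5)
The plan is to prove the cycle $(1)\Rightarrow(2)\Rightarrow(3)\Rightarrow(4)\Rightarrow(1)$, and to get $(4)\Rightarrow(1)$ from Theorem \ref{characterization restricted weak type of HL}.

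\textbf{$(1)\Rightarrow(2)$.} This is immediate. Take $f=\chi_J$ and use $\|\chi_J\|_{L^{p,1}(w_1dx)}\simeq w_1(J)^{1/p}$.

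\textbf{$(2)\Rightarrow(3)$.} Fix an interval $I$ and a set $J$ with $0<w_1(J)<\infty$. For every $x\in I$,
\begin{align*}
\mathcal{M}_\lambda(\chi_J)(x)\ge \frac{\nu_\lambda(I\cap J)}{\nu_\lambda(I)}=:s.
\end{align*}
If $s=0$ there is nothing to prove. Otherwise $I\subset\{\mathcal{M}_\lambda\chi_J>s/2\}$, so by (2),
\begin{align*}
\frac{s}{2}\,w_2(I)^{1/p}\le\|\mathcal{M}_\lambda\chi_J\|_{L^{p,\infty}(w_2dx)}\lesssim w_1(J)^{1/p}.
\end{align*}
Taking suprema over $J$ and $I$ gives $\|w_1,w_2\|_{\widetilde{A}_{p,\lambda}^{\mathcal R}}<\infty$.

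\textbf{$(3)\Rightarrow(4)$.} Set $g=\chi_I v_\lambda w_1^{-1}$. By the level-set formula for the $L^{p',\infty}(w_1dx)$ quasi-norm, it suffices to bound $t\,w_1(E_t)^{1/p'}$ uniformly in $t$, where $E_t=\{x\in I: g(x)>t\}$. Fix $t$ and, by truncation, assume $0<w_1(E_t)<\infty$; if the truncation is not available, apply the argument to $E_t\cap[a,b]$ with $0<a<b$ and let $a\to0$, $b\to\infty$. Now test (3) with $J=E_t$. On $E_t$ we have $v_\lambda>t w_1$, so
\begin{align*}
\nu_\lambda(I\cap E_t)=\int_{E_t}v_\lambda\,dx> t\,w_1(E_t).
\end{align*}
Hence (3) yields
\begin{align*}
w_2(I)^{1/p}\,\frac{t\,w_1(E_t)}{\nu_\lambda(I)}\,w_1(E_t)^{-1/p}\le\|w_1,w_2\|_{\widetilde{A}_{p,\lambda}^{\mathcal R}},
\end{align*}
that is, $w_2(I)^{1/p}\,t\,w_1(E_t)^{1/p'}/\nu_\lambda(I)\le \|w_1,w_2\|_{\widetilde{A}_{p,\lambda}^{\mathcal R}}$. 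Taking the supremum over $t$ and $I$ gives (4). When $p=1$ the same computation gives $t\le\|w_1,w_2\|\,\nu_\lambda(I)/w_2(I)$ whenever $w_1(E_t)>0$, which is exactly the $L^\infty$ bound.

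\textbf{$(4)\Rightarrow(1)$.} This is the sufficiency half of Theorem \ref{characterization restricted weak type of HL}, which I may assume. For orientation, here is the route I would expect it to take, since it is the genuine obstacle. Reduce to $f=\chi_J$, using the $L^{p,1}$ atomic structure for level sets. Write
\begin{align*}
\frac{1}{\nu_\lambda(I)}\int_{I\cap J}d\nu_\lambda=\frac{1}{\nu_\lambda(I)}\int_{I\cap J}(v_\lambda w_1^{-1})\,w_1\,dx,
\end{align*}
and apply Hölder's inequality in Lorentz spaces with respect to $w_1dx$:
\begin{align*}
\frac{1}{\nu_\lambda(I)}\int_{I\cap J}d\nu_\lambda\lesssim\frac{\|\chi_Iv_\lambda w_1^{-1}\|_{L^{p',\infty}(w_1dx)}}{\nu_\lambda(I)}\,w_1(J)^{1/p}.
\end{align*}
The level set $\{\mathcal{M}_\lambda\chi_J>s\}$ is covered by such intervals. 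A Vitali-type selection on $\mathbb{R}_+$, with enlargement by a fixed factor (using $\nu_\lambda$ doubling and the fact that enlarged intervals stay comparable), then bounds $w_2$ of the level set by $s^{-p}[w_1,w_2]^p\,w_1(J)$. The delicate point is that dilated intervals shift their centers. One can circumvent this with a one-dimensional covering argument: select a subfamily of overlap at most two from any finite family of intervals. This avoids any dilation, so no doubling of $w_2$ is needed.
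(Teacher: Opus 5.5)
Your proof is correct and is essentially the paper's: the same cycle (1)$\Rightarrow$(2)$\Rightarrow$(3)$\Rightarrow$(4)$\Rightarrow$(1), the same pointwise bound $\frac{\nu_\lambda(I\cap J)}{\nu_\lambda(I)}\chi_I\le\mathcal{M}_\lambda(\chi_J)$ for (2)$\Rightarrow$(3), the same Chebyshev test set $J=\{x\in I: v_\lambda(x)w_1(x)^{-1}>t\}$ for (3)$\Rightarrow$(4), and Theorem~\ref{characterization restricted weak type of HL} for (4)$\Rightarrow$(1). Your truncation step is unnecessary, because $t\,w_1(E_t)\le\nu_\lambda(E_t)\le\nu_\lambda(I)<\infty$ holds automatically; the orientation sketch for (4)$\Rightarrow$(1) is also not needed, and its reduction to characteristic functions would in any case fail at $p=1$, where $L^{1,\infty}$ is not normable (the paper instead handles general $f$ directly through the covering argument).
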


Relied on Theorem \ref{characterization restricted weak type of HL}, we have the following quantitative restricted weighted weak type estimates for sparse operators.
\begin{theorem}\label{quantitative two weight restricted weak type of sparse operator}
Let $\mathcal{S}$ be an $\eta$-sparse family. Let $1<p<\infty$, $(w_1,w_2)\in\widetilde{A}_{p,\lambda-\frac{1}{2}}^\mathcal{R}$, $w_2\in\widetilde{A}_{\infty,\lambda-\frac{1}{2}}$. Then there exists $r\geq1$ such that
\begin{align*}
\|\mathcal{A}_\mathcal{S}f\|_{L^{p,\infty}(\mathbb{R}_+,w_2dx)}\lesssim
[w_2]_{\widetilde{A}_{r,\lambda-\frac{1}{2}}^{\mathcal{R}}}^{r}
[w_1,w_2]_{\widetilde{A}_{p,\lambda-\frac{1}{2}}^\mathcal{R}}\|f\|_{L^{p,1}(\mathbb{R}_+,w_1dx)},
\end{align*}
where
\begin{align*}
\mathcal{A}_\mathcal{S}f(x):=\sum_{I\in\mathcal{S}}\frac{1}{\mu(I)}\int_I |f(t)|d\mu(t)\chi_I(x).
\end{align*}
\end{theorem}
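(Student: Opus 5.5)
The plan is to dualize the weak norm, use sparseness together with the restricted weight condition on $w_2$ to pass from $w_2(I)$ to $w_2(E_I)$, and then close with a Lorentz-space Hölder inequality and Theorem \ref{characterization restricted weak type of HL}.

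\textbf{Two reductions.} First, with the index $\lambda-\frac12$ we have $v_{\lambda-\frac12}(t)=t^{2\lambda}$, so $d\nu_{\lambda-\frac12}=d\mu$. Hence each average $\frac{1}{\mu(I)}\int_I|f|\,d\mu$ appearing in $\mathcal{A}_\mathcal{S}$ is bounded by $\inf_{x\in I}\mathcal{M}_{\lambda-\frac12}f(x)$. Second, since $p>1$, the weak norm has the dual description
$$\|g\|_{L^{p,\infty}(w_2dx)}\simeq\sup\Big\{w_2(E)^{-1/p'}\int_E|g|\,w_2\,dx:\ 0<w_2(E)<\infty\Big\}.$$
So it suffices to fix such a set $E$ and bound
$$\int_E\mathcal{A}_\mathcal{S}f\,w_2\,dx=\sum_{I\in\mathcal{S}}\langle |f|\rangle^\mu_I\,w_2(E\cap I).$$

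\textbf{Choice of $r$ and the key estimate.} Since $w_2\in\widetilde{A}_{\infty,\lambda-\frac12}$, Theorem \ref{characterization A} gives some $r\ge1$ with $w_2\in\widetilde{A}_{r,\lambda-\frac12}$. By Hölder's inequality, $\widetilde{A}_{r,\lambda-\frac12}\subset\widetilde{A}^{\mathcal{R}}_{r,\lambda-\frac12}$, so $[w_2]_{\widetilde{A}^{\mathcal R}_{r,\lambda-\frac12}}<\infty$; this is the $r$ of the statement. For each $I\in\mathcal{S}$, let $E_I\subset I$ be the pairwise disjoint sets with $\mu(E_I)\ge\eta\mu(I)$. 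Apply characterization (3) of Theorem \ref{another characterization restricted weak type}, with $w_1=w_2$ and exponent $r$, to the set $J=E_I$; its proof gives $\|\cdot\|\lesssim[\cdot]$. This yields
$$\eta\le\frac{\mu(I\cap E_I)}{\mu(I)}\lesssim[w_2]_{\widetilde{A}^{\mathcal R}_{r,\lambda-\frac12}}\Big(\frac{w_2(E_I)}{w_2(I)}\Big)^{1/r},$$
that is,
$$w_2(I)\lesssim\eta^{-r}[w_2]^r_{\widetilde{A}^{\mathcal R}_{r,\lambda-\frac12}}\,w_2(E_I).$$

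\textbf{Summing over the sparse family.} Writing $w_2(E\cap I)=\frac{w_2(E\cap I)}{w_2(I)}\,w_2(I)$ and using the key estimate, we get
$$\sum_{I}\langle|f|\rangle^\mu_I\,w_2(E\cap I)\lesssim[w_2]^r\sum_I\int_{E_I}\mathcal{M}_{\lambda-\frac12}f\cdot M^{w_2}(\chi_E)\,w_2\,dx\le[w_2]^r\int_{\mathbb{R}_+}\mathcal{M}_{\lambda-\frac12}f\cdot M^{w_2}(\chi_E)\,w_2\,dx.$$
Here $M^{w_2}$ is the uncentered maximal operator over intervals with respect to $w_2\,dx$, and the last step uses the disjointness of the $E_I$. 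Next apply Hölder's inequality in Lorentz spaces, $L^{p,\infty}\times L^{p',1}$, with respect to $w_2\,dx$. For the first factor, Theorem \ref{characterization restricted weak type of HL} at level $\lambda-\frac12$ gives
$$\|\mathcal{M}_{\lambda-\frac12}f\|_{L^{p,\infty}(w_2)}\lesssim[w_1,w_2]_{\widetilde{A}^{\mathcal R}_{p,\lambda-\frac12}}\|f\|_{L^{p,1}(w_1)}.$$
For the second factor we need
$$\|M^{w_2}\chi_E\|_{L^{p',1}(w_2)}\lesssim_p w_2(E)^{1/p'}.$$
Dividing by $w_2(E)^{1/p'}$ and taking the supremum over $E$ then gives the claim.

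\textbf{Main obstacle.} The delicate point is the bound for $M^{w_2}$, because $w_2\,dx$ need not be doubling. I would use the one-dimensional covering fact: from any finite family of intervals one can extract a subfamily with the same union and overlap at most $2$. This gives the weak $(1,1)$ bound for $M^{w_2}$ with an absolute constant, for an arbitrary locally finite measure. The bound $M^{w_2}\chi_E\le1$ is trivial, so Marcinkiewicz-type interpolation (or direct computation of the distribution of $M^{w_2}\chi_E$, which is at most $\min(1,\,2w_2(E)/t)$) gives the $L^{p',1}$ estimate with a constant depending only on $p$. A secondary technical check is that characterization (3) of Theorem \ref{another characterization restricted weak type} is quantitatively comparable to $[\cdot]_{\widetilde{A}^{\mathcal R}}$. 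If needed, this can be verified directly by Hölder's inequality: bound $\nu_\lambda(I\cap J)=\int_{I\cap J} v_\lambda w^{-1}\,w\,dx\le\|\chi_I v_\lambda w^{-1}\|_{L^{r',\infty}(w)}\,w(J)^{1/r}$.
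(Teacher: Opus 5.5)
Your proposal is correct and follows the paper's proof essentially step by step. You choose $r$ with $w_2\in\widetilde{A}^{\mathcal R}_{r,\lambda-\frac12}$ and use the $\|\cdot\|$ form of the restricted constant to get $w_2(I)\lesssim\eta^{-r}[w_2]^r w_2(E_I)$. Note that the inequality $\|\cdot\|\le r[\cdot]$ you need there is Kolmogorov's inequality, i.e.\ Proposition \ref{weight constant compare2}, not the proof of Theorem \ref{another characterization restricted weak type}, which gives the reverse direction. You then dominate the sum by $\int \mathcal{M}f\cdot M_{w_2}g\,w_2$ and close with Lorentz H\"older, Theorem \ref{characterization restricted weak type of HL}, and a $w_2$-independent $L^{p',1}$ bound for the weighted maximal function.

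The only differences from the paper are cosmetic. You test the weak norm against $\chi_E$ rather than general $g\in L^{p',1}$. You also use the non-dyadic $M^{w_2}$ with a one-dimensional covering argument, whereas the paper uses the sparse-restricted $M^{\mathcal S}_{w_2}$ with the Lerner--Nazarov weak $(1,1)$ bound and interpolation.
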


To establish quantitative restricted weighted weak type estimates for sparse operators associated to $BMO$ functions, we need more preparation tools, among which is the following characterizations of restricted weighted weak type estimates of Orlicz maximal functions associated to $\Delta_\lambda$. Unlike the classical case, we emphasize that extra doubling condition of $w_2$ is needed due to the intrinsic structure of Bessel operator.
\begin{theorem}\label{characterization two weight restricted weak type estimate of Orlicz maximal}
Let $1<p<\infty$, $\lambda>-1/2, \neq0$ and $\phi$ be a Young function. Let $w_1$ be a weight and $w_2$ be a doubling weight. Then
\begin{equation*}
\|\mathcal{M}_{\phi,\nu_\lambda}f\|_{L^{p,\infty}(\mathbb{R}_+,w_2dx)}\lesssim\|f\|_{L^{p,1}(\mathbb{R}_+,w_1dx)}
\end{equation*}
if and only if $(w_1,w_2)\in\widetilde{A}_{p,\lambda,\phi}^\mathcal{R}$, where the weight constant of $\widetilde{A}_{p,\lambda,\phi}^\mathcal{R}$ and $\mathcal{M}_{\phi,\nu_\lambda}$ are defined by
\begin{align*}
\|w_1,w_2\|_{\widetilde{A}_{p,\lambda,\phi}^\mathcal{R}}:=\sup_{I\subset\mathbb{R}_+}\sup_{J\subset I}\|\chi_J\|_{\phi,I,\nu_\lambda}\Big(\frac{w_2(I)}{w_1(J)}\Big)^{1/p}<\infty
\end{align*}
and
\begin{align*}
\mathcal{M}_{\phi,\nu_\lambda}f(x):=\sup_{I\ni x,I\subset\mathbb{R}_+}\|f\|_{\phi,I,\nu_\lambda}\chi_I(x),
\end{align*}
respectively. Moreover, if $(w_1,w_2)\in\widetilde{A}_{p,\lambda,\phi}^\mathcal{R}$ and $w_2\in \widetilde{A}_{\infty,\lambda}$, there exists $r\geq1$ such that
\begin{align}\label{quantitative two weight restricted weak type estimate of Orlicz maximal}
\|\mathcal{M}_{\phi,\nu_\lambda}f\|_{L^{p,\infty}(\mathbb{R}_+,w_2dx)}\lesssim
[w_2]_{\widetilde{A}_{r,\lambda}^\mathcal{R}}^{r/p}\|w_1,w_2\|_{\widetilde{A}_{p,\lambda,\phi}^\mathcal{R}}
\|f\|_{L^{p,1}(\mathbb{R}_+,w_1dx)}.
\end{align}
\end{theorem}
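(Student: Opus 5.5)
The plan has three parts: necessity, sufficiency, and the quantitative bound. The necessity part tests the inequality on characteristic functions. Fix an interval $I\subset\mathbb{R}_+$ and a measurable $J\subset I$ with $0<w_1(J)<\infty$, and take $f=\chi_J$. For every $x\in I$ we have $\mathcal{M}_{\phi,\nu_\lambda}\chi_J(x)\ge\|\chi_J\|_{\phi,I,\nu_\lambda}$. Also $\|\chi_J\|_{L^{p,1}(w_1dx)}\sim w_1(J)^{1/p}$. Testing the weak-type norm at the level $t=\|\chi_J\|_{\phi,I,\nu_\lambda}/2$ then gives
\begin{align*}
\|\chi_J\|_{\phi,I,\nu_\lambda}\,w_2(I)^{1/p}\lesssim w_1(J)^{1/p},
\end{align*}
which is exactly $\|w_1,w_2\|_{\widetilde{A}_{p,\lambda,\phi}^\mathcal{R}}<\infty$. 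This direction needs no doubling.

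For sufficiency, I would first reduce to characteristic functions, using the standard fact that restricted weak type $(p,p)$ for an operator that is sublinear (or, for Orlicz averages, quasi-sublinear) follows from testing on sets $f=\chi_E$. For $\mathcal{M}_{\phi,\nu_\lambda}$ this reduction goes through the decomposition of $L^{p,1}$ functions into level sets together with the lattice property of $\|\cdot\|_{\phi,I,\nu_\lambda}$; it is what Theorem \ref{another characterization restricted weak type} ((1)$\Leftrightarrow$(2)) does for $\mathcal{M}_\lambda$. So the goal is
\begin{align*}
w_2(\{\mathcal{M}_{\phi,\nu_\lambda}\chi_E>t\})\lesssim t^{-p}\,w_1(E).
\end{align*}
To prove this, I would take a compact $K$ inside the level set and cover it by intervals $I$ with $\|\chi_{E\cap I}\|_{\phi,I,\nu_\lambda}>t$. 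Using the Vitali-type selection on $(\mathbb{R}_+,|\cdot|)$, which needs no measure, I would extract disjoint $I_j$ with $K\subset\bigcup 3I_j$. Doubling of $w_2$ then gives
\begin{align*}
w_2(K)\lesssim\sum_j w_2(I_j)\le \|w_1,w_2\|^p_{\widetilde{A}_{p,\lambda,\phi}^\mathcal{R}}\sum_j\frac{w_1(E\cap I_j)}{\|\chi_{E\cap I_j}\|_{\phi,I_j,\nu_\lambda}^p}\le t^{-p}\|w_1,w_2\|^p_{\widetilde{A}_{p,\lambda,\phi}^\mathcal{R}}\,w_1(E).
\end{align*}
This is precisely where the doubling hypothesis on $w_2$ enters: we cannot pass to a centered operator via Besicovitch, so we must dilate the intervals and pay the doubling constant of $w_2$.

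For the quantitative estimate \eqref{quantitative two weight restricted weak type estimate of Orlicz maximal}, the task is to replace the doubling constant of $w_2$ with $[w_2]_{\widetilde{A}_{r,\lambda}^\mathcal{R}}^{r/p}$. Since $w_2\in\widetilde{A}_{\infty,\lambda}$, Theorem \ref{characterization A} gives $w_2\in\widetilde{A}_{r,\lambda}\subset\widetilde{A}_{r,\lambda}^\mathcal{R}$ for some $r\ge1$. Applying Theorem \ref{another characterization restricted weak type}(3) with $w_1=w_2$ and the pair $I'=3I$, $J=I$ gives
\begin{align*}
w_2(3I)\le [w_2]_{\widetilde{A}_{r,\lambda}^\mathcal{R}}^{r}\Big(\frac{\nu_\lambda(3I)}{\nu_\lambda(I)}\Big)^{r}w_2(I)\lesssim [w_2]_{\widetilde{A}_{r,\lambda}^\mathcal{R}}^{r}\,w_2(I),
\end{align*}
using that $\nu_\lambda$ is doubling. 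Thus the doubling constant of $w_2$ is controlled by $[w_2]^{r}$. Taking $p$-th roots in the covering estimate yields the stated factor $r/p$, at the cost of the equivalence constants between $[\cdot]$ and $\|\cdot\|$ from Theorem \ref{another characterization restricted weak type}.

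I expect the main obstacle to be the reduction from general $f\in L^{p,1}$ to characteristic functions. The Orlicz maximal operator is only quasi-sublinear, and the norms $\|\cdot\|_{\phi,I,\nu_\lambda}$ do not split additively over level sets. My first attempt would be the dyadic level-set decomposition $f\le\sum_k 2^{k+1}\chi_{E_k}$, combined with a weak-type summation lemma for $L^{p,\infty}$ quasi-norms: either use that $L^{p,\infty}$ is normable for $p>1$, or follow the argument in the classical Orlicz restricted weak-type characterizations, adapted here to $\nu_\lambda$.
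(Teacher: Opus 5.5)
Your proposal is correct and is essentially the paper's proof. The paper factors your covering step through the pointwise bound $\mathcal{M}_{\phi,\nu_\lambda}(\chi_J)\le\|w_1,w_2\|_{\widetilde{A}_{p,\lambda,\phi}^{\mathcal{R}}}\,M_{w_1,w_2}(\chi_J)^{1/p}$, where $M_{w_1,w_2}f(x)=\sup_{I\ni x}w_2(I)^{-1}\int_I|f|w_1\,dx$, and proves weak type $(1,1)$ for $M_{w_1,w_2}$ by a Vitali covering and the quantitative doubling $w_2(\eta I)\lesssim[w_2]_{\widetilde{A}_{r,\lambda}^{\mathcal{R}}}^{r}w_2(I)$ (Lemma \ref{quantitative doubling}, the same computation you do via condition (3)); it then passes to general $f$ via $|f|\le 2\sum_k 2^k\chi_{J_k}$.

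The step you flag as the main obstacle is routine. Since $\phi$ is convex, $\|\cdot\|_{\phi,I,\nu_\lambda}$ is a genuine norm with the Fatou property, so $\mathcal{M}_{\phi,\nu_\lambda}$ is countably sublinear, not merely quasi-sublinear. Together with the normability of $L^{p,\infty}$ for $p>1$, this gives the summation exactly as in the paper.
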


The following theorem is concerned with quantitative restricted weighted weak type estimates of sparse operators associated to $BMO$ functions.
\begin{theorem}\label{quantitative two weight restricted weak type estimate for sparse operator of commutator 2}
Let $1<p<\infty$, $\Phi(t)=t\log(e+t)^m$ with $m\in\mathbb{Z}^+$ and $\mathcal{S}$ be an $\eta$-sparse family. If $w_2\in\widetilde{A}_{\infty,\lambda-\frac{1}{2}}$, $(w_1,w_2)\in\widetilde{A}_{p,\lambda-\frac{1}{2},\Phi}^\mathcal{R}$ and $b\in BMO_{\Delta_\lambda}(\mathbb{R}_+)$. Then there exists $r\geq1$ such that
\begin{align*}
&\|\mathcal{A}_{\mathcal{S},b}^mf\|_{L^{p,\infty}(\mathbb{R}_+,w_2dx)}
+\|(\mathcal{A}_{\mathcal{S},b}^m)^{\ast}f\|_{L^{p,\infty}(\mathbb{R}_+,w_2dx)}\\
&\quad\lesssim\|b\|_{BMO_{\Delta_\lambda}(\mathbb{R}_+)}^m
[w_2]_{\widetilde{A}_{\infty,\lambda-\frac{1}{2}}}^m
[w_2]_{\widetilde{A}_{r,\lambda-\frac{1}{2}}^\mathcal{R}}^{r+\frac{r}{p}}
\|w_1,w_2\|_{\widetilde{A}_{p,\lambda-\frac{1}{2},\Phi}^\mathcal{R}}\|f\|_{L^{p,1}(\mathbb{R}_+,w_1dx)},
\end{align*}
where
\begin{align*}
\mathcal{A}_{\mathcal{S},b}^mf(x):=\sum_{I\in\mathcal{S}}\frac{1}{\mu(I)}|b(x)-b_{I,\mu}|^m
\int_I|f(t)|d\mu(t)\chi_I(x),~b_{I,\mu}:=\frac{1}{\mu(I)}\int_Ib(t)d\mu(t)
\end{align*}
and
\begin{align*}
(\mathcal{A}_{\mathcal{S},b}^m)^{\ast}f(x):=\sum_{I\in\mathcal{S}}\frac{1}{\mu(I)}
\int_I|b(z)-b_{I,\mu}|^m|f(z)|d\mu(z)\chi_I(x).
\end{align*}
\end{theorem}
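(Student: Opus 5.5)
We prove Theorem \ref{quantitative two weight restricted weak type estimate for sparse operator of commutator 2} by duality for $L^{p,\infty}$, a localization of $b-b_{I,\mu}$ via a sparse/John--Nirenberg argument, and reduction to the Orlicz maximal estimate of Theorem \ref{characterization two weight restricted weak type estimate of Orlicz maximal}. Throughout, $\mu$ coincides with $\nu_{\lambda-\frac12}$, so every result is applied with $\lambda$ replaced by $\lambda-\frac12$. Because $L^{p,\infty}(w_2dx)$ is only quasi-normed, we test against sets rather than use a plain dual pairing. By the standard Kolmogorov-type characterization, it suffices to show that for every measurable $E$ with $0<w_2(E)<\infty$ there is $E'\subset E$ with $w_2(E')\ge \frac12 w_2(E)$ and
\begin{align*}
\int_{E'}\mathcal{A}_{\mathcal{S},b}^mf\,w_2\,dx\lesssim C\,w_2(E)^{1/p'}\|f\|_{L^{p,1}(w_1dx)},
\end{align*}
and similarly for the adjoint. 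We take $E'=E\setminus\{\mathcal{M}_{\lambda-\frac12}(\chi_E)>\delta\}$ with small $\delta$, or the analogous exceptional set built from $\mathcal{M}_{\Phi,\mu}f$. The weak bound for $\mathcal{M}_{\lambda-\frac12}$ from Theorem \ref{characterization restricted weak type of HL}, applied with $r$ and the weight $w_2\in\widetilde{A}^{\mathcal{R}}_{r,\lambda-\frac12}$, is what produces the factor $[w_2]_{\widetilde{A}^{\mathcal R}_{r}}^{r}$.

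\textbf{Term $(\mathcal{A}_{\mathcal{S},b}^m)^{\ast}$.} For each $I\in\mathcal{S}$ we use the generalized H\"older inequality with $\Phi(t)=t\log(e+t)^m$ and its complementary function $\exp(t^{1/m})$. Combined with John--Nirenberg for $BMO_{\Delta_\lambda}$ on the space of homogeneous type $(\mathbb{R}_+,|\cdot|,\mu)$, this gives
\begin{align*}
\frac{1}{\mu(I)}\int_I|b-b_{I,\mu}|^m|f|\,d\mu\lesssim\|b\|_{BMO}^m\,\|f\|_{\Phi,I,\mu}.
\end{align*}
Hence
\begin{align*}
\int_{E'}(\mathcal{A}_{\mathcal{S},b}^m)^{\ast}f\,w_2\lesssim\|b\|^m\sum_{I\in\mathcal{S}}\|f\|_{\Phi,I,\mu}\,w_2(I\cap E').
\end{align*}
Sparseness and $w_2\in\widetilde{A}_{\infty,\lambda-\frac12}$ allow us to replace $w_2(I\cap E')$ by $w_2$ of the disjoint pieces $E_I$. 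This step uses the characterization (2) of Theorem \ref{characterization A} to transfer the sparse condition from $\mu$-measure to $w_2$-measure, and it costs the factor $[w_2]_{\widetilde{A}_\infty}$. Once the disjoint pieces are in place, the sum is bounded by $\int_{E'}\mathcal{M}_{\Phi,\mu}f\,w_2$. We then estimate this integral through the layer-cake formula restricted to $E'$ together with the weak bound \eqref{quantitative two weight restricted weak type estimate of Orlicz maximal}. That weak bound supplies $[w_2]_{\widetilde{A}^{\mathcal R}_r}^{r/p}\|w_1,w_2\|_{\widetilde{A}^{\mathcal R}_{p,\Phi}}$, and the doubling of $w_2$ follows from $w_2\in\widetilde{A}_\infty$.

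\textbf{Term $\mathcal{A}_{\mathcal{S},b}^m$.} Here we move $|b-b_{I,\mu}|^m$ onto the test function, writing
\begin{align*}
\int_{E'}\mathcal{A}^m_{\mathcal S,b}f\,w_2=\sum_I\langle|f|\rangle_{I,\mu}\int_{I\cap E'}|b-b_{I,\mu}|^m w_2\,dx.
\end{align*}
The $w_2$-integral is controlled by the weighted John--Nirenberg inequality $\|b-b_{I,\mu}\|_{\exp L^{1/m}(w_2,I)}\lesssim[w_2]_{\widetilde{A}_\infty}\|b\|$. This inequality follows from Theorem \ref{characterization A}(2) applied to the level sets of $b$, and it gives the factor $[w_2]^m_{\widetilde{A}_\infty}$ up to logarithmic losses absorbed into the power. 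After that, the argument reduces to the same sparse-plus-maximal estimate as in Theorem \ref{quantitative two weight restricted weak type of sparse operator}, with $\mathcal{M}_{\lambda-\frac12}$ dominated by $\mathcal{M}_{\Phi,\mu}$.

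\textbf{Main obstacle.} The delicate step is converting the sparse sum into a maximal-function integral over the exceptional-set-removed $E'$ while retaining the restricted weak $(p,p)$ structure. In particular, one must verify that the disjoint sets $E_I$ have $w_2(E_I)\gtrsim w_2(I)$ uniformly. This is where $w_2\in\widetilde{A}_{\infty,\lambda-\frac12}$ is essential, and where the accumulation of the exponent $r+\frac rp$ must be tracked carefully.
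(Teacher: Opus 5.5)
There is a genuine gap at the step you yourself flag as delicate. After disjointifying you claim $\sum_{I\in\mathcal S}\|f\|_{\Phi,I,\mu}\,w_2(I\cap E')\lesssim\int_{E'}\mathcal M_{\Phi,\mu}f\,w_2\,dx$, and this is false. The sets $E_I$ on which you would evaluate $\inf_{E_I}\mathcal M_{\Phi,\mu}f$ need not meet $E'$. Moreover, the left side is the sparse operator itself integrated over $E'$, and the maximal function does not dominate it.

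For a concrete failure, take the sparse chain $I_k=(0,2^{-k})$, $f=\chi_{(0,1)}$, $w_2(x)=x^{2\lambda}$ and $E=(0,\varepsilon)$. Here $\mathcal M_{\Phi,\mu}f$ is bounded, so your exceptional set is empty and $E'=E$. The left side then exceeds the right by a factor $\sim\log(1/\varepsilon)$.

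What disjointification actually gives is $w_2(I\cap E')=\frac{w_2(I\cap E')}{w_2(I)}\,w_2(I)\lesssim\frac{w_2(I\cap E')}{w_2(I)}\,w_2(E_I)$. This yields a bound by $\int_{\mathbb R_+}\mathcal M_{\Phi,\mu}f\cdot M^{\mathcal S}_{w_2}(\chi_{E'})\,w_2\,dx$, in which the weighted dyadic maximal operator falls on the test function and is not supported in $E'$. The missing idea is to close this with H\"older's inequality in $L^{p,\infty}(w_2dx)\times L^{p',1}(w_2dx)$, together with the bound $\|M^{\mathcal S}_{w_2}g\|_{L^{p',1}(w_2dx)}\lesssim\|g\|_{L^{p',1}(w_2dx)}$ uniformly in $w_2$, which is \eqref{M_w}. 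This is the paper's argument, run with an arbitrary $0\le g\in L^{p',1}(w_2dx)$.

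No major subset is needed. For $p>1$ one has directly $\int_E|F|w_2\le p'\|F\|_{L^{p,\infty}(w_2dx)}w_2(E)^{1/p'}$ together with its converse. Your other choice, $E\setminus\{\mathcal M_{\lambda-\frac12}(\chi_E)>\delta\}$, is a.e.\ empty for $\delta<1$ anyway.

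The same defect affects your treatment of $\mathcal A^m_{\mathcal S,b}$. After the weighted generalized H\"older inequality, the test function enters as $\|\chi_{E'}\|_{L(\log L)^m,I,w_2}$. These logarithmic factors are not ``absorbed into the power'' of a weight constant. They are controlled by the $L^{p',1}(w_2dx)$-boundedness of $\mathcal M^{\mathcal S}_{L(\log L)^m,w_2}$, uniformly in $w_2$; see \eqref{Lorentz estimate for weighted Orlicz maximal}.

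Your bookkeeping of the constants is also off.

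\textbf{Where $[w_2]^{r}$ comes from.} The factor $[w_2]^{r}_{\widetilde A^{\mathcal R}_{r,\lambda-\frac12}}$ is the price of the sparse transfer $w_2(I)\lesssim w_2(E_I)$. It comes from the set form of the $\widetilde A^{\mathcal R}_r$ condition (Proposition \ref{weight constant compare2}, with $\nu_{\lambda-\frac12}=\mu$): $\mu(E_I)/\mu(I)\le\|w_2\|_{\widetilde A^{\mathcal R}_{r,\lambda-\frac12}}\big(w_2(E_I)/w_2(I)\big)^{1/r}$, which is \eqref{sparse property}. It does not come from any weak bound for $\mathcal M_{\lambda-\frac12}(\chi_E)$.

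\textbf{Why $\widetilde A_\infty$ cannot do the transfer.} Theorem \ref{characterization A}(2) and Corollary \ref{Ratio property} cannot perform this transfer at cost $[w_2]_{\widetilde A_{\infty,\lambda-\frac12}}$. Applied to $J=I\setminus E_I$, they only give $w_2(I\setminus E_I)\le 2(1-\eta)^{1/(c_\lambda[w_2]_{\widetilde A_{\infty,\lambda-\frac12}})}w_2(I)$. The right-hand side is at least $w_2(I)$ once $[w_2]_{\widetilde A_{\infty,\lambda-\frac12}}$ is large.

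\textbf{Where the other factors enter.} In the paper, $[w_2]^m_{\widetilde A_{\infty,\lambda-\frac12}}$ enters only through the weighted John--Nirenberg estimate of Lemma \ref{Orlicz norm involing A}. The factor $[w_2]^{r/p}_{\widetilde A^{\mathcal R}_{r,\lambda-\frac12}}\|w_1,w_2\|_{\widetilde A^{\mathcal R}_{p,\lambda-\frac12,\Phi}}$ enters through \eqref{quantitative two weight restricted weak type estimate of Orlicz maximal}, for the adjoint term. For $\mathcal A^m_{\mathcal S,b}$, the paper uses Theorem \ref{characterization restricted weak type of HL} together with $[w_1,w_2]_{\widetilde A^{\mathcal R}_{p,\lambda-\frac12}}\le\|w_1,w_2\|_{\widetilde A^{\mathcal R}_{p,\lambda-\frac12,\Phi}}$.

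The remaining ingredients you list are the right ones: the generalized H\"older inequality with $\exp L^{1/m}$ and $L(\log L)^m$, and the unweighted and weighted John--Nirenberg estimates.
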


We organize the rest of the paper as follows. We give some basic definitions and useful conclusions in Section $2$. In Section $3$, we give the proofs of Theorems \ref{Reverse Holder inequality theorem}-\ref{mixed quantitative weighted strong estimates}. In Section $4$, we prove Theorems \ref{Rubio de Francia extrapolation theorem for new weights}, Corollaries \ref{vector-value extrapolation}-\ref{weak type extrapolation}. The proofs of Theorems \ref{characterization restricted weak type of HL}-\ref{quantitative two weight restricted weak type estimate for sparse operator of commutator 2} are given in Section $5$. Finally, the applications of our theorems will be presented in Section $6$.

Throughout this paper, $d\mu(x)=x^{2\lambda}dx$, $d\nu_\lambda(x)=x^{2\lambda+1}dx$. We denote $f\lesssim g$, $f\thicksim g$ if $f\leq Cg$ and $f\lesssim g \lesssim f$, respectively. For $x,r,\sigma>0$, $I(x,r):=(x-r,x+r)\cap\mathbb{R}_+$, $I(x,\sigma r):=(x-\sigma r,x+\sigma r)\cap\mathbb{R}_+$. $\chi_I$ represents the characteristic function of measurable set $I$, $f_{I,\mu}$ represents the average $\mu(I)^{-1}\int_If(x)d\mu(x)$ of $f$ over the interval $I$.

\section{Preliminaries}
\subsection{A system of dyadic cubes and sparse family}
Let us recall the following definition of dyadic systems in a space of homogeneous type $X$.
\begin{definition}{\rm(\cite{Chr,HyK})}
Let $(X,d,\mu)$ be a space of homogeneous type. A countable family $\mathcal{D}:=\bigcup_{k\in\mathbb{Z}}\mathcal{D}_k$, $\mathcal{D}_k:=\{Q_\alpha^k:\alpha\in\mathcal{I}_k\}$, of Borel sets $Q_\alpha^k\subset X$ is called a system of dyadic cubes with parameters $0<a_1\leq A_1\leq\infty$ and $\delta\in(0,1)$ if the following properties hold:
\begin{itemize}
\item [(1)] $X=\bigcup_{\alpha\in\mathcal{I}_k}Q_\alpha^k~ (disjoint~ union) $ for all $k\in\mathbb{Z}$;
\item [(2)] if $l\geq k$, $\alpha\in\mathcal{I}_k,\tau\in\mathcal{I}_l$, then either $Q_{\tau}^l\subset Q_\alpha^k$ or $Q_{\tau}^l\bigcap Q_\alpha^k=\emptyset$;
\item [(3)] for every $k\in\mathbb{Z}$, $\alpha\in\mathcal{I}_k$ and each $l\leq k$, there exists a unique $\beta\in\mathcal{I}_k$ such that $Q_\alpha^k\subset Q_\beta^l$;
\item [(4)] for every $k\in\mathbb{Z}$ and $\alpha\in\mathcal{I}_k$, there exists at most $M(a~ fixed ~constant)$ $\beta$ such that $Q_\beta^{k+1}\subset Q_\alpha^k$, and $Q_\alpha^k=\bigcup_{Q\in\mathcal{D}_{k+1},Q\subset Q_\alpha^k}Q$;
\item [(5)] for every $k\in\mathbb{Z}$ and $\alpha\in\mathcal{I}_k$, there exists $x_\alpha^k$ such that
    \begin{align*}
    B(x_\alpha^k,a_1\delta^k)\subset Q_\alpha^k\subset B(x_\alpha^k,A_1\delta^k)=:B(Q_\alpha^k);
    \end{align*}
\item [(6)] if $l\geq k$ and $Q_\beta^l\subset Q_\alpha^k$, then $B(Q_\beta^l)\subset B(Q_{\alpha}^k)$.
\end{itemize}
The set $Q_\alpha^k$ is called a dyadic cube of generation $k$ with centre point $x_\alpha^k\in
Q_\alpha^k$ and sidelength $\delta^k$.
\end{definition}

\begin{definition}\rm(\cite{DuoGonKufLiWiYa})
Let $\eta\in(0,1)$. A collection $\mathcal{S}\subseteq\mathcal{D}$ of dyadic cubes is said to be an $\eta$-sparse family if for any cube $Q\in\mathcal{S}$, there exists a measurable subset $E_Q\subseteq Q$ such that $\mu(E_Q)\geq\eta\mu(Q)$, and the sets $\{E_Q\}_{Q\in\mathcal{S}}$ have only finite overlap.
\end{definition}

\subsection{Lorentz spaces and spaces of bounded mean oscillation}
Let $p>0$ and $w$ be a weight. $L^{p,1}(\mathbb{R}_+,wdx)$ is the space of functions such that
\begin{equation*}
\|f\|_{L^{p,1}(\mathbb{R}_+,wdx)}:=p\int_0^\infty w(\{x\in\mathbb{R}^n:|f(x)|>y\})^{1/p}dy<\infty.
\end{equation*}
We will ultimate the following duality inequality, see \cite[Theorem 1.4.16.(v)]{Gra}:
\begin{align*}
\frac{1}{p}\|g\|_{L^{p',\infty}(\mathbb{R}_+,wdx)}&\leq
\sup\Big\{\int_{\mathbb{R}_+}|f(x)g(x)|w(x)dx:\|f\|_{L^{p,1}(\mathbb{R}_+,wdx)}\leq1\Big\}\\
&\leq\|g\|_{L^{p',\infty}(\mathbb{R}_+,wdx)},~1<p<\infty.
\end{align*}
We shall also use the following summation lemma.
\begin{lemma}{\rm(\cite{R})}\label{summation lemma}
Let $0<p<\infty$ and $\gamma\geq\max\{p,1\}$. Given a measurable function $f$ and a weight $w$, if $\{E_j\}$ is a collection of measurable sets such that $\sum_{j\geq1}\chi_{E_j}\leq C$, then
\begin{equation*}
\sum_{j\geq1}\|f\chi_{E_j}\|_{L^{p,1}(\mathbb{R}_+,wdx)}^\gamma\leq C^{\gamma/p}\|f\|_{L^{p,1}(\mathbb{R}_+,wdx)}^\gamma.
\end{equation*}
\end{lemma}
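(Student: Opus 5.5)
We argue directly from the definition $\|g\|_{L^{p,1}(\mathbb{R}_+,wdx)}=p\int_0^\infty w(\{|g|>y\})^{1/p}\,dy$, combining Minkowski's integral inequality with the elementary superadditivity of $t\mapsto t^{s}$ for $s\geq1$. The two hypotheses on $\gamma$ are used separately: $\gamma\geq1$ for Minkowski, and $\gamma\geq p$ for superadditivity.

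First, for each $j$ and $y>0$ set $a_j(y):=w(\{x:|f(x)|\chi_{E_j}(x)>y\})=w(\{|f|>y\}\cap E_j)$. Then
\begin{align*}
\|f\chi_{E_j}\|_{L^{p,1}(\mathbb{R}_+,wdx)}=p\int_0^\infty a_j(y)^{1/p}\,dy .
\end{align*}
Since $\gamma\geq1$, the $\ell^\gamma$ norm is a genuine norm. Minkowski's integral inequality (the $\ell^\gamma$ norm of an integral is at most the integral of the $\ell^\gamma$ norms) therefore gives
\begin{align*}
\Big(\sum_{j\geq1}\|f\chi_{E_j}\|_{L^{p,1}(\mathbb{R}_+,wdx)}^\gamma\Big)^{1/\gamma}
\leq p\int_0^\infty\Big(\sum_{j\geq1}a_j(y)^{\gamma/p}\Big)^{1/\gamma}dy .
\end{align*}

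Second, since $\gamma/p\geq1$, we have $\sum_j a_j(y)^{\gamma/p}\leq\big(\sum_j a_j(y)\big)^{\gamma/p}$. The bounded overlap $\sum_j\chi_{E_j}\leq C$ yields
\begin{align*}
\sum_j a_j(y)=\int_{\{|f|>y\}}\sum_j\chi_{E_j}\,w\,dx\leq C\,w(\{|f|>y\}).
\end{align*}
Hence the integrand above is bounded by $C^{1/p}w(\{|f|>y\})^{1/p}$. Integrating and multiplying by $p$ gives $\big(\sum_j\|f\chi_{E_j}\|^\gamma\big)^{1/\gamma}\leq C^{1/p}\|f\|_{L^{p,1}(\mathbb{R}_+,wdx)}$. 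Raising both sides to the power $\gamma$ is exactly the claim.

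There is no real obstacle. The only point needing care is the order of the two steps: Minkowski must be applied first, while the sum over $j$ still sits outside the $y$-integral, and the superadditivity inequality only afterwards, pointwise in $y$.
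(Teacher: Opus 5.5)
Your proof is correct and complete. Minkowski's integral inequality in $\ell^\gamma$ (which needs $\gamma\geq1$), followed by the pointwise estimate $\sum_j a_j(y)^{\gamma/p}\leq\bigl(\sum_j a_j(y)\bigr)^{\gamma/p}\leq C^{\gamma/p}w(\{|f|>y\})^{\gamma/p}$ (which needs $\gamma\geq p$ and the bounded overlap), gives exactly the claimed bound; the paper gives no proof of its own and quotes the lemma from \cite{R}, and yours is the standard argument for it.
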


Next, we recall the definition of $BMO_{\Delta_\lambda}(\mathbb{R}_+)$.
\begin{definition}{\rm(\cite{DuoLiWiYa})}
The function space $BMO_{\Delta_\lambda}(\mathbb{R}_+)$ is the set of $b\in L_{loc}^{1}(\mathbb{R}_+,\mu)$ such that
\begin{align*}
\|b\|_{BMO_{\Delta_\lambda}(\mathbb{R}_+)}:=\sup_{I\subset \mathbb{R}_+}\frac{1}{\mu(I)}\int_I|b(t)-b_{I,\mu}|d\mu(t)<\infty,
\end{align*}
where the supremum is taken over all intervals $I\subset\mathbb{R}_+$.
\end{definition}
Treating $(\mathbb{R}_+,|\cdot|,d\mu)$ as a space of homogeneous type, for $b\in BMO_{\Delta_\lambda}(\mathbb{R}_+)$, recall the following standard John-Nirenberg inequality in \cite{Kr}:
\begin{align}\label{John-Nirengerg inequality}
\mu(\{x\in I\subset \mathbb{R}_+:|b(x)-b_{I,\mu}|>\gamma\})\leq c_1e^{-\frac{\gamma}{c_2\|b\|_{BMO_{\Delta_\lambda}(\mathbb{R}_+)}}}\mu(I).
\end{align}
From this and layer-cake formula, there exists a positive constant $c$ such that for any interval $I\subset\mathbb{R}_+$,
\begin{align}\label{JN corollary}
|s|<(ce\|b\|_{BMO_{\Delta_\lambda}(\mathbb{R}_+)})^{-1}&\Rightarrow
\frac{1}{\mu(I)}\int_Ie^{s|b(x)-b_{I,\mu}|}d\mu(x)\\
&\qquad\leq 1+e\frac{(ce)^{-1}|s|\|b\|_{BMO_{\Delta_\lambda}(\mathbb{R}_+)}}
{1-(ce)^{-1}|s|\|b\|_{BMO_{\Delta_\lambda}(\mathbb{R}_+)}}.\nonumber
\end{align}
\subsection{Young functions, Luxemburg norms} Recall that $\Phi:[0,\infty)\rightarrow[0,\infty)$ is called a Young function if it is continuous, increasing, convex and satisfies $\Phi(0)=0$ and $\lim_{t\rightarrow\infty}\Phi(t)/t=\infty$. The corresponding complementary function of $\Phi$, denoted by $\bar{\Phi}:[0,\infty)\rightarrow[0,\infty)$, is given by
\begin{equation*}
\bar{\Phi}(t)=\sup_{s>0}\{st-\Phi(s)\}.
\end{equation*}
Let $\Phi$ be a Young function, $u$ be a measure. The Luxemburg norms $\|f\|_{\Phi,Q,u}$ is defined by
\begin{equation*}
\|f\|_{\Phi,Q,u}:=\inf\Big\{\lambda>0:\frac{1}{u(Q)}\int_Q\Phi\Big(\frac{|f(x)|}
{\lambda}\Big)du(x)\leq1\Big\}.
\end{equation*}
When $du=d\mu$ and $du=wdx$, we denote $\|f\|_{\Phi,Q,u}$ by $\|f\|_{\Phi,Q,\mu}$ and $\|f\|_{\Phi,Q,w}$, respectively. Let $A$ be a Young function, we will frequently use the following generalized H\"{o}lder's inequality,
\begin{align*}
\frac{1}{u(Q)}\int_Q|fg|du\leq2\|f\|_{A,Q,u}\|g\|_{\bar{A},Q,u}.
\end{align*}
If we take $\Phi(t)=e^t-1$, for $b\in BMO_{\Delta_\lambda}(\mathbb{R}_+)$, \eqref{JN corollary} implies that
\begin{equation*}
\|b-b_{I,\mu}\|_{\exp L,I,\mu}\lesssim\|b\|_{BMO_{\Delta_\lambda}(\mathbb{R}_+)}.
\end{equation*}

\section{Characterizations of \texorpdfstring{$\widetilde{A}_{\infty,\lambda}$} ~~weights and sharp weighted bounds of Hardy-Littlewood maximal operators associated to $\Delta_\lambda$}
In this section, we prove Theorems \ref{Reverse Holder inequality theorem}-\ref{mixed quantitative weighted strong estimates}. We will borrow some ideas from \cite{Gra,HyPe,HyPeRe}, however, as we stated in the introduction, the weight classes considered in our work differ from those in these references.

\begin{definition}
Let $\mathcal{D}_k|_{I_0}$ be a family of dyadic intervals on $I_0$ with side length $2^{-k}|I_0|$ and $f\in L_{loc}^1(\mathbb{R}_+,d\nu_\lambda)$. The dyadic maximal function of $f$ associated to the interval $I_0$ is defined by
\begin{equation*}
\mathcal{M}_{\lambda,I_0}^d(f)(x):=\sup_kE_{k,\lambda,I_0}f(x),
\end{equation*}
where
\begin{equation*}
E_{k,\lambda,I_0}f(x):=\sum_{I\in\mathcal{D}_k|_{I_0}}
\Big(\frac{1}{\nu_\lambda(I)}\int_I|f|d\nu_\lambda\Big)\chi_{I}(x).
\end{equation*}
\end{definition}

\begin{lemma}\label{reverse holder for M}
Let $w\in\widetilde{A}_{\infty,\lambda}$ with $\lambda>-1/2,\neq0$ and $I_0\subset\mathbb{R}_+$ be an interval. Then for any $\epsilon\in\Big(0,\frac{1}{2^{2\lambda+3}[w]_{\widetilde{A}_{\infty,\lambda}}-1}\Big]$,
\begin{align*}
\frac{1}{\nu_\lambda(I_0)}\int_{I_0}[\mathcal{M}_{\lambda,I_0}^d(wv_\lambda^{-1}\chi_{I_0})(x)]
^{1+\epsilon}d\nu_\lambda(x)\leq2[w]_{\widetilde{A}_{\infty,\lambda}}
\Big(\frac{1}{\nu_\lambda(I_0)}\int_{I_0}w(x)dx\Big)^{1+\epsilon}.
\end{align*}
\end{lemma}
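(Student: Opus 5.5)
Throughout, write $u:=wv_\lambda^{-1}\chi_{I_0}$, so that $\int_I u\,d\nu_\lambda=w(I)$ for every $I\subset I_0$. Put $M:=\mathcal{M}_{\lambda,I_0}^d(u)$ and $\langle u\rangle_{I_0}:=\frac{1}{\nu_\lambda(I_0)}\int_{I_0}w\,dx$. The approach is the Hyt\"{o}nen--P\'{e}rez--Rela layer-cake argument, adapted to the measure $\nu_\lambda$.

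\emph{Reduction to a finite quantity.} To justify absorbing at the end, I first replace $M$ by the truncation $M_K:=\sup_{k\le K}E_{k,\lambda,I_0}u$. This is a finite combination of averages, so $\int_{I_0}M_K^{1+\epsilon}d\nu_\lambda<\infty$ (assuming $w(I_0)<\infty$, which is automatic for $w\in\widetilde A_{\infty,\lambda}$). At the end I let $K\to\infty$ by monotone convergence. The layer-cake formula gives
\[
\int_{I_0}M_K^{1+\epsilon}d\nu_\lambda=\int_0^\infty \epsilon t^{\epsilon-1}\int_{\{M_K>t\}}M_K\,d\nu_\lambda\,dt .
\]
I split the $t$-integral at $t=\langle u\rangle_{I_0}$.

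\emph{Small $t$.} For $t\le\langle u\rangle_{I_0}$, the dyadic maximal function is pointwise at most $\mathcal M_\lambda(wv_\lambda^{-1}\chi_{I_0})$. The definition of $[w]_{\widetilde A_{\infty,\lambda}}$ then bounds this part by
\[
\langle u\rangle_{I_0}^{\epsilon}\,[w]_{\widetilde A_{\infty,\lambda}}\,w(I_0).
\]

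\emph{Large $t$.} For $t>\langle u\rangle_{I_0}$, I use a Calder\'on--Zygmund decomposition: $\{M_K>t\}=\bigcup_j I_j$, where the $I_j$ are the maximal dyadic subintervals of $I_0$ with $\langle u\rangle_{I_j}>t$. The key point is a doubling estimate for dyadic parents. If $\widehat I=[a,a+2h]$ is the parent of $I_j$, then
\[
\nu_\lambda(\widehat I)\le 2^{2\lambda+2}\nu_\lambda(I_j),
\]
since $2\lambda+1>0$ and the worst case is $a=0$ with $I_j$ the left half. Maximality gives $\langle u\rangle_{\widehat I}\le t$, hence $t<\langle u\rangle_{I_j}\le 2^{2\lambda+2}t$.

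On $I_j$, every dyadic interval strictly containing $I_j$ has average $\le t<\langle u\rangle_{I_j}$. Therefore $M_K=\mathcal M^d_{\lambda,I_j}(u\chi_{I_j})\le\mathcal M_\lambda(wv_\lambda^{-1}\chi_{I_j})$ there. The $\widetilde A_{\infty,\lambda}$ condition on $I_j$ yields
\[
\int_{I_j}M_K\,d\nu_\lambda\le [w]_{\widetilde A_{\infty,\lambda}}w(I_j)=[w]_{\widetilde A_{\infty,\lambda}}\langle u\rangle_{I_j}\nu_\lambda(I_j)\le 2^{2\lambda+2}[w]_{\widetilde A_{\infty,\lambda}}\,t\,\nu_\lambda(I_j).
\]
Summing over $j$ gives $\int_{\{M_K>t\}}M_K\,d\nu_\lambda\le 2^{2\lambda+2}[w]_{\widetilde A_{\infty,\lambda}}\,t\,\nu_\lambda(\{M_K>t\})$. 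Integrating against $\epsilon t^{\epsilon-1}dt$, the large-$t$ part is at most
\[
2^{2\lambda+2}[w]_{\widetilde A_{\infty,\lambda}}\frac{\epsilon}{1+\epsilon}\int_{I_0}M_K^{1+\epsilon}d\nu_\lambda .
\]

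\emph{Absorption.} The hypothesis $\epsilon\le (2^{2\lambda+3}[w]_{\widetilde A_{\infty,\lambda}}-1)^{-1}$ is equivalent to $2^{2\lambda+3}[w]_{\widetilde A_{\infty,\lambda}}\epsilon\le 1+\epsilon$. Hence the coefficient in front of $\int_{I_0}M_K^{1+\epsilon}d\nu_\lambda$ is at most $1/2$. Absorbing this term, which is legitimate because it is finite, gives
\[
\int_{I_0}M_K^{1+\epsilon}d\nu_\lambda\le 2[w]_{\widetilde A_{\infty,\lambda}}\langle u\rangle_{I_0}^{\epsilon}w(I_0).
\]
Dividing by $\nu_\lambda(I_0)$ and using $w(I_0)/\nu_\lambda(I_0)=\langle u\rangle_{I_0}$ yields the claim, after letting $K\to\infty$.

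The main obstacle is the localization step for large $t$. One must check that on each $I_j$ the function $M_K$ coincides with the local maximal function of $u\chi_{I_j}$, so that the $\widetilde A_{\infty,\lambda}$ constant can be applied on $I_j$. One must also obtain the sharp dyadic doubling constant $2^{2\lambda+2}$ for $\nu_\lambda$, since this constant is exactly what fixes the admissible range of $\epsilon$.
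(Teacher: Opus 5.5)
Your proposal is correct and follows the paper's proof essentially step by step: the same layer-cake decomposition split at $t=\langle u\rangle_{I_0}$, the same Calder\'on--Zygmund localization to maximal dyadic intervals $I_j$ combined with the parent bound $\nu_\lambda(\widehat I)\le 2^{2\lambda+2}\nu_\lambda(I_j)$, and the same absorption. Your truncation $M_K$ is a worthwhile addition, because the paper absorbs $\int_{I_0}\mathcal{M}^d_{\lambda,I_0}(wv_\lambda^{-1})^{1+\epsilon}\,d\nu_\lambda$ into the left-hand side without first checking that this quantity is finite.
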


\begin{proof}
Since all computations are localized to $I_0$, we can assume $w$ is supported on $I_0$. For brevity, denote
\begin{align*}
\Omega_t:=\{x\in I_0:\mathcal{M}_{\lambda,I_0}^d(wv_\lambda^{-1})(x)>t\}.
\end{align*}
It is direct that
\begin{align*}
&\int_{I_0}\mathcal{M}_{\lambda,I_0}^d(wv_\lambda^{-1})(x)^{1+\epsilon}d\nu_\lambda(x)\\
&\quad=\int_{I_0}\mathcal{M}_{\lambda,I_0}^d(wv_\lambda^{-1})(x)^{\epsilon}
\mathcal{M}_{\lambda,I_0}^d(wv_\lambda^{-1})(x)d\nu_\lambda(x)\\
&\quad=\int_0^\infty\epsilon t^{\epsilon-1}\Big(\int_{\Omega_t}\mathcal{M}_{\lambda,I_0}^d(wv_\lambda^{-1})(x)d\nu_\lambda(x)\Big)dt\\
&\quad\leq\int_0^{(wv_\lambda^{-1})_{I_0,\nu_\lambda}}\epsilon t^{\epsilon-1}\Big(\int_{I_0}\mathcal{M}_{\lambda,I_0}^d(wv_\lambda^{-1})(x)d\nu_\lambda(x)\Big)dt\\
&\qquad+
\int_{(wv_\lambda^{-1})_{I_0,\nu_\lambda}}^\infty\epsilon t^{\epsilon-1}\Big(\int_{\Omega_t}\mathcal{M}_{\lambda,I_0}^d(wv_\lambda^{-1})(x)d\nu_\lambda(x)\Big)dt\\
&\quad=:E+F.
\end{align*}

For the term $E$, since $w$ is supported on $I_0$, by Definition \ref{A}, it is easy to see that
\begin{align*}
E\leq[w]_{\widetilde{A}_{\infty,\lambda}}\Big(\frac{1}{\nu_\lambda(I_0)}\int_{I_0}w(x)dx\Big)^\epsilon w(I_0).
\end{align*}

Next, we tackle $F$. For
\begin{align*}
\frac{1}{\nu_\lambda(I_0)}\int_{I_0}w(x)dx\leq t,
\end{align*}
there exists a collection of maximal disjoint dyadic intervals $I_j\subset I_0$ such that $\Omega_t=\cup_jI_j$ and
\begin{align*}
\frac{1}{\nu_\lambda(I_j)}\int_{I_j}w(x)dx\geq t.
\end{align*}
To proceed with the proof, we first claim that for $x\in I_j$,
\begin{align*}
\mathcal{M}_{\lambda,I_0}^d(wv_\lambda^{-1})(x)=\mathcal{M}_{\lambda,I_0}^d(wv_\lambda^{-1}\chi_{I_j})(x).
\end{align*}
It is trivial that $\mathcal{M}_{\lambda,I_0}^d(wv_\lambda^{-1})(x)\geq
\mathcal{M}_{\lambda,I_0}^d(wv_\lambda^{-1}\chi_{I_j})(x)$. We turn to the converse. By maximal of the intervals in $\{I_j\}_j$, for $x\in I_j$,
\begin{align*}
&\mathcal{M}_{\lambda,I_0}^d(wv_\lambda^{-1})(x)\\
&\quad=\sup_k\sum_{I\in\mathcal{D}_k|_{I_0},I\subset I_j}\Big(\frac{1}{\nu_\lambda(I)}\int_{I}w(x)v_\lambda^{-1}(x)d\nu_\lambda(x)\Big)\chi_I(x)\\
&\quad=\sup_k\sum_{I\in\mathcal{D}_k|_{I_0},I\subset I_j}\Big(\frac{1}{\nu_\lambda(I)}\int_{I}w(x)v_\lambda^{-1}(x)\chi_{I_j}(x)d\nu_\lambda(x)\Big)\chi_I(x)\\
&\quad\leq\mathcal{M}_{\lambda,I_0}^d(wv_\lambda^{-1}\chi_{I_j})(x).
\end{align*}
This proves the claim.

Using the claim and Definition \ref{A} again,
\begin{align}\label{estimate of dyadic maximal}
&\int_{I_j}\mathcal{M}_{\lambda,I_0}^d(wv_\lambda^{-1})(x)d\nu_\lambda(x)\\
&\quad=\int_{I_j}\mathcal{M}_{\lambda,I_0}^d(wv_\lambda^{-1}\chi_{I_j})(x)d\nu_\lambda(x)\nonumber\\
&\quad\leq[w]_{\widetilde{A}_{\infty,\lambda}}w(I_j).\nonumber
\end{align}
Note that $\nu_\lambda(\widetilde{I}_j)\leq2^{2\lambda+2}\nu_\lambda(I_j)$, where $\widetilde{I}_j$ is the dyadic parent of $I_j$. From this and the maximality of the interval $I_j$, we have
\begin{align}\label{w transfrom to v}
w(I_j)\leq w(\widetilde{I}_j)\nu_\lambda(\widetilde{I}_j)^{-1}\nu_\lambda(\widetilde{I}_j)\leq t\nu_\lambda(\widetilde{I}_j)\leq2^{2\lambda+2}t\nu_\lambda(I_j).
\end{align}
This, together with \eqref{estimate of dyadic maximal}, allows us to get
\begin{align*}
F&=\int_{(wv_\lambda^{-1})_{I_0,\nu_\lambda}}^\infty\epsilon t^{\epsilon-1}
\Big(\sum_j\int_{I_j}\mathcal{M}_{\lambda,I_0}^d(wv_\lambda^{-1})(x)d\nu_\lambda(x)\Big)dt\\
&\leq2^{2\lambda+2}[w]_{\widetilde{A}_{\infty,\lambda}}\int_{(wv_\lambda^{-1})_{I_0,\nu_\lambda}}^\infty
\epsilon t^{\epsilon}\sum_j\nu_\lambda(I_j)dt\\
&=2^{2\lambda+2}\epsilon[w]_{\widetilde{A}_{\infty,\lambda}}\int_{(wv_\lambda^{-1})_{I_0,\nu_\lambda}}^\infty
t^{\epsilon}\nu_\lambda(\Omega_t)dt\\
&\leq\frac{2^{2\lambda+2}\epsilon[w]_{\widetilde{A}_{\infty,\lambda}}}{1+\epsilon}
\int_{I_0}\mathcal{M}_{\lambda,I_0}^d(wv_\lambda^{-1})(x)^{1+\epsilon}d\nu_\lambda(x).
\end{align*}

In summary, combined with the estimates of $E$, $F$, we obtain
\begin{align*}
&\int_{I_0}\mathcal{M}_{\lambda,I_0}^d(wv_\lambda^{-1})(x)^{1+\epsilon}d\nu_\lambda(x)\\
&\quad\leq[w]_{\widetilde{A}_{\infty,\lambda}}(wv_\lambda^{-1})_{I_0,\nu_\lambda}^\epsilon w(I_0)+
\frac{2^{2\lambda+2}\epsilon[w]_{\widetilde{A}_{\infty,\lambda}}}{1+\epsilon}
\int_{I_0}\mathcal{M}_{\lambda,I_0}^d(wv_\lambda^{-1})(x)^{1+\epsilon}d\nu_\lambda(x).
\end{align*}
Dividing both sides by $\nu_\lambda(I_0)$, using $\epsilon\in\Big(0,\frac{1}{2^{2\lambda+3}[w]_{\widetilde{A}_{\infty,\lambda}}-1}\Big]$, we get
\begin{align*}
&\frac{1}{\nu_\lambda(I_0)}\int_{I_0}\mathcal{M}_{\lambda,I_0}^d(wv_\lambda^{-1})(x)^{1+\epsilon}
d\nu_\lambda(x)\\
&\quad\leq[w]_{\widetilde{A}_{\infty,\lambda}}(wv_\lambda^{-1})_{I_0,\nu_\lambda}^{1+\epsilon}+
\frac{2^{2\lambda+2}\epsilon[w]_{\widetilde{A}_{\infty,\lambda}}}{1+\epsilon}\frac{1}{\nu_\lambda(I_0)}
\int_{I_0}\mathcal{M}_{\lambda,I_0}^d(wv_\lambda^{-1})(x)^{1+\epsilon}d\nu_\lambda(x)\\
&\quad\leq[w]_{\widetilde{A}_{\infty,\lambda}}(wv_\lambda^{-1})_{I_0,\nu_\lambda}^{1+\epsilon}+
\frac{1}{2\nu_\lambda(I_0)}
\int_{I_0}\mathcal{M}_{\lambda,I_0}^d(wv_\lambda^{-1})(x)^{1+\epsilon}d\nu_\lambda(x).
\end{align*}
Thus,
\begin{align*}
\frac{1}{\nu_\lambda(I_0)}\int_{I_0}[\mathcal{M}_{\lambda,I_0}^d(wv_\lambda^{-1}\chi_{I_0})(x)]
^{1+\epsilon}d\nu_\lambda(x)\leq2[w]_{\widetilde{A}_{\infty,\lambda}}
\Big(\frac{1}{\nu_\lambda(I_0)}\int_{I_0}w(x)dx\Big)^{1+\epsilon}.
\end{align*}
\end{proof}

\begin{proof}[Proof of Theorem \ref{Reverse Holder inequality theorem}]
We first prove (1)$\Rightarrow$(2). Let $\Omega_t$ and $I_j$ be stated as in Lemma \ref{reverse holder for M}. Again, we assume that $w$ is supported on $I_0$. Since
\begin{align*}
\int_{I_0}(w(x)v_\lambda(x)^{-1})^{1+\epsilon}d\nu_\lambda(x)&\leq\int_{I_0}
\mathcal{M}_{\lambda,I_0}^d(wv_{\lambda}^{-1})(x)^\epsilon w(x)dx\\
&=\int_0^\infty\epsilon t^{\epsilon-1}w(\Omega_t)dt\\
&=\int_0^{(wv_\lambda^{-1})_{I_0,\nu_\lambda}}\epsilon t^{\epsilon-1}w(\Omega_t)dt+\int_{(wv_\lambda^{-1})_{I_0,\nu_\lambda}}^\infty\epsilon t^{\epsilon-1}w(\Omega_t)dt\\
&=:G+H.
\end{align*}

By a direct computation,
\begin{equation*}
G\leq(wv_{\lambda}^{-1})_{I_0,\nu_\lambda}^\epsilon w(I_0).
\end{equation*}

In view of \eqref{w transfrom to v}, we deduce that
\begin{align*}
H&\leq\int_{(wv_\lambda^{-1})_{I_0,\nu_\lambda}}^\infty\epsilon t^{\epsilon-1}\sum_jw(I_j)dt\\
&\leq\epsilon2^{2\lambda+2}\int_{(wv_\lambda^{-1})_{I_0,\nu_\lambda}}^\infty t^{\epsilon}\sum_j\nu_\lambda(I_j)dt\\
&=\epsilon2^{2\lambda+2}\int_{(wv_\lambda^{-1})_{I_0,\nu_\lambda}}^\infty t^{\epsilon}\nu_\lambda(\Omega_t)dt\\
&\leq\frac{2^{2\lambda+2}\epsilon}{1+\epsilon}\int_{I_0}\mathcal{M}_{\lambda,I_0}^d
(wv_\lambda^{-1})(x)^{1+\epsilon}d\nu_\lambda(x).
\end{align*}

Therefore, from the estimates of $G$ and $H$, we have
\begin{align*}
&\int_{I_0}(w(x)v_\lambda(x)^{-1})^{1+\epsilon}d\nu_\lambda(x)\\
&\quad\leq(wv_{\lambda}^{-1})_{I_0,\nu_\lambda}^\epsilon w(I_0)+\frac{2^{2\lambda+2}\epsilon}{1+\epsilon}\int_{I_0}\mathcal{M}_{\lambda,I_0}^d
(wv_\lambda^{-1})(x)^{1+\epsilon}d\nu_\lambda(x).
\end{align*}
Dividing both sides by $\nu_\lambda(I_0)$, applying Lemma \ref{reverse holder for M} and $\epsilon\in\Big(0,\frac{1}{2^{2\lambda+3}[w]_{\widetilde{A}_{\infty,\lambda}}-1}\Big]$, we conclude that
\begin{align*}
&\frac{1}{\nu_\lambda(I_0)}\int_{I_0}(w(x)v_\lambda(x)^{-1})^{1+\epsilon}d\nu_\lambda(x)\\
&\quad\leq(wv_{\lambda}^{-1})_{I_0,\nu_\lambda}^{1+\epsilon} +\frac{2^{2\lambda+2}\epsilon}{1+\epsilon}\frac{1}{\nu_\lambda(I_0)}\int_{I_0}\mathcal{M}_{\lambda,I_0}^d
(wv_\lambda^{-1})(x)^{1+\epsilon}d\nu_\lambda(x)\\
&\quad\leq\Big(\frac{1}{\nu_\lambda(I_0)}\int_{I_0}w(x)v_{\lambda}(x)^{-1}d\nu_\lambda(x)\Big)^{1+\epsilon}\\
&\qquad+
\frac{2^{2\lambda+3}\epsilon[w]_{\widetilde{A}_{\infty,\lambda}}}{1+\epsilon}
\Big(\frac{1}{\nu_\lambda(I_0)}\int_{I_0}w(x)v_{\lambda}(x)^{-1}d\nu_\lambda(x)\Big)^{1+\epsilon}\\
&\quad\leq2\Big(\frac{1}{\nu_\lambda(I_0)}\int_{I_0}w(x)v_{\lambda}(x)^{-1}d\nu_\lambda(x)\Big)^{1+\epsilon},
\end{align*}
as we wanted.

Next, we prove (2)$\Rightarrow$(1). By H\"{o}lder's inequality and the boundedness of $\mathcal{M}_\lambda$ on $L^{1+\epsilon}(\mathbb{R}_+,d\nu_\lambda)$ (see \cite{CoWe}), we have
\begin{align*}
&\frac{1}{\nu_\lambda(I)}\int_I\mathcal{M}_\lambda(wv_\lambda^{-1}\chi_I)(x)d\nu_\lambda(x)\\
&\quad\leq\Big(\frac{1}{\nu_\lambda(I)}\int_I\mathcal{M}_\lambda(wv_\lambda^{-1}\chi_I)(x)^{1+\epsilon}
d\nu_\lambda(x)\Big)^{1/(1+\epsilon)}\\
&\quad\leq\Big(\frac{1}{\nu_\lambda(I)}\int_{\mathbb{R}_+}\mathcal{M}_\lambda(wv_\lambda^{-1}\chi_I)(x)^
{1+\epsilon}
d\nu_\lambda(x)\Big)^{1/(1+\epsilon)}\\
&\quad\lesssim\Big(\frac{1}{\nu_\lambda(I)}\int_{I}(w(x)v_\lambda(x)^{-1})^{1+\epsilon}
d\nu_\lambda(x)\Big)^{1/(1+\epsilon)}\\
&\quad\lesssim\frac{1}{\nu_\lambda(I)}\int_{I}w(x)v_\lambda(x)^{-1}d\nu_\lambda(x)=v_\lambda(I)^{-1}w(I).
\end{align*}
Rearrange the above inequality, and then take the supremum over all intervals $I\subset\mathbb{R}_+$, we get
\begin{align*}
\sup_{I\subset\mathbb{R}_+}\frac{1}{w(I)}
\int_I\mathcal{M}_\lambda(wv_\lambda^{-1}\chi_I)(x)d\nu_\lambda(x)<\infty.
\end{align*}
\end{proof}

The following corollary improves the $\widetilde{A}_{p,\lambda}$ condition in \cite{LiLiangLinShen} to $\widetilde{A}_{\infty,\lambda}$.
\begin{corollary}\label{Ratio property}
Let $w\in\widetilde{A}_{\infty,\lambda}$ with $\lambda>-1/2,\neq0$. Then there exists $c_\lambda>0$ such that for each interval $I\subset\mathbb{R}_+$ and each measurable subset $J\subset I$,
\begin{align*}
\frac{w(J)}{w(I)}\leq2\Big(\frac{\nu_\lambda(J)}{\nu_\lambda(I)}\Big)^
{\frac{1}{c_\lambda[w]_{\widetilde{A}_{\infty,\lambda}}}}.
\end{align*}
\end{corollary}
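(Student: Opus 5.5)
The plan is to apply H\"older's inequality with respect to the measure $\nu_\lambda$, using the reverse H\"older inequality of Theorem \ref{Reverse Holder inequality theorem}(1). Fix an interval $I\subset\mathbb{R}_+$ and a measurable set $J\subset I$. Write $u:=wv_\lambda^{-1}$, so that $w(J)=\int_J u\,d\nu_\lambda$. Choose the largest admissible exponent
\begin{align*}
\epsilon=\frac{1}{2^{2\lambda+3}[w]_{\widetilde{A}_{\infty,\lambda}}-1}.
\end{align*}
This is positive because $[w]_{\widetilde{A}_{\infty,\lambda}}\geq1$. Indeed, $\mathcal{M}_\lambda(wv_\lambda^{-1}\chi_I)\geq wv_\lambda^{-1}$ a.e.\ on $I$ by Lebesgue differentiation, so the supremum in Definition \ref{A} is at least $1$.

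Apply H\"older with exponents $1+\epsilon$ and $(1+\epsilon)'=\frac{1+\epsilon}{\epsilon}$:
\begin{align*}
w(J)=\int_I u\chi_J\,d\nu_\lambda\leq \Big(\int_I u^{1+\epsilon}d\nu_\lambda\Big)^{\frac{1}{1+\epsilon}}\nu_\lambda(J)^{\frac{\epsilon}{1+\epsilon}}.
\end{align*}
Next, bound the first factor by Theorem \ref{Reverse Holder inequality theorem}(1) with $I_0=I$:
\begin{align*}
\Big(\int_I u^{1+\epsilon}d\nu_\lambda\Big)^{\frac{1}{1+\epsilon}}\leq 2\,\nu_\lambda(I)^{\frac{1}{1+\epsilon}}\,\frac{w(I)}{\nu_\lambda(I)}.
\end{align*}
Combining the two displays gives
\begin{align*}
\frac{w(J)}{w(I)}\leq 2\Big(\frac{\nu_\lambda(J)}{\nu_\lambda(I)}\Big)^{\frac{\epsilon}{1+\epsilon}}.
\end{align*}

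It remains to compute the exponent. With the above choice of $\epsilon$,
\begin{align*}
\frac{\epsilon}{1+\epsilon}=\frac{1}{2^{2\lambda+3}[w]_{\widetilde{A}_{\infty,\lambda}}}.
\end{align*}
Hence the corollary holds with $c_\lambda=2^{2\lambda+3}$.

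There is no real obstacle here. The only points to check are that the reverse H\"older inequality applies to every interval $I$ (it does, since $I_0$ there is arbitrary), and that $\epsilon$ is positive, which follows from $[w]_{\widetilde{A}_{\infty,\lambda}}\geq1$ as shown above.
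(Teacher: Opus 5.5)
Your proof is correct and is essentially the paper's argument: H\"older's inequality in $d\nu_\lambda$ applied to $wv_\lambda^{-1}\chi_J$, followed by the reverse H\"older inequality of Theorem \ref{Reverse Holder inequality theorem}(1). Taking the endpoint $\epsilon=(2^{2\lambda+3}[w]_{\widetilde{A}_{\infty,\lambda}}-1)^{-1}$ gives the exponent exactly $1/(2^{2\lambda+3}[w]_{\widetilde{A}_{\infty,\lambda}})$. That is slightly cleaner than the paper's choice $r_{w,\lambda}=1+1/(c_\lambda[w]_{\widetilde{A}_{\infty,\lambda}})$, which yields $1/r_{w,\lambda}'=1/(c_\lambda[w]_{\widetilde{A}_{\infty,\lambda}}+1)$ and so forces $c_\lambda$ to be enlarged in the final step.
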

\begin{proof}
Let $r_{w,\lambda}:=1+\frac{1}{c_\lambda[w]_{\widetilde{A}_{\infty,\lambda}}}$. By H\"{o}lder's inequality and \eqref{Reverse Holder inequality}, we have
\begin{align*}
w(J)&=\nu_\lambda(I)\frac{1}{\nu_\lambda(I)}\int_Iw(x)v_\lambda(x)^{-1}\chi_J(x)d\nu_\lambda(x)\\
&\leq\nu_\lambda(I)
\Big(\frac{1}{\nu_\lambda(I)}\int_I(w(x)v_\lambda(x)^{-1})^{r_{w,\lambda}}d\nu_\lambda(x)\Big)^{1/r_{w,\lambda}}
\Big(\frac{1}{\nu_\lambda(I)}\int_I\chi_J(x)d\nu_\lambda(x)\Big)^{1/r_{w,\lambda}'}\\
&\leq2\nu_\lambda(I)\Big(\frac{1}{\nu_\lambda(I)}\int_Iw(x)dx\Big)
\Big(\frac{\nu_\lambda(J)}{\nu_\lambda(I)}\Big)^{\frac{1}{r_{w,\lambda}'}}.
\end{align*}

Note that $r_{w,\lambda}'\sim c_\lambda[w]_{\widetilde{A}_{\infty,\lambda}}$, we get
\begin{align*}
\frac{w(J)}{w(I)}\leq2\Big(\frac{\nu_\lambda(J)}{\nu_\lambda(I)}\Big)^
{\frac{1}{r_{w,\lambda}'}}\leq2\Big(\frac{\nu_\lambda(J)}{\nu_\lambda(I)}\Big)^
{\frac{1}{c_\lambda[w]_{\widetilde{A}_{\infty,\lambda}}}}.
\end{align*}
\end{proof}

We need the following lemma to prove the characterization theorem for $\widetilde{A}_{\infty,\lambda}$. The proof is almost the same as in \cite[Corollary 7.2.4]{Gra}, we omit the details.
\begin{lemma}\label{geneal reverse Holder}
Let $w$ be a weight and $\mu$ be a doubling measure on $\mathbb{R}_+$. Assume that there are $\alpha,\beta\in(0,1)$ such that
\begin{align*}
u(J)\leq\alpha u(I)\Rightarrow\int_Jw(t)du(t)\leq\beta\int_Iw(t)du(t),
\end{align*}
where $J$ is an arbitrary $\mu$-measurable subset of an interval $I\subset\mathbb{R}_+$. Then there exist $\gamma>0$ such that for any interval $I\subset\mathbb{R}_+$,
\begin{align*}
\Big(\frac{1}{u(I)}\int_Iw(t)^{1+\gamma}du(t)\Big)^{\frac{1}{1+\gamma}}\lesssim
\frac{1}{u(I)}\int_Iw(t)du(t).
\end{align*}
\end{lemma}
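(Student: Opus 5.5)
The plan is the classical Gehring/Coifman--Fefferman argument in the space of homogeneous type $(\mathbb{R}_+,|\cdot|,u)$, following \cite[Corollary 7.2.4]{Gra}. Here $u$ is the doubling measure of the statement; the hypothesis ``$\mu$ doubling'' is read as $u$ doubling. Fix an interval $I$ and normalize so that $w_{I,u}:=\frac{1}{u(I)}\int_Iw\,du=1$. Subdivide $I$ dyadically, in the Christ/Hyt\"onen--Kairema sense or simply by halving intervals, and use the dyadic maximal operator $M^d_{I,u}$ relative to $I$. For $s\geq 1$, let $\{I_j^s\}$ be the maximal dyadic subintervals with $w_{I_j^s,u}>s$. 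By maximality and doubling, $s<w_{I_j^s,u}\leq C_u s$, where $C_u$ is the doubling constant of $u$ relative to the dyadic parent.

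The key step is a good-$\lambda$ type estimate. Take $s$ and a larger level $\tau s$, where $\tau$ is fixed and large. Each $I_k^{\tau s}$ lies inside some $I_j^{s}$, and
\[
u\Big(\bigcup_{k:I_k^{\tau s}\subset I_j^s}I_k^{\tau s}\Big)\leq \frac{1}{\tau s}\int_{I_j^s}w\,du\leq \frac{C_u}{\tau}u(I_j^s).
\]
Choose $\tau$ so that $C_u/\tau\leq\alpha$. The hypothesis then gives
\[
\int_{\bigcup_k I_k^{\tau s}}w\,du\leq \beta\int_{I_j^s}w\,du.
\]
Summing over $j$ yields $W(\tau s)\leq\beta W(s)$, where $W(s):=\int_{\{M^d_{I,u}w>s\}\cap I}w\,du$. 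Iterating gives $W(\tau^k)\leq \beta^k w(I)$, so that $W(s)\lesssim s^{-\delta}u(I)$ with $\delta=\log(1/\beta)/\log\tau>0$.

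Finally, by Lebesgue differentiation for the doubling measure $u$, we have $w\leq M^d_{I,u}w$ $u$-a.e.\ on $I$. Then
\[
\int_I w^{1+\gamma}\,du\leq \int_I (M^d_{I,u}w)^{\gamma}w\,du=\int_0^\infty \gamma s^{\gamma-1}W(s)\,ds,
\]
where the part $s\leq 1$ contributes at most $w(I)=u(I)$. The tail $\int_1^\infty \gamma s^{\gamma-1-\delta}\,ds\, u(I)$ is finite for $\gamma<\delta$. This gives $\frac{1}{u(I)}\int_Iw^{1+\gamma}\,du\lesssim 1$, and undoing the normalization gives the claim.

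I expect the main obstacle to be the dyadic structure. The hypothesis is stated only for subsets $J$ of intervals $I$, so the sets $I_j^s$ must be genuine intervals, and their parents must be controlled by doubling. On $\mathbb{R}_+$ with halving of intervals this holds, because dyadic subintervals of an interval are intervals and $u$ doubling gives $u(\widetilde{I}_j)\leq C_u u(I_j)$. I would therefore avoid abstract dyadic cubes and use bisection of $I$.
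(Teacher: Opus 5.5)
Your proposal is correct and is essentially the argument the paper defers to, namely the Calder\'on--Zygmund/Gehring proof behind Grafakos' Corollary 7.2.4. That argument uses stopping intervals at geometric levels, applies the hypothesis inside each stopping interval (which is a genuine interval because you bisect) to get $W(\tau s)\leq\beta W(s)$, and sums the resulting power decay for $\gamma<\delta$. Your use of $\mathcal{M}^d_{I,u}$ with the layer-cake formula is just the continuous form of summing over discrete levels, and reading the hypothesis ``$\mu$ doubling'' as ``$u$ doubling'' is the right interpretation.
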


\begin{proof}[Proof of Theorem \ref{characterization A}]
We prove Theorem \ref{characterization A} by going:
\begin{align*}
(1)\Rightarrow(2)\Rightarrow(3)\Rightarrow(4)\Rightarrow(5)\Rightarrow(1).
\end{align*}
(1)$\Rightarrow$(2) and (5)$\Rightarrow$(1) have been proved in Theorem \ref{Ratio property} and Theorem \ref{Reverse Holder inequality theorem}, respectively. (4)$\Rightarrow$(5) is proved in \cite{LiLiangLinShen}. It remains to prove (2)$\Rightarrow$(3) and (3)$\Rightarrow$(4).

We show (2)$\Rightarrow$(3) first. Choose an $\alpha'\in(0,1)$ small enough such that $\beta'=c_1\alpha'^{c_2}<1$. By (2), we have
\begin{align*}
\nu_\lambda(J)<\alpha'\nu_\lambda(I)\Rightarrow w(J)<\beta'w(I)
\end{align*}
for any intervals $I\subset\mathbb{R}_+$ and any measurable set $J\subset I$. Replacing $J$ by $I\backslash J$, it follows that
\begin{align*}
\nu_\lambda(J)\geq(1-\alpha')\nu_\lambda(I)\Rightarrow w(J)\geq(1-\beta')w(I),
\end{align*}
which implies that
\begin{align*}
w(J)<(1-\beta')w(I)\Rightarrow \nu_\lambda(J)<(1-\alpha')\nu_\lambda(I)
\end{align*}
by the contraposition. This proves (3).

Next, we show (3)$\Rightarrow$(4). Define the measure $du(t)=w(t)dt$. Firstly, we show $u$ is doubling. Note that
\begin{align}\label{contraposition of (3)}
\nu_\lambda(J)\geq\beta\nu_\lambda(I)\Rightarrow w(J)\geq\alpha w(I).
\end{align}
Hence, by the doubling property of $\nu_\lambda$:
\begin{equation*}
\nu_\lambda(2I)\leq 2^{2\lambda+2}\nu_\lambda(I),
\end{equation*}
and replacing $J$ and $I$ by $I$ and $2I$ in \eqref{contraposition of (3)}, respectively. We prove that $u$ is doubling by \eqref{contraposition of (3)}.

Observe that the condition (3) can be reformulated as
\begin{align*}
u(J)<\alpha u(I)\Rightarrow\int_J(wv_\lambda^{-1})^{-1}du(t)<\int_I(wv_\lambda^{-1})^{-1}du(t).
\end{align*}
This, together with the doubling property of $u$, shows that $(wv_\lambda^{-1})^{-1}$ satisfies a reverse H\"{o}lder estimate with respect to the measure $u$ by Lemma \ref{geneal reverse Holder}, that is, there exists $\gamma>0$ such that for any $I\subset \mathbb{R}_+$,
\begin{align*}
\Big(\frac{1}{w(I)}\int_I(w(t)v_\lambda(t)^{-1})^{-1-\gamma}w(t)dt\Big)^{\frac{1}{1+\gamma}}\lesssim
\frac{1}{w(I)}\int_I(w(t)v_\lambda(t)^{-1})^{-1}w(t)dt,
\end{align*}
which is equivalent to
\begin{align*}
\Big(\frac{1}{\nu_\lambda(I)}\int_Iw(t)dt\Big)^{\frac{\gamma}{\gamma+1}}
\Big(\frac{1}{\nu_\lambda(I)}\int_Iv_\lambda(t)^{1+\lambda}w(t)^{-\gamma}dt\Big)^{\frac{1}{1+\gamma}}
\lesssim1.
\end{align*}
Denote $p=1+(1/\gamma)$. Then we proved that for any $I\subset\mathbb{R}_+$,
\begin{align*}
\Big(\frac{1}{\nu_\lambda(I)}\int_Iw(t)dt\Big)^{1/p}
\Big(\frac{1}{\nu_\lambda(I)}\int_Iv_\lambda(t)^{p'}w(t)^{1-p'}dt\Big)^{1/p'}\lesssim1.
\end{align*}
This proves $w\in\widetilde{A}_{p,\lambda}$.
\end{proof}

Now, we undertake the proof of sharp weighted bounds of $\mathcal{M}_\lambda$. We first need to prove Theorem \ref{open property of weight}.
\begin{proof}[Proof of Theorem \ref{open property of weight}]
Note that $1-(p-\epsilon)'=(1-p')r_\sigma$ and
\begin{align*}
0<r_\sigma:=1+\frac{1}{2^{2\lambda+3}[\sigma]_{\widetilde{A}_{\infty,\lambda}}}
<1+\frac{1}{2^{2\lambda+3}[\sigma]_{\widetilde{A}_{\infty,\lambda}}-1}.
\end{align*}
Then for any interval $I\subset\mathbb{R}_+$, by Theorem \ref{Reverse Holder inequality},
\begin{align*}
&\Big(\frac{1}{\nu_\lambda(I)}\int_Iv_\lambda(t)^{(p-\epsilon)'}w(t)^{1-(p-\epsilon)'}dt\Big)^{p-\epsilon-1}\\
&\quad=\Big(\frac{1}{\nu_\lambda(I)}\int_I(w(t)v_\lambda(t)^{-1})^{(1-p')r_\sigma}d\nu_\lambda(t)\Big)
^{\frac{p-1}{r_\sigma}}\\
&\quad=\Big(\frac{1}{\nu_\lambda(I)}\int_I(w(t)^{1-p'}v_\lambda(t)^{p'}v_\lambda(t)^{-1})^{r_\sigma}
d\nu_\lambda(t)\Big)^{\frac{p-1}{r_\sigma}}\\
&\quad\leq\Big(\frac{2}{\nu_\lambda(I)}\int_Iv_\lambda(t)^{p'}w(t)^{1-p'}dt\Big)^{p-1}.
\end{align*}
Hence,
\begin{align*}
&\Big(\frac{1}{\nu_\lambda(I)}\int_Iw(t)dt\Big)
\Big(\frac{1}{\nu_\lambda(I)}\int_Iv_\lambda(t)^{(p-\epsilon)'}w(t)^{1-(p-\epsilon)'}dt\Big)^{p-\epsilon-1}\\
&\quad\leq2^{p-1}\Big(\frac{1}{\nu_\lambda(I)}\int_Iw(t)dt\Big)
\Big(\frac{1}{\nu_\lambda(I)}\int_Iv_\lambda(t)^{p'}w(t)^{1-p'}dt\Big)^{p-1}\\
&\quad\leq2^{p-1}[w]_{\widetilde{A}_{p,\lambda}}.
\end{align*}
Taking the supremum over all intervals $I\subset\mathbb{R}_+$, we get the desired result.
\end{proof}

Analogous to the classical weight case, we can also define another $\widetilde{A}_{\infty,\lambda}$ constant.
\begin{definition}
Let $\lambda>-1/2,\neq0$ and $v_\lambda(t):=t^{2\lambda+1}$. We define
\begin{align*}
[w]_{\widetilde{A}_{\infty,\lambda}}':=\sup_{I\subset\mathbb{R}_+}\Big(\frac{1}{\nu_\lambda(I)}
\int_Iw(t)dt\Big)\Big(e^{\frac{1}{\nu_\lambda(I)}\int_I
\log(w(t)v_\lambda(t)^{-1})^{-1}d\nu_\lambda(t)}\Big)<\infty.
\end{align*}
\end{definition}

The following lemma says that $[w]_{\widetilde{A}_{\infty,\lambda}}$ is better than $[w]_{\widetilde{A}_{\infty,\lambda}}'$, and $[w]_{\widetilde{A}_{\infty,\lambda}}'$ serve as an intermediate bridge, through which the quantitative relationship between $[w]_{\widetilde{A}_{\infty,\lambda}}$ and $[w]_{\widetilde{A}_{p,\lambda}}$ can be obtained.
\begin{lemma}\label{A compare}
Let $1\leq p<\infty$ and $\lambda>-1/2,\neq0$. Then $[w]_{\widetilde{A}_{\infty,\lambda}}\lesssim[w]_{\widetilde{A}_{\infty,\lambda}}'\leq
[w]_{\widetilde{A}_{p,\lambda}}$.
\end{lemma}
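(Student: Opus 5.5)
The lemma has two inequalities, and I would prove them separately. Throughout, $W:=wv_\lambda^{-1}$, so that $w(I)=\int_I W\,d\nu_\lambda$ and the $\nu_\lambda$-average of $W$ over $I$ is $w(I)/\nu_\lambda(I)$.

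\emph{Second inequality.} This is Jensen's inequality, as in the classical case. Fix an interval $I$. For $p>1$, write
\begin{align*}
\exp\Big(\frac{1}{\nu_\lambda(I)}\int_I\log W^{-1}d\nu_\lambda\Big)
=\Big[\exp\Big(\frac{1}{\nu_\lambda(I)}\int_I\log W^{-(p'-1)}d\nu_\lambda\Big)\Big]^{p-1}.
\end{align*}
Jensen with respect to the probability measure $d\nu_\lambda/\nu_\lambda(I)$ bounds this by
\begin{align*}
\Big(\frac{1}{\nu_\lambda(I)}\int_I W^{1-p'}d\nu_\lambda\Big)^{p-1}.
\end{align*}
Since $W^{1-p'}v_\lambda=v_\lambda^{p'}w^{1-p'}$, this is exactly the second factor of $[w]_{\widetilde{A}_{p,\lambda}}$. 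Multiplying by $w(I)/\nu_\lambda(I)$ and taking the supremum gives $[w]'_{\widetilde{A}_{\infty,\lambda}}\le[w]_{\widetilde{A}_{p,\lambda}}$. For $p=1$, bound the exponential directly by $\|W^{-1}\|_{L^\infty(I)}$.

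\emph{First inequality.} I would adapt the Hytönen--Pérez argument on the space of homogeneous type $(\mathbb{R}_+,|\cdot|,\nu_\lambda)$, working entirely with $W$ and the measure $\nu_\lambda$. The condition $[w]'_{\widetilde{A}_{\infty,\lambda}}<\infty$ says precisely that $W$ lies in the classical $A_\infty^{\exp}$ class relative to $\nu_\lambda$, with constant $[w]'$:
\begin{align*}
\langle W\rangle_I\exp\big(\langle\log W^{-1}\rangle_I\big)\le[w]',
\end{align*}
where averages are taken with respect to $\nu_\lambda$. Meanwhile $[w]_{\widetilde{A}_{\infty,\lambda}}$ is the Fujii--Wilson constant of $W$ for $\nu_\lambda$. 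So the lemma reduces to the known comparison $[W]_{A_\infty^{FW}(\nu_\lambda)}\lesssim[W]_{A_\infty^{\exp}(\nu_\lambda)}$ in a doubling space.

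To make this concrete, fix $I$ and localize by replacing $\mathcal{M}_\lambda$ with a finite sum of dyadic maximal operators from adjacent dyadic systems. This costs a constant depending only on $\lambda$ through the doubling constant $2^{2\lambda+2}$. For each dyadic $Q\subset I$, set
\begin{align*}
W_Q:=\exp\big(\langle\log W\rangle_Q\big),
\end{align*}
so that the hypothesis reads $\langle W\rangle_Q\le[w]'\,W_Q$ for every $Q$. Hence
\begin{align*}
M^d(W\chi_I)\le[w]'\sup_{Q\ni x,\,Q\subset I}\exp\big(\langle\log W\rangle_Q\big)=[w]'\,M_0^d(W\chi_I)(x),
\end{align*}
where $M_0^d$ is the logarithmic (geometric) maximal operator. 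It is bounded on $L^1(\nu_\lambda)$ with constant $e$, since $M_0^d f\le (M^d f^{1/r})^r$ and $M^d$ is bounded on $L^r$ with norm $r'$; letting $r\to\infty$ gives $(r')^r\to e$. Integrating over $I$ then yields
\begin{align*}
\int_I\mathcal{M}_\lambda(W\chi_I)\,d\nu_\lambda\lesssim[w]'\int_IW\,d\nu_\lambda=[w]'\,w(I),
\end{align*}
which is the claim.

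\emph{Main obstacle.} The main point needing care is the passage from the non-dyadic $\mathcal{M}_\lambda$ to dyadic pieces inside $I$. The intervals appearing in $\mathcal{M}_\lambda(W\chi_I)$ may stick out of $I$, but since $W\chi_I$ vanishes outside $I$, their averages are dominated by averages over dyadic cubes of a bounded dilate of $I$. The argument must therefore apply the hypothesis on those cubes and use doubling of $\nu_\lambda$, not of $w$. Alternatively, one can use $W\chi_I\le W$ on the dilate and apply the geometric maximal bound there, at the price of $w(cI)$ in place of $w(I)$. That $w$ is doubling with respect to $\nu_\lambda$-dilates follows from the $A_\infty^{\exp}$ condition: Jensen gives $\nu_\lambda(E)/\nu_\lambda(Q)\lesssim$ a power of $w(E)/w(Q)$, which controls $w(cI)/w(I)$ by a constant.
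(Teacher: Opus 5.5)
Your proof of $[w]_{\widetilde{A}_{\infty,\lambda}}'\le[w]_{\widetilde{A}_{p,\lambda}}$ is correct and is the same Jensen argument as in the paper. The first inequality has a genuine gap, exactly at the step you flag as the main obstacle, and neither of your repairs gives the \emph{linear} bound the lemma asserts.

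After passing to dyadic systems, the display $M^d(W\chi_I)(x)\le[w]'\sup_{Q\ni x,\,Q\subset I}\exp(\langle\log W\rangle_Q)$ is unjustified. A dyadic $Q\ni x$ need not lie in $I$, and $Q\cap I$ is not a dyadic cube. Both of your fixes apply the hypothesis to dyadic cubes of a dilate $cI$. There it controls $\langle W\rangle_Q$, not $\langle W\chi_I\rangle_Q$, so what you actually obtain is
\[
\int_I\mathcal{M}_\lambda(W\chi_I)\,d\nu_\lambda\lesssim[w]_{\widetilde{A}_{\infty,\lambda}}'\,w(cI).
\]
Doubling of $\nu_\lambda$ cannot turn $w(cI)$ into $w(I)$. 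The doubling of $w$ you extract from the exponential condition depends on $[w]_{\widetilde{A}_{\infty,\lambda}}'$: the Jensen argument gives $w(cI)\lesssim\big([w]_{\widetilde{A}_{\infty,\lambda}}'\big)^{\nu_\lambda(cI)/\nu_\lambda(I)}w(I)$, a power strictly larger than one. You would therefore end with a superlinear bound. Linearity is what the paper needs later, e.g.\ $[\sigma]_{\widetilde{A}_{\infty,\lambda}}\lesssim[\sigma]_{\widetilde{A}_{p',\lambda}}$ when deriving the sharp exponent $1/(p-1)$ in Theorem~\ref{mixed quantitative weighted strong estimates}.

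The missing observation is one-dimensional, and it makes the dyadic detour unnecessary. If $x\in I$ and $R\ni x$ is an interval, then $R\cap I$ is an interval containing $x$ and contained in $I$, with $\nu_\lambda(R\cap I)\le\nu_\lambda(R)$. So the $\nu_\lambda$-average of $W\chi_I$ over $R$ is at most the average of $W$ over $R\cap I$. Hence, for $x\in I$,
\[
\mathcal{M}_\lambda(W\chi_I)(x)=\sup_{x\in R\subset I}\frac{1}{\nu_\lambda(R)}\int_R W\,d\nu_\lambda\le[w]_{\widetilde{A}_{\infty,\lambda}}'\,\widetilde{\mathcal{M}}_\lambda(W\chi_I)(x),
\]
where $\widetilde{\mathcal{M}}_\lambda$ is the non-dyadic geometric maximal operator. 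Since $\widetilde{\mathcal{M}}_\lambda f\le(\mathcal{M}_\lambda|f|^{1/2})^2$ and $\mathcal{M}_\lambda$ is bounded on $L^2(\nu_\lambda)$, integrating over $I$ gives
\[
\int_I\mathcal{M}_\lambda(W\chi_I)\,d\nu_\lambda\lesssim[w]_{\widetilde{A}_{\infty,\lambda}}'\,w(I),
\]
with a constant depending only on $\lambda$. This is the paper's proof. Your idea of dominating by a geometric maximal function is the right one, but it must be applied to the truncated intervals $R\cap I$, not to dyadic cubes of a dilate.
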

\begin{proof}
Fix an interval $I\subset\mathbb{R}_+$. We first show $[w]_{\widetilde{A}_{\infty,\lambda}}'\leq
[w]_{\widetilde{A}_{p,\lambda}}$. It suffices to consider the case $1<p<\infty$ since $[w]_{\widetilde{A}_{p,\lambda}}\leq[w]_{\widetilde{A}_{1,\lambda}}$. By Jesen's inequality,
\begin{align*}
\Big(\int_I(wv_\lambda^{-1})^{-\frac{1}{p-1}}\frac{d\nu_\lambda}{\nu_\lambda(I)}\Big)^{p-1}\geq e^{\int_I\log(wv_\lambda^{-1})^{-1}\frac{d\nu_\lambda}{\nu_\lambda(I)}},
\end{align*}
so
\begin{align*}
&\Big(\frac{1}{\nu_\lambda(I)}\int_Iw(t)dt\Big)
e^{\int_I\log(wv_\lambda^{-1})^{-1}\frac{d\nu_\lambda}{\nu_\lambda(I)}}\\
&\quad\leq\Big(\frac{1}{\nu_\lambda(I)}\int_Iw(t)dt\Big)
\Big(\int_I(wv_\lambda^{-1})^{-\frac{1}{p-1}}\frac{d\nu_\lambda}{\nu_\lambda(I)}\Big)^{p-1}\\
&\quad=\Big(\frac{1}{\nu_\lambda(I)}\int_Iw(t)dt\Big)
\Big(\frac{1}{\nu_\lambda(I)}\int_Iv_\lambda(t)^{p'}w(t)^{-\frac{1}{p-1}}dt\Big)^{p-1}.
\end{align*}
Taking the supremum over all $I\subset\mathbb{R}_+$, we get $[w]_{\widetilde{A}_{\infty,\lambda}}'\leq
[w]_{\widetilde{A}_{p,\lambda}}$.

Next, we prove $[w]_{\widetilde{A}_{\infty,\lambda}}\lesssim[w]_{\widetilde{A}_{\infty,\lambda}}'$. Let $I,R$ be intervals in $\mathbb{R}_+$. From the definition of $[w]_{\widetilde{A}_\infty,\lambda}'$,
\begin{align}\label{compare orgin}
\frac{1}{\nu_\lambda(R)}\int_Rwv_\lambda d\nu_\lambda\leq[w]_{\widetilde{A}_\infty,\lambda}'e^{\frac{1}{\nu_\lambda(R)}\int_R
\log(wv_\lambda^{-1})d\nu_\lambda}.
\end{align}
By the fact
\begin{align*}
\mathcal{M}_\lambda(wv_\lambda^{-1}\chi_I)(x)=\sup_{R\ni x,R\subset I}\frac{1}{\nu_\lambda(R)}\int_Rwv_\lambda^{-1} d\nu_\lambda,~x\in I.
\end{align*}
and taking the supremum over $R$ on both sides of \eqref{compare orgin}, we have
\begin{align}\label{HL compare}
\mathcal{M}_\lambda(wv_\lambda^{-1}\chi_I)(x)\leq[w]_{\widetilde{A}_\infty,\lambda}'
\widetilde{\mathcal{M}}_{\lambda}(wv_\lambda^{-1}\chi_I)(x),~x\in I,
\end{align}
where
\begin{align*}
\widetilde{\mathcal{M}}_{\lambda}(f):=\sup_I
e^{\frac{1}{\nu_\lambda(I)}\int_I\log(|f|)}\chi_I.
\end{align*}
We claim that $\|\widetilde{\mathcal{M}}_\lambda f\|_{L^1(\mathbb{R}_+,d\nu_\lambda)}\lesssim\|f\|_{L^1(\mathbb{R}_+,d\nu_\lambda)}$. In fact, by the Jesen inequality,
\begin{align*}
\int_I|f|\frac{d\nu_\lambda}{\nu_\lambda(I)}\geq e^{\int_I\log|f|\frac{d\nu_\lambda}{\nu_\lambda(I)}},
\end{align*}
then $\widetilde{\mathcal{M}_\lambda} f\leq \mathcal{M}_\lambda f$. Observe that $\widetilde{\mathcal{M}_\lambda} f=[\widetilde{\mathcal{M}}_\lambda(|f|^{1/2})]^2$, so
\begin{align*}
\int_{\mathbb{R}_+}\widetilde{\mathcal{M}_\lambda} fd\nu_\lambda&=\int_{\mathbb{R}_+}\widetilde{\mathcal{M}_\lambda}(|f|^{1/2})^2d\nu_\lambda\\
&\leq\int_{\mathbb{R}_+}\mathcal{M}_\lambda(|f|^{1/2})^2d\nu_\lambda\\
&\lesssim\int_{\mathbb{R}_+}|f|d\nu_\lambda.
\end{align*}
This proves the claim.

By this claim and \eqref{HL compare}, we deduce that
\begin{align*}
\int_I\mathcal{M}_\lambda(wv_\lambda^{-1}\chi_I)d\nu_\lambda&\leq[w]_{\widetilde{A}_{\infty,\lambda}}'\int_I
\widetilde{\mathcal{M}}_\lambda(wv_\lambda^{-1}\chi_I)d\nu_\lambda\\
&\lesssim[w]_{\widetilde{A}_{\infty,\lambda}}'\int_I wv_\lambda^{-1} d\nu_\lambda\\
&=[w]_{\widetilde{A}_{\infty,\lambda}}'w(I),
\end{align*}
which gives $[w]_{\widetilde{A}_{\infty,\lambda}}\lesssim[w]_{\widetilde{A}_{\infty,\lambda}}'$.
\end{proof}

We end this section by giving the proof of Theorem \ref{mixed quantitative weighted strong estimates}.
\begin{proof}[Proof of Theorem \ref{mixed quantitative weighted strong estimates}]
Observe that
\begin{align*}
\{x\in\mathbb{R}_+:\mathcal{M}_\lambda f(x)>2t\}\subset\{x\in\mathbb{R}_+:\mathcal{M}_\lambda(f\chi_{|f|>t})(x)>t\},~t>0.
\end{align*}
Then
\begin{align}\label{distribution represention of Lp of M}
\|\mathcal{M}_\lambda f\|_{L^p(\mathbb{R}_+,wdx)}^p&=p\int_0^\infty t^{p-1}w(\{x\in\mathbb{R}_+:\mathcal{M}_\lambda f(x)>t\})dt\\
&=p2^p\int_0^\infty t^{p-1}w(\{x\in\mathbb{R}_+:\mathcal{M}_\lambda f(x)>2t\})dt\nonumber\\
&\leq p2^p\int_0^\infty t^{p-1}w(\{x\in\mathbb{R}_+:\mathcal{M}_\lambda(f\chi_{|f|>t})(x)>t\})dt.\nonumber
\end{align}

Now we temporarily stop the estimation of \eqref{distribution represention of Lp of M}. By Theorem \ref{open property of weight}, $w\in \widetilde{A}_{p-\epsilon,\lambda}$, we turn to prove
\begin{align}\label{quantitative weak type estimate of HLO}
\|\mathcal{M}_\lambda f\|_{L^{p-\epsilon,\infty}(\mathbb{R}_+,wdx)}\leq 5^{2\lambda+2}[w]_{\widetilde{A}_{p-\epsilon,\lambda}}^{\frac{1}{p-\epsilon}}\|f\|_{L^{p-\epsilon}(\mathbb{R}_+,wdx)}.
\end{align}
Denote
\begin{equation*}
E_t:=\{x\in\mathbb{R}_+: \mathcal{M}_\lambda f(x)>t\}.
\end{equation*}
Let $F$ be a compact subset of $E_t$. For any $x\in F$, one can choose an open interval $I_x\ni x$ for which the following inequality holds:
\begin{align*}
\frac{1}{\nu_\lambda(I_x)}\int_{I_x}|f(y)|d\nu_\lambda(y)>t.
\end{align*}
Due to the compactness of $F$, there is a finite subcover $\{I_{x_1},\cdots,I_{x_k}\}$ of $F$. In virtue of the well known one-dimensional Vitali covering lemma, we find a subcollection of mutually disjoint intervals $I_{x_{j_1}},\cdots,I_{x_{j_l}}$ such that $F\subset\bigcup_{i=1}^{l}5I_{x_{j_i}}$. As a consequence of H\"{o}lder's inequality and $w\in\widetilde{A}_{p-\epsilon,\lambda}$, we deduce that
\begin{align*}
w(F)&\leq\sum_{i=1}^{l}w(5I_{x_{j_i}})\\
&\leq t^{-(p-\epsilon)}\sum_{i=1}^lw(5I_{x_{j_i}})\Big(\frac{1}{\nu_{\lambda}(I_{x_{j_i}})}\int_{I_{x_{j_i}}}
|f(y)|y^{2\lambda+1}dy\Big)^{p-\epsilon}\\
&=t^{-(p-\epsilon)}\sum_{i=1}^lw(5I_{x_{j_i}})\Big(\frac{\nu_{\lambda}(5I_{x_{j_i}})}
{\nu_{\lambda}(I_{x_{j_i}})}\Big)^{p-\epsilon}
\Big(\frac{1}{\nu_{\lambda}(5I_{x_{j_i}})}
\int_{I_{x_{j_i}}}
|f(y)|y^{2\lambda+1}dy\Big)^{p-\epsilon}\\
&\leq5^{(2\lambda+2)(p-\epsilon)} t^{-(p-\epsilon)}\sum_{i=1}^l
\left(\frac{\int_{5I_{x_{j_i}}}v_\lambda(y)^{(p-\epsilon)'}
w(y)^{-\frac{(p-\epsilon)'}{p-\epsilon}}dy}{\nu_{\lambda}(5I_{x_{j_i}})}\right)^{p-\epsilon-1}
\\
&\qquad\qquad\qquad\qquad\qquad\times\frac{w(5I_{x_{j_i}})}{\nu_\lambda(5I_{x_{j_i}})}
\|f\chi_{I_{x_{j_i}}}\|_{L^{p-\epsilon}(\mathbb{R}_+,wdx)}^{p-\epsilon}\\
&\leq5^{(2\lambda+2)(p-\epsilon)} t^{-(p-\epsilon)}[w]_{\widetilde{A}_{p-\epsilon,\lambda}}\sum_{i=1}^l
\|f\chi_{I_{x_{j_i}}}\|_{L^{p-\epsilon}(\mathbb{R}_+,wdx)}^{p-\epsilon}\\
&\leq 5^{(2\lambda+2)(p-\epsilon)} t^{-(p-\epsilon)}[w]_{\widetilde{A}_{p-\epsilon,\lambda}}\|f\|_{L^{p-\epsilon}
(\mathbb{R}_+,wdx)}^{p-\epsilon}.
\end{align*}
By taking the supremum over all compact $F\subset E_t$, we obtain \eqref{quantitative weak type estimate of HLO}.

We return to estimate \eqref{distribution represention of Lp of M}. By \eqref{quantitative weak type estimate of HLO},
\begin{align*}
&t^{p-\epsilon}w(\{x\in\mathbb{R}_+:\mathcal{M}_\lambda(f\chi_{|f|>t})(x)>t\})\\
&\quad\leq5^{(2\lambda+2)(p-\epsilon)}[w]_{\widetilde{A}_{p-\epsilon,\lambda}}
\int_{\mathbb{R}_+}|f(x)|^{p-\epsilon}\chi_{\{x:|f(x)|>t\}}w(x)dx.
\end{align*}
From this and Theorem \ref{open property of weight}, we have
\begin{align*}
&\int_0^\infty t^{p-1}w(\{x\in\mathbb{R}_+:\mathcal{M}_\lambda(f\chi_{|f|>t})(x)>t\})dt\\
&\quad=\int_0^\infty t^{\epsilon-1}t^{p-\epsilon}w(\{x\in\mathbb{R}_+:\mathcal{M}_\lambda(f\chi_{|f|>t})(x)>t\})dt\\
&\quad\leq5^{(2\lambda+2)(p-\epsilon)}[w]_{\widetilde{A}_{p-\epsilon,\lambda}}\int_0^\infty t^{\epsilon-1}\int_{\{x:|f(x)|>t\}}|f(x)|^{p-\epsilon}w(x)dxdt\\
&\quad= 5^{(2\lambda+2)(p-\epsilon)}\frac{[w]_{\widetilde{A}_{p-\epsilon,\lambda}}}{\epsilon}
\int_{\mathbb{R}_+}|f(x)|^pw(x)dx\\
&\quad\leq\frac{5^{(4\lambda+6)p}2^{p-1}}{p-1}[v_\lambda^{p'}w^{1-p'}]_{\widetilde{A}_{\infty,\lambda}}
[w]_{\widetilde{A}_{p,\lambda}}
\|f\|_{L^p(\mathbb{R}_+,wdx)}^p.
\end{align*}
Inserting this estimate into \eqref{distribution represention of Lp of M}, we get
\begin{align*}
\|\mathcal{M}_\lambda f\|_{L^p(\mathbb{R}_+,wdx)}\lesssim([w]_{\widetilde{A}_{p,\lambda}}
[v_\lambda^{p'}w^{1-p'}]_{\widetilde{A}_{\infty,\lambda}})^{\frac{1}{p}}\|f\|_{L^p(\mathbb{R}_+,wdx)}.
\end{align*}
Note that $v_\lambda^{p'}w^{1-p'}\in \widetilde{A}_{p',\lambda}$ and $[v_\lambda^{p'}w^{1-p'}]_{\widetilde{A}_{p',\lambda}}=
[w]_{\widetilde{A}_{p,\lambda}}^{1/(p-1)}$, by Lemma \ref{A compare}, we get
\begin{align*}
\|\mathcal{M}_\lambda f\|_{L^p(\mathbb{R}_+,wdx)}\lesssim([w]_{\widetilde{A}_{p,\lambda}}
[v_\lambda^{p'}w^{1-p'}]_{\widetilde{A}_{\infty,\lambda}})^{\frac{1}{p}}\|f\|_{L^p(\mathbb{R}_+,wdx)}\leq
[w]_{\widetilde{A}_{p,\lambda}}^{\frac{1}{p-1}}\|f\|_{L^p(\mathbb{R}_+,wdx)}.
\end{align*}

Finally, we prove that the exponent $1/(p-1)$ is sharp. Let $x\in\mathbb{R}_+$, $\epsilon\in(0,1)$. Denote $w_\epsilon(x):=x^{(2\lambda+2)p-1-\epsilon}$ and $f_\epsilon(x):=x^{-(2\lambda+2)+\frac{\epsilon}{p-1}}\chi_{(0,1)}(x)$. Then
\begin{align*}
\|f_\epsilon\|_{L^p(\mathbb{R}_+,w_\epsilon dx)}&=\int_0^1x^{-p(2\lambda+2)+\frac{\epsilon p}{p-1}}x^{(2\lambda+2)p-1-\epsilon}dx\\
&=\int_0^1x^{\frac{\epsilon}{p-1}-1}dx=\frac{p-1}{\epsilon},
\end{align*}
so $f_\epsilon\in L^p(\mathbb{R}_+,w_\epsilon dx)$. For $x\in(0,1)$, one can check that
\begin{align*}
\mathcal{M}_\lambda f_\epsilon(x)&\geq\frac{1}{\int_0^xy^{2\lambda+1}dy}\int_0^xy^{-(2\lambda+2)+\frac{\epsilon}{p-1}}
y^{2\lambda+1}dy\\
&=\frac{2\lambda+2}{x^{2\lambda+2}}\int_0^xy^{\frac{\epsilon}{p-1}-1}dy\\
&=\frac{(2\lambda+2)(p-1)}{\epsilon}x^{-(2\lambda+2)+\frac{\epsilon}{p-1}}=\frac{(2\lambda+2)(p-1)}{\epsilon}
f_\epsilon(x),
\end{align*}
which implies that
\begin{align}\label{lower bound of M}
\|\mathcal{M}_\lambda f_\epsilon\|_{L^p(\mathbb{R}_+,w_\epsilon dx)}\geq\frac{(2\lambda+2)(p-1)}{\epsilon}\|f_\epsilon\|_{L^p(\mathbb{R}_+,w_\epsilon dx)}.
\end{align}

In the following, we prove that
\begin{align}\label{weight constant of test weight}
[w_\epsilon]_{\widetilde{A}_{p,\lambda}}^{\frac{1}{p-1}}=
\Big(\frac{2\lambda+2}{(2\lambda+2)p-\epsilon}\Big)^{\frac{1}{p-1}}\frac{(2\lambda+2)(p-1)}{\epsilon}.
\end{align}
Fix an interval $I:=(a,b)\subset\mathbb{R}_+$. One can check that
\begin{align*}
\frac{1}{\nu_\lambda(I)}\int_Iw_\epsilon(x)dx&=\frac{2\lambda+2}{b^{2\lambda+2}-a^{2\lambda+2}}
\frac{b^{(2\lambda+2)p-\epsilon}-a^{(2\lambda+2)p-\epsilon}}{(2\lambda+2)p-\epsilon}\\
&=\frac{1}{b^{2\lambda+2}(1-\frac{a^{2\lambda+2}}{b^{2\lambda+2}})}
\frac{b^{(2\lambda+2)p-\epsilon}
[1-(\frac{a^{2\lambda+2}}{b^{2\lambda+2}})^{p-\frac{\epsilon}{2\lambda+2}}]}
{p-\frac{\epsilon}{2\lambda+2}}\\
&=b^{(2\lambda+2)(p-1-\frac{\epsilon}{2\lambda+2})}
\frac{[1-(\frac{a^{2\lambda+2}}{b^{2\lambda+2}})^{p-\frac{\epsilon}{2\lambda+2}}]}
{(p-\frac{\epsilon}{2\lambda+2})(1-\frac{a^{2\lambda+2}}{b^{2\lambda+2}})}
\end{align*}
and
\begin{align*}
\frac{1}{\nu_\lambda(I)}\int_Ix^{(2\lambda+1)p'}w_\epsilon(x)^{1-p'}dx&=
\frac{2\lambda+2}{b^{2\lambda+2}-a^{2\lambda+2}}\int_a^bx^{\epsilon p'-\epsilon-1}dx\\
&=\frac{2\lambda+2}{b^{2\lambda+2}-a^{2\lambda+2}}\frac{b^{\epsilon/(p-1)}-a^{\epsilon/(p-1)}}{\epsilon/(p-1)}\\
&=\frac{b^{-(2\lambda+2)\big[1-\frac{\epsilon}{(2\lambda+2)(p-1)}\big]}}{1-\frac{a^{2\lambda+2}}{b^{2\lambda+2}}}
\frac{[1-(\frac{a^{2\lambda+2}}{b^{2\lambda+2}})^{\frac{\epsilon}{(2\lambda+2)(p-1)}}]}
{\epsilon/[(2\lambda+2)(p-1)]}.
\end{align*}
For brevity, denote $\alpha:=a^{2\lambda+2}/b^{2\lambda+2}$, $\beta:=p-1-(\epsilon/(2\lambda+2))$. We rewrite
\begin{align*}
\frac{1}{\nu_\lambda(I)}\int_Iw_\epsilon(x)dx=b^{(2\lambda+2)\beta}\frac{1-\alpha^{1+\beta}}
{(1+\beta)(1-\alpha)},
\end{align*}
\begin{align*}
\frac{1}{\nu_\lambda(I)}\int_Ix^{(2\lambda+1)p'}w_\epsilon(x)^{1-p'}dx=b^{-\frac{(2\lambda+2)\beta}{p-1}}
\frac{1-\alpha^{1-\frac{\beta}{p-1}}}{(1-\alpha)(1-\frac{\beta}{p-1})}.
\end{align*}
Hence,
\begin{align}\label{represention of weight constant}
&\Big(\frac{1}{\nu_\lambda(I)}\int_Iw_\epsilon(x)dx\Big)\Big(\frac{1}{\nu_\lambda(I)}\int_I
x^{(2\lambda+1)p'}w_\epsilon(x)^{1-p'}dx\Big)^{p-1}\\
&\quad=\frac{1-\alpha^{1+\beta}}{(1+\beta)(1-\alpha)}
\Big[\frac{1-\alpha^{1-\frac{\beta}{p-1}}}{(1-\alpha)(1-\frac{\beta}{p-1})}\Big]^{p-1}\nonumber.
\end{align}
When $I=(0,b)$, then $\alpha=0$. It is trivial that
\begin{align}\label{lower bound of weight constant}
[w_\epsilon]_{\widetilde{A}_{p,\lambda}}^{\frac{1}{p-1}}&\geq\Big(\frac{1}{1+\beta}\Big)^{\frac{1}{p-1}}
\Big(\frac{1}{1-\frac{\beta}{p-1}}\Big)\\
&=\left(\frac{2\lambda+2}{(2\lambda+2)p-\epsilon}\right)^{\frac{1}{p-1}}\frac{(2\lambda+2)(p-1)}{\epsilon}.\nonumber
\end{align}
We turn to the upper bound of $[w_\epsilon]_{\widetilde{A}_{p,\lambda}}^{\frac{1}{p-1}}$. Consider the function
\begin{equation*}
f(t):=\log(1-\alpha^t),~0<\alpha<1,~t>0.
\end{equation*}
Note that $f''(t)=-\frac{(\log \alpha)^2\alpha^t}{(1-\alpha^t)^2}<0$ and
\begin{align*}
\frac{1}{p}(1+\beta)+\frac{p-1}{p}\Big(1-\frac{\beta}{p-1}\Big)=1.
\end{align*}
Applying the Jesen inequality, we have
\begin{align*}
\frac{1}{p}f(1+\beta)+\frac{p-1}{p}f\Big(1-\frac{\beta}{p-1}\Big)\leq f(1),
\end{align*}
which yields that
\begin{align*}
(1-\alpha^{1+\beta})(1-\alpha^{1-\frac{\beta}{p-1}})^{p-1}\leq(1-\alpha)^p.
\end{align*}
From this and \eqref{represention of weight constant}, we deduce that
\begin{align*}
&\Big[\Big(\frac{1}{\nu_\lambda(I)}\int_I w_\epsilon(x)dx\Big)\Big(\frac{1}{\nu_\lambda(I)}\int_Ix^{(2\lambda+1)p'}w_\epsilon(x)^{1-p'}dx\Big)^{p-1}
\Big]^{\frac{1}{p-1}}\\
&\quad\leq\Big(\frac{1}{1+\beta}\Big)^{\frac{1}{p-1}}\left(\frac{1}{1-\frac{\beta}{p-1}}\right)=
\left(\frac{2\lambda+2}{(2\lambda+2)p-\epsilon}\right)^{\frac{1}{p-1}}\frac{(2\lambda+2)(p-1)}{\epsilon}.
\end{align*}
Therefore,
\begin{align*}
[w_\epsilon]_{\widetilde{A}_{p,\lambda}}^{\frac{1}{p-1}}\leq
\left(\frac{2\lambda+2}{(2\lambda+2)p-\epsilon}\right)^{\frac{1}{p-1}}\frac{(2\lambda+2)(p-1)}{\epsilon}.
\end{align*}
This, together with \eqref{lower bound of weight constant}, gives \eqref{weight constant of test weight}.

In the end, we prove the exponent $1/(p-1)$ is sharp by contradiction. Assume that the exponent of $[w]_{\widetilde{A}_{p,\lambda}}$ in Theorem \ref{mixed quantitative weighted strong estimates} is $\delta$ with $\delta<1/(p-1)$. Then by \eqref{lower bound of M},
\begin{align*}
c[w_\epsilon]_{\widetilde{A}_{p,\lambda}}^\delta\geq\|\mathcal{M}_\lambda\|_{L^p(\mathbb{R}_+,w_\epsilon dx)\rightarrow L^p(\mathbb{R}_+,w_\epsilon dx)}\geq\frac{(2\lambda+2)(p-1)}{\epsilon},
\end{align*}
which, together with \eqref{weight constant of test weight}, gives
\begin{align*}
\Big(\frac{(2\lambda+2)(p-1)}{\epsilon}\Big)^{1-(p-1)\delta}\leq c\Big(\frac{2\lambda+2}{(2\lambda+2)p-\epsilon}\Big)^{\delta}.
\end{align*}
Note that $1-(p-1)\delta>0$, the above inequality cannot hold when $\epsilon\rightarrow0$. This finish the proof.
\end{proof}

\section{Rubio de Francia extrapolation theorem in terms of a new class of weights}
In this section, we prove extrapolation theorems related to $\widetilde{A}_{p,\lambda}$. Our ideas come from \cite{CMP11,DrGrPePe,D,Ru1,Ru2}. To prove these extrapolation theorems, we first need to prove the following two lemmas.
\begin{lemma}\label{Rubio de Francia operator1}
Let $1<p<\infty$, $v_\lambda(x)=x^{2\lambda+1}$ with $\lambda>-1/2,\neq0$. Let $w\in\widetilde{A}_{p,\lambda}$ and $0\leq f\in L^p(\mathbb{R}_+,wdx)$. Define
\begin{equation*}
Rf(x):=v_\lambda(x)\sum_{k=0}^{\infty}\frac{\mathcal{M}_\lambda^kf(x)}
{(2\|\mathcal{M}_\lambda\|_{\mathcal{B}(L^p(\mathbb{R}_+,wdx))})^k},~x\in\mathbb{R}_+,
\end{equation*}
where $\mathcal{M}_\lambda^k:=\mathcal{M}_\lambda\circ \mathcal{M}_\lambda\circ\cdots\circ\mathcal{M}_\lambda$ is $k$ iterations of $\mathcal{M}_\lambda$ and $\mathcal{M}_\lambda^0$ is the identity operator. Then $f\leq Rf\cdot v_\lambda^{-1}$, $\|Rf\cdot v_\lambda^{-1}\|_{L^p(\mathbb{R}_+,wdx)}\leq2\|f\|_{{L^p(\mathbb{R}_+,wdx)}}$, and $Rf\in \widetilde{A}_{1,\lambda}$ with $[Rf]_{\widetilde{A}_{1,\lambda}}\leq2\|\mathcal{M}_\lambda\|_{\mathcal{B}(L^p(\mathbb{R}_+,wdx))}$.
\end{lemma}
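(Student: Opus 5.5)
This is the standard Rubio de Francia construction, adapted to the $v_\lambda$-twisted definition of $\widetilde{A}_{1,\lambda}$. Write $B:=\|\mathcal{M}_\lambda\|_{\mathcal{B}(L^p(\mathbb{R}_+,wdx))}$. This is finite by \eqref{scalar characterization new weight}, or quantitatively by Theorem \ref{mixed quantitative weighted strong estimates}. Set $h:=Rf\cdot v_\lambda^{-1}=\sum_{k\ge0}\mathcal{M}_\lambda^kf/(2B)^k$.

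\textbf{First two claims.} Since every term is nonnegative and the $k=0$ term is $f$, we get $f\le h$ pointwise. Iterating the bound $\|\mathcal{M}_\lambda g\|_{L^p(\mathbb{R}_+,wdx)}\le B\|g\|_{L^p(\mathbb{R}_+,wdx)}$ gives $\|\mathcal{M}_\lambda^kf\|_{L^p(\mathbb{R}_+,wdx)}\le B^k\|f\|_{L^p(\mathbb{R}_+,wdx)}$. Minkowski's inequality for the series then yields $\|h\|_{L^p(\mathbb{R}_+,wdx)}\le\sum_k2^{-k}\|f\|_{L^p(\mathbb{R}_+,wdx)}=2\|f\|_{L^p(\mathbb{R}_+,wdx)}$. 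In particular $h<\infty$ a.e. The series converges in $L^p$ and pointwise monotonically, so $h$ is a well-defined measurable function.

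\textbf{Key pointwise estimate.} $\mathcal{M}_\lambda$ is sublinear and monotone. Moreover, for a monotone increasing sequence of nonnegative partial sums it commutes with the limit, by monotone convergence inside each average and then the supremum over $I$. Therefore
\[
\mathcal{M}_\lambda h\le\sum_{k\ge0}\frac{\mathcal{M}_\lambda^{k+1}f}{(2B)^k}=2B\sum_{k\ge1}\frac{\mathcal{M}_\lambda^kf}{(2B)^k}\le 2B\,h .
\]

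\textbf{Translating into the $\widetilde{A}_{1,\lambda}$ condition for $W:=Rf=v_\lambda h$.} Fix an interval $I$. For a.e. $x\in I$,
\[
\frac{1}{\nu_\lambda(I)}\int_IW(y)\,dy=\frac{1}{\nu_\lambda(I)}\int_I h\,d\nu_\lambda\le\mathcal{M}_\lambda h(x)\le 2B\,h(x)=2B\,v_\lambda(x)^{-1}W(x).
\]
Hence
\[
\Big(\frac{1}{\nu_\lambda(I)}\int_IW\,dx\Big)\,\frac{v_\lambda(x)}{W(x)}\le 2B\quad\text{for a.e. } x\in I,
\]
so $\big(\nu_\lambda(I)^{-1}\int_IW\big)\|v_\lambda W^{-1}\|_{L^\infty(I,dx)}\le 2B$. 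Taking the supremum over $I$ gives $[Rf]_{\widetilde{A}_{1,\lambda}}\le2B$.

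\textbf{Main obstacle.} The only delicate point is justifying the pointwise subadditivity of $\mathcal{M}_\lambda$ over the infinite series. I would handle it through partial sums and monotone convergence, as sketched above. One also needs $h>0$ a.e. so that $W^{-1}$ makes sense. This holds whenever $f\not\equiv0$, since then $\mathcal{M}_\lambda f>0$ everywhere. The case $f\equiv0$ is trivial or excluded, since the family $\mathcal{F}$ consists of functions that are not identically zero.
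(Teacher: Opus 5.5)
Your proposal is correct and follows essentially the same argument as the paper. Both use $\mathcal{M}_\lambda^0f=f$ for the pointwise bound, iterate the $L^p(\mathbb{R}_+,wdx)$ bound of $\mathcal{M}_\lambda$ to sum a geometric series, and prove $\mathcal{M}_\lambda(v_\lambda^{-1}Rf)\le 2\|\mathcal{M}_\lambda\|_{\mathcal{B}(L^p(\mathbb{R}_+,wdx))}\,v_\lambda^{-1}Rf$. You additionally justify the countable sublinearity of $\mathcal{M}_\lambda$, and you make explicit the translation of this pointwise inequality into the $\widetilde{A}_{1,\lambda}$ constant via $d\nu_\lambda=v_\lambda\,dx$; the paper leaves both implicit.
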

\begin{proof}
Since $\mathcal{M}_\lambda^0f=f$, it is easy to see that $f\leq v_\lambda^{-1}Rf$. From Theorem \ref{mixed quantitative weighted strong estimates}, one has
\begin{align*}
\|v_\lambda^{-1}Rf\|_{L^p(\mathbb{R}_+,wdx)}&\leq\sum_{k=0}^\infty
\frac{\|\mathcal{M}_\lambda^kf\|_{L^p(\mathbb{R}_+,wdx)}}
{(2\|\mathcal{M}_\lambda\|_{\mathcal{B}(L^p(\mathbb{R}_+,wdx))})^k}\\
&\leq\sum_{k=0}^\infty 2^{-k}\|f\|_{L^p(\mathbb{R}_+,wdx)}=2\|f\|_{L^p(\mathbb{R}_+,wdx)}.
\end{align*}
For the last one, one can check that
\begin{align*}
\mathcal{M}_\lambda(v_\lambda^{-1}Rf)&\leq\sum_{k=0}^\infty
\frac{2\|\mathcal{M}_\lambda\|_{\mathcal{B}(L^p(\mathbb{R}_+,wdx))}\mathcal{M}_\lambda^{k+1}f}
{(2\|\mathcal{M}_\lambda\|_{\mathcal{B}(L^p(\mathbb{R}_+,wdx))})^{k+1}}\\
&=2\|\mathcal{M}_\lambda\|_{\mathcal{B}(L^p(\mathbb{R}_+,wdx))}\sum_{k=1}^\infty\frac{\mathcal{M}_\lambda^kf}
{(2\|\mathcal{M}_\lambda\|_{\mathcal{B}(L^p(\mathbb{R}_+,wdx))})^k}\\
&\leq2\|\mathcal{M}_\lambda\|_{\mathcal{B}(L^p(\mathbb{R}_+,wdx))}v_\lambda^{-1}Rf.
\end{align*}
This proves $[Rf]_{\widetilde{A}_{1,\lambda}}\leq2\|\mathcal{M}_\lambda\|_{\mathcal{B}(L^p(\mathbb{R}_+,wdx))}$.
\end{proof}

\begin{lemma}\label{Rubio de Francia operator2}
Let $1<p<\infty$, $v_\lambda(x)=x^{2\lambda+1}$ with $\lambda>-1/2,\neq0$. Let $w\in\widetilde{A}_{p,\lambda}$ and $0\leq f\in L^{p'}(\mathbb{R}_+,wdx)$. Define
\begin{equation*}
R'f(x):=v_\lambda(x)\sum_{k=0}^{\infty}\frac{\mathcal{M}_\lambda'^{k}f(x)}
{(2\|\mathcal{M}_\lambda'\|_{\mathcal{B}(L^{p'}(\mathbb{R}_+,wdx))})^k},~x\in\mathbb{R}_+,
\end{equation*}
where
\begin{equation*}
\mathcal{M}_\lambda'f(x):=\frac{\mathcal{M}_\lambda(fwv_\lambda^{-1})(x)v_\lambda(x)}{w(x)},
\end{equation*}
where $\mathcal{M}_\lambda'^k:=\mathcal{M}_\lambda'\circ \mathcal{M}_\lambda'\circ\cdots\circ\mathcal{M}_\lambda'$ is $k$ iterations of $\mathcal{M}_\lambda'$ and $\mathcal{M}_\lambda'^0$ is the identity operator.. Then $f\leq R'f\cdot v_\lambda^{-1}$, $\|R'f\cdot v_\lambda^{-1}\|_{L^{p'}(\mathbb{R}_+,wdx)}\leq2\|f\|_{{L^{p'}(\mathbb{R}_+,wdx)}}$, and $wv_\lambda^{-1}R'f\in \widetilde{A}_{1,\lambda}$ with $[wv_\lambda^{-1}R'f]_{\widetilde{A}_{1,\lambda}}\leq
2\|\mathcal{M}_\lambda'\|_{\mathcal{B}(L^{p'}(\mathbb{R}_+,wdx))}$.
\end{lemma}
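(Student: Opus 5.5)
The plan is to mirror the proof of Lemma \ref{Rubio de Francia operator1}, with $\mathcal{M}_\lambda$ replaced by $\mathcal{M}_\lambda'$. The first task is to check that $\|\mathcal{M}_\lambda'\|_{\mathcal{B}(L^{p'}(\mathbb{R}_+,wdx))}$ is finite. For $g\ge0$ we have
\begin{align*}
\int_{\mathbb{R}_+}(\mathcal{M}_\lambda' g)^{p'}w\,dx=\int_{\mathbb{R}_+}\mathcal{M}_\lambda(gwv_\lambda^{-1})^{p'}v_\lambda^{p'}w^{1-p'}dx .
\end{align*}
Set $\sigma:=v_\lambda^{p'}w^{1-p'}$. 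As noted in the proof of Theorem \ref{mixed quantitative weighted strong estimates}, $\sigma\in\widetilde{A}_{p',\lambda}$, so Theorem \ref{mixed quantitative weighted strong estimates} (or \eqref{scalar characterization new weight}) gives that $\mathcal{M}_\lambda$ is bounded on $L^{p'}(\sigma dx)$. Moreover $\|gwv_\lambda^{-1}\|_{L^{p'}(\sigma dx)}^{p'}=\int g^{p'}w^{p'}v_\lambda^{-p'}v_\lambda^{p'}w^{1-p'}dx=\|g\|_{L^{p'}(wdx)}^{p'}$. Hence $\|\mathcal{M}_\lambda'\|_{\mathcal{B}(L^{p'}(wdx))}=\|\mathcal{M}_\lambda\|_{\mathcal{B}(L^{p'}(\sigma dx))}<\infty$, and the series defining $R'f$ makes sense.

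Next come the first two claims. Since the $k=0$ term equals $f$, we get $f\le R'f\cdot v_\lambda^{-1}$. For the norm, $\mathcal{M}_\lambda'$ is positive and sublinear, so the triangle inequality gives
\begin{align*}
\|R'f\cdot v_\lambda^{-1}\|_{L^{p'}(wdx)}\le\sum_k\|\mathcal{M}_\lambda'\|^k\|f\|/(2\|\mathcal{M}_\lambda'\|)^k=2\|f\|_{L^{p'}(wdx)}.
\end{align*}

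The substantive step is the $\widetilde{A}_{1,\lambda}$ property of $W:=wv_\lambda^{-1}R'f=w\sum_k\mathcal{M}_\lambda'^kf/(2\|\mathcal{M}_\lambda'\|)^k$. By the definition, $W\in\widetilde{A}_{1,\lambda}$ with constant $C$ is equivalent to the pointwise inequality $\mathcal{M}_\lambda(Wv_\lambda^{-1})\le C\,Wv_\lambda^{-1}$ a.e. To see this, average $Wv_\lambda^{-1}$ against $d\nu_\lambda$ over $I\ni x$; this is $\frac{1}{\nu_\lambda(I)}\int_I W\,dx\le C\,\mathrm{ess\,inf}_I Wv_\lambda^{-1}$, which is the $\widetilde{A}_{1,\lambda}$ condition. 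Using sublinearity and countable subadditivity of $\mathcal{M}_\lambda$ on nonnegative series (monotone convergence), we obtain
\begin{align*}
\mathcal{M}_\lambda(Wv_\lambda^{-1})\le\sum_{k\ge0}\frac{\mathcal{M}_\lambda(wv_\lambda^{-1}\mathcal{M}_\lambda'^kf)}{(2\|\mathcal{M}_\lambda'\|)^k}
=\frac{w}{v_\lambda}\sum_{k\ge0}\frac{\mathcal{M}_\lambda'^{k+1}f}{(2\|\mathcal{M}_\lambda'\|)^k},
\end{align*}
where the last equality is exactly the definition of $\mathcal{M}_\lambda'$. Reindexing, the right side equals $2\|\mathcal{M}_\lambda'\|\,\frac{w}{v_\lambda}\sum_{k\ge1}\mathcal{M}_\lambda'^kf/(2\|\mathcal{M}_\lambda'\|)^k\le 2\|\mathcal{M}_\lambda'\|\,Wv_\lambda^{-1}$. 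This gives $[W]_{\widetilde{A}_{1,\lambda}}\le2\|\mathcal{M}_\lambda'\|_{\mathcal{B}(L^{p'}(wdx))}$.

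The main point needing care is the initial boundedness of $\mathcal{M}_\lambda'$, that is, the duality identification with $\sigma\in\widetilde{A}_{p',\lambda}$. A secondary point is justifying the passage from the pointwise inequality to the $\widetilde{A}_{1,\lambda}$ constant via essential infima; both are routine once set up.
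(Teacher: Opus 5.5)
Your proposal is correct and follows the paper's proof. Like the paper, you get the boundedness of $\mathcal{M}_\lambda'$ on $L^{p'}(\mathbb{R}_+,wdx)$ by transferring to $L^{p'}(\mathbb{R}_+,v_\lambda^{p'}w^{1-p'}dx)$ and using $v_\lambda^{p'}w^{1-p'}\in\widetilde{A}_{p',\lambda}$. You then rerun the Neumann-series argument of Lemma \ref{Rubio de Francia operator1} and use the identity $\mathcal{M}_\lambda(wv_\lambda^{-1}g)=wv_\lambda^{-1}\mathcal{M}_\lambda'g$ to obtain $\mathcal{M}_\lambda(wv_\lambda^{-2}R'f)\le 2\|\mathcal{M}_\lambda'\|\,wv_\lambda^{-2}R'f$. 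Your only additions are details the paper leaves implicit: the monotone-convergence justification for the series, and the passage from this pointwise bound to the $\widetilde{A}_{1,\lambda}$ constant.
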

\begin{proof}
The first result follows by the same reason as in Lemma \ref{Rubio de Francia operator1}. Using $w\in\widetilde{A}_{p,\lambda}\Leftrightarrow v_{\lambda}^{p'}w^{1-p'}\in\widetilde{A}_{p',\lambda}$ and the boundedness of $\mathcal{M}_\lambda$ on $L^{p'}(\mathbb{R}_+,v_{\lambda}^{p'}w^{1-p'}dx)$, we obtain
\begin{align}\label{variant HL boundedness}
\|\mathcal{M}_{\lambda}'f\|_{L^{p'}(\mathbb{R}_+,wdx)}&=
\|\mathcal{M}_\lambda(fwv_{\lambda}^{-1})\|_{L^{p'}(\mathbb{R}_+,v_{\lambda}^{p'}w^{1-p'}dx)}\\
&\leq\|\mathcal{M}_\lambda\|_{\mathcal{B}(L^{p'}(\mathbb{R}_+,v_{\lambda}^{p'}w^{1-p'}dx))}
\|f\|_{L^{p'}(\mathbb{R}_+,wdx)}\nonumber.
\end{align}
Thus, the rest of the proof of the second result is similar to that of Lemma \ref{Rubio de Francia operator1}, we omit the details.

To obtain the last result. Similar to the procedure of Lemma \ref{Rubio de Francia operator1}, we can get
\begin{align*}
\mathcal{M}_{\lambda'}(R'f\cdot v_\lambda^{-1})\leq2\|\mathcal{M}_{\lambda'}\|_{\mathcal{B}(L^{p'}(\mathbb{R}_+,wdx))}
R'f\cdot v_\lambda^{-1}.
\end{align*}
Multiply both sides by $wv_\lambda^{-1}$ and observe that
\begin{align*}
wv_\lambda^{-1}\mathcal{M}_{\lambda'}(R'f\cdot v_\lambda^{-1})=
\mathcal{M}_\lambda(wv_\lambda^{-2}R'f),
\end{align*}
we obtain
\begin{align*}
\mathcal{M}_\lambda(wv_\lambda^{-2}R'f)\leq
2\|\mathcal{M}_{\lambda'}\|_{\mathcal{B}(L^{p'}(\mathbb{R}_+,wdx))}wv_\lambda^{-2}R'f.
\end{align*}
This completes the proof.
\end{proof}

\begin{proof}[Proof of Theorem \ref{Rubio de Francia extrapolation theorem for new weights}]
We prove it by considering two cases: $1\leq p_0<p<\infty$ and $1<p<p_0$.

\textbf{Case 1. $1\leq p_0<p<\infty$}:\quad Assume $p_0=1$ first. By duality, Lemma \ref{Rubio de Francia operator2} and \eqref{variant HL boundedness}, there is a function $0\leq h\in L^{p'}(\mathbb{R}_+,wdx)$ with $\|h\|_{L^{p'}(\mathbb{R}_+,wdx)}=1$ such that
\begin{align*}
\|f\|_{L^p(\mathbb{R}_+,wdx)}&=\int_{\mathbb{R}_+}f(x)h(x)w(x)dx\\
&\leq\int_{\mathbb{R}_+}f(x)R'h(x)v_\lambda(x)^{-1}w(x)dx\\
&\leq N_1\Big(2\|\mathcal{M}_\lambda\|_{\mathcal{B}(L^{p'}(\mathbb{R}_+,v_\lambda^{p'}w^{1-p'}dx))}\Big)
\int_{\mathbb{R}_+}
g(x)R'h(x)v_\lambda(x)^{-1}w(x)dx\\
&\leq N_1\Big(2\|\mathcal{M}_\lambda\|_{\mathcal{B}(L^{p'}(\mathbb{R}_+,v_\lambda^{p'}w^{1-p'}dx))}\Big)
\Big(\int_{\mathbb{R}_+}g(x)^pw(x)dx\Big)^{1/p}\\
&\qquad\times\Big(\int_{\mathbb{R}_+}(R'h(x)v_\lambda(x)^{-1})^{p'}w(x)dx\Big)^{1/p'}\\
&\leq2N_1\Big(2\|\mathcal{M}_\lambda\|_{\mathcal{B}(L^{p'}(\mathbb{R}_+,v_\lambda^{p'}w^{1-p'}dx))}\Big)
\|g\|_{L^p(\mathbb{R}_+,wdx)}.
\end{align*}

Next, we deal with the case $1<p_0<p<\infty$. Again by duality and Lemma \ref{Rubio de Francia operator2},
\begin{align}\label{case 1 origin}
\|f\|_{L^p(\mathbb{R}_+,wdx)}&=\int_{\mathbb{R}_+}f(x)h(x)w(x)dx\\
&=\int_{\mathbb{R}_+}f(x)h(x)^{\frac{p-p_0}{p_0(p-1)}}h(x)^{\frac{(p_0-1)p}{p_0(p-1)}}w(x)dx\nonumber\\
&\leq\int_{\mathbb{R}_+}f(x)(R'h(x)v_{\lambda}(x)^{-1})^{\frac{p-p_0}{p_0(p-1)}}
h(x)^{\frac{(p_0-1)p}{p_0(p-1)}}w(x)dx.\nonumber
\end{align}

Denote $W=(v_\lambda^{-1}R'h)^{\frac{p-p_0}{p-1}}w$, where $w\in \widetilde{A}_{p,\lambda}$. In the following, we show that $W\in\widetilde{A}_{p_0,\lambda}$. Fix an interval $I\subset\mathbb{R}_+$. By Lemma \ref{Rubio de Francia operator2},
\begin{align*}
[v_\lambda^{-1}wR'h]_{\widetilde{A}_{1,\lambda}}\leq2\|\mathcal{M}_\lambda'\|_{\mathcal{B}(L^{p'}(\mathbb{R}_+,wdx))},
\end{align*}
which yields that for a.e. $x\in I$,
\begin{align*}
\frac{1}{\nu_\lambda(I)}\int_IR'h(x)w(x)v_\lambda(x)^{-1}dx\leq2
\|\mathcal{M}_\lambda'\|_{\mathcal{B}(L^{p'}(\mathbb{R}_+,wdx))}
R'h(x)w(x)v_\lambda(x)^{-2}.
\end{align*}
From this, we have
\begin{align}\label{weight pro 1}
&\Big(\frac{1}{\nu_\lambda(I)}\int_Iv_{\lambda}(x)^{p_0'}W(x)^{1-p_0'}dx\Big)^{p_0-1}\\
&\quad=\Big(\frac{1}{\nu_\lambda(I)}\int_Iv_{\lambda}(x)^{p_0'}(R'h(x)v_{\lambda}(x)^{-1})
^{-\frac{p-p_0}{(p_0-1)(p-1)}}w(x)^{1-p_0'}dx\Big)^{p_0-1}\nonumber\\
&\quad=\Big(\frac{1}{\nu_\lambda(I)}\int_Iv_{\lambda}(x)^{p_0'}(R'h(x)w(x)v_{\lambda}(x)^{-2})
^{-\frac{p-p_0}{(p_0-1)(p-1)}}\nonumber\\
&\qquad\quad\times(v_\lambda(x)w(x)^{-1})^{-\frac{p-p_0}{(p_0-1)(p-1)}}w(x)^{1-p_0'}dx
\Big)^{p_0-1}\nonumber\\
&\quad\leq\Big(2\|\mathcal{M}_\lambda'\|_{\mathcal{B}(L^{p'}(\mathbb{R}_+,wdx))}\Big)^{\frac{p-p_0}{p-1}}
\Big(\frac{1}{\nu_\lambda(I)}\int_IR'h(x)w(x)v_\lambda(x)^{-1}dx\Big)^{-\frac{p-p_0}{p-1}}\nonumber\\
&\quad\qquad\times\Big(\frac{1}{\nu_\lambda(I)}\int_I
v_{\lambda}(x)^{p_0'}(v_\lambda(x)w(x)^{-1})^{-\frac{p-p_0}{(p_0-1)(p-1)}}w(x)^{1-p_0'}dx\Big)^{p_0-1}
\nonumber\\
&\quad=\Big(2\|\mathcal{M}_\lambda'\|_{B(L^{p'}(\mathbb{R}_+,wdx))}\Big)^{\frac{p-p_0}{p-1}}
\Big(\frac{1}{\nu_\lambda(I)}\int_IR'h(x)w(x)v_\lambda(x)^{-1}dx\Big)^{-\frac{p-p_0}{p-1}}\nonumber\\
&\quad\qquad\times\Big[\Big(\frac{1}{\nu_\lambda(I)}\int_I
v_\lambda(x)^{p'}w(x)^{1-p'}dx\Big)^{p-1}\Big]^{\frac{p_0-1}{p-1}}.\nonumber
\end{align}

On the other hand, applying H\"{o}lder's inequality,
\begin{align}\label{weight pro 2}
&\frac{1}{\nu_\lambda(I)}\int_IW(x)dx\\
&\quad=\frac{1}{\nu_\lambda(I)}\int_I(R'h(x)
v_\lambda(x)^{-1})^{\frac{p-p_0}{p-1}}w(x)dx\nonumber\\
&\quad\leq\Big(\frac{1}{\nu_\lambda(I)}\int_IR'h(x)v_\lambda(x)^{-1}w(x)dx\Big)
^{\frac{p-p_0}{p-1}}\Big(\frac{1}{\nu_\lambda(I)}\int_Iw(x)dx\Big)^{\frac{p_0-1}{p-1}}\nonumber.
\end{align}
Therefore, combined with \eqref{weight pro 1} and \eqref{weight pro 2}, we deduce that
\begin{align*}
&\Big(\frac{1}{\nu_\lambda(I)}\int_IW(x)dx\Big)
\Big(\frac{1}{\nu_\lambda(I)}\int_Iv_{\lambda}(x)^{p_0'}W(x)^{1-p_0'}dx\Big)^{p_0-1}\\
&\quad\leq\Big(2\|\mathcal{M}_\lambda'\|_{\mathcal{B}(L^{p'}(\mathbb{R}_+,wdx))}\Big)^{\frac{p-p_0}{p-1}}
[w]_{\widetilde{A}_{p,\lambda}}^{\frac{p_0-1}{p-1}},
\end{align*}
which gives
\begin{align}\label{weight constant compare}
[W]_{\widetilde{A}_{p_0,\lambda}}\leq
\Big(2\|\mathcal{M}_\lambda'\|_{\mathcal{B}(L^{p'}(\mathbb{R}_+,wdx))}\Big)^{\frac{p-p_0}{p-1}}
[w]_{\widetilde{A}_{p,\lambda}}^{\frac{p_0-1}{p-1}}.
\end{align}

Now, we return to prove the result in case $1<p_0<p<\infty$. By Lemma \ref{Rubio de Francia operator2},
\begin{align*}
\|f\|_{L^{p_0}(\mathbb{R}_+,Wdx)}&=\Big(\int_{\mathbb{R}_+}f(x)^{p_0}(R'h(x)v_{\lambda}(x)^{-1})
^{\frac{p-p_0}{p-1}}w(x)dx\Big)^{1/p_0}\\
&\leq\Big[\Big(\int_{\mathbb{R}_+}f(x)^{p}w(x)dx\Big)^{\frac{p_0}{p}}
\Big(\int_{\mathbb{R}_+}
(R'h(x)v_\lambda(x)^{-1})^{p'}w(x)dx\Big)^{\frac{p-p_0}{(p-1)p'}}\Big]^{1/p_0}\nonumber\\
&\leq2^{\frac{p-p_0}{(p-1)p_0}}\Big[\Big(\int_{\mathbb{R}_+}f(x)^{p}w(x)dx\Big)^{\frac{p_0}{p}}
\Big(\int_{\mathbb{R}_+}
h(x)^{p'}w(x)dx\Big)^{\frac{p-p_0}{(p-1)p'}}\Big]^{1/p_0}\nonumber\\
&=2^{\frac{p-p_0}{(p-1)p_0}}\|f\|_{L^p(\mathbb{R}_+,wdx)}<\infty.
\end{align*}
Thus, by Holder's inequality, \eqref{variant HL boundedness}, \eqref{case 1 origin}, \eqref{weight constant compare}, the assumption and Lemma \ref{Rubio de Francia operator2}, we have
\begin{align*}
&\|f\|_{L^p(\mathbb{R}_+,wdx)}\\
&\quad\leq\int_{\mathbb{R}_+}f(x)W(x)^{\frac{1}{p_0}}
h(x)^{\frac{p}{p_0'(p-1)}}w(x)^{\frac{1}{p_0'}}dx\\
&\quad\leq\Big(\int_{\mathbb{R}_+}f(x)^{p_0}W(x)dx\Big)^{\frac{1}{p_0}}
\Big(\int_{\mathbb{R}_+}h(x)^{p'}w(x)dx\Big)^{\frac{1}{p_0'}}\\
&\quad\leq N_{p_0}([W]_{\widetilde{A}_{p_0,\lambda}})\Big(\int_{\mathbb{R}_+}g(x)^{p_0}
(R'h(x)v_\lambda(x)^{-1})^{\frac{p-p_0}{p-1}}
w(x)dx\Big)^{\frac{1}{p_0}}\\
&\quad\leq N_{p_0}\Big(2^{\frac{p-p_0}{p-1}}\|\mathcal{M}_\lambda\|_{\mathcal{B}(L^{p'}
(\mathbb{R}_+,v_\lambda^{p'}w^{1-p'}dx))}^{\frac{p-p_0}{p-1}}
[w]_{\widetilde{A}_{p,\lambda}}^{\frac{p_0-1}{p-1}}\Big)
\Big(\int_{\mathbb{R}_+}g(x)^{p_0\cdot\frac{p}{p_0}}w(x)dx\Big)^{\frac{p_0}{p}\cdot\frac{1}{p_0}}\\
&\quad\quad\times\Big(\int_{\mathbb{R}_+}(R'h(x)v_\lambda(x)^{-1})^{\frac{p-p_0}{p-1}\cdot\frac{p}{p-p_0}}
w(x)dx\Big)^{\frac{p-p_0}{pp_0}}\\
&\quad=N_{p_0}\Big(2^{\frac{p-p_0}{p-1}}
\|\mathcal{M}_\lambda\|_{\mathcal{B}(L^{p'}(\mathbb{R}_+,v_\lambda^{p'}w^{1-p'}dx))}^{\frac{p-p_0}{p-1}}
[w]_{\widetilde{A}_{p,\lambda}}^{\frac{p_0-1}{p-1}}\Big)
\Big(\int_{\mathbb{R}_+}g(x)^pw(x)dx\Big)^{\frac{1}{p}}\\
&\quad\quad\times\Big(\int_{\mathbb{R}_+}(R'h(x)v_\lambda(x)^{-1})^{p'}w(x)dx\Big)
^{\frac{1}{p'}\frac{p'(p-p_0)}{pp_0}}\\
&\quad\leq2^{\frac{p-p_0}{p_0(p-1)}}N_{p_0}\Big(2^{\frac{p-p_0}{p-1}}
\|\mathcal{M}_\lambda\|_{\mathcal{B}(L^{p'}
(\mathbb{R}_+,v_\lambda^{p'}w^{1-p'}dx))}^{\frac{p-p_0}{p-1}}
[w]_{\widetilde{A}_{p,\lambda}}^{\frac{p_0-1}{p-1}}\Big)\|g\|_{L^p(\mathbb{R}_+,wdx)}.
\end{align*}

\textbf{Case 2. $1<p<p_0$}:\quad Since $(f,g)\in\mathcal{F}$, we can define
\begin{equation*}
h(x):=\epsilon\frac{f(x)}{\|f\|_{L^p(\mathbb{R}_+,wdx)}}+\frac{g(x)}{\|g\|_{L^p(\mathbb{R}_+,wdx)}},~
\epsilon>0.
\end{equation*}
Then $h\in L^p(\mathbb{R}_+,wdx)$ with $\|h\|_{L^p(\mathbb{R}_+,wdx)}\leq1+\epsilon$. Let $R$ be defined in Lemma \ref{Rubio de Francia operator1}. By H\"{o}lder's inequality and Lemma \ref{Rubio de Francia operator1}, we have
\begin{align}\label{case 2 pro}
&\|f\|_{L^p(\mathbb{R}_+,wdx)}\\
&\quad=\Big(\int_{\mathbb{R}_+}f(x)^p(Rh(x)v_\lambda(x)^{-1})
^{\frac{(p-p_0)p}{p_0}}(Rh(x)v_\lambda(x)^{-1})
^{\frac{(p_0-p)p}{p_0}}w(x)dx\Big)^{\frac{1}{p}}\nonumber\\
&\quad\leq\Big(\int_{\mathbb{R}_+}f(x)^{p\cdot\frac{p_0}{p}}
(Rh(x)v_\lambda(x)^{-1})^{\frac{(p-p_0)p}{p_0}\cdot\frac{p_0}{p}}
w(x)dx\Big)^{\frac{1}{p}\cdot\frac{p}{p_0}}\nonumber\\
&\qquad\quad\times\Big(\int_{\mathbb{R}_+}
(Rh(x)v_\lambda(x)^{-1})^{\frac{(p_0-p)p}{p_0}\cdot\frac{p_0}{p_0-p}}
w(x)dx\Big)^{\frac{1}{p}\cdot\frac{p_0-p}{p_0}}\nonumber\\
&\quad\leq2^{\frac{p_0-p}{p_0}}\|h\|_{L^p(\mathbb{R}_+,wdx)}^{\frac{p_0-p}{p_0}}
\Big(\int_{\mathbb{R}_+}f(x)^{p_0}(Rh(x)v_\lambda(x)^{-1})^{p-p_0}w(x)dx\Big)^{\frac{1}{p_0}}\nonumber\\
&\quad\leq[2(1+\epsilon)]^{\frac{p_0-p}{p_0}}
\Big(\int_{\mathbb{R}_+}f(x)^{p_0}(Rh(x)v_\lambda(x)^{-1})^{p-p_0}w(x)dx\Big)^{\frac{1}{p_0}}.\nonumber
\end{align}

Again by Lemma \ref{Rubio de Francia operator1} and $p<p_0$,
\begin{align*}
&\int_{\mathbb{R}_+}f(x)^{p_0}(Rh(x)v_\lambda(x)^{-1})^{p-p_0}w(x)dx\\
&\quad\leq\int_{\mathbb{R}_+}f(x)^{p_0}h(x)^{p-p_0}w(x)dx\\
&\quad\leq\int_{\mathbb{R}_+}f(x)^{p_0}\Big(\epsilon f(x)/\|f\|_{L^p(\mathbb{R}_+,wdx)}\Big)^{p-p_0}w(x)dx\\
&\quad=\epsilon^{p-p_0}\|f\|_{L^p(\mathbb{R}_+,wdx)}^{p_0}<\infty.
\end{align*}
If we can prove $(Rh\cdot v_\lambda^{-1})^{p-p_0}w\in \widetilde{A}_{p_0,\lambda}$ with
\begin{align}\label{weight constant compare1}
[(Rh\cdot v_\lambda^{-1})^{p-p_0}w]_{\widetilde{A}_{p_0,\lambda}}\leq2^{p_0-p}
\|\mathcal{M}_\lambda\|_{\mathcal{B}(L^p(\mathbb{R}_+,wdx))}^{p_0-p}
[w]_{\widetilde{A}_{p,\lambda}}.
\end{align}
Then by \eqref{case 2 pro}, \eqref{weight constant compare1} and the assumption, we deduce that
\begin{align*}
\|f\|_{L^p(\mathbb{R}_+,wdx)}&\leq[2(1+\epsilon)]^{\frac{p_0-p}{p_0}}
N_{p_0}([(Rh\cdot v_\lambda^{-1})^{p-p_0}w]_{\widetilde{A}_{p_0,\lambda}})\\
&\quad\times\Big(\int_{\mathbb{R}_+}g(x)^{p_0}(Rh\cdot v_\lambda^{-1})^{p-p_0}w(x)dx\Big)^{\frac{1}{p_0}}
\\
&\leq[2(1+\epsilon)]^{\frac{p_0-p}{p_0}}
N_{p_0}(2^{p_0-p}\|\mathcal{M}_\lambda\|_{\mathcal{B}(L^p(\mathbb{R}_+,wdx))}^{p_0-p}[w]_{\widetilde{A}_{p,\lambda}})\\
&\quad\times\Big(\int_{\mathbb{R}_+}g(x)^{p_0}(Rh\cdot v_\lambda^{-1})^{p-p_0}w(x)dx\Big)^{\frac{1}{p_0}}.
\end{align*}
According to the definition of $h$ and Lemma \ref{Rubio de Francia operator1},
\begin{align*}
&\Big(\int_{\mathbb{R}_+}g(x)^{p_0}(Rh(x)v_\lambda(x)^{-1})^{p-p_0}w(x)dx\Big)^{\frac{1}{p_0}}\\
&\quad\leq\Big(\int_{\mathbb{R}_+}g(x)^{p_0}h(x)^{p-p_0}w(x)dx\Big)^{\frac{1}{p_0}}\\
&\quad\leq\Big(\int_{\mathbb{R}_+}g(x)^{p_0}(g(x)/\|g\|_{L^p(\mathbb{R}_+,wdx)})^{p-p_0}w(x)dx\Big)
^{\frac{1}{p_0}}\\
&\quad=\|g\|_{L^p(\mathbb{R}_+,wdx)}.
\end{align*}
From this, we get
\begin{align*}
\|f\|_{L^p(\mathbb{R}_+,wdx)}\leq[2(1+\epsilon)]^{\frac{p_0-p}{p_0}}
N_{p_0}(2^{p_0-p}\|\mathcal{M}_\lambda\|_{\mathcal{B}(L^p(\mathbb{R}_+,wdx))}^{p_0-p}[w]_{\widetilde{A}_{p,\lambda}})
\|g\|_{L^p(\mathbb{R}_+,wdx)}.
\end{align*}
By letting $\epsilon\rightarrow0$, we get the conclusion.

As we stated before, the conclusion holds provided that \eqref{weight constant compare1} is true. Thus, we need to prove \eqref{weight constant compare1}. Fix an interval $I\subset\mathbb{R}_+$, Lemma \ref{Rubio de Francia operator1} shows that for a.e. $x\in I$
\begin{align*}
\frac{1}{\nu_\lambda(I)}\int_IRh(x)v_\lambda(x)^{-1}d\nu_\lambda(x)\leq
2\|\mathcal{M}_\lambda\|_{\mathcal{B}(L^p(\mathbb{R}_+,wdx))}
Rh(x)v_\lambda(x)^{-1}.
\end{align*}
Hence, for a.e. $x\in I$,
\begin{align*}
(Rh(x)v_\lambda(x)^{-1})^{p-p_0}\leq\Big(2\|\mathcal{M}_\lambda\|_{\mathcal{B}(L^p(\mathbb{R}_+,wdx))}\Big)
^{p_0-p}\nu_\lambda(I)^{p_0-p}\Big(\int_IRh(x)dx\Big)^{p-p_0}.
\end{align*}
From this, we have
\begin{align}\label{weight pro3}
&\frac{1}{\nu_\lambda(I)}\int_I(Rh(x)v_\lambda(x)^{-1})^{p-p_0}w(x)dx\\
&\quad\leq\Big(2\|\mathcal{M}_\lambda\|_{\mathcal{B}(L^p(\mathbb{R}_+,wdx))}\Big)
^{p_0-p}
\Big(\frac{1}{\nu_\lambda(I)}\int_Iw(x)dx\Big)
\Big(\frac{1}{\nu_\lambda(I)}\int_IRh(x)dx\Big)^{p-p_0}\nonumber.
\end{align}
On the other hand, by H\"{o}lder's inequality, we have
\begin{align}\label{weight pro4}
&\frac{1}{\nu_\lambda(I)}\int_Iv_\lambda(x)^{p_0'}(Rh(x)v_\lambda(x)^{-1})^{-\frac{p-p_0}{p_0-1}}
w(x)^{-\frac{1}{p_0-1}}dx\\
&\quad=\frac{1}{\nu_\lambda(I)}\int_Iv_\lambda(x)^{\frac{p'(p-1)}{p_0-1}}
v_\lambda(x)^{\frac{p_0-p}{p_0-1}}(Rh(x)v_\lambda(x)^{-1})^{-\frac{p-p_0}{p_0-1}}
w(x)^{-\frac{1}{p_0-1}}dx\nonumber\\
&\quad=\frac{1}{\nu_\lambda(I)}\int_Iv_\lambda(x)^{\frac{p'(p-1)}{p_0-1}}
Rh(x)^{\frac{p_0-p}{p_0-1}}w(x)^{-\frac{1}{p_0-1}}dx\nonumber\\
&\quad\leq\Big(\frac{1}{\nu_\lambda(I)}\int_Iv_\lambda(x)^{p'}w(x)^{-\frac{1}{p-1}}dx\Big)
^{\frac{p-1}{p_0-1}}
\Big(\frac{1}{\nu_\lambda(I)}\int_IRh(x)dx\Big)^{\frac{p_0-p}{p_0-1}}.\nonumber
\end{align}
Therefore, by \eqref{weight pro3} and \eqref{weight pro4}, we obtain
\begin{align*}
&\Big(\frac{1}{\nu_\lambda(I)}\int_I\Big(\frac{Rh(x)}{v_\lambda(x)}\Big)^{p-p_0}w(x)dx\Big)
\Big(\frac{1}{\nu_\lambda(I)}\int_Iv_\lambda(x)^{p_0'}
\Big(\frac{Rh(x)}{v_\lambda(x)}\Big)^{-\frac{p-p_0}{p_0-1}}
w(x)^{-\frac{1}{p_0-1}}dx\Big)^{p_0-1}\\
&\quad\leq2^{p_0-p}
\|\mathcal{M}_\lambda\|_{\mathcal{B}(L^p(\mathbb{R}_+,wdx))}^{p_0-p}[w]_{\widetilde{A}_{p,\lambda}}.
\end{align*}
This gives \eqref{weight constant compare1}. The proof of Theorem \ref{Rubio de Francia extrapolation theorem for new weights} is completed.
\end{proof}

As corollaries of Theorem \ref{Rubio de Francia extrapolation theorem for new weights}, we present the proofs of Corollaries \ref{vector-value extrapolation}-\ref{weak type extrapolation}.
\begin{proof}[Proof of Corollary \ref{vector-value extrapolation}]
Denote
\begin{align*}
F(x):=\Big(\sum_if_i(x)^q\Big)^{1/q},\qquad G(x):=\Big(\sum_ig_i(x)^q\Big)^{1/q}.
\end{align*}
By Theorem \ref{Rubio de Francia extrapolation theorem for new weights} with $p=q$, we obtain
\begin{align*}
\|F\|_{L^q(\mathbb{R}_+,wdx)}^q&=\sum_i\int_{\mathbb{R}_+}f_i(x)^qw(x)dx\\
&\lesssim\sum_i\int_{\mathbb{R}_+}g_i(x)^qw(x)dx=\|G\|_{L^q(\mathbb{R}_+,wdx)}^q.
\end{align*}
This shows that $(F,G)\in\mathcal{F}$ satisfies the assumption of Theorem \ref{Rubio de Francia extrapolation theorem for new weights} with $p_0=q$. Hence, applying Theorem \ref{Rubio de Francia extrapolation theorem for new weights} with $(F,G)\in\mathcal{F}$, we have that for all $w\in \widetilde{A}_{p,\lambda}$, $\|F\|_{L^p(\mathbb{R}_+,wdx)}\lesssim\|G\|_{L^p(\mathbb{R}_+,wdx)}$, this is the desired result.
\end{proof}

\begin{proof}[Proof of Corollary \ref{scale}]
Since
\begin{align*}
\int_{\mathbb{R}_+}(f(x)^r)^{p_0/r}w(x)dx&=\int_{\mathbb{R}_+}f(x)^{p_0}w(x)dx\\
&\lesssim\int_{\mathbb{R}_+}g(x)^{p_0}w(x)dx=\int_{\mathbb{R}_+}(g(x)^r)^{p_0/r}w(x)dx.
\end{align*}
Thus, applying Theorem \ref{Rubio de Francia extrapolation theorem for new weights} with $(f^r,g^r)\in\mathcal{F}$, we have that for all $w\in\widetilde{A}_{p/r,\lambda}$,
\begin{align*}
\|f\|_{L^{p}(\mathbb{R}_+,wdx)}^p=\int_{\mathbb{R}_+}(f(x)^r)^{p/r}w(x)dx\lesssim
\int_{\mathbb{R}_+}(g(x)^r)^{p/r}w(x)dx=\|g\|_{L^{p}(\mathbb{R}_+,wdx)}^p.
\end{align*}
\end{proof}

\begin{proof}[Proof of Corollary \ref{A extraapolaation}]
Fix $1<r<\infty$. Using the assumption, for each $w\in\widetilde{A}_{r,\lambda}\subset\widetilde{A}_{\infty,\lambda}$, $(f^{p_0/r},g^{p_0/r})\in\mathcal{F}$, we have
\begin{align*}
\int_{\mathbb{R}_+}(f(x)^{p_0/r})^rw(x)dx&=\int_{\mathbb{R}_+}f(x)^{p_0}w(x)dx\\
&\lesssim\int_{\mathbb{R}_+}g(x)^{p_0}w(x)dx=\int_{\mathbb{R}_+}(g(x)^{p_0/r})^rw(x)dx.
\end{align*}
The above inequality is just the hypothesis of Theorem \ref{Rubio de Francia extrapolation theorem for new weights} with $r$ in place of $p_0$, then for all $1<s<\infty$ and $w\in\widetilde{A}_{s,\lambda}$,
\begin{align*}
\int_{\mathbb{R}_+}(f(x)^{p_0/r})^sw(x)dx\lesssim\int_{\mathbb{R}_+}(g(x)^{p_0/r})^sw(x)dx.
\end{align*}

Fix $p>0$ and $w\in\widetilde{A}_{\infty,\lambda}$. Since the $\widetilde{A}_{p,\lambda}$ classes are nested (see \cite{LiLiangLinShen}), without loss of generality, we may assume that $w\in\widetilde{A}_{s,\lambda}$ with $s>p/p_0$. Thus, we can choose $r>1$ such that $p_0s/r=p$, and the above inequality gives the desired result.
\end{proof}

\begin{proof}[Proof of Corollary \ref{weak type extrapolation}]
Denote $f_t:=t\chi_{\{x\in\mathbb{R}_+:f(x)>t\}},t>0$. Then for each $w\in\widetilde{A}_{p_0,\lambda}$ and pair $(f_t,g)$, the assumption yields that
\begin{align*}
\|f_t\|_{L^{p_0}(\mathbb{R}_+,wdx)}&=tw(\{x\in\mathbb{R}_+:f(x)>t\})^{1/p_0}\\
&\leq\|f\|_{L^{p_0}(\mathbb{R}_+,wdx)}\lesssim\|g\|_{L^{p_0}(\mathbb{R}_+,wdx)}.
\end{align*}
From this, for any $t>0$, we can use Theorem \ref{Rubio de Francia extrapolation theorem for new weights} with $(f_t,g)\in\mathcal{F}$ to get
\begin{align*}
tw(\{x\in\mathbb{R}_+:f(x)>t\})^{1/p}
=\|f_t\|_{L^{p}(\mathbb{R}_+,wdx)}\lesssim\|g\|_{L^{p}(\mathbb{R}_+,wdx)},
\end{align*}
which implies that
\begin{align*}
\|f\|_{L^{p,\infty}(\mathbb{R}_+,wdx)}=\sup_{t>0}tw(\{x\in\mathbb{R}_+:f(x)>t\})^{1/p}
\lesssim\|g\|_{L^{p}(\mathbb{R}_+,wdx)}.
\end{align*}
\end{proof}

\section{Restricted weighted weak type estimates for operators in the Bessel setting}
In this section, we give the proofs of Theorems \ref{characterization restricted weak type of HL}-\ref{quantitative two weight restricted weak type estimate for sparse operator of commutator 2}. Though our ideas origins from \cite{ChHuKu,KT,R}, we should improve the methods in \cite{ChHuKu,KT,R} and sometimes need more information about the weights due to the intrinsic structure of the Bessel operator.

We begin by proving some properties of the new class of weights $\widetilde{A}_{p,\lambda}^\mathcal{R}$, which are independent of interests. The following theorem illustrates the relationship between $\widetilde{A}_{p,\lambda}$ and $\widetilde{A}_{p,\lambda}^\mathcal{R}$.
\begin{theorem}\label{compare weight constant1}
Let $1\leq p<q<\infty$ and $\lambda>-1/2, \neq 0$. Then $\widetilde{A}_{1,\lambda}=\widetilde{A}_{1,\lambda}^\mathcal{R}$, $\widetilde{A}_{p,\lambda}\subset\widetilde{A}_{p,\lambda}^\mathcal{R}\subset\widetilde{A}_{q,\lambda}$ and $\widetilde{A}_{\infty,\lambda}=\bigcup_{p\geq1}\widetilde{A}_{p,\lambda}=
\bigcup_{p\geq1}\widetilde{A}_{p,\lambda}^\mathcal{R}$. Moreover,
\begin{align*}
[w]_{\widetilde{A}_{p,\lambda}^\mathcal{R}}\leq[w]_{\widetilde{A}_{p,\lambda}}^{1/p},\quad [w]_{\widetilde{A}_{q,\lambda}}\leq\Big(\frac{p'}{p'-q'}\Big)^{q-1}[w]_{\widetilde{A}_{p,\lambda}^\mathcal{R}}^q.
\end{align*}
\end{theorem}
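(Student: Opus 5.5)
The plan is to compare the $L^{p',\infty}(\mathbb{R}_+,w\,dx)$ quasinorm of $\chi_I v_\lambda w^{-1}$ with the $L^{p'}(\mathbb{R}_+,w\,dx)$ norm in one direction and with a truncated distribution-function integral in the other. Throughout, fix an interval $I\subset\mathbb{R}_+$ and write
\begin{align*}
K_I:=\|\chi_Iv_\lambda w^{-1}\|_{L^{p',\infty}(\mathbb{R}_+,w\,dx)}.
\end{align*}

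\emph{The case $p=1$.} Here $p'=\infty$ and $L^{\infty,\infty}=L^\infty$. Since $w\,dx$ and $dx$ have the same null sets on $\{w>0\}$, we get $K_I=\|(\cdot)^{2\lambda+1}w(\cdot)^{-1}\|_{L^\infty(I,dx)}$. Then $w(I)K_I/\nu_\lambda(I)$ is exactly the quantity inside the supremum defining $[w]_{\widetilde{A}_{1,\lambda}}$. Hence $[w]_{\widetilde{A}_{1,\lambda}^\mathcal{R}}=[w]_{\widetilde{A}_{1,\lambda}}$ and $\widetilde{A}_{1,\lambda}=\widetilde{A}_{1,\lambda}^\mathcal{R}$.

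\emph{First inequality, $1<p<\infty$.} Use $\|\cdot\|_{L^{p',\infty}}\le\|\cdot\|_{L^{p'}}$ and compute
\begin{align*}
\|\chi_Iv_\lambda w^{-1}\|_{L^{p'}(w\,dx)}=\Big(\int_Iv_\lambda^{p'}w^{1-p'}dx\Big)^{1/p'}.
\end{align*}
This gives
\begin{align*}
\frac{w(I)^{1/p}K_I}{\nu_\lambda(I)}\le\Big(\frac{w(I)}{\nu_\lambda(I)}\Big)^{1/p}\Big(\frac{1}{\nu_\lambda(I)}\int_Iv_\lambda^{p'}w^{1-p'}\Big)^{1/p'}\le[w]_{\widetilde{A}_{p,\lambda}}^{1/p}.
\end{align*}
Taking the supremum over $I$ yields $[w]_{\widetilde{A}_{p,\lambda}^\mathcal{R}}\le[w]_{\widetilde{A}_{p,\lambda}}^{1/p}$ and $\widetilde{A}_{p,\lambda}\subset\widetilde{A}_{p,\lambda}^\mathcal{R}$.

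\emph{Second inequality, $q>p$.} This is the main computation. Since $q'<p'$, write
\begin{align*}
\int_Iv_\lambda^{q'}w^{1-q'}dx=\int_I(v_\lambda w^{-1})^{q'}w\,dx=q'\int_0^\infty t^{q'-1}w(\{x\in I:v_\lambda w^{-1}>t\})\,dt.
\end{align*}
Bound the distribution function by $\min\{w(I),K_I^{p'}t^{-p'}\}$ and split the $t$-integral at $t_0:=K_Iw(I)^{-1/p'}$. The part below $t_0$ contributes $w(I)t_0^{q'}$. The tail contributes $\frac{q'}{p'-q'}K_I^{p'}t_0^{q'-p'}$, which converges precisely because $q'<p'$. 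Both pieces equal multiples of $w(I)^{1-q'/p'}K_I^{q'}$, so the sum is $\frac{p'}{p'-q'}w(I)^{1-q'/p'}K_I^{q'}$. When $p=1$ the tail term vanishes, because $v_\lambda w^{-1}\le K_I$ $w$-a.e. on $I$, and the factor is $1$.

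Now insert this bound into the $\widetilde{A}_{q,\lambda}$ expression, using $q'(q-1)=q$ and $1+(q-1)(1-q'/p')=q/p$:
\begin{align*}
\frac{w(I)}{\nu_\lambda(I)}\Big(\frac{1}{\nu_\lambda(I)}\int_Iv_\lambda^{q'}w^{1-q'}\Big)^{q-1}\le\Big(\frac{p'}{p'-q'}\Big)^{q-1}\Big(\frac{w(I)^{1/p}K_I}{\nu_\lambda(I)}\Big)^q.
\end{align*}
Taking the supremum over $I$ gives $[w]_{\widetilde{A}_{q,\lambda}}\le\big(\frac{p'}{p'-q'}\big)^{q-1}[w]_{\widetilde{A}_{p,\lambda}^\mathcal{R}}^q$, and hence $\widetilde{A}_{p,\lambda}^\mathcal{R}\subset\widetilde{A}_{q,\lambda}$. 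The only delicate point is the exponent bookkeeping in this step; the rest is routine.

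\emph{Unions.} The chain $\widetilde{A}_{p,\lambda}\subset\widetilde{A}_{p,\lambda}^\mathcal{R}\subset\widetilde{A}_{q,\lambda}$ for every $q>p$ shows that $\bigcup_{p\ge1}\widetilde{A}_{p,\lambda}=\bigcup_{p\ge1}\widetilde{A}_{p,\lambda}^\mathcal{R}$. Theorem \ref{characterization A} then identifies this common union with $\widetilde{A}_{\infty,\lambda}$.
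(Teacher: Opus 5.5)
Your proof is correct and follows the paper's argument. The first inequality comes from $\|\cdot\|_{L^{p',\infty}}\le\|\cdot\|_{L^{p'}}$, and the second from the bound $\int_I|f|^{q'}w\,dx\le\frac{p'}{p'-q'}w(I)^{1-q'/p'}\|f\|_{L^{p',\infty}(\mathbb{R}_+,wdx)}^{q'}$ applied to $f=\chi_Iv_\lambda w^{-1}$, with the same exponent bookkeeping as the paper. The only difference is that you derive this bound yourself, by the layer-cake split at $t_0=K_Iw(I)^{-1/p'}$ and with the $p=1$ endpoint checked explicitly, whereas the paper cites it from Grafakos (Exercise 1.1.11).
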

\begin{proof}
Obviously, $\widetilde{A}_{1,\lambda}=\widetilde{A}_{1,\lambda}^\mathcal{R}$. For the remaining claims, it is enough to establish the inequalities
\begin{align*}
[w]_{\widetilde{A}_{p,\lambda}^\mathcal{R}}\leq[w]_{\widetilde{A}_{p,\lambda}}^{1/p},\quad [w]_{\widetilde{A}_{q,\lambda}}\leq\Big(\frac{p'}{p'-q'}\Big)^{q-1}[w]_{\widetilde{A}_{p,\lambda}^\mathcal{R}}^q.
\end{align*}
We rewrite the definition of $[w]_{\widetilde{A}_{p,\lambda}}^{1/p}$ by
\begin{equation*}
[w]_{\widetilde{A}_{p,\lambda}}^{1/p}=\sup_{I\subset\mathbb{R}_+}\Big(\frac{w(I)}{\nu_\lambda(I)}\Big)^{1/p}
\Big(\frac{\|\chi_I(\cdot)^{2\lambda+1}w^{-1}\|_{L^{p'}(\mathbb{R}_+,wdx)}}{\nu_\lambda(I)}\Big)^{1/p'}.
\end{equation*}
Then $[w]_{\widetilde{A}_{p,\lambda}^\mathcal{R}}\leq[w]_{\widetilde{A}_{p,\lambda}}^{1/p}$ follows by
\begin{align*}
\|\chi_I(\cdot)^{2\lambda+1}w^{-1}\|_{L^{p',\infty}(\mathbb{R}_+,wdx)}\leq
\|\chi_I(\cdot)^{2\lambda+1}w^{-1}\|_{L^{p'}(\mathbb{R}_+,wdx)}.
\end{align*}

For each interval $I\subset\mathbb{R}_+$, $f\in L^{p',\infty}(\mathbb{R}_+,wdx)$. Recall that
\begin{align*}
\int_I|f(x)|^{q'}w(x)dx\leq\frac{p'}{p'-q'}w(I)^{1-\frac{q'}{p'}}\|f\|_{L^{p',\infty}(\mathbb{R}_+,wdx)}^{q'},
\,0<q'<p',
\end{align*}
see \cite[Exercise 1.1.11]{Gra}. Applying the inequality above with $f=\chi_Iv_\lambda w^{-1}$, one has
\begin{align*}
&\Big(\frac{1}{\nu_\lambda(I)}\int_Iw(x)dx\Big)\Big(\frac{1}{\nu_\lambda(I)}\int_Ix^{(2\lambda+1)q'}
w(x)^{1-q'}dx\Big)^{q-1}\\
&\quad=\frac{w(I)}{\nu_\lambda(I)^q}\Big(\int_I(v_\lambda(x)w(x)^{-1})^{q'}w(x)dx\Big)^{q-1}\\
&\quad\leq\Big(\frac{p'}{p'-q'}\Big)^{q-1}\frac{w(I)w(I)^{\frac{q}{p}-1}
\|\chi_Iv_\lambda w^{-1}\|_{L^{p',\infty}(\mathbb{R}_+,wdx)}^q}{\nu_\lambda(I)^q}\\
&\quad\leq\Big(\frac{p'}{p'-q'}\Big)^{q-1}[w]_{\widetilde{A}_{p,\lambda}^\mathcal{R}}^q.
\end{align*}
Therefore,
\begin{align*}
[w]_{\widetilde{A}_{q,\lambda}}\leq\Big(\frac{p'}{p'-q'}\Big)^{q-1}[w]_{\widetilde{A}_{p,\lambda}^\mathcal{R}}^q
\end{align*}
follows by taking the supremum over all intervals $I\subset\mathbb{R}_+$.
\end{proof}

The following example, together with Theorem \ref{compare weight constant1}, says that $\widetilde{A}_{p,\lambda}$ is a proper subset of $\widetilde{A}_{p,\lambda}^\mathcal{R}$.
\begin{example}
Let $1<p<\infty$ and $\lambda>-1/2, \neq0$. $|x|^{p-1+(2\lambda+1)p}$ belongs to $\widetilde{A}_{p,\lambda}^\mathcal{R}$ but not to $\widetilde{A}_{p,\lambda}$.
\end{example}
\begin{proof}
It was proved in \cite{LiLiangLinShen} that $|x|^{p-1+(2\lambda+1)p}\not\in\widetilde{A}_{p,\lambda}$. We are devoted to proving $|x|^{p-1+(2\lambda+1)p}\in\widetilde{A}_{p,\lambda}^\mathcal{R}$. For any interval $I:=(a,b)\subset\mathbb{R}_+$, we claim that
\begin{align}\label{vital for example}
&\|\chi_{(a,b)}(\cdot)^{(2\lambda+2)(1-p)}\|_{L^{p',\infty}(\mathbb{R}_+,|x|^{p-1+(2\lambda+1)p}dx)}\\
&\quad\leq b^{(1-p)(2\lambda+2)}\Big(\int_a^b x^{p-1+(2\lambda+1)p}dx\Big)^{1/p'}\nonumber.
\end{align}
Indeed, one can see that
\begin{align}\label{local sup}
&\sup_{0<t<b^{(1-p)(2\lambda+2)}}t\Big(\int_{\{x\in(a,b)\subset\mathbb{R}_+:x^{(2\lambda+2)(1-p)}>t\}}
x^{p-1+(2\lambda+1)p}dx\Big)^{1/p'}\\
&\quad=\sup_{0<t<b^{(1-p)(2\lambda+2)}}t\Big(\int_{a}^{\min\big\{b,t^{-\frac{1}{(p-1)(2\lambda+2)}}\big\}}
x^{p-1+(2\lambda+1)p}dx\Big)^{1/p'}\nonumber\\
&\quad\leq b^{(1-p)(2\lambda+2)}\Big(\int_a^b x^{p-1+(2\lambda+1)p}dx\Big)^{1/p'}\nonumber.
\end{align}

On the other hand, note that for $s>a$,
\begin{equation*}
f(s)=s^{(1-p)(2\lambda+2)}\Big(\int_a^sx^{p-1+(2\lambda+1)p}dx\Big)^{1/p'}
\end{equation*}
is an increasing function. In fact,
\begin{align*}
f(s)=s^{(1-p)(2\lambda+2)}\Big(\frac{s^{(2\lambda+2)p}-a^{(2\lambda+2)p}}{(2\lambda+2)p}\Big)^{1/p'},
\end{align*}
then
\begin{align*}
\log f=-(p-1)(2\lambda+2)\log s+\frac{1}{p'}\log\Big(\frac{s^{(2\lambda+2)p}-a^{(2\lambda+2)p}}{(2\lambda+2)p}\Big).
\end{align*}
It follows that
\begin{align*}
\frac{f'(s)}{f(s)}&=-\frac{(p-1)(2\lambda+2)}{s}+\frac{[(2\lambda+2)p]^2s^{(2\lambda+2)p-1}}
{p'(s^{(2\lambda+2)p}-a^{(2\lambda+2)p})}\\
&=-\frac{(p-1)(2\lambda+2)}{s}+\frac{(p-1)(2\lambda+2)}{s}\frac{s^{(2\lambda+2)p}(2\lambda+2)p}
{s^{(2\lambda+2)p}-a^{(2\lambda+2)p}}\\
&=\frac{(p-1)(2\lambda+2)}{s}\frac{a^{(2\lambda+2)p}+s^{(2\lambda+2)p}[(2\lambda+2)p-1]}
{s^{(2\lambda+2)p}-a^{(2\lambda+2)p}}>0.
\end{align*}
This proves $f'(s)>0$. Using this fact, for $a^{(1-p)(2\lambda+2)}>t\geq b^{(1-p)(2\lambda+2)}$, we have
\begin{align*}
&t\Big(\int_{a}^{\min\big\{b,t^{-\frac{1}{(p-1)(2\lambda+2)}}\big\}}
x^{p-1+(2\lambda+1)p}dx\Big)^{1/p'}\\
&\quad=t\Big(\int_a^{t^{-\frac{1}{(p-1)(2\lambda+2)}}}x^{p-1+(2\lambda+1)p}dx\Big)^{1/p'}\\
&\quad=s^{(1-p)(2\lambda+2)}\Big(\int_a^sx^{p-1+(2\lambda+1)p}dx\Big)^{1/p'}\\
&\quad\leq b^{(1-p)(2\lambda+2)}\Big(\int_a^bx^{p-1+(2\lambda+1)p}dx\Big)^{1/p'},
\end{align*}
where $s=t^{-1/(p-1)(2\lambda+2)}$. Hence,
\begin{align*}
&\sup_{b^{(1-p)(2\lambda+2)}\leq t<a^{(1-p)(2\lambda+2)} }t\Big(\int_{\{x\in(a,b)\subset\mathbb{R}_+:x^{(2\lambda+2)(1-p)}>t\}}
x^{p-1+(2\lambda+1)p}dx\Big)^{1/p'}\\
&\quad=t\Big(\int_{a}^{\min\big\{b,t^{-\frac{1}{(p-1)(2\lambda+2)}}\big\}}
x^{p-1+(2\lambda+1)p}dx\Big)^{1/p'}\\
&\quad\leq b^{(1-p)(2\lambda+2)}\Big(\int_a^bx^{p-1+(2\lambda+1)p}dx\Big)^{1/p'},
\end{align*}
this, together with \eqref{local sup}, yields \eqref{vital for example}.

Now, we are in the position to prove $|x|^{p-1+(2\lambda+1)p}\in\widetilde{A}_{p,\lambda}^\mathcal{R}$. For any interval $(a,b)\subset\mathbb{R}_+$, by making use of \eqref{vital for example}, we have
\begin{align*}
&\frac{\Big(\int_a^bx^{p-1+(2\lambda+1)p}dx\Big)^{1/p}
\|\chi_{(a,b)}(\cdot)^{(2\lambda+2)(1-p)}\|_{L^{p',\infty}(\mathbb{R}_+,x^{p-1+(2\lambda+1)p}dx)}}
{\int_a^bx^{2\lambda+1}dx}\\
&\quad\leq\frac{b^{(1-p)(2\lambda+2)}\Big(\int_a^bx^{p-1+(2\lambda+1)p}dx\Big)(2\lambda+2)}
{b^{2\lambda+2}-a^{2\lambda+2}}\\
&\quad\sim\frac{b^{(1-p)(2\lambda+2)}(b^{(2\lambda+2)p}-a^{(2\lambda+2)p})}{p(b^{2\lambda+2}-a^{2\lambda+2})}\\
&\quad=\frac{b^{2\lambda+2}-a^{(2\lambda+2)p}b^{(p-1)(2\lambda+2)}}{p(b^{2\lambda+2}-a^{2\lambda+2})}\leq1.
\end{align*}
This proves $|x|^{p-1+(2\lambda+1)p}\in\widetilde{A}_{p,\lambda}^\mathcal{R}$.
\end{proof}

From Theorem \ref{compare weight constant1}, we see that $w$ satisfies the doubling condition when $w\in\widetilde{A}_{p,\lambda}^R$. A precise dependency of weight constant is provided below.
\begin{lemma}\label{quantitative doubling}
Let $1\leq p<\infty$, $\lambda>-1/2, \neq0$ and $\eta>1$. If $w\in\widetilde{A}_{p,\lambda}^\mathcal{R}$, then for any interval $I\subset\mathbb{R}_+$,
\begin{align*}
w(\eta I)\lesssim[w]_{\widetilde{A}_{p,\lambda}^\mathcal{R}}^pw(I).
\end{align*}
\end{lemma}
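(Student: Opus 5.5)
The plan is to test the definition of $[w]_{\widetilde{A}_{p,\lambda}^\mathcal{R}}$ on the larger interval $\eta I$ against the subset $I$, exactly as in the classical argument that $A_p^\mathcal{R}$ weights are doubling. Fix an interval $I\subset\mathbb{R}_+$ and write $\widetilde I:=\eta I$ (intersected with $\mathbb{R}_+$, which is still an interval containing $I$). By the definition of the weak Lorentz quasinorm, for every $t>0$,
\begin{align*}
\|\chi_{\widetilde I}v_\lambda w^{-1}\|_{L^{p',\infty}(\mathbb{R}_+,wdx)}\geq t\,w(\{x\in\widetilde I: v_\lambda(x)w(x)^{-1}>t\})^{1/p'}.
\end{align*}
Rather than passing through level sets, it is cleaner to use the Lorentz duality (Hölder) inequality stated in Section 2 with $f=\chi_I$. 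Since $\|\chi_I\|_{L^{p,1}(\mathbb{R}_+,wdx)}=p\,w(I)^{1/p}$, this gives
\begin{align*}
\nu_\lambda(I)=\int_{\mathbb{R}_+}\chi_I\,(\chi_{\widetilde I}v_\lambda w^{-1})\,w\,dx\leq p\,w(I)^{1/p}\|\chi_{\widetilde I}v_\lambda w^{-1}\|_{L^{p',\infty}(\mathbb{R}_+,wdx)}.
\end{align*}
For $p=1$ the same bound holds trivially with the $L^\infty$ norm.

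Next I would multiply both sides by $w(\widetilde I)^{1/p}/\nu_\lambda(\widetilde I)$ and invoke the definition of $[w]_{\widetilde{A}_{p,\lambda}^\mathcal{R}}$ on $\widetilde I$. This yields
\begin{align*}
\frac{\nu_\lambda(I)}{\nu_\lambda(\widetilde I)}\,w(\widetilde I)^{1/p}\leq p\,w(I)^{1/p}[w]_{\widetilde{A}_{p,\lambda}^\mathcal{R}}.
\end{align*}
Raising to the power $p$ gives $w(\eta I)\leq p^p\big(\nu_\lambda(\eta I)/\nu_\lambda(I)\big)^p[w]_{\widetilde{A}_{p,\lambda}^\mathcal{R}}^p\,w(I)$.

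The remaining step, which I expect to be the only point needing care, is the doubling of $\nu_\lambda$ with respect to dilation by $\eta$, including the case where $\eta I$ is truncated at $0$. Using $\nu_\lambda((a,b))\sim (b-a)\max(a,b)^{2\lambda+1}$, I would show $\nu_\lambda(\eta I)\leq C(\lambda,\eta)\nu_\lambda(I)$, for instance $C\le (2\eta+1)^{2\lambda+2}$. This follows by iterating the bound $\nu_\lambda(2I)\le 2^{2\lambda+2}\nu_\lambda(I)$ already used in Section 3, roughly $\log_2\eta$ times. The resulting implicit constant depends only on $p,\lambda,\eta$, which is consistent with the claimed dependence $[w]_{\widetilde{A}_{p,\lambda}^\mathcal{R}}^p$.
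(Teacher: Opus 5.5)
Your proof is correct and follows the paper's argument: the paper also applies the Lorentz H\"older inequality on the dilated interval $\eta I$ to $f=\chi_I$ (using $\|\chi_I\|_{L^{p,1}(\mathbb{R}_+,wdx)}=p\,w(I)^{1/p}$), uses the definition of $[w]_{\widetilde{A}_{p,\lambda}^\mathcal{R}}$ on $\eta I$, raises to the power $p$, and concludes with the doubling of $\nu_\lambda$. For that last step, your direct comparison via $\nu_\lambda((a,b))\sim(b-a)b^{2\lambda+1}$ is preferable to iterating $\nu_\lambda(2I)\le 2^{2\lambda+2}\nu_\lambda(I)$, because dilations of intervals truncated at $0$ do not compose exactly (for instance $4I\not\subset 2(2I)$ when $I=(0,2r)$).
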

\begin{proof}
We only prove the case of $1<p<\infty$, since the case $p=1$ can be similarly proved. For $1<p<\infty$, using H\"{o}lder's inequality and $w\in\widetilde{A}_{p,\lambda}^\mathcal{R}$, it is direct that
\begin{align}\label{use for doubling}
&\Big(\frac{1}{\nu_\lambda}\int_I|f(x)|d\nu_\lambda(x)\Big)^p\\
&\quad=\Big(\frac{1}{\nu_\lambda}\int_{\mathbb{R}_+}|f(x)|\chi_I(x)x^{2\lambda+1}w(x)^{-1}w(x)dx\Big)^p
\nonumber\\
&\quad\leq\Big(\frac{1}{\nu_\lambda}\|f\|_{L^{p,1}(\mathbb{R}_+,wdx)}
\|\chi_I(\cdot)^{2\lambda+1}w^{-1}\|_{L^{p',\infty}(\mathbb{R}_+,wdx)}\Big)^p\nonumber\\
&\quad=\frac{\|f\|_{L^{p,1}(\mathbb{R}_+,wdx)}^p}{w(I)}w(I)
\frac{\|\chi_I(\cdot)^{2\lambda+1}w^{-1}\|_{L^{p',\infty}(\mathbb{R}_+,wdx)}^p}{\nu_\lambda(I)^p}
\nonumber\\
&\quad\leq[w]_{\widetilde{A}_{p,\lambda}^\mathcal{R}}^p\frac{\|f\|_{L^{p,1}(\mathbb{R}_+,wdx)}^p}{w(I)}
\nonumber.
\end{align}

Applying \eqref{use for doubling} to the function $f=\chi_I$ and putting $\eta I$ in the place of $I$ in \eqref{use for doubling} with $\eta>1$, we get
\begin{align*}
w(\eta I)\lesssim\Big(\frac{\nu_\lambda(\eta I)}{\nu_\lambda(I)}\Big)^p[w]_{\widetilde{A}_{p,\lambda}^\mathcal{R}}^pw(I)
\lesssim[w]_{\widetilde{A}_{p,\lambda}^\mathcal{R}}^pw(I),
\end{align*}
where we use $\|\chi_I\|_{L^{p,1}(\mathbb{R}_+,wdx)}=p\omega(I)^{1/p}$, the doubling property of $x^{2\lambda+1}dx$ in the first and second inequality, respectively.
\end{proof}

\begin{proof}[Proof of Theorem \ref{characterization restricted weak type of HL}]
We begin with the proof of
\begin{align*}
[w_1, w_2]_{\widetilde{A}_{p,\lambda}^\mathcal{R}}\lesssim\|\mathcal{M}_\lambda\|_{L^{p,1}(\mathbb{R}_+, w_1dx)\rightarrow L^{p,\infty}(\mathbb{R}_+, w_2dx)}.
\end{align*}
Considering two cases: $1<p<\infty$ and $p=1$.

\textbf{Case 1. $1<p<\infty$}:\quad Fixed an interval $I\subset\mathbb{R}_+$ and $\epsilon>1$. Since
\begin{align*}
\frac{1}{p}\|g\|_{L^{p',\infty}(\mathbb{R}_+,wdx)}&\leq\sup\Big\{\int_{\mathbb{R}_+}|f(x)g(x)|w(x)dx: \|f\|_{L^{p,1}(\mathbb{R}_+,wdx)}\leq1\Big\}\\
&\leq\|g\|_{L^{p',\infty}(\mathbb{R}_+,wdx)},
\end{align*}
there exists a non-negative function $f$ such that $\|f\|_{L^{p,1}(\mathbb{R}_+, w_1dx)}=1$ and
\begin{align*}
\frac{1}{\epsilon p}\|\chi_I(\cdot)^{2\lambda+1}w_1^{-1}\|_{L^{p',\infty}(\mathbb{R}_+, w_1dx)}&\leq\int_{\mathbb{R}_+}f(x)\chi_I(x)x^{2\lambda+1}w_1(x)^{-1}w_1(x)dx\\
&=\int_If(x)d\nu_\lambda(x).
\end{align*}
From this and the definition of $\mathcal{M}_\lambda$, we get
\begin{align*}
&\frac{1}{\epsilon p\nu_\lambda(I)}\|\chi_I(\cdot)^{2\lambda+1}w_1^{-1}\|_{L^{p',\infty}(\mathbb{R}_+,w_1dx)}\chi_I\\
&\quad\leq\frac{1}{\nu_\lambda(I)}\int_If(x)d\nu_\lambda(x)\chi_I\leq\mathcal{M}_\lambda f.
\end{align*}
Hence,
\begin{align*}
&\Big\|\frac{1}{\epsilon p\nu_\lambda(I)}\|\chi_I(\cdot)^{2\lambda+1}w_1^{-1}\|_{L^{p',\infty}(\mathbb{R}_+,w_1dx)}\chi_I\Big\|
_{L^{p,\infty}(\mathbb{R}_+, w_2dx)}\\
&\quad\leq\|\mathcal{M}_\lambda f\|_{L^{p,\infty}(\mathbb{R}_+, w_2dx)}\lesssim\|\mathcal{M}_\lambda\|_{L^{p,1}(\mathbb{R}_+,w_1dx)\rightarrow L^{p,\infty}(\mathbb{R}_+,w_2dx)},
\end{align*}
which yields that
\begin{equation*}
[w_1, w_2]_{\widetilde{A}_{p,\lambda}^\mathcal{R}}\lesssim\|\mathcal{M}_\lambda\|_{L^{p,1}(\mathbb{R}_+, w_1dx)\rightarrow L^{p,\infty}(\mathbb{R}_+, w_2dx)}.
\end{equation*}

\textbf{Case 2. $p=1$}:\quad Let $I$ be an interval in $\mathbb{R}_+$ and $t>0$. Since $\mathcal{M}_\lambda f(x)\geq\nu_\lambda(I)^{-1}\int_I|f|d\nu_\lambda$ for all $x\in I$ and
\begin{equation*}
tw_2(\{x\in\mathbb{R}_+:\mathcal{M}_\lambda f(x)>t\})\lesssim\int_{\mathbb{R}_+}|f(x)|w_1(x)dx,
\end{equation*}
we have that for all $t<\nu_\lambda(I)^{-1}\int_I|f|d\nu_\lambda$,
\begin{equation*}
w_2(I)\leq w_2(\{x\in\mathbb{R}_+:\mathcal{M}_\lambda f(x)>t\})\lesssim \frac{1}{t}\int_{\mathbb{R}_+}|f(x)|w_1(x)dx.
\end{equation*}
Replace $f$ in the above inequality by $\chi_E$, where $E$ is an arbitrary measurable subset of $I$. Then
\begin{align*}
\frac{1}{\nu_\lambda(I)}\int_I\chi_Ed\nu_\lambda\lesssim\frac{\int_{\mathbb{R}_+}\chi_E(x)w_1(x)dx}{w_2(I)},
\end{align*}
that is,
\begin{align}\label{use for A1}
\frac{\nu_\lambda(E)}{\nu_\lambda(I)}\lesssim\frac{w_1(E)}{w_2(I)}.
\end{align}
Let $\epsilon>0$. Denote $a:=\essi_{x\in I}\big(w_1(x)x^{-(2\lambda+1)}\big)$. Then there exists $E_\epsilon\subset I$ such that for any $x\in E_\epsilon$,
\begin{align*}
w_1(x)x^{-(2\lambda+1)}<a+\epsilon.
\end{align*}
Hence, \eqref{use for A1} implies that
\begin{align*}
\frac{1}{\nu_\lambda(I)}\int_Iw_2(x)dx&\lesssim\frac{1}{\nu_\lambda(E_\epsilon)}\int_{E_\epsilon}w_1(x)dx\\
&=\frac{1}{\nu_\lambda(E_\epsilon)}\int_{E_\epsilon}w_1(x)x^{-(2\lambda+1)}d\nu_\lambda(x)\\
&\leq a+\epsilon=\essi_{x\in I}\big(w_1(x)x^{-(2\lambda+1)}\big)+\epsilon,
\end{align*}
from which it follows that
\begin{equation*}
\frac{w_2(I)}{\nu_\lambda(I)}\lesssim \essi_{x\in I}w_1(x)x^{-(2\lambda+1)}.
\end{equation*}
This proves $(w_1, w_2)\in \widetilde{A}_{1,\lambda}^\mathcal{R}$.

Next, we proceed to prove $\|\mathcal{M}_\lambda\|_{L^{p,1}(\mathbb{R}_+,w_1dx)\rightarrow L^{p,\infty}(\mathbb{R}_+,w_2dx)}\lesssim[w_1,w_2]_{\widetilde{A}_{p,\lambda}^\mathcal{R}}$. We still use the notations used in the proof of Theorem \ref{mixed quantitative weighted strong estimates}. In virtue of covering lemma, H\"{o}lder's inequality and Lemma \ref{summation lemma}, we deduce that
\begin{align}\label{quantitative information}
w_2(F)&\leq\sum_{i=1}^{l}w_2(5I_{x_{j_i}})\\
&\leq t^{-p}\sum_{i=1}^lw_2(5I_{x_{j_i}})\Big(\frac{1}{\nu_{\lambda}(I_{x_{j_i}})}\int_{I_{x_{j_i}}}
|f(y)|y^{2\lambda+1}dy\Big)^p\nonumber\\
&=t^{-p}\sum_{i=1}^lw_2(5I_{x_{j_i}})\Big(\frac{\nu_{\lambda}(5I_{x_{j_i}})}{\nu_{\lambda}(I_{x_{j_i}})}\Big)^p
\Big(\frac{1}{\nu_{\lambda}(5I_{x_{j_i}})}
\int_{I_{x_{j_i}}}
|f(y)|y^{2\lambda+1}dy\Big)^p\nonumber\\
&\lesssim t^{-p}\sum_{i=1}^lw_2(5I_{x_{j_i}})
\frac{\|\chi_{5I_{x_{j_i}}}(\cdot)^{2\lambda+1}w_1^{-1}\|_{L^{p',\infty}(\mathbb{R}_+,w_1dx)}^p}
{\nu_{\lambda}(5I_{x_{j_i}})^{p}}\|f\chi_{I_{x_{j_i}}}\|_{L^{p,1}(\mathbb{R}_+,w_1dx)}^p\nonumber\\
&\leq t^{-p}[w_1,w_2]_{\widetilde{A}_{p,\lambda}^\mathcal{R}}^p\sum_{i=1}^l
\|f\chi_{I_{x_{j_i}}}\|_{L^{p,1}(\mathbb{R}_+,w_1dx)}^p\nonumber\\
&\lesssim t^{-p}[w_1,w_2]_{\widetilde{A}_{p,\lambda}^\mathcal{R}}^p\|f\|_{L^{p,1}(\mathbb{R}_+,w_1dx)}^p\nonumber.
\end{align}
Thus, the conclusion follows by taking the supremum over all compact $F\subset E_t$.
\end{proof}

\begin{proof}[Proof of Theorem \ref{another characterization restricted weak type}]
(1)$\Rightarrow$(2) follows by taking $f=\chi_J$ and using
\begin{align*}
\|\chi_J\|_{L^{p,1}(\mathbb{R}_+,w_1dx)}=pw_1(J)^{1/p}.
\end{align*}
(4)$\Rightarrow$(1) is proved in  Theorem \ref{characterization restricted weak type of HL}. It remains to demonstrate (2)$\Rightarrow$(3) and (3)$\Rightarrow$(4).

We first give the proof of (2)$\Rightarrow$(3). Fix an interval $I\subset\mathbb{R}_+$ and a measurable set $J\subset\mathbb{R}_+$ with $0<w_1(J)<\infty$. It is trivial that
\begin{align*}
\frac{\nu_\lambda(I\cap J)}{\nu_\lambda(I)}\chi_I=\Big(\frac{1}{\nu_\lambda(I)}\int_I\chi_Jd\nu_\lambda\Big)\chi_I\leq
\mathcal{M}_\lambda(\chi_J).
\end{align*}
It follows that
\begin{align*}
w_2(I)^{1/p}\frac{\nu_\lambda(I\cap J)}{\nu_\lambda(I)}&\sim\Big\|\frac{\nu_\lambda(I\cap J)}{\nu_\lambda(I)}\chi_I\Big\|_{L^{p,\infty}(\mathbb{R}_+,w_2dx)}\\
&\leq\|\mathcal{M}_\lambda(\chi_J)\|_{L^{p,\infty}(\mathbb{R}_+,w_2dx)}\lesssim w_1(J)^{1/p}.
\end{align*}
This proves $\|w_1,w_2\|_{\widetilde{A}_{p,\lambda}^\mathcal{R}}<\infty$.

We turn to (3)$\Rightarrow$(4). Fix an interval $I\subset\mathbb{R}_+$ and $t>0$. Denote
\begin{equation*}
J:=\{x\in I\subset\mathbb{R}_+: x^{2\lambda+1}w_1(x)^{-1}>t\}.
\end{equation*}
We claim that
\begin{align}\label{claim for 3-4}
tw_1(J)\leq\|w_1,w_2\|_{\widetilde{A}_{p,\lambda}^\mathcal{R}}\frac{w_1(J)^{1/p}\nu_\lambda(I)}{w_2(I)^{1/p}}.
\end{align}
Indeed, by Chebyshev's inequality and the definition $J$,
\begin{align}\label{use for claim 3-4}
tw_1(J)&=\int_Jtw_1(x)dx\\
&\leq\int_Jx^{2\lambda+1}w_1(x)^{-1}w_1(x)dx=\nu_\lambda(J)\nonumber.
\end{align}
Note that
\begin{equation}\label{sup compare}
\sup_{0<w_1(J)<\infty}\frac{\nu_\lambda(I\cap J)}{w_1(J)^{1/p}}=\sup_{J\subset I}\frac{\nu_\lambda(J)}{w_1(J)^{1/p}}.
\end{equation}
Then for any $J\subset I$, from the condition (3), we have
\begin{align*}
\nu_\lambda(J)\leq\|w_1,w_2\|_{\widetilde{A}_{p,\lambda}^\mathcal{R}}\frac{w_1(J)^{1/p}
\nu_\lambda(I)}{w_2(I)^{1/p}}.
\end{align*}
This, together with \eqref{use for claim 3-4}, allows us to get \eqref{claim for 3-4}.

Now, we return to prove (3)$\Rightarrow$(4). We prove it by considering two cases: $1<p<\infty$ and $p=1$. When $1<p<\infty$. In virtue of \eqref{claim for 3-4} and the definition $J$, we have
\begin{align*}
w_2(I)^{1/p}\frac{tw_1(J)^{1/p'}}{\nu_\lambda(I)}\leq\|w_1,w_2\|_{\widetilde{A}_{p,\lambda}^\mathcal{R}}.
\end{align*}
Taking the supremum over all $t>0$, and then taking the supremum over all intervals $I\subset\mathbb{R}_+$, we get
\begin{align}\label{compare}
[w_1,w_2]_{\widetilde{A}_{p,\lambda}^\mathcal{R}}\leq\|w_1,w_2\|_{\widetilde{A}_{p,\lambda}^\mathcal{R}}.
\end{align}
When $p=1$, for $t<\|\chi_Iw_1^{-1}(\cdot)^{2\lambda+1}\|_{L^\infty(\mathbb{R}_+, w_1dx)}$, according to \eqref{claim for 3-4} again,
\begin{align*}
t\leq\|w_1,w_2\|_{\widetilde{A}_{p,\lambda}^\mathcal{R}}\frac{\nu_\lambda(I)}{w_2(I)}.
\end{align*}
Thus, by letting $t$ increase to $\|\chi_Iw_1^{-1}(\cdot)^{2\lambda+1}\|_{L^\infty(\mathbb{R}_+, w_1dx)}$, it yields that
\begin{align*}
w_2(I)\frac{\|\chi_Iw_1^{-1}(\cdot)^{2\lambda+1}\|_{L^\infty(\mathbb{R}_+, w_1dx)}}{\nu_\lambda(I)}\leq\|w_1,w_2\|_{\widetilde{A}_{1,\lambda}^\mathcal{R}}.
\end{align*}
Taking the supremum over all intervals $I\subset \mathbb{R}_+$, we obtain
\begin{align*}
[w_1,w_2]_{\widetilde{A}_{1,\lambda}^\mathcal{R}}\leq\|w_1,w_2\|_{\widetilde{A}_{1,\lambda}^\mathcal{R}}.
\end{align*}
\end{proof}

From the proof of Theorem \ref{another characterization restricted weak type}, we know that $[w_1,w_2]_{\widetilde{A}_{p,\lambda}^\mathcal{R}}$ and $\|w_1,w_2\|_{\widetilde{A}_{p,\lambda}^\mathcal{R}}$ is equivalent. The following proposition further demonstrates the equivalence between the two from a quantitative perspective.
\begin{proposition}\label{weight constant compare2}
Let $1\leq p<\infty$, $\lambda>-1/2, \neq0$ and $(w_1,w_2)$ be a pair of weights. Then
\begin{align*}
[w_1,w_2]_{\widetilde{A}_{p,\lambda}^\mathcal{R}}\leq\|w_1,w_2\|_{\widetilde{A}_{p,\lambda}^\mathcal{R}}\leq p
[w_1,w_2]_{\widetilde{A}_{p,\lambda}^\mathcal{R}}.
\end{align*}
\end{proposition}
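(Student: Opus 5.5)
The left inequality is already contained in the proof of (3)$\Rightarrow$(4) of Theorem \ref{another characterization restricted weak type}. I would restate it quantitatively. For $1<p<\infty$, fix $I$ and $t>0$, and set $J_t=\{x\in I: x^{2\lambda+1}w_1(x)^{-1}>t\}$. Chebyshev's inequality gives $tw_1(J_t)\le\nu_\lambda(J_t)$. Combining this with \eqref{sup compare} and the definition of $\|w_1,w_2\|_{\widetilde{A}_{p,\lambda}^\mathcal{R}}$ yields $t\,w_1(J_t)^{1/p'}\le \|w_1,w_2\|_{\widetilde{A}_{p,\lambda}^\mathcal{R}}\nu_\lambda(I)w_2(I)^{-1/p}$. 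Taking the supremum in $t$ gives exactly $w_2(I)^{1/p}\|\chi_Iv_\lambda w_1^{-1}\|_{L^{p',\infty}(w_1dx)}/\nu_\lambda(I)\le\|w_1,w_2\|$, with constant one. The case $p=1$ is handled identically, letting $t$ increase to the $L^\infty$ norm. This is \eqref{compare} and its $p=1$ analogue.

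For the right inequality I would argue directly from the duality bound recalled in Section 2, $\int|fg|w\,dx\le\|f\|_{L^{p,1}(w)}\|g\|_{L^{p',\infty}(w)}$. Fix $I$ and $J$ with $0<w_1(J)<\infty$. Then
\begin{align*}
\nu_\lambda(I\cap J)=\int_{\mathbb{R}_+}\chi_J(x)\,\chi_I(x)x^{2\lambda+1}w_1(x)^{-1}\,w_1(x)dx\le\|\chi_J\|_{L^{p,1}(w_1dx)}\|\chi_Iv_\lambda w_1^{-1}\|_{L^{p',\infty}(w_1dx)}.
\end{align*}
With the normalization $\|\chi_J\|_{L^{p,1}(w_1dx)}=p\,w_1(J)^{1/p}$ used in the paper, this becomes $\nu_\lambda(I\cap J)w_1(J)^{-1/p}\le p\|\chi_Iv_\lambda w_1^{-1}\|_{L^{p',\infty}(w_1dx)}$. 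Multiplying by $w_2(I)^{1/p}/\nu_\lambda(I)$ and taking suprema over $J$ and then over $I$ gives $\|w_1,w_2\|\le p[w_1,w_2]$.

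When $p=1$ I would avoid Lorentz duality and use the trivial bound instead:
\begin{align*}
\nu_\lambda(I\cap J)\le\int_{I\cap J}w_1\cdot\|\chi_Iv_\lambda w_1^{-1}\|_{L^\infty}\,dx\le w_1(J)\|\chi_Iv_\lambda w_1^{-1}\|_{L^\infty(w_1dx)}.
\end{align*}
This gives the constant $1=p$.

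The only delicate point is bookkeeping of the constant in the Hölder-type inequality between $L^{p,1}$ and $L^{p',\infty}$. This depends on the $L^{p,1}$ normalization, and the factor $p$ in the paper's definition is exactly what produces the constant $p$ on the right-hand side. I would verify it via the layer-cake formula: write $\int \chi_J g\,w_1=\int_0^\infty w_1(\{x\in J: g(x)>s\})\,ds$. Bound each level set by $\min\{w_1(J),\|g\|_{L^{p',\infty}}^{p'}s^{-p'}\}$ and integrate. This gives $p\,w_1(J)^{1/p}\|g\|_{L^{p',\infty}(w_1dx)}$, which confirms the constant.
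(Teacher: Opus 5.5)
Your proposal is correct and follows the paper's argument. The left inequality is exactly \eqref{compare} (and its $p=1$ analogue) from the proof of Theorem \ref{another characterization restricted weak type}. For the right inequality, the paper applies Kolmogorov's inequality $\int_E|g|w\,dx\le p\,w(E)^{1/p}\|g\|_{L^{p',\infty}(\mathbb{R}_+,wdx)}$ with $E=J$ and $g=\chi_Iv_\lambda w_1^{-1}$; this is precisely your $L^{p,1}$--$L^{p',\infty}$ H\"older bound applied to $f=\chi_J$, and your layer-cake computation is its standard proof. The only cosmetic difference is that you work directly with $\nu_\lambda(I\cap J)$ for arbitrary $J$, whereas the paper first reduces to $J\subset I$ via \eqref{sup compare}.
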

\begin{proof}
In view of \eqref{compare}, it suffices to prove $\|w_1,w_2\|_{\widetilde{A}_{p,\lambda}^\mathcal{R}}\leq p
[w_1,w_2]_{\widetilde{A}_{p,\lambda}^\mathcal{R}}$. Recall the following Kolmogorov's inequality: for any measurable set $E\subset\mathbb{R}_+$,
\begin{align*}
\int_E|g(x)|w(x)dx\leq pw(E)^{1/p}\|g\|_{L^{p',\infty}(\mathbb{R}_+,wdx)},~1\leq p<\infty.
\end{align*}
Fix an interval $I\subset\mathbb{R}_+$, for any measurable set $J\subset I$, by taking $E=J$, $g=\chi_I|\cdot|^{2\lambda+1}w_1^{-1}$, $w=w_1$ in the Kolmogorov inequality, we get
\begin{align*}
\frac{\nu_\lambda(J)}{w_1(J)^{1/p}}\leq p\big\|\chi_I|\cdot|^{2\lambda+1}w_1^{-1}\big\|_{L^{p',\infty}(\mathbb{R}_+,w_1dx)},
\end{align*}
which, together with \eqref{sup compare}, implies that
\begin{align*}
\sup_{0<w_1(J)<\infty}\frac{\nu_\lambda(I\bigcap J)}{w_1(J)^{1/p}}=\sup_{J\subset I}\frac{\nu_\lambda(J)}{w_1(J)^{1/p}}\leq p\big\|\chi_I|\cdot|^{2\lambda+1}w_1^{-1}\big\|_{L^{p',\infty}(\mathbb{R}_+,w_1dx)}.
\end{align*}
Multiply both sides by $w_2(I)^{1/p}/\nu_\lambda(I)$ and then take the supremum over all intervals $I\subset \mathbb{R}_+$, we get the desired result.
\end{proof}
\begin{remark}
In particular, when $w_1=w_2=w$, we denote $\|w,w\|_{\widetilde{A}_{p,\lambda}^R}$ by $\|w\|_{\widetilde{A}_{p,\lambda}^R}$.
\end{remark}
\begin{proof}[Proof of Theorem \ref{quantitative two weight restricted weak type of sparse operator}]
Since $w_2\in \widetilde{A}_{\infty,\lambda-\frac{1}{2}}$, there exists $r\geq1$ such that $w_2\in \widetilde{A}_{r,\lambda-\frac{1}{2}}^\mathcal{R}$ by Theorem \ref{compare weight constant1}. From the proof of Theorem \ref{another characterization restricted weak type}, we know that
\begin{align*}
\|w_2\|_{\widetilde{A}_{r,\lambda}^\mathcal{R}}=\sup_{I\subset\mathbb{R}_+}\sup_{J\subset I}\frac{\nu_\lambda(J)}{\nu_\lambda(I)}\frac{w_2(I)^{1/r}}{w_2(J)^{1/r}}.
\end{align*}
Using $\mathcal{S}$ is an $\eta$-sparse family, for each interval $I\in\mathcal{S}$, there is a subset $E_I$ of $I$ such that
\begin{align}\label{sparse property}
\frac{w_2(I)}{w_2(E_I)}=\frac{\mu(I)^r}{\mu(E_I)^r}\frac{\mu(E_I)^r}{\mu(I)^r}\frac{w_2(I)}{w_2(E_I)}\leq
\eta^{-r}\|w_2\|_{\widetilde{A}_{r,\lambda-\frac{1}{2}}^\mathcal{R}}^r.
\end{align}

By duality and \eqref{sparse property},
\begin{align*}
&\|\mathcal{A}_\mathcal{S}f\|_{L^{p,\infty}(\mathbb{R}_+,w_2dx)}\\
&\quad\sim
\underset{\|g\|_{L^{p',1}(\mathbb{R}_+,w_2dx)=1}}{\sup_{0\leq g\in L^{p',1}(\mathbb{R}_+,w_2dx)}}\int_{\mathbb{R}_+}
\mathcal{A}_\mathcal{S}f(x)g(x)w_2(x)dx\\
&\quad=\underset{\|g\|_{L^{p',1}(\mathbb{R}_+,w_2dx)=1}}{\sup_{0\leq g\in L^{p',1}(\mathbb{R}_+,w_2dx)}}\sum_{I\in\mathcal{S}}\Big(\frac{1}{\mu(I)}\int_I|f(x)|d\mu(x)\Big)
\Big(\int_Ig(x)w_2(x)dx\Big)\\
&\quad=\underset{\|g\|_{L^{p',1}(\mathbb{R}_+,w_2dx)=1}}{\sup_{0\leq g\in L^{p',1}(\mathbb{R}_+,w_2dx)}}\sum_{I\in\mathcal{S}}\Big(\frac{1}{\mu(I)}\int_I|f(x)|d\mu(x)\Big)
\Big(\frac{1}{w_2(I)}\int_Ig(x)w_2(x)dx\Big)\\
&\qquad\qquad\qquad\qquad\qquad\times\frac{w_2(I)}{w_2(E_I)}w_2(E_I)\\
&\quad\lesssim\|w_2\|_{\widetilde{A}_{r,\lambda-\frac{1}{2}}^\mathcal{R}}^r\underset{\|g\|_{L^{p',1}
(\mathbb{R}_+,w_2dx)=1}}{\sup_{0\leq g\in L^{p',1}(\mathbb{R}_+,w_2dx)}}
\sum_{I\in\mathcal{S}}\int_{E_I}\mathcal{M}_{\mu}^{\mathcal{S}}f(x)M_{w_2}^{\mathcal{S}}
g(x)w_2(x)dx\\
&\quad\lesssim\|w_2\|_{\widetilde{A}_{r,\lambda-\frac{1}{2}}^\mathcal{R}}^r\underset{\|g\|_{L^{p',1}
(\mathbb{R}_+,w_2dx)=1}}{\sup_{0\leq g\in L^{p',1}(\mathbb{R}_+,w_2dx)}}
\int_{\mathbb{R}_+}\mathcal{M}_{\mu}^{\mathcal{S}}f(x)M_{w_2}^{\mathcal{S}}
g(x)w_2(x)dx\\
&\quad\leq\|w_2\|_{\widetilde{A}_{r,\lambda-\frac{1}{2}}^\mathcal{R}}^r\underset{\|g\|_{L^{p',1}
(\mathbb{R}_+,w_2dx)=1}}{\sup_{0\leq g\in L^{p',1}(\mathbb{R}_+,w_2dx)}}
\|\mathcal{M}_{\mu}^{\mathcal{S}}f\|_{L^{p,\infty}(\mathbb{R}_+.w_2dx)}\|M_{w_2}^{\mathcal{S}}
g\|_{L^{p',1}(\mathbb{R}_+,w_2dx)},
\end{align*}
where
\begin{equation*}
\mathcal{M}_{\mu}^\mathcal{S}f(x):=\sup_{I\ni x,I\in\mathcal{S}}\frac{1}{\mu(I)}\int_I|f(x)|d\mu(x)
\end{equation*}
and
\begin{equation*}
M_{w}^\mathcal{S}g(x):=\sup_{I\ni x,I\in\mathcal{S}}\frac{1}{w(I)}\int_I|g(x)|w(x)dx.
\end{equation*}

It is easy to see that $\|M_{w_2}^\mathcal{S}g\|_{L^{\infty}(\mathbb{R}_+,w_2dx)}\leq\|g\|_{L^{\infty}(\mathbb{R}_+,w_2dx)}$. In \cite{LerNa}, the authors proved that for any weight $w$,
\begin{align*}
\|M_{w}^\mathcal{S}g\|_{L^{1,\infty}(\mathbb{R}_+,wdx)}\leq\|g\|_{L^{1}(\mathbb{R}_+,wdx)}.
\end{align*}
Then by the Marcinkiewicz interpolation theorem \cite[Theorem 4.13]{BeSh}, we obtain
\begin{align}\label{M_w}
\|M_{w_2}^\mathcal{S}g\|_{L^{p,1}(\mathbb{R}_+,w_2dx)}\lesssim\|g\|_{L^{p,1}(\mathbb{R}_+,w_2dx)},
\end{align}
where the implicit constant is independent of $w_2$. From this, Theorem \ref{characterization restricted weak type of HL} and Proposition \ref{weight constant compare2}, we conclude that
\begin{align*}
\|\mathcal{A}_\mathcal{S}f\|_{L^{p,\infty}(\mathbb{R}_+,w_2dx)}\lesssim
[w_2]_{\widetilde{A}_{r,\lambda-\frac{1}{2}}^{\mathcal{R}}}^{r}
[w_1,w_2]_{\widetilde{A}_{p,\lambda-\frac{1}{2}}^\mathcal{R}}\|f\|_{L^{p,1}(\mathbb{R}_+,w_1dx)}.
\end{align*}
\end{proof}

\begin{proof}[Proof of Theorem \ref{characterization two weight restricted weak type estimate of Orlicz maximal}]
We first prove the necessity. Let $x\in\mathbb{R}_+$. Fix an interval $I\subset\mathbb{R}_+$ that contains $x$ and a measurable set $J\subset I$. By the definition of $\mathcal{M}_{\phi,\nu_\lambda}$, we see that
\begin{align*}
\|\chi_J\|_{\phi,I,\nu_\lambda}\chi_I(x)\leq\mathcal{M}_{\phi,\nu_\lambda}(\chi_J)(x).
\end{align*}
Taking the $\|\cdot\|_{L^{p,\infty}(\mathbb{R}_+,w_2dx)}$ on both sides and using the assumption, we get
\begin{align*}
\|\chi_J\|_{\phi,I,\nu_\lambda}\|\chi_I\|_{L^{p,\infty}(\mathbb{R}_+,w_2dx)}&\leq
\|\mathcal{M}_{\phi,\nu_\lambda}(\chi_J)\|_{L^{p,\infty}(\mathbb{R}_+,w_2dx)}\\
&\lesssim\|\chi_J\|_{L^{p,1}(\mathbb{R}_+,w_1dx)}\sim w_1(J)^{1/p},
\end{align*}
which implies that $\|\chi_J\|_{\phi,I,\nu_\lambda}(w_2(I)/w_1(J))^{1/p}\lesssim1$. So $\|w_1,w_2\|_{\widetilde{A}_{p,\lambda,\phi}}<\infty$.

Next, we concentrate on the proof of the sufficiency. We only prove the quantitative weighted bounds provided that $(w_1,w_2)\in \widetilde{A}_{p,\lambda,\phi}^\mathcal{R}$ and $w_2\in \widetilde{A}_{\infty,\lambda}^\mathcal{R}$, since the case of $(w_1,w_2)\in \widetilde{A}_{p,\lambda,\phi}^\mathcal{R}$ and $w_2$ is a doubling weight can be similar proved. Define
\begin{equation*}
M_{w_1,w_2}f(x):=\sup_{I\ni x,I\subset\mathbb{R}_+}\frac{1}{w_2(I)}\int_I|f(x)|w_1(x)dx.
\end{equation*}
In virtue of Vitali covering lemma and Lemma \ref{quantitative doubling}, adopting similar techniques as in the proof of Theorem \ref{characterization restricted weak type of HL}, there exists $r\geq1$ such that $w_2\in\widetilde{A}_{r,\lambda}^\mathcal{R}$ and
\begin{equation}\label{weak type (1,1) of two-weight HL}
\|M_{w_1,w_2}f\|_{L^{1,\infty}(\mathbb{R}_+,w_2dx)}\lesssim[w_2]_{A_{r,\lambda}^\mathcal{R}}^r
\|f\|_{L^1(\mathbb{R}_+,w_1dx)}.
\end{equation}
Note that for every interval $I\subset\mathbb{R}_+$ and every measurable set $J$, $(w_1,w_2)\in\widetilde{A}_{p,\lambda,\phi}^\mathcal{R}$ shows that
\begin{align*}
\|\chi_J\cap \chi_I\|_{\phi,I,\nu_\lambda}\leq\|w_1,w_2\|_{\widetilde{A}_{p,\lambda,\phi}^\mathcal{R}}
\Big(\frac{w_1(I\cap J)}{w_2(I)}\Big)^{1/p}.
\end{align*}
From this, one can easily get
\begin{align}\label{Orlicz maximal domanated by two-weight HL}
\mathcal{M}_{\phi,\nu_\lambda}(\chi_J)\leq\|w_1,w_2\|_{\widetilde{A}_{p,\lambda,\phi}^\mathcal{R}}
M_{w_1,w_2}(\chi_J)^{1/p}.
\end{align}
Thus, by \eqref{weak type (1,1) of two-weight HL}, we have
\begin{align}\label{conclusion for characteristic function}
\|\mathcal{M}_{\phi,\nu_\lambda}(\chi_J)\|_{L^{p,\infty}(\mathbb{R}_+,w_2dx)}&\leq
\|w_1,w_2\|_{\widetilde{A}_{p,\lambda,\phi}^\mathcal{R}}\|M_{w_1,w_2}(\chi_J)^{1/p}\|_{L^{p,\infty}(\mathbb{R}_+,w_2dx)}\\
&=\|w_1,w_2\|_{\widetilde{A}_{p,\lambda,\phi}^\mathcal{R}}\|M_{w_1,w_2}(\chi_J)\|_{L^{1,\infty}(\mathbb{R}_+,w_2dx)}^{1/p}\nonumber\\
&\lesssim [w_2]_{A_{r,\lambda}^\mathcal{R}}^{r/p}\|w_1,w_2\|_{\widetilde{A}_{p,\lambda,\phi}^\mathcal{R}}w_1(J)^{1/p}.\nonumber
\end{align}
This proves the conclusion for characteristic function.

For more general functions $f\in L^{p,1}(\mathbb{R}_+,w_1dx)$, denote $J_k:=\{x\in\mathbb{R}_+:2^k<|f(x)|\leq2^{k+1}\}, k\in\mathbb{Z}$. Therefore, by \eqref{conclusion for characteristic function} and $|f|\leq2\sum_{k\in\mathbb{Z}}2^k\chi_{J_k}$, we deduce that
\begin{align}\label{quanality weighted estimate of Orlicz maximal}
&\|\mathcal{M}_{\phi,\nu_\lambda}f\|_{L^{p,\infty}(\mathbb{R}_+,w_2dx)}\\
&\quad\lesssim\sum_{k\in\mathbb{Z}}2^k
\|\mathcal{M}_{\phi,\nu_\lambda}(\chi_{J_k})\|_{L^{p,\infty}(\mathbb{R}_+,w_2dx)}\nonumber\\
&\quad\lesssim[w_2]_{A_{r,\lambda}^\mathcal{R}}^{r/p}
\|w_1,w_2\|_{\widetilde{A}_{p,\lambda,\phi}^\mathcal{R}}\sum_{k\in\mathbb{Z}}2^k
w_1(\{x\in\mathbb{R}_+:|f(x)|>2^k\})^{1/p}\nonumber\\
&\quad\lesssim[w_2]_{A_{r,\lambda}^\mathcal{R}}^{r/p}
\|w_1,w_2\|_{\widetilde{A}_{p,\lambda,\phi}^\mathcal{R}}\sum_{k\in\mathbb{Z}}\int_{2^{k-1}}^{2^k}
w_1(\{x\in\mathbb{R}_+:|f(x)|>t\})^{1/p}dt\nonumber\\
&\quad\lesssim[w_2]_{A_{r,\lambda}^\mathcal{R}}^{r/p}
\|w_1,w_2\|_{\widetilde{A}_{p,\lambda,\phi}^\mathcal{R}}\|f\|_{L^{p,1}(\mathbb{R}_+,w_1dx)}\nonumber.
\end{align}
\end{proof}

Now, we are in the position to prove Theorem \ref{quantitative two weight restricted weak type estimate for sparse operator of commutator 2}. We prove it by dividing into two parts. The first part is the following quantitative restricted two-weight weak type estimate for $(\mathcal{A}_{\mathcal{S},b}^m)^{\ast}$.
\begin{theorem}\label{quantitative two weight restricted weak type estimate for sparse operator of commutator 1}
Let $1<p<\infty$, $\Phi(t)=t\log(e+t)^m$ with $m\in\mathbb{Z}^+$ and $\mathcal{S}$ be an $\eta$-sparse family. If $b\in BMO_{\Delta_\lambda}(\mathbb{R}_+)$, $w_2\in\widetilde{A}_{\infty,\lambda-\frac{1}{2}}$ and $(w_1,w_2)\in\widetilde{A}_{p,\lambda-\frac{1}{2},\Phi}^\mathcal{R}$. Then there exists $r\geq1$ such that
\begin{align*}
\|(\mathcal{A}_{\mathcal{S},b}^m)^{\ast}f\|_{L^{p,\infty}(\mathbb{R}_+,w_2dx)}
\lesssim\|b\|_{BMO_{\Delta_\lambda}(\mathbb{R}_+)}^m
[w_2]_{\widetilde{A}_{r,\lambda-\frac{1}{2}}^\mathcal{R}}^{r+\frac{r}{p}}
\|w_1,w_2\|_{\widetilde{A}_{p,\lambda-\frac{1}{2},\Phi}^\mathcal{R}}\|f\|_{L^{p,1}(\mathbb{R}_+,w_1dx)}.
\end{align*}
\end{theorem}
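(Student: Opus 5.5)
The plan is to copy the duality argument from the proof of Theorem \ref{quantitative two weight restricted weak type of sparse operator}, with the oscillation $|b-b_{I,\mu}|^m$ moved onto the inner integral and controlled by generalized H\"older with the pair $\Phi$, $\bar\Phi$. Since $w_2\in\widetilde{A}_{\infty,\lambda-\frac12}$, Theorem \ref{compare weight constant1} gives some $r\geq1$ with $w_2\in\widetilde{A}_{r,\lambda-\frac12}^\mathcal{R}$. For the sparse sets, \eqref{sparse property} gives $w_2(I)/w_2(E_I)\lesssim\|w_2\|^r_{\widetilde{A}^\mathcal{R}_{r,\lambda-\frac12}}$ for every $I\in\mathcal{S}$. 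Proposition \ref{weight constant compare2} then turns this into the factor $[w_2]^{r}_{\widetilde{A}^\mathcal{R}_{r,\lambda-\frac12}}$.

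By duality in $L^{p,\infty}(w_2)$, I test against $0\le g$ with $\|g\|_{L^{p',1}(w_2)}=1$. This gives
\begin{align*}
\int(\mathcal{A}_{\mathcal{S},b}^m)^{\ast}f\,g\,w_2\,dx=\sum_{I\in\mathcal{S}}\Big(\frac{1}{\mu(I)}\int_I|b-b_{I,\mu}|^m|f|d\mu\Big)\Big(\frac{1}{w_2(I)}\int_Igw_2\Big)w_2(I).
\end{align*}
The first factor is bounded by generalized H\"older with $\bar\Phi(t)\approx e^{t^{1/m}}$. The John--Nirenberg consequence \eqref{JN corollary} gives $\||b-b_{I,\mu}|^m\|_{\bar\Phi,I,\mu}\lesssim\|b\|^m_{BMO_{\Delta_\lambda}}$, so the first factor is at most $\|b\|_{BMO}^m\|f\|_{\Phi,I,\mu}$.

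Next I replace $w_2(I)$ by $w_2(E_I)$ at the cost of $[w_2]^r$. As in the earlier proof, I bound pointwise on $E_I$ by $\mathcal{M}_{\Phi,\mu}^{\mathcal S}f\cdot M^{\mathcal S}_{w_2}g$. Finite overlap of the $E_I$ then gives the bound
\begin{align*}
\|b\|^m_{BMO}[w_2]^r\,\|\mathcal{M}_{\Phi,\mu}f\|_{L^{p,\infty}(w_2)}\,\|M^{\mathcal S}_{w_2}g\|_{L^{p',1}(w_2)}.
\end{align*}
By \eqref{M_w} the last factor is $\lesssim1$. Here $\mathcal{M}_{\Phi,\mu}$ is the Orlicz maximal function with respect to $\mu=\nu_{\lambda-\frac12}$, i.e.\ $\mathcal{M}_{\Phi,\nu_{\lambda-\frac12}}$. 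Theorem \ref{characterization two weight restricted weak type estimate of Orlicz maximal}, in its quantitative form \eqref{quantitative two weight restricted weak type estimate of Orlicz maximal} with $\lambda-\frac12$ in place of $\lambda$, bounds it by $[w_2]^{r/p}_{\widetilde{A}^\mathcal{R}_{r,\lambda-\frac12}}\|w_1,w_2\|_{\widetilde{A}^\mathcal{R}_{p,\lambda-\frac12,\Phi}}\|f\|_{L^{p,1}(w_1)}$. Together the two weight factors give the exponent $r+\frac rp$.

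The main obstacle is bookkeeping in the parameter shift. One must check that $\mu$ equals $\nu_{\lambda-\frac12}$, that $\lambda-\frac12$ is admissible for the earlier theorems, and that the same $r$ serves both \eqref{sparse property} and \eqref{quantitative two weight restricted weak type estimate of Orlicz maximal}. One also needs the Orlicz Luxemburg norm with respect to $\mu$ in the H\"older step, which is consistent because both norms are taken over $\mu$.
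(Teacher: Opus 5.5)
Your proposal is correct and follows the paper's proof essentially step for step. The paper also uses duality against $g\in L^{p',1}(w_2)$, then generalized H\"older with the John--Nirenberg bound $\||b-b_{I,\mu}|^m\|_{\exp L^{1/m},I,\mu}\lesssim\|b\|_{BMO_{\Delta_\lambda}(\mathbb{R}_+)}^m$. It then replaces $w_2(I)$ by $w_2(E_I)$ at cost $[w_2]^r_{\widetilde{A}^{\mathcal R}_{r,\lambda-\frac12}}$, bounds pointwise on $E_I$ by the Orlicz maximal function times $M^{\mathcal S}_{w_2}g$, and finishes with the $L^{p',1}$ bound for $M^{\mathcal S}_{w_2}$ and the quantitative Orlicz maximal estimate, which contributes $[w_2]^{r/p}$.
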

\begin{proof}
By duality,
\begin{align}\label{duality for sparse operator of commutator 1}
&\|(\mathcal{A}_{\mathcal{S},b}^m)^{\ast}f\|_{L^{p,\infty}(\mathbb{R}_+,w_2dx)}\\
&\quad\sim\underset{\|g\|_{L^{p',1}(\mathbb{R}_+,w_2dx)=1}}{\sup_{0\leq g\in L^{p',1}(\mathbb{R}_+,w_2dx)}}\int_{\mathbb{R}_+}
(\mathcal{A}_{\mathcal{S},b}^m)^{\ast}f(x)g(x)w_2(x)dx\nonumber\\
&\quad=\underset{\|g\|_{L^{p',1}(\mathbb{R}_+,w_2dx)=1}}{\sup_{0\leq g\in L^{p',1}(\mathbb{R}_+,w_2dx)}}\sum_{I\in\mathcal{S}}
\Big(\frac{1}{\mu(I)}\int_I|b(x)-b_{I,\mu}|^m|f(x)|d\mu(x)\Big)\nonumber\\
&\qquad\qquad\qquad\qquad\qquad\times\Big(\int_{I}g(x)w_2(x)dx\Big).\nonumber
\end{align}

Since $w_2\in\widetilde{A}_{\infty,\lambda-\frac{1}{2}}$, there exists $r\geq1$ such that $w_2\in\widetilde{A}_{r,\lambda-\frac{1}{2}}^\mathcal{R}$. Then by \eqref{sparse property}, generalized H\"{o}lder's inequality and John-Nirenberg's inequality, we have
\begin{align*}
&\sum_{I\in\mathcal{S}}
\Big(\frac{1}{\mu(I)}\int_I|b(x)-b_{I,\mu}|^m|f(x)|d\mu(x)\Big)\Big(\int_{I}g(x)w_2(x)dx\Big)\\
&\quad\lesssim\sum_{I\in\mathcal{S}}\||b-b_{I,\mu}|^m\|_{\exp L^{1/m},I,\mu}\|f\|_{L(\log L)^m,I,\mu}
\frac{1}{w_2(I)}\int_Ig(x)w_2(x)dxw_2(I)\\
&\quad\lesssim\|b\|_{BMO_{\Delta_\lambda}(\mathbb{R}_+)}^m
[w_2]_{\widetilde{A}_{r,\lambda-\frac{1}{2}}^\mathcal{R}}^r
\sum_{I\in\mathcal{S}}\|f\|_{L(\log L)^m,I,\mu}
\frac{1}{w_2(I)}\int_Ig(x)w_2(x)dxw_2(E_I)\\
&\quad\leq\|b\|_{BMO_{\Delta_\lambda}(\mathbb{R}_+)}^m
[w_2]_{\widetilde{A}_{r,\lambda-\frac{1}{2}}^\mathcal{R}}^r
\sum_{I\in\mathcal{S}}\int_{E_I}\mathcal{M}_{L(\log L)^m,\mu}f(x)M_{w_2}^\mathcal{S}g(x)w_2(x)dx\\
&\quad\lesssim\|b\|_{BMO_{\Delta_\lambda}(\mathbb{R}_+)}^m
[w_2]_{\widetilde{A}_{r,\lambda-\frac{1}{2}}^\mathcal{R}}^r\|\mathcal{M}_{L(\log L)^m,\mu}f\|_{L^{p,\infty}(\mathbb{R}_+,w_2dx)}\|M_{w_2}^\mathcal{S}g\|_{L^{p',1}(\mathbb{R}_+,w_2dx)}
\end{align*}

In view of \eqref{quantitative two weight restricted weak type estimate of Orlicz maximal}, \eqref{M_w} and \eqref{duality for sparse operator of commutator 1}, we deduce that
\begin{align*}
\|(\mathcal{A}_{\mathcal{S},b}^m)^{\ast}f\|_{L^{p,\infty}(\mathbb{R}_+,w_2dx)}
\lesssim\|b\|_{BMO_{\Delta_\lambda}(\mathbb{R}_+)}^m
[w_2]_{\widetilde{A}_{r,\lambda-\frac{1}{2}}^\mathcal{R}}^{r+\frac{r}{p}}
\|w_1,w_2\|_{\widetilde{A}_{p,\lambda-\frac{1}{2},\Phi}^\mathcal{R}}\|f\|_{L^{p,1}(\mathbb{R}_+,w_1dx)}.
\end{align*}
\end{proof}

Next, we turn to deal with $\mathcal{A}_{\mathcal{S},b}^m$. We need the following lemma.
\begin{lemma}\label{Orlicz norm involing A}
Let $b\in BMO_{\Delta_\lambda}(\mathbb{R}_+)$ and $w\in\widetilde{A}_{\infty,\lambda-\frac{1}{2}}$ with $\lambda>-1/2,\neq0$. Then
\begin{align*}
\|b-b_{I,\mu}\|_{\exp L,I,w}\lesssim[w]_{\widetilde{A}_{\infty,\lambda-\frac{1}{2}}}
\|b\|_{BMO_{\Delta_\lambda}(\mathbb{R}_+)}.
\end{align*}
Moreover,
\begin{align*}
\||b-b_{I,\mu}|^j\|_{\exp L^{\frac{1}{j}},I,w}\lesssim
[w]_{\widetilde{A}_{\infty,\lambda-\frac{1}{2}}}^j\|b\|_{BMO_{\Delta_\lambda}(\mathbb{R}_+)}^j,~j>0.
\end{align*}
\end{lemma}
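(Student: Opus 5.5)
The plan is to combine the ratio property of $\widetilde{A}_{\infty}$ weights (Corollary \ref{Ratio property}) with the John–Nirenberg inequality \eqref{John-Nirengerg inequality}. The key observation is that for the shifted index $\lambda-\frac12$ we have $d\nu_{\lambda-\frac12}(x)=x^{2\lambda}dx=d\mu(x)$. So the $\widetilde{A}_{\infty,\lambda-\frac12}$ condition measures $w$ against exactly the measure $\mu$ in which $BMO_{\Delta_\lambda}(\mathbb{R}_+)$ and \eqref{John-Nirengerg inequality} are formulated. (Applying the earlier results with index $\lambda-\frac12$ tacitly requires $\lambda-\frac12>-\frac12$, $\neq0$; I will work under that standing convention, as the paper does in Theorem \ref{quantitative two weight restricted weak type of sparse operator}.)

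\textbf{Step 1: transfer John–Nirenberg to $w$.} Fix an interval $I$ and set $E_\gamma:=\{x\in I:|b(x)-b_{I,\mu}|>\gamma\}$. By Corollary \ref{Ratio property} with $\lambda-\frac12$ in place of $\lambda$, and with $\delta:=1/(c[w]_{\widetilde{A}_{\infty,\lambda-\frac12}})$,
\begin{align*}
\frac{w(E_\gamma)}{w(I)}\leq 2\Big(\frac{\mu(E_\gamma)}{\mu(I)}\Big)^{\delta}\leq 2c_1^{\delta}e^{-\frac{\delta\gamma}{c_2\|b\|_{BMO_{\Delta_\lambda}(\mathbb{R}_+)}}}.
\end{align*}
Since $\delta\le 1$ we may assume $\delta\leq1$, so $c_1^\delta\le\max\{1,c_1\}$ is an absolute constant.

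\textbf{Step 2: estimate the Luxemburg norm.} For $s>0$ the layer-cake formula gives
\begin{align*}
\frac{1}{w(I)}\int_I\big(e^{|b-b_{I,\mu}|/s}-1\big)w\,dx=\int_0^\infty\frac1s e^{\gamma/s}\frac{w(E_\gamma)}{w(I)}\,d\gamma\lesssim\frac1s\int_0^\infty e^{\gamma\big(\frac1s-\frac{\delta}{c_2\|b\|}\big)}d\gamma .
\end{align*}
Choose $s=K c_2\|b\|_{BMO_{\Delta_\lambda}(\mathbb{R}_+)}/\delta$ with $K$ a large absolute constant. The right-hand side then equals $C/(K-1)\leq1$. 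Hence
\begin{align*}
\|b-b_{I,\mu}\|_{\exp L,I,w}\le s\lesssim [w]_{\widetilde{A}_{\infty,\lambda-\frac12}}\|b\|_{BMO_{\Delta_\lambda}(\mathbb{R}_+)}.
\end{align*}

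\textbf{Step 3: the power $j$.} With $\Phi_j(t)=e^{t^{1/j}}-1$ we have $\Phi_j(|f|^j/s^j)=e^{|f|/s}-1$. Therefore $\||f|^j\|_{\exp L^{1/j},I,w}=\|f\|_{\exp L,I,w}^j$ directly from the definition of the Luxemburg norm; this holds even for $j<1$, where $\Phi_j$ fails to be convex but the infimum formula still makes sense. Applying this with $f=b-b_{I,\mu}$ and Step 2 gives the second estimate.

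The only delicate point is Step 1. One must make sure that the ratio inequality really carries the explicit exponent $1/(c[w]_{\widetilde{A}_{\infty}})$, which is what yields the linear dependence on $[w]$. One must also check that the measures match after the index shift. Everything after that is a routine layer-cake computation.
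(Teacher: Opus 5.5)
Your proposal is correct and follows essentially the same route as the paper. Both arguments apply the ratio property of Corollary \ref{Ratio property} at index $\lambda-\frac12$ (where $\nu_{\lambda-\frac12}=\mu$) together with John--Nirenberg, then do a layer-cake computation with scale proportional to $[w]_{\widetilde{A}_{\infty,\lambda-\frac12}}\|b\|_{BMO_{\Delta_\lambda}(\mathbb{R}_+)}$, and finish with the identity $\||f|^j\|_{\exp L^{1/j},I,w}=\|f\|_{\exp L,I,w}^j$. Your bound on $c_1^{\delta}$ is a detail the paper silently absorbs into its constant $c_3$. Its proper justification is $[w]_{\widetilde{A}_{\infty,\lambda-\frac12}}\ge 1$ (since $\mathcal{M}_{\lambda-\frac12}f\ge|f|$ a.e.), which gives $\delta\le1$ once $c\ge1$; your sentence "since $\delta\le1$ we may assume $\delta\le1$" is circular as written.
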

\begin{proof}
From the definition of Luxemburg norm, it suffices to demonstrate that
\begin{align*}
\frac{1}{w(I)}\int_I\left(e^{\frac{|b(x)-b_{I,\mu}|}{c
[w]_{\widetilde{A}_{\infty,\lambda-\frac{1}{2}}}\|b\|_{BMO_{\Delta_\lambda}(\mathbb{R}_+)}}}-1\right)w(x)dx
\leq1.
\end{align*}
Applying layer cake formula, for $\gamma>0$, we have the following identity,
\begin{align}\label{layer cake formula}
&\frac{1}{w(I)}\int_I\Big(e^{\frac{|b(x)-b_{I,\mu}|}{\gamma}}-1\Big)w(x)dx\\
&\quad=\frac{1}{w(I)}\int_0^\infty e^t
w(\{x\in I:|b(x)-b_{I,\mu}|>\gamma t\})dt\nonumber.
\end{align}
Then by Corollary \ref{Ratio property} and \eqref{John-Nirengerg inequality}, we have
\begin{align*}
\frac{w(\{x\in I:|b(x)-b_{I,\mu}|>\gamma t\})}{w(I)}&\leq 2\left(\frac{\mu(\{x\in I:|b(x)-b_{I,\mu}|>\gamma t\})}{\mu(I)}\right)^{\frac{1}{c_\lambda[w]_{\widetilde{A}_{\infty,\lambda-\frac{1}{2}}}}}\\
&\leq c_3e^{-\frac{\gamma t}{c_2c_\lambda[w]_{\widetilde{A}_{\infty,\lambda-\frac{1}{2}}}
\|b\|_{BMO_{\Delta_\lambda}(\mathbb{R}_+)}}}.
\end{align*}
Plug this estimate into \eqref{layer cake formula}, and take $\gamma=(c_3+2)c_2c_\lambda[w]_{\widetilde{A}_{\infty,\lambda-\frac{1}{2}}}
\|b\|_{BMO_{\Delta_\lambda}(\mathbb{R}_+)}$, we further have
\begin{align*}
\frac{1}{w(I)}\int_I\Big(e^{\frac{|b(x)-b_{I,\mu}|}{\gamma}}-1\Big)w(x)dx\leq c_3\int_0^\infty
e^{-t(1+c_3)}dt\leq1.
\end{align*}

Finally, the second result follows by the first result and $\||b-b_{I,\mu}|^j\|_{\exp L^{\frac{1}{j}},I,w}=\|b-b_{I,\mu}\|_{\exp L,I,w}^j$.
\end{proof}

\begin{theorem}\label{quantitative two weight restricted weak type estimate for sparse operator of commutator 11}
Let $1<p<\infty$, $m\in\mathbb{Z}^+$ and $\mathcal{S}$ be an $\eta$-sparse family. If $w_2\in\widetilde{A}_{\infty,\lambda-\frac{1}{2}}$, $(w_1,w_2)\in\widetilde{A}_{p,\lambda-\frac{1}{2}}^\mathcal{R}$ and $b\in BMO_{\Delta_\lambda}(\mathbb{R}_+)$. Then there exists $r\geq1$ such that
\begin{align*}
&\|\mathcal{A}_{\mathcal{S},b}^mf\|_{L^{p,\infty}(\mathbb{R}_+,w_2dx)}\\
&\quad\lesssim\|b\|_{BMO_{\Delta_\lambda}(\mathbb{R}_+)}^m
[w_2]_{\widetilde{A}_{\infty,\lambda-\frac{1}{2}}}^m
[w_2]_{\widetilde{A}_{r,\lambda-\frac{1}{2}}^\mathcal{R}}^{r}
[w_1,w_2]_{\widetilde{A}_{p,\lambda-\frac{1}{2}}^\mathcal{R}}\|f\|_{L^{p,1}(\mathbb{R}_+,w_1dx)}.
\end{align*}
\end{theorem}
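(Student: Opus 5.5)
The plan is to dualize, move the weight of the oscillation $|b-b_{I,\mu}|^m$ onto the test function $g$, and then repeat the argument used for Theorem \ref{quantitative two weight restricted weak type of sparse operator}. As in the proof of Theorem \ref{quantitative two weight restricted weak type estimate for sparse operator of commutator 1}, by the $L^{p,1}$--$L^{p',\infty}$ duality,
\begin{align*}
\|\mathcal{A}_{\mathcal{S},b}^mf\|_{L^{p,\infty}(\mathbb{R}_+,w_2dx)}\sim\sup_{g}\sum_{I\in\mathcal{S}}\Big(\frac{1}{\mu(I)}\int_I|f|d\mu\Big)\int_I|b(x)-b_{I,\mu}|^mg(x)w_2(x)dx,
\end{align*}
where the supremum runs over $0\leq g$ with $\|g\|_{L^{p',1}(\mathbb{R}_+,w_2dx)}=1$. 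Since $w_2\in\widetilde{A}_{\infty,\lambda-\frac12}$, Theorem \ref{compare weight constant1} gives some $r\geq1$ with $w_2\in\widetilde{A}_{r,\lambda-\frac12}^{\mathcal{R}}$. The sparse bound \eqref{sparse property} then gives $w_2(I)\lesssim [w_2]_{\widetilde{A}_{r,\lambda-\frac12}^\mathcal{R}}^r w_2(E_I)$, where we use Proposition \ref{weight constant compare2} to pass from $\|\cdot\|$ to $[\cdot]$.

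Next, I apply the generalized H\"older inequality with respect to the measure $w_2dx$ on $I$, taking the Young function pair $\exp L^{1/m}$ and $L(\log L)^m$:
\begin{align*}
\frac{1}{w_2(I)}\int_I|b-b_{I,\mu}|^mgw_2dx\lesssim\||b-b_{I,\mu}|^m\|_{\exp L^{1/m},I,w_2}\|g\|_{L(\log L)^m,I,w_2}.
\end{align*}
By Lemma \ref{Orlicz norm involing A}, the first factor is $\lesssim[w_2]_{\widetilde{A}_{\infty,\lambda-\frac12}}^m\|b\|_{BMO_{\Delta_\lambda}(\mathbb{R}_+)}^m$. The average of $f$ is dominated on $E_I$ by $\mathcal{M}_\mu^{\mathcal{S}}f$, and the Orlicz average of $g$ by the sparse Orlicz maximal function
\[
M^{\mathcal{S}}_{L(\log L)^m,w_2}g:=\sup_{I\ni x,\,I\in\mathcal{S}}\|g\|_{L(\log L)^m,I,w_2}.
\]
Using the finite overlap of the sets $E_I$, the sum is then bounded by a constant times
\[
\|b\|^m_{BMO_{\Delta_\lambda}(\mathbb{R}_+)}[w_2]_{\widetilde{A}_{\infty,\lambda-\frac12}}^m[w_2]^r_{\widetilde{A}_{r,\lambda-\frac12}^\mathcal{R}}\int_{\mathbb{R}_+}\mathcal{M}_\mu^{\mathcal{S}}f\cdot M^{\mathcal{S}}_{L(\log L)^m,w_2}g\;w_2dx.
\]
I then use the H\"older inequality for $L^{p,\infty}\times L^{p',1}$. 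The factor involving $f$ is handled by Theorem \ref{characterization restricted weak type of HL}, applied with $\lambda-\frac12$ in place of $\lambda$, so that $\nu_{\lambda-\frac12}=\mu$. This gives $\|\mathcal{M}_\mu^{\mathcal{S}}f\|_{L^{p,\infty}(w_2)}\lesssim[w_1,w_2]_{\widetilde{A}_{p,\lambda-\frac12}^\mathcal{R}}\|f\|_{L^{p,1}(w_1)}$.

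The main obstacle is the remaining estimate
\[
\|M^{\mathcal{S}}_{L(\log L)^m,w_2}g\|_{L^{p',1}(w_2)}\lesssim\|g\|_{L^{p',1}(w_2)},
\]
with a constant independent of $w_2$. Unlike $M^{\mathcal{S}}_{w_2}$ in \eqref{M_w}, this operator is not of weak type $(1,1)$. I would handle it as follows. On dyadic-type sparse families with respect to the measure $w_2$, the standard iteration gives $M^{\mathcal{S}}_{L(\log L)^m,w_2}\lesssim (M^{\mathcal{S}}_{w_2})^{m+1}$ pointwise; this rests only on the nestedness of $\mathcal{S}\subset\mathcal{D}$ and a Stein-type $L\log L$ estimate for the dyadic $w_2$-maximal function, both of which are universal in $w_2$. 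Since $p'>1$, iterating \eqref{M_w} $m+1$ times then gives the required $L^{p',1}(w_2)$ bound uniformly in $w_2$. If the pointwise iteration is awkward to justify, the fallback is to interpolate directly. $M^{\mathcal{S}}_{L(\log L)^m,w_2}$ is bounded on $L^\infty$, and it maps $L^{s}(w_2)$ to $L^{s,\infty}(w_2)$ for every $s>1$ with a constant depending only on $s$ and $m$, via the dyadic Calder\'on--Zygmund decomposition with respect to $w_2$. Marcinkiewicz interpolation (\cite[Theorem 4.13]{BeSh}) then yields the bound on $L^{p',1}(w_2)$. Combining the three factors gives the claimed estimate.
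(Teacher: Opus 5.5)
Your overall argument is the paper's proof: duality, generalized H\"older with respect to $w_2\,dx$ using $\exp L^{1/m}$ and $L(\log L)^m$, Lemma \ref{Orlicz norm involing A}, the sparse bound \eqref{sparse property}, H\"older in $L^{p,\infty}\times L^{p',1}$, and Theorem \ref{characterization restricted weak type of HL} (with $\nu_{\lambda-\frac12}=\mu$) for the $f$-factor.

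\textbf{The primary route for the Orlicz maximal bound is wrong.} You propose to get the $w_2$-uniform bound on $M^{\mathcal{S}}_{L(\log L)^m,w_2}$ from the pointwise estimate $M^{\mathcal{S}}_{L(\log L)^m,w_2}g\lesssim (M^{\mathcal{S}}_{w_2})^{m+1}g$, and that estimate is false.

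With the maximal operators restricted to $\mathcal{S}$, it already fails for Lebesgue measure. Take $\mathcal{S}=\{I_0\}$ and $E\subset I_0$ with $\delta=|E|/|I_0|$. Then $(M^{\mathcal{S}})^{m+1}\chi_E=\delta\chi_{I_0}$, whereas $\|\chi_E\|_{L(\log L)^m,I_0}\approx\delta\log(e+1/\delta)^m$.

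Switching to the full dyadic maximal operator does not restore uniformity either. The half of Stein's estimate you need, $\|g\|_{L\log L,I,\sigma}\lesssim\sigma(I)^{-1}\int_I M^{\mathcal{D}}_{\sigma}(g\chi_I)\,d\sigma$, depends on dyadic doubling of $\sigma$. Let $\sigma(I)=1$, with mass $\delta$ on the left dyadic child of $I$ and $1-\delta$ on the right one, and let $g$ be the indicator of the left child. The left-hand side is then $\approx\delta\log(1/\delta)$ and the right-hand side is at most $2\delta$. For $w_2\in\widetilde{A}_{\infty,\lambda-\frac12}$ you could recover the estimate through Lemma \ref{quantitative doubling}. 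However, the constants would then grow with $[w_2]_{\widetilde{A}^{\mathcal{R}}_{r,\lambda-\frac12}}$, which spoils the stated quantitative bound.

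\textbf{Your fallback is correct and is essentially the paper's argument.} The paper interpolates (\cite[Theorem 4.13]{BeSh}) between the trivial $L^\infty(w)$ bound and a strong $L^{s}(w)$ bound with $1<s<p'$, obtained as in \cite[Lemma 2.6]{Cr}. A weak $(s,s)$ bound serves equally well.

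You do not need a Calder\'on--Zygmund decomposition to get it uniformly in the weight. Since $\|g\|_{L(\log L)^m,I,w_2}\lesssim_{s,m}\big(w_2(I)^{-1}\int_I|g|^s w_2\,dx\big)^{1/s}$, you have $M^{\mathcal{S}}_{L(\log L)^m,w_2}g\lesssim \big(M^{\mathcal{S}}_{w_2}(|g|^s)\big)^{1/s}$. Moreover, $M^{\mathcal{S}}_{w_2}$ is of weak type $(1,1)$ with constant $1$ for every weight (\cite{LerNa}), so the weak $(s,s)$ bound follows immediately. With this step replacing the iteration argument, your proof is complete and matches the paper's.
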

\begin{proof}
By duality,
\begin{align*}
&\|\mathcal{A}_{\mathcal{S},b}^mf\|_{L^{p,\infty}(\mathbb{R}_+,w_2dx)}\\
&\quad\sim\underset{\|g\|_{L^{p',1}(\mathbb{R}_+,w_2dx)=1}}{\sup_{0\leq g\in L^{p',1}(\mathbb{R}_+,w_2dx)}}\int_{\mathbb{R}_+}
\mathcal{A}_{\mathcal{S},b}^mf(x)g(x)w_2(x)dx\nonumber\\
&\quad=\underset{\|g\|_{L^{p',1}(\mathbb{R}_+,w_2dx)=1}}{\sup_{0\leq g\in L^{p',1}(\mathbb{R}_+,w_2dx)}}\sum_{I\in\mathcal{S}}
\Big(\frac{1}{w_2(I)}\int_I|b(x)-b_{I,\mu}|^m|g(x)|w_2(x)dx\Big)\nonumber\\
&\qquad\qquad\qquad\qquad\qquad\times\Big(\frac{1}{\mu(I)}\int_{I}|f(x)|d\mu(x)\Big)w_2(I).\nonumber
\end{align*}

Since $w_2\in\widetilde{A}_{\infty,\lambda-\frac{1}{2}}$, there exists $r\geq1$ such that $w_2\in\widetilde{A}_{r,\lambda-\frac{1}{2}}^\mathcal{R}$. Then by generalized H\"{o}lder's inequality, Theorem \ref{Orlicz norm involing A} and \eqref{sparse property}, we deduce that
\begin{align*}
&\sum_{I\in\mathcal{S}}
\Big(\frac{1}{w_2(I)}\int_I|b(x)-b_{I,\mu}|^m|g(x)|w_2(x)dx\Big)
\Big(\frac{1}{\mu(I)}\int_{I}|f(x)|d\mu(x)\Big)w_2(I)\\
&\lesssim\sum_{I\in\mathcal{S}}\||b-b_{I,\mu}|^m\|_{\exp L^{1/m},I,w_2}\|g\|_{L(\log L)^m,I,w_2}
\frac{1}{\mu(I)}\int_I|f(x)|d\mu(x)w_2(I)\\
&\lesssim\|b\|_{BMO_{\Delta_\lambda}(\mathbb{R}_+)}^m
[w_2]_{\widetilde{A}_{r,\lambda-\frac{1}{2}}^\mathcal{R}}^r[w_2]_{A_{\infty,\lambda-\frac{1}{2}}}^m
\sum_{I\in\mathcal{S}}\|g\|_{L(\log L)^m,I,w_2}
\frac{1}{\mu(I)}\int_I|f(x)|d\mu(x)w_2(E_I)\\
&\leq\|b\|_{BMO_{\Delta_\lambda}(\mathbb{R}_+)}^m
[w_2]_{\widetilde{A}_{r,\lambda-\frac{1}{2}}^\mathcal{R}}^r[w_2]_{A_{\infty,\lambda-\frac{1}{2}}}^m
\sum_{I\in\mathcal{S}}\int_{E_I}\mathcal{M}_{L(\log L)^m,w_2}^{\mathcal{S}}g(x)M_{\mu}f(x)w_2(x)dx\\
&\lesssim\|b\|_{BMO_{\Delta_\lambda}(\mathbb{R}_+)}^m
[w_2]_{\widetilde{A}_{r,\lambda-\frac{1}{2}}^\mathcal{R}}^r[w_2]_{A_{\infty,\lambda-\frac{1}{2}}}^m
\|\mathcal{M}_{L(\log L)^m,w_2}g\|_{L^{p',1}(\mathbb{R}_+,w_2dx)}\|M_{\mu}f\|_{L^{p,\infty}(\mathbb{R}_+,w_2dx)},
\end{align*}
where
\begin{align*}
\mathcal{M}_{L(\log L)^m,w}^{\mathcal{S}}(f):=\sup_{Q\in\mathcal{S}}\|f\|_{L(\log L)^m,Q,w}\chi_Q.
\end{align*}

Now, we illustrate that for any weight $w$,
\begin{align}\label{Lorentz estimate for weighted Orlicz maximal}
\|\mathcal{M}_{L(\log L)^m,w}^\mathcal{S}g\|_{L^{p,1}(\mathbb{R}_+,wdx)}\lesssim\|g\|_{L^{p,1}(\mathbb{R}_+,wdx)},
\end{align}
with constant independent of $w$. One can check that
\begin{align*}
\|g\|_{L(\log L)^m,I,w}\lesssim\Big(\frac{1}{w(I)}\int_I|g(x)|^pw(x)dx\Big)^{1/p}\leq
\|g\|_{L^{\infty}(\mathbb{R}_+,wdx)},~1<p<\infty,
\end{align*}
so $\|\mathcal{M}_{L(\log L)^m,w}^\mathcal{S}\|_{L^{\infty}(\mathbb{R}_+,wdx)}\lesssim\|g\|_{L^{\infty}
(\mathbb{R}_+,wdx)}$, where the implicit constant is independent of $w$. Applying the same technique as in \cite[Lemma 2.6]{Cr}, we can prove
\begin{align*}
\|\mathcal{M}_{L(\log L)^m,w}^\mathcal{S}\|_{L^{r}(\mathbb{R}_+,wdx)}\lesssim\|g\|_{L^{r}(\mathbb{R}_+,wdx)},~
1<r<p,
\end{align*}
where the implicit constant is also independent of $w$. By Marcinkiewicz's interpolation theorem,
$$\|\mathcal{M}_{L(\log L)^m,w}g\|_{L^{p,1}(\mathbb{R}_+,wdx)}\lesssim\|g\|_{L^{p,1}(\mathbb{R}_+,wdx)},$$
where the implicit constant is independent of $w$.

Finally, using \eqref{Lorentz estimate for weighted Orlicz maximal} and Theorem \ref{characterization restricted weak type of HL}, we conclude that
\begin{align*}
&\|\mathcal{A}_{\mathcal{S},b}^mf\|_{L^{p,\infty}(\mathbb{R}_+,w_2dx)}\\
&\quad\lesssim\|b\|_{BMO_{\Delta_\lambda}(\mathbb{R}_+)}^m
[w_2]_{\widetilde{A}_{\infty,\lambda-\frac{1}{2}}}^m
[w_2]_{\widetilde{A}_{r,\lambda-\frac{1}{2}}^\mathcal{R}}^{r}
[w_1,w_2]_{\widetilde{A}_{p,\lambda-\frac{1}{2}}^\mathcal{R}}\|f\|_{L^{p,1}(\mathbb{R}_+,w_1dx)}.
\end{align*}
\end{proof}

\begin{proof}[Proof of Theorem \ref{quantitative two weight restricted weak type estimate for sparse operator of commutator 2}]
The result can be easily deduced from Theorems \ref{quantitative two weight restricted weak type estimate for sparse operator of commutator 1}, \ref{quantitative two weight restricted weak type estimate for sparse operator of commutator 11} and $[w_1,w_2]_{\widetilde{A}_{p,\lambda-\frac{1}{2}}^\mathcal{R}}\leq
\|w_1,w_2\|_{\widetilde{A}_{p,\lambda-\frac{1}{2},\Phi}^\mathcal{R}}$.
\end{proof}

\section{Applications}
In this section, we present some applications of our main results. To begin with, we recall the definitions of the Hankel convolution and the Hankel translation as presented in \cite{Ha,Hi}. For appropriate functions $f$ and $g$, the Hankel convolution is defined by
\begin{align*}
(f\sharp_\lambda g)(x):=\int_0^\infty f(y)\tau_x^\lambda(g)(y)y^{2\lambda}dy,~x\in(0,\infty),
\end{align*}
where $\tau_x^\lambda g$, denotes the Hankel translation of $g$, which is defined as follows
\begin{align*}
\tau_x^\lambda(g)(y):=c_\lambda\int_0^\pi g(\sqrt{x^2+y^2-2xy\cos\theta})\sin\theta^{2\lambda-1}d\theta,~x,y\in(0,\infty).
\end{align*}

Following \cite{YY}, the $Z^\lambda$ denote the space of all functions $\varphi\in C^2(0,\infty)$ that satisfy the following three conditions:
\begin{itemize}
\item [(1)] $|\varphi(x)|\lesssim(1+x^2)^{-\lambda-1},~x\in(0,\infty)$;
\item [(2)] $|\varphi'(x)|\lesssim x(1+x^2)^{-\lambda-2},~x\in(0,\infty)$;
\item [(3)] $|\varphi''(x)|\lesssim (1+x^2)^{-\lambda-2},~x\in(0,\infty)$.
\end{itemize}
We introduce the operators that will be studied. Let $\varphi\in Z^\lambda$. The maximal operator $\varphi_\ast^\lambda$ is defined by
\begin{align*}
\varphi_\ast^\lambda(f)(x):=\sup_{t>0}|f\sharp_\lambda\varphi_t(x)|,
\end{align*}
where $\varphi_t(x)=t^{-2\lambda-1}\varphi(x/t)$, $x>0$. The square function $g_\varphi^\lambda$ is given as follows:
\begin{align*}
g_\varphi^\lambda(f)(x):=\Big(\int_0^\infty|f\sharp_\lambda\varphi_t(x)|^2\frac{dt}{t}\Big)^{1/2},
~x\in(0,\infty).
\end{align*}
Finally, for $\rho>2$, the $\rho$-variation operator $V_\rho^\lambda(\{\varphi_t\}_{t>0})$ associated to the family $\{\varphi_t\}_{t>0}$ is given by
\begin{align*}
V_\rho^\lambda(\{\varphi_t\}_{t>0})(f)(x):=\underset{n\in\mathbb{N}}{\sup_{0<t_n<t_{n-1}<\cdots<t_1}}
\Big(\sum_{j=1}^{n-1}
|f\sharp_\lambda\varphi_{t_j}(x)-f\sharp_\lambda\varphi_{t_{j+1}}(x)|^\rho\Big)^{1/\rho}.
\end{align*}
Let $b\in L_{loc}^1(\mathbb{R}_+,d\mu)$ and $m\in\mathbb{Z}^+$. The higher order commutators of $R_{\Delta_\lambda}$, $\varphi_\ast^\lambda$, $g_\varphi^\lambda$ and $V_\rho^\lambda(\{\varphi_t\}_{t>0})$ are defined by
\begin{align*}
R_{\Delta_\lambda,b}^{m}(f)(x):=R_{\Delta_\lambda}((b(\cdot)-b(x))^mf)(x),
~x\in(0,\infty),
\end{align*}
\begin{align*}
\varphi_{\ast,b}^{\lambda,m}(f)(x):=\sup_{t>0}|(\varphi_t\sharp_\lambda[(b(\cdot)-b(x))^mf])(x)|,
~x\in(0,\infty),
\end{align*}
\begin{align*}
g_{\varphi,b}^{\lambda,m}(f)(x):=\Big(\int_0^\infty|(\varphi_t\sharp_\lambda[(b(\cdot)-b(x))^mf])(x)|^2
\frac{dt}{t}\Big)^{1/2},
~x\in(0,\infty),
\end{align*}
\begin{align*}
V_{\rho,b}^{\lambda,m}(\{\varphi_t\}_{t>0})(f)(x):&=
\underset{n\in\mathbb{N}}{\sup_{0<t_n<t_{n-1}<\cdots<t_1}}\Big(\sum_{j=1}^{n-1}
|(\varphi_{t_j}\sharp_\lambda[(b(\cdot)-b(x))^mf])(x)\\
&\qquad\quad\qquad\qquad\quad-
(\varphi_{t_{j+1}}\sharp_\lambda[(b(\cdot)-b(x))^mf])(x)|^\rho\Big)^{1/\rho}.
\end{align*}
respectively.

Maximal operators, square functions and variation operators have received considerable attention in probability theory, ergodic theory and harmonic analysis. It is well known that the maximal operators $\varphi_\ast^\lambda$ and $V_\rho^\lambda(\{\varphi_t\}_{t>0})$ are both connected with the convergence of $\{f\sharp_\lambda\varphi_t\}_{t>0}$, while the advantage of the latter is that it can directly derive pointwise convergence of $\{f\sharp_\lambda\varphi_t\}_{t>0}$, without requiring prior knowledge that pointwise convergence holds for a dense subclass of functions, which may not be satisfied in certain ergodic models, we refer readers to see \cite{Bou}. Additionally, the boundedness properties of the square functions $g_\varphi^\lambda$ leads directly to the boundedness of Bessel spectral multipliers.

Let $T$ represents to $\mathcal{R}_{\Delta_\lambda}$, $\varphi_\ast^\lambda$, $g_\varphi^\lambda$ or $V_\rho^\lambda(\{\varphi_t\}_{t>0})$. Let $T_b^m$ represents to $\mathcal{R}_{\Delta_\lambda,b}^m$, $\varphi_{\ast,b}^{\lambda,m}$, $g_{\varphi,b}^{\lambda,m}$ or $V_{\rho,b}^{\lambda,m}(\{\varphi_t\}_{t>0})$.  In \cite{AlBeFaRo,DuoGonKufLiWiYa}, the authors proved that there exist $\mathcal{T}$ dyadic systems $\mathcal{D}^j$, $j=1,2,\cdots,\mathcal{T}$, and $\eta$-sparse families $\mathcal{S}_j\subset\mathcal{D}^j$ such that,
\begin{align}\label{sparse domination for various operator}
|Tf|\lesssim\sum_{j=1}^{\mathcal{T}}\mathcal{A}_{\mathcal{S}_j}f,
\end{align}
\begin{align}\label{sparse domination for commutator of various operator}
&|T_b^mf(x)|\lesssim\sum_{j=1}^{\mathcal{T}}\sum_{k=0}^{m}C_m^k\sum_{I\in\mathcal{S}_j}
|b(x)-b_{I,\mu}|^{m-k}\\
&\qquad\qquad\qquad\qquad\times\Big(\frac{1}{\mu(I)}\int_I|b(z)-b_{I,\mu}|^k|f(z)|d\mu(z)\Big)\chi_I(x).
\nonumber
\end{align}

In the following, we present some applications of extrapolation theorems. As in the classical Laplace setting, we can also employ the sharp extrapolation theorem to reprove the following quantitative weighted estimate of Bessel Riesz transform $R_{\Delta_\lambda}$:
\begin{align}\label{quantitative weighted estimates of Bessel Riesz transform}
\|R_{\Delta_\lambda}f\|_{L^p(\mathbb{R}_+,wdx)}\lesssim
[w]_{\widetilde{A}_{p,\lambda-\frac{1}{2}}}^{\max\{1,\frac{1}{p-1}\}}\|f\|_{L^p(\mathbb{R}_+,wdx)},~w\in
\widetilde{A}_{p,\lambda-\frac{1}{2}},~1<p<\infty.
\end{align}
\begin{proof}
In \cite{LiLiangShenW}, the authors prove the above result by considering two cases: $p\geq2$ and $1<p<2$. Nevertheless, we only need to consider the case $p=2$ and each weight $w\in \widetilde{A}_{2,\lambda-\frac{1}{2}}$,
\begin{align}\label{p=2}
\|R_{\Delta_\lambda}f\|_{L^2(\mathbb{R}_+,wdx)}\lesssim
[w]_{\widetilde{A}_{2,\lambda-\frac{1}{2}}}\|f\|_{L^2(\mathbb{R}_+,wdx)},
\end{align}
which was already proved in \cite{LiLiangShenW}. The rest results can be derived by the sharp extrapolation theorem.

Indeed, by the sharp extrapolation Theorem \ref{Rubio de Francia extrapolation theorem for new weights} and Theorem \ref{mixed quantitative weighted strong estimates}, for $1<p<2$,
\begin{align}\label{p<2}
\|R_{\Delta_\lambda}f\|_{L^p(\mathbb{R}_+,wdx)}&\lesssim
\|\mathcal{M}_{\mu}\|_{\mathcal{B}(L^p(\mathbb{R}_+,wdx))}^{2-p}
[w]_{\widetilde{A}_{p,\lambda-\frac{1}{2}}}\|f\|_{L^p(\mathbb{R}_+,wdx)}\\
&\lesssim[w]_{\widetilde{A}_{p,\lambda-\frac{1}{2}}}^{\frac{2-p}{p-1}}
[w]_{\widetilde{A}_{p,\lambda-\frac{1}{2}}}\|f\|_{L^p(\mathbb{R}_+,wdx)}\nonumber\\
&=[w]_{\widetilde{A}_{p,\lambda-\frac{1}{2}}}^{\frac{1}{p-1}}\|f\|_{L^p(\mathbb{R}_+,wdx)}.\nonumber
\end{align}
Again by Theorem \ref{Rubio de Francia extrapolation theorem for new weights}, Theorem \ref{mixed quantitative weighted strong estimates} and
\begin{align*}
[v_\lambda^{p'}w^{1-p'}]_{\widetilde{A}_{p',\lambda-\frac{1}{2}}}=
[w]_{\widetilde{A}_{p,\lambda-\frac{1}{2}}}^{\frac{1}{p-1}},
\end{align*}
for $2<p<\infty$,
\begin{align}\label{p>2}
\|R_{\Delta_\lambda}f\|_{L^p(\mathbb{R}_+,wdx)}&\lesssim
\|\mathcal{M}_{\mu}\|_{\mathcal{B}(L^{p'}(\mathbb{R}_+,v_\lambda^{p'}w^{1-p'}dx))}^{\frac{p-2}{p-1}}
[w]_{\widetilde{A}_{p,\lambda-\frac{1}{2}}}^{\frac{1}{p-1}}\|f\|_{L^p(\mathbb{R}_+,wdx)}\\
&\lesssim[v_\lambda^{p'}w^{1-p'}]_{\widetilde{A}_{p',\lambda-\frac{1}{2}}}^{\frac{p-2}{(p'-1)(p-1)}}
[w]_{\widetilde{A}_{p,\lambda-\frac{1}{2}}}^{\frac{1}{p-1}}\|f\|_{L^p(\mathbb{R}_+,wdx)}\nonumber\\
&=[w]_{\widetilde{A}_{p,\lambda-\frac{1}{2}}}^{\frac{p-2}{p-1}+\frac{1}{p-1}}
\|f\|_{L^p(\mathbb{R}_+,wdx)}\nonumber\\
&=[w]_{\widetilde{A}_{p,\lambda-\frac{1}{2}}}\|f\|_{L^p(\mathbb{R}_+,wdx)}.\nonumber
\end{align}
Combined with \eqref{p=2}-\eqref{p>2}, we obtain \eqref{quantitative weighted estimates of Bessel Riesz transform}.
\end{proof}

Now we concentrate on some other important weighted inequalities in harmonic analysis. By Corollary \ref{vector-value extrapolation}, we can extend \eqref{scalar characterization new weight} to the vector-valued case.
\begin{theorem}\label{vector-value characterization of HL}
Let $w$ be a weight on $\mathbb{R}_+$ and $\lambda>-1/2,\neq0$. Then for all $1<p,q<\infty$,
\begin{align*}
w\in\widetilde{A}_{p,\lambda}\Leftrightarrow\Big\|\Big(\sum_{i}\mathcal{M}_\lambda f_i^q\Big)^{1/q}\Big\|_{L^p(\mathbb{R}_+,wdx)}\lesssim
\Big\|\Big(\sum_{i}|f_i|^q\Big)^{1/q}\Big\|_{L^p(\mathbb{R}_+,wdx)}.
\end{align*}
\end{theorem}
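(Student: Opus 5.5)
The plan is to prove the two implications separately, with the forward direction carried by Corollary \ref{vector-value extrapolation}.

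\emph{($\Rightarrow$).} Consider the family $\mathcal{F}$ of pairs $(\mathcal{M}_\lambda f,|f|)$, with $f$ ranging over, say, bounded compactly supported functions; a density or monotone convergence argument will extend the conclusion afterwards. The scalar characterization \eqref{scalar characterization new weight}, or quantitatively Theorem \ref{mixed quantitative weighted strong estimates}, gives
\[
\|\mathcal{M}_\lambda f\|_{L^{p_0}(\mathbb{R}_+,wdx)}\lesssim\|f\|_{L^{p_0}(\mathbb{R}_+,wdx)}
\]
for all $w\in\widetilde{A}_{p_0,\lambda}$, for any fixed $p_0\in(1,\infty)$. The implicit constant depends only on $[w]_{\widetilde{A}_{p_0,\lambda}}$ through an increasing function, so the hypotheses of Corollary \ref{vector-value extrapolation} (equivalently of Theorem \ref{Rubio de Francia extrapolation theorem for new weights}) are satisfied. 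That corollary then gives the vector-valued inequality for all $1<p,q<\infty$ and all $w\in\widetilde{A}_{p,\lambda}$, applied to the sequence $\{(\mathcal{M}_\lambda f_i,|f_i|)\}_i$.

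There is one small technicality: the family must consist of pairs that are not identically zero and for which the left side is finite. I would handle this by restricting first to finitely many $f_i$ that are bounded with compact support. Finiteness then holds since $\mathcal{M}_\lambda$ is bounded on $L^p(wdx)$, and the monotone convergence theorem lets the number of terms tend to infinity.

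\emph{($\Leftarrow$).} Take the sequence with $f_1=f$ and $f_i=0$ for $i\ge2$. The vector-valued inequality then reduces to
\[
\|\mathcal{M}_\lambda f\|_{L^p(\mathbb{R}_+,wdx)}\lesssim\|f\|_{L^p(\mathbb{R}_+,wdx)},
\]
and \eqref{scalar characterization new weight} yields $w\in\widetilde{A}_{p,\lambda}$. If one prefers to avoid citing the necessity part of \eqref{scalar characterization new weight}, the standard test is available. Take $f=\sigma\chi_I v_\lambda^{-1}$ with $\sigma=v_\lambda^{p'}w^{1-p'}$, truncated so that it lies in $L^p(wdx)$. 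On $I$ this gives $\mathcal{M}_\lambda f\ge \sigma(I)/\nu_\lambda(I)$, and hence $w(I)\,\sigma(I)^p/\nu_\lambda(I)^p\lesssim\sigma(I)$. This is exactly the $\widetilde{A}_{p,\lambda}$ condition.

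The main (mild) obstacle is verifying that the extrapolation hypotheses apply to the pairs $(\mathcal{M}_\lambda f,|f|)$ with the required finiteness. I expect the truncation argument above to handle it.
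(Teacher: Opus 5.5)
Your proposal is correct and follows essentially the same route as the paper. The forward direction applies Corollary \ref{vector-value extrapolation} to the pairs $(\mathcal{M}_\lambda f_i,|f_i|)$ on the strength of \eqref{scalar characterization new weight}, and the converse takes $f_1=f$, $f_i=0$ for $i\ge2$ and invokes \eqref{scalar characterization new weight} again; your finiteness/truncation remarks and the direct test with $f=\sigma\chi_I v_\lambda^{-1}$ (which gives $\|f\|_{L^p(\mathbb{R}_+,wdx)}^p=\sigma(I)$ and hence exactly the $\widetilde{A}_{p,\lambda}$ condition) are valid supplements that the paper leaves implicit.
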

\begin{proof}
The necessity is a direct application of Corollary \ref{vector-value extrapolation} and \eqref{scalar characterization new weight}. Let $f_1=f$ and $f_i=0$, $i\neq1$. Then
\begin{align*}
\Big\|\Big(\sum_{i}\mathcal{M}_\lambda f_i^q\Big)^{1/q}\Big\|_{L^p(\mathbb{R}_+,wdx)}\lesssim
\Big\|\Big(\sum_{i}|f_i|^q\Big)^{1/q}\Big\|_{L^p(\mathbb{R}_+,wdx)}
\end{align*}
reduces to
\begin{align*}
\|\mathcal{M}_\lambda f\|_{L^p(\mathbb{R}_+,wdx)}\lesssim\|f\|_{L^p(\mathbb{R}_+,wdx)}.
\end{align*}
Hence, the sufficiency follows from \eqref{scalar characterization new weight}.
\end{proof}

Relied upon Theorem \ref{characterization A} and Corollary \ref{A extraapolaation}, we have the following Coifman-Fefferman inequalities for harmonic analysis operators associated to $\Delta_\lambda$.
\begin{theorem}\label{Coifman-Fefferman inequalities}
Let $T$ represents to $\mathcal{R}_{\Delta_\lambda}$, $\varphi_\ast^\lambda$, $g_\varphi^\lambda$ or $V_\rho^\lambda(\{\varphi_t\}_{t>0})$. Then for any $0<p<\infty$ and $w\in\widetilde{A}_{\infty,\lambda-\frac{1}{2}}$,
\begin{align*}
\|Tf\|_{L^p(\mathbb{R}_+,wdx)}\lesssim\|\mathcal{M}_\mu f\|_{L^p(\mathbb{R}_+,wdx)}.
\end{align*}
\end{theorem}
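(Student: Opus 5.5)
The plan is to use Corollary \ref{A extraapolaation}, applied with $\lambda-\frac{1}{2}$ in place of $\lambda$, to the family of pairs $(f,g)=(|Tf|,\mathcal{M}_\mu f)$. This reduces the theorem to a single exponent $p_0$ and all $w\in\widetilde{A}_{\infty,\lambda-\frac{1}{2}}$. (One should restrict to $f$ for which $\|Tf\|_{L^{p}(\mathbb{R}_+,wdx)}$ is a priori finite, e.g.\ bounded compactly supported $f$ and a truncated weight $\min\{w,N\}$ if needed, then pass to the limit by monotone convergence; I will treat this as routine.)

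The natural choice is $p_0=1$. By the sparse domination \eqref{sparse domination for various operator}, it suffices to show
\[
\int_{\mathbb{R}_+}\mathcal{A}_{\mathcal{S}}f\,w\,dx\lesssim\int_{\mathbb{R}_+}\mathcal{M}_\mu f\,w\,dx
\]
for each $\eta$-sparse family $\mathcal{S}$ and each $w\in\widetilde{A}_{\infty,\lambda-\frac{1}{2}}$. Writing out the left side gives $\sum_{I\in\mathcal{S}}f_{I,\mu}\,w(I)$, with $f_{I,\mu}$ computed using $|f|$.

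The key step is to replace $w(I)$ by $w(E_I)$. Note that $\nu_{\lambda-\frac{1}{2}}=\mu$, so Theorem \ref{characterization A}(2), or Corollary \ref{Ratio property}, applied to $I\setminus E_I$ gives
\[
\frac{w(I\setminus E_I)}{w(I)}\le c_1(1-\eta)^{c_2}.
\]
If this ratio is at most $1/2$, then $w(I)\le 2w(E_I)$. If not, the cleaner route is to use Theorem \ref{characterization A}(3) directly. Its contrapositive, \eqref{contraposition of (3)}, states that $\mu(J)\ge\beta\mu(I)$ implies $w(J)\ge\alpha w(I)$.

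The difficulty is that sparseness only gives $\mu(E_I)\ge\eta\mu(I)$, and $\eta$ may be smaller than $\beta$. I would handle this by showing that (3) holds for every $\beta\in(0,1)$ with a corresponding $\alpha$, which follows from the power bound in (2). Concretely, choose $\delta$ with $c_1\delta^{c_2}\le\frac12$. Then $\mu(J)\ge(1-\delta)\mu(I)$ implies $w(J)\ge\frac12 w(I)$. For general $\eta$, Corollary \ref{Ratio property} gives $w(I\setminus E_I)\le 2(1-\eta)^{\theta}w(I)$ with $\theta>0$. This is less than $w(I)$ only when $(1-\eta)^\theta<\frac12$, which may fail.

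To get around this, I would use the standard fact that an $\eta$-sparse family with finite overlap can be split into finitely many subfamilies, each $\eta'$-sparse with $\eta'$ as close to $1$ as needed. The alternative is the Carleson-type reformulation via $A_\infty$ doubling of $w$, which gives $w(E_I)\gtrsim w(I)$ by iterating (3) on dyadic children. I expect this step to be the main obstacle. The fallback is to invoke the equivalence (1)$\Leftrightarrow$(3) in Theorem \ref{characterization A}, together with the fact that the dyadic structure lets $E_I$ contain a definite proportion.

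Once $w(I)\lesssim w(E_I)$ is in hand, the rest is immediate:
\[
\sum_{I}f_{I,\mu}\,w(E_I)\le\sum_I\int_{E_I}\mathcal{M}_\mu f\,w\,dx\lesssim\int_{\mathbb{R}_+}\mathcal{M}_\mu f\,w\,dx,
\]
using the finite overlap of the sets $E_I$. Summing over the $\mathcal{T}$ dyadic systems gives the $p_0=1$ estimate for all $w\in\widetilde{A}_{\infty,\lambda-\frac{1}{2}}$. Corollary \ref{A extraapolaation} then extends it to all $0<p<\infty$.
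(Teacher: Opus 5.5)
Your outline matches the paper's proof. You reduce via \eqref{sparse domination for various operator} to $\mathcal{A}_{\mathcal{S}}$, prove the $p_0=1$ bound $\sum_{I\in\mathcal{S}} f_{I,\mu}\,w(I)\lesssim\int_{\mathbb{R}_+}\mathcal{M}_\mu f\,w\,dx$ by trading $w(I)$ for $w(E_I)$ and using the finite overlap of the $E_I$, and then apply Corollary \ref{A extraapolaation}.

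The step you flag as the main obstacle, $w(I)\lesssim w(E_I)$ for an arbitrary sparseness constant $\eta$, is the step the paper settles with a one-line citation of Theorem \ref{characterization A}. It follows at once from item (4) there, not from (2) or (3). If $w\in\widetilde{A}_{p,\lambda-\frac12}$ for some $p$, write $\mu(E)=\int_E x^{2\lambda}w^{-1/p}w^{1/p}\,dx$ and apply H\"older's inequality to get
\[
\Big(\frac{\mu(E)}{\mu(I)}\Big)^p\le[w]_{\widetilde{A}_{p,\lambda-\frac12}}\,\frac{w(E)}{w(I)},\qquad E\subset I .
\]
Hence $w(I)\le\eta^{-p}[w]_{\widetilde{A}_{p,\lambda-\frac12}}\,w(E_I)$ for every $\eta\in(0,1)$. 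For $p=1$ the same bound comes from the $L^\infty$ factor. This is the same mechanism the paper uses later in \eqref{sparse property}.

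You are right that the power bound (2), or Corollary \ref{Ratio property}, by itself handles only $\eta$ close to $1$. Splitting the sparse family into subfamilies with sparseness close to $1$ does repair this; it is the Lerner--Nazarov splitting lemma for Carleson families. However, it is an imported, nontrivial lemma and is not needed here. Your other suggestion, iterating (3) on dyadic children, does not work as stated, because $E_I$ is an arbitrary measurable subset of $I$ rather than a union of children. With the H\"older estimate above in place, the rest of your argument goes through as written.
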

\begin{proof}
It suffices to show the conclusion holds for $\mathcal{A}_{\mathcal{S}}$ by \eqref{sparse domination for various operator}, where $\mathcal{S}$ is an $\eta$-sparse family. By the definition of sparse operator, there exists $E_I\subset I$ such that $\mu(E_I)\geq\eta\mu(I)$. Theorem \ref{characterization A} implies that $w(E_I)\gtrsim w(I)$. From this, we deduce that
\begin{align*}
&\int_{\mathbb{R}_+}|\mathcal{A}_{\mathcal{S}}f(x)|w(x)dx\\
&\quad=\sum_{I\in\mathcal{S}}\int_{\mathbb{R}_+}\frac{1}{\mu(I)}\int_I|f(x)|d\mu(x)\chi_I(x)w(x)dx\\
&\quad\leq\sum_{I\in\mathcal{S}}\inf_{x\in I}\mathcal{M}_{\mu}(f)(x)w(I)\\
&\quad\lesssim\sum_{I\in\mathcal{S}}\inf_{x\in I}\mathcal{M}_{\mu}(f)(x)w(E_I)\\
&\quad\lesssim\int_{\mathbb{R}_+}\mathcal{M}_{\mu}(f)(x)w(x)dx,
\end{align*}
this verifies the assumption of Corollary \ref{A extraapolaation} when $p_0=1$. Then the desired result follows by Corollary \ref{A extraapolaation}.
\end{proof}

From Theorems \ref{vector-value characterization of HL}, \ref{Coifman-Fefferman inequalities} and Corollary \ref{A extrapolaation vector-value}, we can also extend above theorem to the vector-valued case.
\begin{theorem}
Let $T$ represents to $\mathcal{R}_{\Delta_\lambda}$, $\varphi_\ast^\lambda$, $g_\varphi^\lambda$ or $V_\rho^\lambda(\{\varphi_t\}_{t>0})$. Then for any $0<p,q<\infty$ and $w\in\widetilde{A}_{\infty,\lambda-\frac{1}{2}}$,
\begin{align*}
\Big\|\big(\sum_{i}|Tf_i|^q\big)^{1/q}\Big\|_{L^p(\mathbb{R}_+,wdx)}\lesssim
\Big\|\big(\sum_i(\mathcal{M}_\mu f_i)^q\big)^{1/q}\Big\|_{L^p(\mathbb{R}_+,wdx)}.
\end{align*}
In particular, for all $1<p,q<\infty$ and $w\in \widetilde{A}_{p,\lambda-\frac{1}{2}}$,
\begin{align*}
\Big\|\big(\sum_{i}|Tf_i|^q\big)^{1/q}\Big\|_{L^p(\mathbb{R}_+,wdx)}\lesssim
\Big\|\big(\sum_i|f_i|^q\big)^{1/q}\Big\|_{L^p(\mathbb{R}_+,wdx)}.
\end{align*}
\end{theorem}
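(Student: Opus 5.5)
The plan is to follow the same scheme as the preceding two theorems: derive the vector-valued Coifman--Fefferman inequality from Corollary \ref{A extrapolaation vector-value}, then combine it with Theorem \ref{vector-value characterization of HL} to obtain the second assertion.

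\emph{First assertion.} Consider the family $\mathcal{F}$ of pairs $(|Tf|,\mathcal{M}_\mu f)$. Theorem \ref{Coifman-Fefferman inequalities} gives, for any $p_0\in(0,\infty)$ (say $p_0=1$) and every $w\in\widetilde{A}_{\infty,\lambda-\frac12}$,
\begin{align*}
\|Tf\|_{L^{p_0}(\mathbb{R}_+,wdx)}\lesssim\|\mathcal{M}_\mu f\|_{L^{p_0}(\mathbb{R}_+,wdx)}.
\end{align*}
This is exactly the hypothesis of Corollary \ref{A extrapolaation vector-value}, with $\lambda$ replaced by $\lambda-\frac12$. We should check that this replacement is legitimate, since the extrapolation machinery only uses the $\widetilde{A}$ structure for the given parameter. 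Applying the corollary to the pairs $\{(|Tf_i|,\mathcal{M}_\mu f_i)\}_i$ yields the vector-valued estimate for all $0<p,q<\infty$. The only technical point is the requirement that the pairs are not identically zero and that the left-hand sides be finite. The standard remedy is to work first with $f_i$ in a dense class (bounded, compactly supported) and finitely many indices, then pass to the limit by monotone convergence.

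\emph{Second assertion.} Fix $1<p,q<\infty$ and $w\in\widetilde{A}_{p,\lambda-\frac12}\subset\widetilde{A}_{\infty,\lambda-\frac12}$. The first part gives
\begin{align*}
\Big\|\big(\textstyle\sum_i|Tf_i|^q\big)^{1/q}\Big\|_{L^p(wdx)}\lesssim\Big\|\big(\textstyle\sum_i(\mathcal{M}_\mu f_i)^q\big)^{1/q}\Big\|_{L^p(wdx)}.
\end{align*}
It remains to bound the right-hand side by $\|(\sum_i|f_i|^q)^{1/q}\|_{L^p(wdx)}$. This is the main obstacle, because Theorem \ref{vector-value characterization of HL} concerns $\mathcal{M}_\lambda$ with measure $\nu_\lambda$, whereas here we have $\mathcal{M}_\mu$ with $d\mu=x^{2\lambda}dx=x^{2(\lambda-\frac12)+1}dx=d\nu_{\lambda-\frac12}$. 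Hence $\mathcal{M}_\mu=\mathcal{M}_{\lambda-\frac12}$, and Theorem \ref{vector-value characterization of HL} with parameter $\lambda-\frac12$ gives the desired Fefferman--Stein bound for $w\in\widetilde{A}_{p,\lambda-\frac12}$.

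The caveat is that $\lambda-\frac12$ may be $\le -\frac12$ or equal to $0$ when $\lambda\in(-\frac12,\frac12]$. Those results are stated only for parameters $>-\frac12$ and $\neq0$, so in that range we will argue directly. The Vitali covering argument and the Rubio de Francia extrapolation argument used there rely only on the doubling of $\nu_{\lambda-\frac12}=\mu$, which holds for all $\lambda>-\frac12$. This matches the convention of the paper, which already applies these tools with index $\lambda-\frac12$ in \eqref{quantitative weighted estimates of Bessel Riesz transform}. Combining the two displayed inequalities finishes the proof.
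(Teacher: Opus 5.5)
Your proposal is correct and follows the same route the paper indicates: the vector-valued Coifman--Fefferman estimate comes from feeding Theorem \ref{Coifman-Fefferman inequalities} into Corollary \ref{A extrapolaation vector-value} at index $\lambda-\frac12$. The second assertion then follows by bounding $\big(\sum_i(\mathcal{M}_\mu f_i)^q\big)^{1/q}$ with Theorem \ref{vector-value characterization of HL}, using $\mathcal{M}_\mu=\mathcal{M}_{\lambda-\frac12}$. Your caveats about the admissible range of the index $\lambda-\frac12$ (where only the doubling of $\mu=\nu_{\lambda-\frac12}$ is really needed) and about finiteness of the left-hand sides in the extrapolation are points the paper passes over silently.
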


Finally, we present some applications of Theorems \ref{quantitative two weight restricted weak type of sparse operator}, \ref{quantitative two weight restricted weak type estimate for sparse operator of commutator 2}. The following theorem is a consequence of Theorem \ref{quantitative two weight restricted weak type of sparse operator} and \eqref{sparse domination for various operator}.
\begin{theorem}\label{restricted weak type estimates for various operators}
Let $1<p<\infty$, $(w_1,w_2)\in\widetilde{A}_{p,\lambda-\frac{1}{2}}^\mathcal{R}$, $w_2\in\widetilde{A}_{\infty,\lambda-\frac{1}{2}}$. Let $T$ represents to $\mathcal{R}_{\Delta_\lambda}$, $\varphi_\ast^\lambda$, $g_\varphi^\lambda$ or $V_\rho^\lambda(\{\varphi_t\}_{t>0})$. Then there exists $r\geq1$ such that
\begin{align*}
\|Tf\|_{L^{p,\infty}(\mathbb{R}_+,w_2dx)}\lesssim
[w_2]_{\widetilde{A}_{r,\lambda-\frac{1}{2}}^{\mathcal{R}}}^{r}
[w_1,w_2]_{\widetilde{A}_{p,\lambda-\frac{1}{2}}^\mathcal{R}}\|f\|_{L^{p,1}(\mathbb{R}_+,w_1dx)}.
\end{align*}
\end{theorem}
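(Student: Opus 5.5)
The plan is to reduce the estimate for $T$ to the sparse operators and then apply Theorem \ref{quantitative two weight restricted weak type of sparse operator} to each of them. By \eqref{sparse domination for various operator}, there are finitely many dyadic systems $\mathcal{D}^j$, $j=1,\dots,\mathcal{T}$, and $\eta$-sparse families $\mathcal{S}_j\subset\mathcal{D}^j$ such that
\[
|Tf|\lesssim\sum_{j=1}^{\mathcal{T}}\mathcal{A}_{\mathcal{S}_j}f
\]
pointwise. The number $\mathcal{T}$, the sparseness constant $\eta$ and the implicit constant depend only on $\lambda$ (and on $\varphi,\rho$ in the respective cases). They do not depend on $f$ or on the weights.

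Next I would use the quasi-triangle inequality in $L^{p,\infty}(\mathbb{R}_+,w_2dx)$. For finitely many summands,
\[
\Big\|\sum_{j=1}^{\mathcal{T}} g_j\Big\|_{L^{p,\infty}} \le C_{p,\mathcal{T}}\sum_{j=1}^{\mathcal{T}} \|g_j\|_{L^{p,\infty}}.
\]
Since $p>1$, one can instead use an equivalent norm on $L^{p,\infty}$, such as $\sup_E w_2(E)^{-1/p'}\int_E|g|w_2$. Either way,
\[
\|Tf\|_{L^{p,\infty}(\mathbb{R}_+,w_2dx)}\lesssim\sum_{j}\|\mathcal{A}_{\mathcal{S}_j}f\|_{L^{p,\infty}(\mathbb{R}_+,w_2dx)}.
\]

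For each $j$, I apply Theorem \ref{quantitative two weight restricted weak type of sparse operator} with $\mathcal{S}=\mathcal{S}_j$. Its hypotheses $(w_1,w_2)\in\widetilde{A}_{p,\lambda-\frac12}^\mathcal{R}$ and $w_2\in\widetilde{A}_{\infty,\lambda-\frac12}$ are exactly those assumed here. The one point to check is that the exponent $r$ can be taken independent of $j$. In the proof of that theorem, $r$ is chosen only from $w_2$: by Theorem \ref{compare weight constant1} we have $w_2\in\widetilde{A}_{r,\lambda-\frac12}^\mathcal{R}$ for some $r\ge1$. Hence a single $r$ works for all $j$. Summing the finitely many bounds
\[
[w_2]_{\widetilde{A}_{r,\lambda-\frac12}^\mathcal{R}}^{r}[w_1,w_2]_{\widetilde{A}_{p,\lambda-\frac12}^\mathcal{R}}\|f\|_{L^{p,1}(\mathbb{R}_+,w_1dx)}
\]
then gives the claim.

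The main obstacle is making sure the dyadic systems $\mathcal{D}^j$ in \eqref{sparse domination for various operator} are compatible with the sparse theorem, which only assumes that $\mathcal{S}$ is $\eta$-sparse with respect to $\mu$ inside some dyadic system. The constants in that proof come from \eqref{sparse property}, \eqref{M_w} and Theorem \ref{characterization restricted weak type of HL}, and all of these are uniform over dyadic systems. So the argument goes through with constants independent of $j$.
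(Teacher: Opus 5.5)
Your proposal is correct and is the argument the paper intends: the paper gives no separate proof and only says the theorem follows from the sparse domination \eqref{sparse domination for various operator} together with Theorem \ref{quantitative two weight restricted weak type of sparse operator}. Your added details are exactly the routine steps left implicit there: the quasi-triangle inequality (or the equivalent Kolmogorov-type norm) in $L^{p,\infty}(\mathbb{R}_+,w_2dx)$ for the finitely many summands, and the observation that $r$ is fixed by $w_2$ alone through Theorem \ref{compare weight constant1}, so it is uniform in $j$.
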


By Theorem \ref{quantitative two weight restricted weak type estimate for sparse operator of commutator 2}, we have the following quantitative restricted weighted weak type estimates for commutators of harmonic analysis operators in the Bessel setting.
\begin{theorem}\label{restricted weak type estimates for commutators of various operators}
Let $1<p<\infty$, $\Phi(t)=t\log(e+t)^m$ with $m\in\mathbb{Z}^+$ and $b\in BMO_{\Delta_\lambda}(\mathbb{R}_+)$. Let $T_b^m$ represents to $\mathcal{R}_{\Delta_\lambda,b}^m$, $\varphi_{\ast,b}^{\lambda,m}$, $g_{\varphi,b}^{\lambda,m}$ or $V_{\rho,b}^{\lambda,m}(\{\varphi_t\}_{t>0})$. If $w_2\in\widetilde{A}_{\infty,\lambda-\frac{1}{2}}$ and $(w_1,w_2)\in\widetilde{A}_{p,\lambda-\frac{1}{2},\Phi}^\mathcal{R}$. Then there exists $r\geq1$ such that
\begin{align*}
&\|T_{b}^mf\|_{L^{p,\infty}(\mathbb{R}_+,w_2dx)}\\
&\quad\lesssim\|b\|_{BMO_{\Delta_\lambda}(\mathbb{R}_+)}^m
[w_2]_{\widetilde{A}_{\infty,\lambda-\frac{1}{2}}}^m
[w_2]_{\widetilde{A}_{r,\lambda-\frac{1}{2}}^\mathcal{R}}^{r+\frac{r}{p}}
\|w_1,w_2\|_{\widetilde{A}_{p,\lambda-\frac{1}{2},\Phi}^\mathcal{R}}\|f\|_{L^{p,1}(\mathbb{R}_+,w_1dx)}.
\end{align*}
\end{theorem}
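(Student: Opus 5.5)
The plan is to combine the pointwise sparse domination \eqref{sparse domination for commutator of various operator} with the quantitative sparse bounds of Theorems \ref{quantitative two weight restricted weak type estimate for sparse operator of commutator 1} and \ref{quantitative two weight restricted weak type estimate for sparse operator of commutator 11}. The first step is to expand the right-hand side of \eqref{sparse domination for commutator of various operator}: for each $j$ and each $0\le k\le m$ we obtain the mixed sparse operator
\begin{align*}
\mathcal{A}^{m-k,k}_{\mathcal{S}_j,b}f(x):=\sum_{I\in\mathcal{S}_j}|b(x)-b_{I,\mu}|^{m-k}\Big(\frac{1}{\mu(I)}\int_I|b(z)-b_{I,\mu}|^k|f(z)|d\mu(z)\Big)\chi_I(x).
\end{align*}
Since $L^{p,\infty}(\mathbb{R}_+,w_2dx)$ is quasi-normed and there are only finitely many terms ($\mathcal{T}(m+1)$), it suffices to bound each $\mathcal{A}^{m-k,k}_{\mathcal{S}_j,b}$ with the stated constant. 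The endpoints $k=0$ and $k=m$ are exactly $\mathcal{A}_{\mathcal{S},b}^m$ and $(\mathcal{A}_{\mathcal{S},b}^m)^\ast$, which are handled by Theorem \ref{quantitative two weight restricted weak type estimate for sparse operator of commutator 2}. Here we use $[w_1,w_2]_{\widetilde{A}_{p,\lambda-\frac12}^\mathcal{R}}\le\|w_1,w_2\|_{\widetilde{A}_{p,\lambda-\frac12,\Phi}^\mathcal{R}}$, and we note that $[w_2]_{\widetilde{A}^\mathcal{R}_{r,\lambda-\frac12}}\gtrsim1$, so that the powers $r$ and $r+r/p$ may be enlarged to the common power $r+r/p$.

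For the intermediate terms $0<k<m$, I would run the duality argument used in the two sparse theorems simultaneously on both sides. Pairing with $0\le g$, $\|g\|_{L^{p',1}(w_2)}=1$, each term $I$ contributes
\[
\Big(\frac1{w_2(I)}\int_I|b-b_{I,\mu}|^{m-k}g\,w_2\Big)\Big(\frac{1}{\mu(I)}\int_I|b-b_{I,\mu}|^k|f|d\mu\Big)w_2(I).
\]
On the first factor I apply the generalized H\"older inequality with $\exp L^{1/(m-k)}$ in the measure $w_2dx$ together with Lemma \ref{Orlicz norm involing A}; this gives $[w_2]^{m-k}_{\widetilde A_\infty}\|b\|^{m-k}\|g\|_{L(\log L)^{m-k},I,w_2}$. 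On the second factor I apply generalized H\"older in the measure $\mu$ together with John--Nirenberg, which gives $\|b\|^{k}\|f\|_{L(\log L)^k,I,\mu}$. Next I replace $w_2(I)$ by $\eta^{-r}[w_2]^r_{\widetilde A^\mathcal{R}_r}w_2(E_I)$ using \eqref{sparse property} and sum over the sets $E_I$, which overlap only finitely. Hölder in Lorentz spaces then leaves
\[
\|\mathcal{M}^{\mathcal S}_{L(\log L)^{m-k},w_2}g\|_{L^{p',1}(w_2)}\,\|\mathcal{M}_{L(\log L)^k,\mu}f\|_{L^{p,\infty}(w_2)}.
\]
The first factor is $\lesssim1$ by \eqref{Lorentz estimate for weighted Orlicz maximal}. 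The second is controlled by \eqref{quantitative two weight restricted weak type estimate of Orlicz maximal} with the Young function $t\log(e+t)^k$. This requires knowing that $\|w_1,w_2\|_{\widetilde A^\mathcal{R}_{p,\lambda-\frac12,\Phi_k}}\lesssim\|w_1,w_2\|_{\widetilde A^\mathcal{R}_{p,\lambda-\frac12,\Phi}}$, which follows because $\Phi_k\le\Phi$ pointwise, so the Luxemburg norms are monotone. The resulting bound is
\[
\|b\|^m[w_2]^{m-k}_{\widetilde A_\infty}[w_2]^{r+r/p}_{\widetilde A^\mathcal{R}_r}\|w_1,w_2\|_{\widetilde{A}_{p,\lambda-\frac12,\Phi}^\mathcal{R}},
\]
and $[w_2]^{m-k}_{\widetilde A_\infty}\le[w_2]^m_{\widetilde A_\infty}$ since $[w_2]_{\widetilde A_\infty}\gtrsim1$.

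The main obstacle I expect is keeping a single exponent $r$ across all terms. The sparse theorems each produce "some $r\ge1$" from $w_2\in\widetilde A_{\infty}$ via Theorem \ref{compare weight constant1}. I would fix one $r$ with $w_2\in\widetilde A^\mathcal{R}_{r,\lambda-\frac12}$ once at the start and note that every step above uses this same $r$. A secondary point to check is that the dyadic systems $\mathcal{D}^j$ and the sparse families from \eqref{sparse domination for commutator of various operator} are compatible with the sparse-family hypotheses of the two theorems. This holds because those theorems only use $\eta$-sparseness with respect to $\mu$, which is exactly what \eqref{sparse domination for commutator of various operator} provides.
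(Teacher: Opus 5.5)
Your proof is correct. For the endpoint terms $k=0$ and $k=m$ it agrees with the paper's, but you treat the mixed terms $0<k<m$ by a different route.

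The paper disposes of the mixed terms with a one-line pointwise estimate. For $x,z\in I$,
\[
|b(x)-b_{I,\mu}|^{m-k}|b(z)-b_{I,\mu}|^{k}\le \max\{|b(x)-b_{I,\mu}|,|b(z)-b_{I,\mu}|\}^m\le |b(x)-b_{I,\mu}|^m+|b(z)-b_{I,\mu}|^m .
\]
So the whole $k$-sum in \eqref{sparse domination for commutator of various operator} is at most $(m+1)\big(\mathcal{A}_{\mathcal{S},b}^mf+(\mathcal{A}_{\mathcal{S},b}^m)^{\ast}f\big)$. Theorem \ref{quantitative two weight restricted weak type estimate for sparse operator of commutator 2} is then applied once, and no intermediate operator ever appears.

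You instead rerun, for each mixed term, a hybrid of the duality arguments from Theorems \ref{quantitative two weight restricted weak type estimate for sparse operator of commutator 1} and \ref{quantitative two weight restricted weak type estimate for sparse operator of commutator 11}. This needs extra inputs:
\begin{itemize}
\item Lemma \ref{Orlicz norm involing A} with exponent $m-k$;
\item the Orlicz maximal bound \eqref{quantitative two weight restricted weak type estimate of Orlicz maximal} for $\Phi_k(t)=t\log(e+t)^k$;
\item the monotonicity of the $\Phi_k$-constants in $k$, which holds since $\Phi_k\le\Phi$;
\item \eqref{Lorentz estimate for weighted Orlicz maximal} with $m-k$ in place of $m$.
\end{itemize}
All of these steps hold. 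Fixing one $r$ with $w_2\in\widetilde{A}^{\mathcal R}_{r,\lambda-\frac12}$ works throughout. The bounds $[w_2]_{\widetilde{A}_{\infty,\lambda-\frac12}}\ge1$ and $[w_2]_{\widetilde{A}^{\mathcal R}_{r,\lambda-\frac12}}\ge 1/r$ justify enlarging the exponents.

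Your route does give slightly finer information on the mixed terms: only $[w_2]_{\widetilde{A}_{\infty,\lambda-\frac12}}^{m-k}$ appears, under the weaker $\Phi_k$-condition. Since the endpoint terms already force the stated constant, that gain is lost in the final bound, and the paper's pointwise trick is much shorter.
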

\begin{proof}
We borrow some ideas from \cite{CrMoTr}. Denote
\begin{align*}
\mathcal{A}_{\mathcal{S},b}^{m,k}f(x):=\sum_{k=0}^{m}\sum_{I\in\mathcal{S}}
|b(x)-b_{I,\mu}|^{m-k}\Big(\frac{1}{\mu(I)}\int_I|b(z)-b_{I,\mu}|^k|f(z)|d\mu(z)\Big)\chi_I(x).
\end{align*}
It suffices to prove the result for $\mathcal{A}_{\mathcal{S},b}^{m,k}f$ by \eqref{sparse domination for commutator of various operator}. By making a reduction, we only need to consider $\mathcal{A}_{\mathcal{S},b}^m$ and $(\mathcal{A}_{\mathcal{S},b}^m)^{\ast}$, which are defined in Theorem \ref{quantitative two weight restricted weak type estimate for sparse operator of commutator 2}.

In fact, fix a sparse family $\mathcal{S}$, let $x\in\mathbb{R}_+$ and $I\in\mathcal{S}$ such that $x\in I$. Then one has
\begin{align*}
&\sum_{k=0}^m|b(x)-b_{I,\mu}|^{m-k}\frac{1}{\mu(I)}\int_I|b(z)-b_{I,\mu}|^k|f(z)|d\mu(z)\\
&\quad\leq\frac{1}{\mu(I)}\int_I\Big(\sum_{k=0}^m\max\{|b(x)-b_{I,\mu}|,|b(z)-b_{I,\mu}|\}^m\Big)|f(z)|
d\mu(z)\\
&\quad\leq(m+1)\Big[\frac{|b(x)-b_{I,\mu}|^m}{\mu(I)}\int_I|f(z)|d\mu(z)+\frac{1}{\mu(I)}\int_I
|b(z)-b_{I,\mu}|^m|f(z)|d\mu(z)\Big].
\end{align*}
Thus,
\begin{align*}
\mathcal{A}_{\mathcal{S},b}^{m,k}f\leq(m+1)(\mathcal{A}_{\mathcal{S},b}^mf+
(\mathcal{A}_{\mathcal{S},b}^m)^{\ast}f).
\end{align*}
The desired results follows by Theorem \ref{quantitative two weight restricted weak type estimate for sparse operator of commutator 2}.
\end{proof}
\begin{remark}
We present some special operators, the weighted inequalities for these operators can be deduced from Theorems \ref{Coifman-Fefferman inequalities}-\ref{restricted weak type estimates for commutators of various operators}, one can see \cite{AlBeFaRo} for more details.\\
$(1)$ Let $P_t^\lambda f=P_t^\lambda\sharp_\lambda f$, where $\{P_t^\lambda\}_{t>0}$ is the Poisson semigroup associated to $\Delta_\lambda$ and $P^\lambda(x)=\frac{2\lambda\Gamma(\lambda)}{\sqrt{\pi}\Gamma(\lambda+1/2)}(1+x)^{-\lambda-1}$. Let $l\in \mathbb{N}$. Define the maximal operator $P_{\ast,l}^{\lambda}$, $g_{\{t^l\partial_t^lP_t^\lambda\}_{t>0}}^{\lambda}$, $V_\rho(\{t^l\partial_t^lP_t^\lambda\}_{t>0})$ by
\begin{align*}
P_{\ast,l}^{\lambda}(f)(x):=\sup_{t>0}|t^l\partial_t^lP_t^\lambda(f)(x)|;
\end{align*}
\begin{align*}
g_{\{t^l\partial_t^lP_t^\lambda\}_{t>0}}^{\lambda}(f)(x):=\Big(\int_0^\infty|t^l\partial_t^lP_t^\lambda (f)(x)|^2\frac{dt}{t}\Big)^{1/2};
\end{align*}
\begin{align*}
&V_\rho^\lambda(\{t^l\partial_t^lP_t^\lambda\}_{t>0})(f)(x)\\
&\quad:=
\underset{n\in\mathbb{N}}{\sup_{0<t_n<t_{n-1}<\cdots<t_1}}
\Big(\sum_{j=1}^{n-1}
|f\sharp_\lambda t_{j}^l\partial_t^lP_{t_{j}}^\lambda(x)-f\sharp_\lambda t_{j+1}^l\partial_t^lP_{t_{j+1}}^\lambda(x)|^\rho\Big)^{1/\rho},
\end{align*}
respectively. Then $P_{\ast,l}^{\lambda}$, $g_{\{t^l\partial_t^lP_t^\lambda\}_{t>0}}^{\lambda}$, $V_\rho(\{t^l\partial_t^lP_t^\lambda\}_{t>0})$ also enjoy Theorems \ref{Coifman-Fefferman inequalities}-\ref{restricted weak type estimates for various operators}, the commutators of $P_{\ast,l}^{\lambda}$, $g_{\{t^l\partial_t^lP_t^\lambda\}_{t>0}}^{\lambda}$, $V_\rho(\{t^l\partial_t^lP_t^\lambda\}_{t>0})$ enjoy Theorem \ref{restricted weak type estimates for commutators of various operators}.\\
$(2)$ Let $\varphi\in\mathcal{S}(\mathbb{R}_+)$, where $\mathcal{S}(\mathbb{R}_+)$ is the class of Schwartz function on $\mathbb{R}_+$. Then $\varphi_\ast^\lambda$, $g_\varphi^\lambda$ with Hankel transform
\begin{align}\label{Hankel transform}
h_\lambda(\varphi)(x)=\int_0^\infty (xy)^{-\lambda+\frac{1}{2}}J_{\lambda-\frac{1}{2}}(xy)\varphi(y)y^{2\lambda}dy=0,
\end{align}
where $J_s$ represents the Bessel function of first kind and order $s>-1$, $V_\rho^\lambda(\{\varphi_t\}_{t>0})$ enjoy Theorems \ref{Coifman-Fefferman inequalities}-\ref{restricted weak type estimates for various operators}, the commutators of $\varphi_\ast^\lambda$, $g_\varphi^\lambda$ and $V_\rho^\lambda(\{\varphi_t\}_{t>0})$ enjoy Theorem \ref{restricted weak type estimates for commutators of various operators}. The $\{t^l\partial_t^lW_t^\lambda\}_{t>0}$, $l\in\mathbb{N}$, where $\{W_t^\lambda\}_{t>0}$ is the heat semigroup generated by $-\Delta_\lambda$, is a special case of $\{\varphi_t\}_{t>0}$.\\
$(3)$ Let $\alpha,t>0$,
\begin{align*}
B_t^{\lambda,\alpha}(f)=h_\lambda\Big(\Big(1-\frac{y^2}{t^2}\Big)_+^\alpha h_\lambda(f)\Big)
\end{align*}
be the Bochner-Riesz multiplier associated to $\Delta_\lambda$, where $h_\lambda$ is defined as \eqref{Hankel transform}, $z_+:=\max(0,z)$, $z\in\mathbb{R}$. By
\begin{align*}
h_\lambda\big(2^\alpha\Gamma(\alpha+1)y^{-\alpha-\lambda-1/2}J_{\alpha+\lambda+1/2}(y)\big)(x)=
(1-x^2)_+^\alpha,~x\in(0,\infty),
\end{align*}
see \cite[Page 48]{ErMaObTr}, we can rewrite
\begin{align*}
B_t^{\lambda,\alpha}(f)=\varphi_{1/t}^{\lambda,\alpha}\sharp_\lambda f,
\end{align*}
where $\varphi^{\lambda,\alpha}(x)=2^\alpha\Gamma(\alpha+1)x^{-\alpha-\lambda-1/2}J_{\alpha+\lambda+1/2}(x)$.
Then the maximal operator
$$B_{\ast}^{\lambda,\alpha}f:=\sup_{t>0}|B_{t}^{\lambda,\alpha}(f)|,~\alpha\geq \lambda+1,$$  $V_\rho(\{B_{t}^{\lambda,\alpha}\}_{t>0})$ with $\alpha\geq \lambda+3$, and the Stein-square function
\begin{align*}
G^{\lambda,\alpha}(f)(x):=\Big(\int_0^\infty|t\partial_tB_{t}^{\lambda,\alpha}(f)(x)|^2\frac{dt}{t}\Big)
^{\frac{1}{2}}=\Big(\int_0^\infty|\Psi_{1/t}\sharp_\lambda f(x)|^2\frac{dt}{t}\Big)^{\frac{1}{2}},~\alpha\geq \lambda+3,
\end{align*}
where $\Psi(z):=\varphi^{\lambda,\alpha}(z)+z\frac{d}{dz}\varphi^{\lambda,\alpha}(z)$, $z\in\mathbb{R}_+$, enjoy Theorems \ref{Coifman-Fefferman inequalities}-\ref{restricted weak type estimates for various operators}. The commutators of $B_{\ast}^{\lambda,\alpha}$ with $\alpha\geq \lambda+1$, $V_\rho(\{B_{t}^{\lambda,\alpha}\}_{t>0})$ with $\alpha\geq \lambda+3$, $G^{\lambda,\alpha}$ with $\alpha\geq \lambda+3$ enjoy Theorem \ref{restricted weak type estimates for commutators of various operators}.
\end{remark}

\subsection*{Declaration of competing interest}
The authors declare that there is no competing interest.

%%%%%%%%%%%% References %%%%%%%%%%%%%
%%
%<Author name> is written as Initial of Given Name, and Family Name.
%<Title> is written in roman letters.
%<Journal name> should be abbreviated according to
% the MR Serials Abbreviations List of Mathematical Reviews:
% (Abbreviations of Names of Serials; http://www.ams.org/mr-database)
%For <Pages>, use en-dash "--" between page numbers.
%%

\end{document}